\theoremstyle{plain}
\newtheorem{lemma}{Lemma}[section]
\newtheorem{theorem}[lemma]{Theorem}
\newtheorem{proposition}[lemma]{Proposition}
\newtheorem{corollary}[lemma]{Corollary}
\theoremstyle{definition}
\newtheorem{definition}[lemma]{Definition}
\newtheorem{remark}[lemma]{Remark}
\numberwithin{equation}{section}
\newcommand{\mms}{m.m.s.}
\newcommand{\R}{\mathbb{R}}
\newcommand{\N}{\mathbb{N}}
\DeclareMathOperator{\BV}{BV}
\DeclareMathOperator{\dist}{dist}
\DeclareMathOperator{\supp}{supp}
\DeclareMathOperator{\Lip}{Lip}
\DeclareMathOperator{\lip}{lip}
\DeclareMathOperator{\id}{Id}
\DeclareMathOperator{\diam}{diam}
\newcommand{\haus}{\mathcal{H}}
\newcommand{\cI}{\mathcal{I}}
\newcommand{\f}{\varphi}
\newcommand{\G}{\mathcal{G}}
\newcommand{\T}{\mathcal{T}}
\renewcommand{\L}{\mathcal{L}}
\newcommand{\RCD}{\mathsf{RCD}}
\newcommand{\CD}{\mathsf{CD}}
\newcommand{\PP}{\mathsf{P}}
\DeclareMathOperator{\Geo}{Geo}
\newcommand{\MCP}{\mathsf{MCP}}
\DeclareMathOperator{\OptGeo}{OptGeo}
\DeclareMathOperator{\Dom}{Dom}
\newcommand{\M}{\mathcal{M}}
\DeclareMathOperator{\dom}{Dom}
\DeclareMathOperator*{\esssup}{ess\,sup}
\DeclareMathOperator*{\essinf}{ess\,inf}
\DeclareMathOperator*{\argmax}{arg\,max}
\DeclareMathOperator*{\argmin}{arg\,min}
\DeclareMathOperator{\Ric}{Ric}
\newcommand{\norm}[1]{\left\Vert#1\right\Vert}
\newcommand{\I}{\mathcal{I}}
\renewcommand{\L}{\mathcal{L}}
\newcommand{\m}{\mathfrak{m}}
\newcommand{\q}{\mathfrak{q}}
\DeclareMathOperator{\vol}{Vol}
\renewcommand{\P}{\mathcal{P}}
\renewcommand{\H}{\mathcal{H}}
\newcommand{\mm}{\mathfrak m}
\newcommand{\qq}{\mathfrak q}
\newcommand{\QQ}{\mathfrak Q}
\newcommand{\sfd}{\mathsf d}
\DeclareMathOperator{\Opt}{OptGeo}
\newcommand{\AVR}{\mathsf{AVR}}
\newcommand{\indicator}{\mathbf{1}}
\newcommand{\g}{{\G_N}}
\DeclareMathOperator{\Res}{Res}
\newcommand{\relation}{\mathcal{R}}
\title[Rigidities of Isoperimetric inequality under nonnegative Ricci curvature]{Rigidities of Isoperimetric inequality \\ under nonnegative Ricci curvature}
\author{Fabio Cavalletti}
\email{fabio.cavalletti@unimi.it}
\author{Davide Manini}
\email{dmanini@campus.technion.ac.il}
\date{}     
\begin{document}
\maketitle

\begin{abstract}
The sharp isoperimetric inequality for non-compact Riemannian manifolds with non-negative Ricci curvature and Euclidean volume growth  has been obtained in increasing generality with different approaches in a number of contributions \cite{AgoFogMazz,FogagnoloMazzieri22,Brendle22,Johne2021} culminated
by Balogh and Krist\'{a}ly \cite{Balogh_Kristaly_2022} covering also \mms's verifying the non-negative Ricci curvature condition in the synthetic sense of Lott, Sturm and Villani.
In sharp contrast with the compact case of positive Ricci curvature, for a large class of spaces including 
weighted Riemannian manifolds, no complete characterisation of 
the equality cases is present in the literature.

The scope of this paper is to settle this problem by proving, in the same generality of \cite{Balogh_Kristaly_2022}, that the equality in the isoperimetric inequality can be attained only by metric balls. Whenever this happens the space is forced, in a measure theoretic sense, to be a cone. 

Our result applies to different frameworks yielding as corollaries new rigidity results: 
it extend to weighted Riemannian manifold the rigidity results of \cite{Brendle22},
it extend to general $\RCD$ spaces the rigidity results of \cite{AntonelliPasqualettoPozzettaSemola23} 
and finally applies also to the Euclidean setting by proving that 
that optimisers in the anisotropic and weighted isoperimetric inequality for Euclidean cones
are necessarily the Wulff shapes.
\end{abstract}


\section{Introduction}

The Levy--Gromov isoperimetric inequality \cite[Appendix C]{Gro} asserts that 
if $E$ is a (sufficiently regular) subset of a Riemannian manifold 
$(M^n,g)$ with dimension $n$ and $\Ric_{g} \geq K>0$, then
\begin{equation}\label{eq:LevyGromov}
\frac{|\partial E|}{|M|}\geq \frac{|\partial B|}{|S|},
\end{equation}
where $B$ is a spherical cap in the model sphere, i.e.\ the $n$-dimensional sphere with constant Ricci curvature equal to $K$,  and $|M|,|S|,|\partial E|, |\partial B|$  denote the appropriate $n$ or $n-1$ dimensional volume, and where $B$ is chosen so that
$|E|/|M|=|B|/|S|$. If there exists a set $E\subset M$ with smooth boundary attaining the equality 
in \eqref{eq:LevyGromov}, then $M^{n}$ is isometric to the model space, i.e. the $n$-dimensional round sphere of the same Ricci curvature of $M^{n}$, and $E$ is a metric ball. 

If $(M^{n},g)$ is a Riemannian manifold, it is natural to consider more general measures other than the $\vol_{g}$. Then  the relevant object to control is the $N$-Ricci tensor introduced in \cite{Bakry}:
if $h \in C^{2}(M)$ with $h > 0$, the  generalised $N$-Ricci tensor, with $N\geq n$, 
is defined by
$$
\Ric_{g,h,N} : =  \Ric_{g} - (N-n) \frac{\nabla_{g}^{2} h^{\frac{1}{N-n}}}{h^{\frac{1}{N-n}}}.  
$$
The weighted manifold $(M^{n},g,h \vol_g)$ is said to verify the 
Bakry-Emery Curvature-Dimension condition $\CD(K,N)$ \cite{BakryEmery} if $\Ric_{g,h,N} \geq K g$.   
The $\CD(K,N)$ condition 
incorporates information on 
curvature and dimension from both the geometry of $(M^{n},g)$ and the measure $ h \vol_g$. 
In its most general form, the 
sharp (with respect to all parameters) extension of \eqref{eq:LevyGromov} 
to weighted manifolds  (with also bounded diameter) verifying the Bakry--\'Emery $\CD(K,N)$ is due to \cite{Mil}; we refer as well to \cite{Mil} for the long list of previous contributions that are too many to list them all. 

In their seminal works Lott--Villani \cite{lottvillani:metric} and Sturm \cite{sturm:I, sturm:II} introduced a synthetic definition of $\CD(K,N)$  for complete and separable metric spaces $(X,\sfd)$ endowed with a (locally-finite Borel) reference measure 
$\mm$ (``metric-measure space", or \mms). The synthetic $\CD(K,N)$ 
is formulated in terms of Optimal Transport (see Section \ref{S:preliminaries} for its definition)
and it was shown to coincide with the Bakry--\'Emery one in the smooth Riemannian setting 
(and in particular in the classical non-weighted one), that it is stable under measured Gromov-Hausdorff convergence of \mms's, and that Finsler manifolds and Alexandrov spaces  satisfy it. 

In \cite{CM1} the Levy--Gromov isoperimetric inequality has been generalised to the \mms's verifying the synthetic $\CD(K,N)$  by showing that the same sharp lower bounds obtained in \cite{Mil} applies to metric setting.
%
The approach of \cite{CM1} is based on the localisation paradigm, a powerful dimensional reduction tool from convex geometry extended to weighted Riemannian manifolds by means of an $L^{1}$ optimal transport approach by Klartag \cite{klartag} and then obtained for $\CD(K,N)$ spaces in
\cite{CM1}. 

In \cite{CM1}, in the case $K > 0$, the rigidity of \eqref{eq:LevyGromov} has been generalised as well. 
The equality in in the isoperimetric inequality implies that $(X,\sfd,\mm)$ has maximal diameter. 
If in addition $(X,\sfd,\mm)$ satisfies the $\RCD(0,N)$ condition (see Section \ref{S:preliminaries}), 
then $X$ is isomorphic as \mms\ to a spherical suspensions (\cite[Theorem 1.4]{CM1}. 
As a consequence, the optimal sets  are characterised as well 
(are metric balls centred on the tips of the spherical suspensions)
producing a rather clear picture of the isoperimetric inequality in the setting $K >0$.

\smallskip

On the other hand, it is well known that without an additional condition on the geometry of the space 
no isoperimetric inequality holds true in general spaces in the regime of $K = 0$, i.e.
 those with nonnegative Ricci curvature. 

However the classical Euclidean isoperimetric inequality asserts that any Borel set $E \subset \R^{n}$ with smooth boundary satisfy 
$$
|\partial E| \geq n \omega_{n}^{1/n} |E|^{\frac{n-1}{n}}.
$$
%
%
Hence a way to reconcile Levy--Gromov with the previous inequality is to impose a growth condition on 
the space so as to match the Euclidean one. Letting $B_{r}(x)=\{ y \in X \colon \sfd(x,y) < r\}$
denoting the metric ball with center $x\in X$
and radius $r> 0$, by Bishop--Gromov volume growth inequality, 
see \cite[Theorem 2.3]{sturm:II}, the map $r \mapsto \frac{\mm(B_{r}(x))}{r^{N}}$ 
is nonincreasing over $(0,\infty)$  for any $x \in X$.
The \emph{asymptotic volume ratio} is then naturally defined by 
$$
\mathsf{AVR}_{(X,\sfd,\mm)} = \lim_{r\to\infty} \frac{\mm(B_{r}(x))}{\omega_{N} r^{N}}.
$$
It is easy to see that it is indeed independent of the choice of $x \in X$;
the constant $\omega_{N}$ is the volume of the Euclidean unit ball in $\R^{N}$ 
whenever $N \in\N$ and it is classically extended to real values of $N$ via the 
$\Gamma$ function. When $\AVR_{(X,\sfd,\mm)} > 0$, we say that $(X,\sfd,\mm)$ has 
\emph{Euclidean volume growth}. 
Whenever no ambiguity is possible, we will prefer the shorter notation $\AVR_{X}$.
In particular,  if $(M,g)$ a noncompact, complete $n$-dimensional 
Riemannian manifold having nonnegative Ricci curvature, 
the asymptotic volume ratio of $(M,g)$ is given by 
$\AVR_{g} := \AVR_{(M,d_{g} ,\vol_{g})}$. 
By the Bishop-Gromov theorem one has that $\AVR_{g} \leq  1$ with
$\AVR_{g} = 1$ if and only if $(M, g)$ is isometric to the usual Euclidean space 
$\R^{n}$ endowed with the Euclidean metric $g_{0}$.

\smallskip

The sharp isoperimetric inequality for Riemannian manifolds with Euclidean volume growth 
has been obtained in increasing generality with different approaches in a number of contributions 
\cite{AgoFogMazz,FogagnoloMazzieri22,Brendle22,Johne2021}.
The most general version (including as subclasses the previous contributions) 
is the one valid for \mms's verifying the $\CD(0,N)$ condition; it 
has been obtained by Balogh and Krist\'{a}ly in \cite{Balogh_Kristaly_2022}
and follows from 
a refined application of the Brunn--Minkowski inequality given by optimal transport.

\begin{theorem}[{\cite[Theorem 1.1]{Balogh_Kristaly_2022}}]
\label{T:BalKri}
Let $(X,\sfd,\mm)$ be a \mms\ satisfying the $\CD(0,N)$ condition for some $N > 1$, 
and having Euclidean volume growth. 
Then for every bounded Borel subset $E \subset X$ it holds
\begin{equation}\label{E:inequality}
\mm^{+}(E) \geq N \omega_{N}^{\frac{1}{N}} \AVR_{X}^{\frac{1}{N}} 
\mm(E)^{\frac{N-1}{N}}
\end{equation}
Moreover, inequality \eqref{E:inequality} is sharp.
\end{theorem}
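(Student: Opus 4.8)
This is the Balogh--Kristaly isoperimetric inequality; the plan is to reprove it through the optimal-transport ``Brunn--Minkowski'' mechanism. We may assume $0<\mm(E)<\infty$ (finiteness because $E$ is bounded and, by the Bishop--Gromov monotonicity, balls in a $\CD(0,N)$ space have finite $\mm$-measure; positivity because the inequality is trivial when $\mm(E)=0$), $\mm^{+}(E)<\infty$, and $\AVR_{X}>0$ by hypothesis. Fix a point $x_{0}\in X$, a point $p_{0}\in E$, and set $C:=\diam(\overline E)+\sfd(p_{0},x_{0})$. The idea is to compare $E$ with a \emph{large} metric ball $B_{r}:=B_{r}(x_{0})$ and then send $r\to\infty$. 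Consider $\mu_{0}:=\mm(E)^{-1}\indicator_{E}\,\mm$ and $\mu_{1}:=\mm(B_{r})^{-1}\indicator_{B_{r}}\,\mm$, which lie in $\mathcal P_{2}(X)$ since $E,B_{r}$ are bounded, and let $(\mu_{t})_{t\in[0,1]}=((e_{t})_{\#}\Pi)_{t\in[0,1]}$, with $\Pi\in\mathcal P(\Geo(X))$ an optimal dynamical plan, be a $W_{2}$-geodesic along which the $\CD(0,N)$ displacement convexity of the R\'enyi entropy holds. Since for $K=0$ the distortion coefficient $\tau^{(t)}_{0,N}$ equals $t$, and $\rho_{0}\equiv\mm(E)^{-1}$ on $E$, $\rho_{1}\equiv\mm(B_{r})^{-1}$ on $B_{r}$, this convexity becomes, writing $\mu_{t}=\rho_{t}\mm+\mu_{t}^{s}$,
\[
\int_{X}\rho_{t}^{\frac{N-1}{N}}\,d\mm\;\ge\;(1-t)\,\mm(E)^{\frac1N}+t\,\mm(B_{r})^{\frac1N}.
\]

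Next I would couple this with the elementary bound (H\"older, together with $\int_{X}\rho_{t}\,d\mm\le\mu_{t}(X)=1$)
\[
\int_{X}\rho_{t}^{\frac{N-1}{N}}\,d\mm\;\le\;\Big(\int_{X}\rho_{t}\,d\mm\Big)^{\frac{N-1}{N}}\,\mm(\spt\mu_{t})^{\frac1N}\;\le\;\mm(\spt\mu_{t})^{\frac1N},
\]
obtaining the Brunn--Minkowski-type inequality $\mm(\spt\mu_{t})\ge\big[(1-t)\mm(E)^{\frac1N}+t\,\mm(B_{r})^{\frac1N}\big]^{N}$. Then I would localise the support near $E$: for $\Pi$-a.e.\ $\gamma$ one has $\gamma_{0}\in\overline E$ and $\gamma_{1}\in\overline{B_{r}}$, so $\sfd(\gamma_{0},\gamma_{t})=t\,\sfd(\gamma_{0},\gamma_{1})\le t(C+r)$; hence $\spt\mu_{t}\subseteq\{\,\sfd(\cdot,E)\le t(C+r)\,\}$ and $\mm(\spt\mu_{t})\le\mm(E^{\eps})$ whenever $\eps>t(C+r)$. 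Choosing $t=\eps/(C+r+1)$, subtracting $\mm(E)$, dividing by $\eps$ and letting $\eps\downarrow0$ --- i.e.\ differentiating $s\mapsto[(1-s)\mm(E)^{1/N}+s\,\mm(B_{r})^{1/N}]^{N}$ at $s=0$ --- gives
\begin{equation}\label{eq:sketch-fixedr}
\mm^{+}(E)\;\ge\;\frac{N\,\mm(E)^{\frac{N-1}{N}}\big(\mm(B_{r})^{\frac1N}-\mm(E)^{\frac1N}\big)}{C+r+1}.
\end{equation}

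The last step is to let $r\to\infty$ in \eqref{eq:sketch-fixedr}. By the definition of Euclidean volume growth $\mm(B_{r})=\omega_{N}\AVR_{X}\,r^{N}(1+o(1))$, whence $\mm(B_{r})^{1/N}=\omega_{N}^{1/N}\AVR_{X}^{1/N}\,r\,(1+o(1))$, while $C+r+1=r\,(1+o(1))$; since the left-hand side of \eqref{eq:sketch-fixedr} does not depend on $r$, passing to the limit yields precisely $\mm^{+}(E)\ge N\omega_{N}^{1/N}\AVR_{X}^{1/N}\mm(E)^{(N-1)/N}$, that is \eqref{E:inequality}. Sharpness is exhibited by $X=\R^{N}$, $N\in\N$, equipped with the Lebesgue measure: there $\AVR_{X}=1$ and every Euclidean ball attains equality.

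I expect the core difficulty to be conceptual rather than computational: the crucial insight is that the right comparison object is a ball of radius $r\to\infty$ and that the limit in \eqref{eq:sketch-fixedr} is exactly what produces the factor $\AVR_{X}^{1/N}$ --- this being the single point where Euclidean volume growth enters in an essential way. The one genuine technical subtlety is that, absent essential non-branching, $(\mu_{t})$ need not be induced by an optimal map and $\mu_{t}$ may carry a nontrivial singular part, so the first two steps must be run in the integrated (``weak'') form of R\'enyi-entropy convexity along $\Pi$ --- which is precisely what the synthetic $\CD(0,N)$ condition furnishes --- and the support localisation is then read off directly from the plan $\Pi$. A different route, in the spirit of \cite{CM1}, would instead use $L^{1}$-localisation to reduce \eqref{E:inequality} to a one-dimensional weighted isoperimetric inequality on $\CD(0,N)$ needles of Euclidean volume growth.
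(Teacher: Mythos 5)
Your proof is correct, but it follows a genuinely different route from the one this paper gives for the inequality. What you wrote is essentially the Brunn--Minkowski/displacement-convexity argument of Balogh and Kristaly themselves: the $\CD(0,N)$ convexity of the R\'enyi entropy along a $W_2$-geodesic from $\mm(E)^{-1}\mm\llcorner_E$ to $\mm(B_r)^{-1}\mm\llcorner_{B_r}$, H\"older to pass to $\mm(\spt\mu_t)$, localisation of the support in $E^{\eps}$ for $t=\eps/(C+r+1)$, differentiation at $t=0$, and then $r\to\infty$ to produce the factor $(\omega_N\AVR_X)^{1/N}$ from $\mm(B_r)^{1/N}/r$. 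All of these steps are sound in the stated generality (in particular your care with the weak, integrated form of the convexity and with the possible singular part of $\mu_t$ is exactly what is needed without essential non-branching). The paper instead reproves the inequality (Theorem~\ref{T:isoperimetricAVR}) via the alternative you mention only in passing: $L^1$-localisation of the balancing function $(\chi_E-\mm(E)/\mm(B_R))\chi_{B_R}$ into one-dimensional $\CD(0,N)$ needles of diameter at most $R+\diam E$ (Proposition~\ref{P:disintfinal}), the sharp one-dimensional profile $\I_{N,D}$ of Milman with the expansion of Lemma~\ref{lem:milman-estimate}, and then $R\to\infty$. Your route is shorter and needs no disintegration machinery, and it naturally yields the Minkowski-content form of the statement; the paper's route is chosen because it is the proof whose equality analysis can be quantified needle by needle, which is what drives the rigidity argument in Sections~\ref{S:one-dim}--\ref{S:limit} --- the Brunn--Minkowski proof does not localise the defect in a way the authors can exploit. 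Two small remarks: your sharpness example $(\R^N,\L^N)$ only covers integer $N$; for arbitrary $N>1$ the standard example is the weighted ray $([0,\infty),|\cdot|,N\omega_N t^{N-1}\L^1)$, which is $\CD(0,N)$ with $\AVR=1$ and for which $[0,\rho)$ gives equality. Also note that the paper works with the perimeter $\PP(E)$ and uses $\mm^+(E)\geq\PP(E)$ (the perimeter being the l.s.c.\ envelope of the Minkowski content), whereas you bound $\mm^+(E)$ directly; both give \eqref{E:inequality}.
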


More challenging to prove are the rigidity properties of \eqref{E:inequality}. 
So far it has been obtained only under special assumptions on the space without matching the generality of 
Theorem \ref{T:BalKri}. 

To the best of our knowledge, the following two are the most general results in the literature. 
The first one is for the smooth setting and is due to Brendle \cite{{Brendle22}}.

\begin{theorem}[{\cite[Theorem 1.2]{Brendle22}}]
\label{T:brendle}
The inequality \eqref{E:inequality}  is valid for any $(M,g)$ 
noncompact, complete $n$-dimensional Riemannian manifold with non-negative Ricci curvature and  having Euclidean volume growth.  
The equality holds in \eqref{E:inequality} 
for some $E\subset M$ with $C^{1}$ smooth regular boundary and $M$ smooth manifold 
if and only if 
$\AVR_{g} = 1$ and $E$ is isometric to a ball $B \subset \R^{n}$.
\end{theorem}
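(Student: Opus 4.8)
\emph{Plan of proof.} The inequality in Theorem~\ref{T:brendle} is the case $N=n$ of Theorem~\ref{T:BalKri}, a complete Riemannian $n$-manifold with $\Ric_g\ge0$ being a $\CD(0,n)$ space; and the converse implication (that $\AVR_g=1$ forces Euclidean balls to be equality cases) reduces to the classical rigidity of the Euclidean isoperimetric inequality, once one uses that $\AVR_g=1$ forces $(M,g)\cong(\R^n,g_0)$, as recalled in the Introduction. So the substance is the direct implication, and the plan is: (a) reduce the equality case to a one-dimensional problem via the localisation paradigm of \cite{klartag,CM1}; (b) analyse the one-dimensional equality to produce a metric-measure cone with $E$ a ball centred at the vertex; (c) upgrade this to an isometry with $(\R^n,g_0)$ using smoothness and Bishop--Gromov rigidity.

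For (a)--(b): assume $E$ is bounded, $0<\vol_g(E)<\infty$, and equality holds in \eqref{E:inequality}. From the Kantorovich potential of the $L^1$-optimal transport problem attached to $E$ (constructed from $\sfd(\cdot,\partial E)$) one disintegrates $\vol_g$ along its transport rays: a family of unit-speed geodesics $\{X_q\}_{q\in Q}$ with quotient measure $\q$, each carrying a density $h_q$ with $h_q^{1/(n-1)}$ concave (the one-dimensional $\CD(0,n)$ condition) and with $E\cap X_q$ an interval. Using non-compactness and Euclidean volume growth, the rays can be arranged to escape to infinity; then \eqref{E:inequality} is recovered by combining the one-dimensional isoperimetric inequality on each $X_q$ with a H\"older inequality in $q$. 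Tracking the two places where this chain can be an equality gives, for $\q$-a.e.\ $q$: a ray $X_q\cong[0,\infty)$ with conical density $h_q(t)\propto t^{n-1}$ and $E\cap X_q=[0,r_q)$ (one-dimensional rigidity), together with $r_q\equiv r$ and the coincidence of all vertices $\{X_q(0)\}$ at a single point $o\in\overline E$ (equality in the H\"older step plus connectedness of $E$). Hence $(M,\sfd,\vol_g)$ is isomorphic, as a metric-measure space, to the Euclidean metric cone over the link $\partial B_1(o)$ with the cone measure, and $E=B_r(o)$: this is the ``measure-theoretic cone'' conclusion, specialised to the present setting.

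For (c): since $M$ is a smooth $n$-manifold, $o$ is a smooth point, and the cone structure makes $\rho\mapsto\vol_g(B_\rho(o))/(\omega_n\rho^n)$ constant, equal to $\AVR_g$; the equality case of the Bishop--Gromov volume comparison for smooth $(M,g)$ with $\Ric_g\ge0$ then yields $(M,g)\cong(\R^n,g_0)$ — the link cannot carry an angle defect at the smooth point $o$ — so $\AVR_g=1$ and $E=B_r(o)$ is a round ball. The main obstacle is step (b): extracting from the single number \eqref{E:inequality} both the $\q$-a.e.\ one-dimensional rigidity and, above all, the coincidence of the needle-vertices, so that $E$ is a genuine metric ball and not merely ``ball-like along each ray''. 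This forces one to set up the localisation so that the foliation is genuinely radial with respect to $E$, to control the behaviour of $h_q$ near the endpoints of the rays, and to rule out a $\q$-positive set of bounded or non-conical needles; granting this, the final identification with $(\R^n,g_0)$ and the converse are routine.
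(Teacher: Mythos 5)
Your overall architecture (localisation into needles, one-dimensional rigidity, measure-theoretic cone, then smoothness at the vertex) is the same as the paper's, and your step (c) is sound: once the cone structure makes $\rho\mapsto\vol_g(B_\rho(o))/(\omega_n\rho^n)$ constant, smoothness of $M$ at $o$ forces the constant to equal $1$, and classical Bishop--Gromov rigidity gives $(M,g)\cong(\R^n,g_0)$. The inequality itself and the converse implication are also handled correctly.

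The genuine gap is in step (b), and it is precisely the point you flag as ``the main obstacle'' and then defer. The localisation is carried out for each finite $R$ (the paper transports $\chi_E$ to its normalised complement inside $B_R$, not the potential of $\sfd(\cdot,\partial E)$), and each needle has length at most $R+\diam E$, so the one-dimensional comparison profile is $\I_{N,\,R+\diam E}$; the global inequality \eqref{E:inequality} is only recovered in the limit $R\to\infty$. Consequently, for no finite $R$ is any link of your chain of inequalities saturated, and ``tracking the two places where the chain can be an equality'' is simply not available. What is actually required is a quantitative stability analysis: one defines a residual $\Res_{\alpha,R}$ for each needle, shows its $L^1$ average over the quotient is $o(1)$ as $R\to\infty$, proves that small residual forces almost-rigidity of the diameter, of the trace of $E$, and of the density $h_{\alpha,R}$ with error rates uniform in $R$, and then passes to the limit of the disintegration itself --- a step the paper describes as having ``no easy way'' and handles by encoding rays and conditional measures as measures on a compact space of geodesics and taking weak limits. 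Two further inaccuracies: in the paper's normalisation the balancing condition gives $\widehat\mm_{\alpha,R}(E)=\mm(E)/\mm(B_R)$ for a.e.\ $\alpha$, so the integrand is constant and no H\"older step in $q$ occurs; and the coincidence of the needle vertices is not deduced from saturation of such a step together with connectedness of $E$ (which is never assumed), but from the limit potential $\varphi_\infty$, a BV/perimeter-positivity argument identifying $E$ with a ball around the maximum point of $\varphi_\infty$ on $\overline{E}$, and a Bishop--Gromov estimate on small balls around that point. Without these ingredients the proof of the direct implication is incomplete.
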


Antonelli, Pasqualetto, Pozzetta and Semola \cite{AntonelliPasqualettoPozzettaSemola23} 
generalise \cite[Theorem 1.2]{Brendle22} to the non-smooth setting  
by considering $\RCD(0,N)$-spaces and removes the regularity assumptions on the boundary of $E$.

\begin{theorem}[{\cite[Theorem 1.3]{AntonelliPasqualettoPozzettaSemola23}}]\label{T:semola}
Let $(X, \sfd, \mathcal{H}^{N})$ be an $\RCD(0, N )$ \mms\ having Eu\-clidean volume growth.  
Then the equality \eqref{E:inequality} holds for some $E\subset X$ with $\mathcal{H}^{N}(E) \in (0,\infty)$ 
if and only if $X$ is isometric to a Euclidean metric measure cone 
over an $\RCD(N- 2, N - 1)$ space and $E$ is isometric to a ball centered at one of the tips of $X$.
\end{theorem}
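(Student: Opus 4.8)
I would treat the two directions of the equivalence separately; the forward implication is a model computation, while the converse carries the content. \emph{Sufficiency:} By Ketterer's cone theorem the Euclidean cone $C(Z)$ over a metric measure space $(Z,\sfd_Z,\mathcal H^{N-1})$ is $\RCD(0,N)$ exactly when $Z$ is $\RCD(N-2,N-1)$, in which case $\mathcal H^N_{C(Z)}$ disintegrates as $t^{N-1}\,dt\otimes\mathcal H^{N-1}_Z$. Writing $O$ for a tip and taking $E=B_R(O)$, one computes $\mathcal H^N(E)=\tfrac1N\mathcal H^{N-1}(Z)\,R^N$, $\mathcal H^{N,+}(E)=\mathcal H^{N-1}(Z)\,R^{N-1}$ and $\AVR_{C(Z)}=\mathcal H^{N-1}(Z)/(N\omega_N)$; substituting into \eqref{E:inequality} makes both sides equal to $\mathcal H^{N-1}(Z)\,R^{N-1}$, so a ball centred at a tip realises equality. (The multiplicative constant in the disintegration of $\mathcal H^N_{C(Z)}$ is irrelevant, as it cancels between the two sides.)

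\emph{Necessity.} Assume $E$ with $v_0:=\mathcal H^N(E)\in(0,\infty)$ attains equality in \eqref{E:inequality}. Since Theorem~\ref{T:BalKri} is sharp, $E$ is an isoperimetric region of volume $v_0$, that is $\mathcal H^{N,+}(E)=I(v_0)$, where $I$ denotes the isoperimetric profile of $(X,\sfd,\mathcal H^N)$; abbreviate the model profile by $J(v):=N\omega_N^{1/N}\AVR_X^{1/N}v^{(N-1)/N}$. The first step is to upgrade the pointwise identity $I(v_0)=J(v_0)$ to $I\equiv J$ on all of $(0,\infty)$. For this I would use, besides $I\ge J$ from Theorem~\ref{T:BalKri}, the concavity of $v\mapsto I(v)^{N/(N-1)}$ — the non-smooth analogue, valid on $\RCD(0,N)$ spaces, of the classical profile inequality $I\,I''+\tfrac1{N-1}(I')^2\le 0$ under $\CD(0,N)$. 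Then $\phi:=I^{N/(N-1)}-J^{N/(N-1)}$ is concave (as $J^{N/(N-1)}$ is affine), nonnegative, and vanishes at the interior point $v_0$; a concave function that is $\ge 0$ and vanishes at an interior point of its domain is identically $0$, so $I\equiv J$. In particular $\mathcal H^{N,+}(F)=J(\mathcal H^N(F))$ for every isoperimetric set $F$.

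\emph{Rigidity and conclusion.} I would then study the parallel sets $E_\rho:=\{\sfd(\cdot,E)\le\rho\}$ of the minimiser $E$. Running the optimal-transport/Brunn--Minkowski mechanism behind Theorem~\ref{T:BalKri} (interpolate along $\CD(0,N)$ geodesics between $E$ and balls $B_s(x_0)$, let $s\to\infty$, and use $\mathcal H^N(B_s(x_0))^{1/N}/s\to\omega_N^{1/N}\AVR_X^{1/N}$), one obtains that every $E_\rho$ is again isoperimetric, hence saturates \eqref{E:inequality}. The rigidity case of the inequality underlying this construction — the $\RCD$ counterpart of Heintze--Karcher / Bayle--Rosales rigidity — then forces the region foliated by the $E_\rho$ to be a warped product over $\partial E$ with the cone warping $\rho\mapsto 1+\rho/R$, where $R>0$ is fixed by $v_0=\omega_N\AVR_X R^N$, with vanishing Ricci curvature in the radial direction. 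Continuing the foliation inward, the inner parallel sets collapse at $\rho=-R$ to a single point $O$, so $E=B_R(O)$; continuing it outward gives $\mathcal H^N(B_r(O))=\omega_N\AVR_X\,r^N$ for every $r>0$. Since $r\mapsto\mathcal H^N(B_r(O))/r^N$ is then constant, the volume-cone-implies-metric-cone theorem for $\RCD(0,N)$ spaces (De~Philippis--Gigli), together with Ketterer's cone theorem applied to the link, shows that $(X,\sfd,\mathcal H^N)$ is isometric to a Euclidean metric measure cone over an $\RCD(N-2,N-1)$ space with tip $O$, and $E=B_R(O)$, as claimed.

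\emph{Main obstacle.} The hard point is the rigidity step: converting the \emph{sharp} inequality \eqref{E:inequality} into the assertion that $X$ is a cone. In the smooth setting one differentiates along the minimiser and invokes classical mean-curvature comparison rigidity; in the $\RCD$ category one must supply non-smooth substitutes for the constancy of the generalised mean curvature of $\partial E$, for the model structure of its parallel sets, and ultimately for the volume-cone-to-metric-cone implication, and it is precisely here that the hypothesis that the reference measure is the Hausdorff measure $\mathcal H^N$ (equivalently, that $(X,\sfd,\mathcal H^N)$ is non-collapsed) is indispensable, since it is what upgrades ``$X$ is measure-theoretically a cone'' to ``$X$ is isometric to a cone''. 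An alternative route, closer to the localisation philosophy of the present paper, is to disintegrate $\mathcal H^N$ along the $L^1$-optimal transport generated by $\sfd(\cdot,E)$, reduce \eqref{E:inequality} to a one-parameter family of one-dimensional $\CD(0,N)$ inequalities, and read off from the equality case that the conditional density on each transport ray is exactly the model $t\mapsto(\mathrm{const})\,t^{N-1}$, whence all rays issue from a common point.
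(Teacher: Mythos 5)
Your proposal does not follow the route of this paper. Here Theorem~\ref{T:semola} is a quoted result which the authors \emph{recover} as a special case of Theorem~\ref{T:main2}, itself a consequence of Theorem~\ref{T:main1}: one localises via the $L^1$-optimal transport between $E$ and its complement in a large ball $B_R$, performs a quantitative analysis of the almost-optimal one-dimensional traces (Sections~\ref{S:dimensiononespace}--\ref{S:one-dim}), passes to the limit $R\to\infty$ to obtain the exact conical disintegration \eqref{E:disintegrationintro} and the identification $E=B_\rho(o)$, and only at the very end invokes ``volume cone implies metric cone'' plus Ketterer's cone theorem. Your main argument instead reconstructs the strategy of the cited reference \cite{AntPasPozzSemola}: sharp concavity of $v\mapsto \cI(v)^{N/(N-1)}$, propagation of the equality $\cI\equiv J$ from one interior point (this elementary step is correct), and then rigidity along the parallel sets $E_\rho$. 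The trade-off is real: your route leans on machinery that is specific to non-collapsed $\RCD$ spaces and to the linearity of the Laplacian --- generalised existence of isoperimetric regions and the concavity of the profile on a \emph{noncompact} space, sharp Laplacian comparison for the signed distance from an isoperimetric set, a Heintze--Karcher/warped-product rigidity --- none of which survives in the essentially non-branching $\CD(0,N)$ setting with a general reference measure, which is exactly the generality the paper is after. The localisation route avoids all of it. Your closing ``alternative route'' is indeed the philosophy of the paper, except that the disintegration used here is the one induced by the zero-mean function $f_R$ on $B_R$ (whose potentials converge to $-\sfd(\cdot,o)$ only \emph{a posteriori}), not by $\sfd(\cdot,E)$ directly.

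As a self-contained proof, the necessity part has genuine gaps precisely at the step you yourself flag as the obstacle. First, the concavity of $\cI^{N/(N-1)}$ on a noncompact $\RCD(0,N)$ space is not a routine analogue of the smooth differential inequality: without existence of minimisers for all volumes the classical variational proof breaks down, and establishing it is a substantial theorem in its own right (essentially the main technical input of \cite{AntPasPozzSemola}), so invoking it here is close to assuming what is to be proved. Second, the assertion that ``every $E_\rho$ is again isoperimetric'' is not obtained by ``running the Brunn--Minkowski mechanism'': one needs the coarea identity $\frac{d}{d\rho}\H^N(E_\rho)=\PP(E_\rho)$ together with a sharp \emph{upper} bound $\PP(E_\rho)\le \PP(E)(1+\rho/R)^{N-1}$ coming from a Laplacian comparison for $\sfd(\cdot,E)$ with the correct mean-curvature normalisation of $\partial E$; only the combination of the two ODE bounds forces equality. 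Third, the inward collapse of $E_{-\rho}$ to a single tip and the warped-product structure are stated, not derived. None of these steps is false, but as written they are placeholders for the hard content, whereas the paper's proof of the same statement replaces them entirely by the one-dimensional rigidity estimates of Theorem~\ref{T:rigidity-1d} and the limit analysis of Section~\ref{S:limit}. The sufficiency computation on the cone is correct.
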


Both theorems deals with the unweighted case, i.e. $\mm = \vol_{g}$ and $\mm = \mathcal{H}^{N}$, respectively.

\smallskip

The scope of the present paper is to improve on the generality of these rigidity results.
We will be able to characterise all the sets attaining the identity 
\eqref{E:inequality} within the same generality of Theorem \ref{T:BalKri}; 
rigidity of the space will follow as well.

\subsection{The result}
The following is the main result of the paper. 

\begin{theorem}\label{T:main1}
Let $(X,\sfd,\mm)$ be an essentially non-branching \mms\ satisfying the $\CD(0,N)$ condition for some $N > 1$, 
and having Euclidean volume growth. 
Let $E \subset X$ be a bounded Borel set that saturates \eqref{E:inequality}. 

Then there exists (a unique) $o\in X$ such that, up to a negligible set,
$E=B_\rho(o)$, with
$\rho=(\frac{\mm(E)}{\AVR_{X}\omega_N})^{\frac{1}{N}}$.
Moreover, considering the disintegration of $\mm$ with respect to $\sfd(\cdot,o)$, 
the measure $\mm$ has the following representation
\begin{equation}\label{E:disintegrationintro}
\mm = \int_{\partial B_{\rho}(o)} \mm_{\alpha} 
\, \qq(d\alpha), \qquad \qq \in \mathcal{P}(\partial B_{\rho}(o)),
\quad \mm_{\alpha} \in \mathcal{M}_{+}(X),
\end{equation}
with $\mm_{\alpha}$ concentrated on the geodesic ray from $o$ through $\alpha$ and $\mm_{\alpha}$  
can be identified (via the unitary speed parametrisation of the ray) with 
$N \omega_{N} \AVR_{X} t^{N-1} \mathcal{L}^{1}\llcorner_{[0,\infty)}$.
\end{theorem}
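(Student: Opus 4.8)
The plan is to revisit the localization (needle-decomposition) proof of the isoperimetric inequality \eqref{E:inequality} and to read off, from the equality case, the full structure of the one-dimensional pieces into which $\mm$ is split. First I would disintegrate $\mm$ along the transport rays of a $1$-Lipschitz guiding function attached to $E$: the signed distance $\sfd(\cdot,E)-\sfd(\cdot,X\setminus E)$ to $\partial E$, or, since $\mm(X)=\infty$, the Kantorovich potential for the $L^{1}$-transport problem between $\mm\llcorner_{E}$ and an auxiliary reference supported on a large ball $B_{R}$, letting $R\to\infty$. As $(X,\sfd,\mm)$ is essentially non-branching and $\CD(0,N)$, the localization theorem produces a partition of $X$ (up to $\mm$-null sets) into transport rays $\{X_{\alpha}\}_{\alpha\in Q}$ together with $\mm=\int_{Q}\mm_{\alpha}\,\qq(d\alpha)$, each $\mm_{\alpha}=h_{\alpha}\,\cH^{1}\llcorner_{X_{\alpha}}$ with $h_{\alpha}^{1/(N-1)}$ concave on the domain of $X_{\alpha}$, and compatible with the isoperimetric data in the sense that $\mm(E)=\int_{Q}\mm_{\alpha}(E\cap X_{\alpha})\,\qq(d\alpha)$ and $\mm^{+}(E)\ge\int_{Q}\mm_{\alpha}^{+}(E\cap X_{\alpha})\,\qq(d\alpha)$. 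Parametrizing each $X_{\alpha}$ by arclength from its innermost point $o_{\alpha}$ so that $u$ increases, one has $E\cap X_{\alpha}=X_{\alpha}([0,\rho_{\alpha}))$ up to $\mm_{\alpha}$-null sets, with $\rho_{\alpha}=\sfd(o_{\alpha},X\setminus E)$.

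Next I would run the equality analysis on the needles. On each $X_{\alpha}$ the problem becomes a one-dimensional isoperimetric inequality for the $\CD(0,N)$ density $h_{\alpha}$ on $[0,b_{\alpha})$ with the ``volume'' $\int_{0}^{\rho_{\alpha}}h_{\alpha}$ prescribed; combining these one-dimensional estimates and integrating against $\qq$ must reproduce \eqref{E:inequality} as a chain of inequalities whose two ends coincide by hypothesis. Forcing every intermediate step to be an equality should give, for $\qq$-a.e.\ $\alpha$: that the ray is complete, $b_{\alpha}=+\infty$; that the profile is the extremal one, $h_{\alpha}(t)=N\omega_{N}\AVR_{X}t^{N-1}$; and that all the needles meet $E$ with the same radius $\rho_{\alpha}=\rho:=(\mm(E)/(\AVR_{X}\omega_{N}))^{1/N}$ (consistently, $\int_{0}^{\rho}h_{\alpha}=\AVR_{X}\omega_{N}\rho^{N}=\mm(E)$).

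Granting this, the centre and the ball come out easily. Fix any $\alpha$ for which the above holds and set $o:=o_{\alpha}$; then $\sfd(o,X\setminus E)=\rho$, so $B_{\rho}(o)\subseteq E$, whence by the Bishop--Gromov monotonicity $\mm(B_{\rho}(o))\ge\omega_{N}\AVR_{X}\rho^{N}=\mm(E)$, while $B_{\rho}(o)\subseteq E$ forces $\mm(B_{\rho}(o))\le\mm(E)$; hence $E=B_{\rho}(o)$ up to an $\mm$-negligible set. Applying the same reasoning to any other admissible innermost point yields the uniqueness of $o$. Since now $E=B_{\rho}(o)$, the guiding function $u$ coincides, up to the additive constant $-\rho$, with $\sfd(\cdot,o)$ on $E$ and equals $\sfd(\cdot,\partial B_{\rho}(o))$ outside, so its transport rays are exactly the unit-speed geodesic rays issuing from $o$; re-indexing them by the point $\alpha\in\partial B_{\rho}(o)$ where they cross the sphere turns the needle decomposition into $\mm=\int_{\partial B_{\rho}(o)}\mm_{\alpha}\,\qq(d\alpha)$ with $\qq\in\mathcal{P}(\partial B_{\rho}(o))$, each $\mm_{\alpha}$ concentrated on the ray from $o$ through $\alpha$ and, by the previous step, identified via arclength with $N\omega_{N}\AVR_{X}t^{N-1}\mathcal{L}^{1}\llcorner_{[0,\infty)}$ --- that is, \eqref{E:disintegrationintro}.

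The hard part is the equality analysis of the second paragraph in the \emph{noncompact} regime: one must ensure that the sharp constant $N\omega_{N}^{1/N}\AVR_{X}^{1/N}$ survives on $\qq$-almost every needle, i.e.\ that $\qq$-a.e.\ transport ray is complete and that its $(N-1)$-concave density is asymptotically linear and actually saturates the Euclidean volume growth. This is where the Euclidean-volume-growth hypothesis must enter non-locally --- presumably through the limit $R\to\infty$ in the truncated localizations, keeping track of how the asymptotic volume ratio gets distributed among the needles; a naive needle-by-needle isoperimetric estimate with a universal constant is false, so the combination of the one-dimensional inequalities is itself delicate. The essentially-non-branching assumption is used both to make the localization theorem applicable and, in the last step, to rule out branching of the geodesic rays at $o$, so that the resulting disintegration is genuinely parametrized by $\partial B_{\rho}(o)$.
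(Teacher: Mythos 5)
Your overall strategy (localize along the $L^{1}$-transport rays from $E$ to a large ball, extract rigidity on each needle from the equality, pass to the limit $R\to\infty$) is the same as the paper's. But there are two genuine gaps, one of which is a non sequitur rather than a mere omission.

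The fatal step is in your third paragraph: ``Fix any $\alpha$ \dots{} and set $o:=o_{\alpha}$; then $\sfd(o,X\setminus E)=\rho$, so $B_{\rho}(o)\subseteq E$.'' The one-dimensional analysis only controls the trace $E\cap X_{\alpha}$ \emph{along the single ray} $X_{\alpha}$: it tells you the ray stays in $E$ for arclength $\rho$ after its starting point, not that every point of $X$ within distance $\rho$ of $o_{\alpha}$ lies in $E$. In other directions the set $E$ could a priori terminate immediately, so $\sfd(o_{\alpha},X\setminus E)$ need not equal $\rho$ and $B_{\rho}(o_{\alpha})\subseteq E$ does not follow. Moreover, nothing in the needle decomposition forces distinct rays to share a common starting point; that all rays emanate from one point $o$ is a conclusion, not an input. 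The paper has to work for this: it defines $o$ as a maximizer of the limit Kantorovich potential $\varphi_{\infty}$ on $\supp\indicator_{E}$, shows via the limit disintegration that the perimeter measure $\PP(E;\cdot)$ is carried by the \emph{far} endpoints $g(\alpha,\rho)$ of the rays (Corollary~\ref{cor:disintegration-classical}), and then argues by contradiction: if $\mm(E\setminus B_{\rho-\epsilon}(o))>0$, the connectedness lemma (Lemma~\ref{lem:boundary-non-empty}) forces positive perimeter inside $B_{\rho-\epsilon}(o)$, hence a positive $\qq$-measure of rays whose endpoint lies deep inside $E$, which violates the bound $\varphi_{\infty}(g_{\infty}(\alpha,0))\le\esssup_{E}\varphi_{\infty}$ of Proposition~\ref{P:min-max-phi}. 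Only \emph{after} $E=B_{\rho}(o)$ is established does one prove $g_{\infty}(\alpha,0)=o$ for $\qq$-a.e.\ $\alpha$, via a Bishop--Gromov comparison on small balls $B_{\epsilon}(o)$ against the explicit disintegration. Your argument would need to be restructured along these lines (or some substitute found) to close.

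The second gap you at least acknowledge: the passage from ``equality in the integrated inequality'' to ``$\qq$-a.e.\ needle is complete with density exactly $N\omega_{N}\AVR_{X}t^{N-1}$ and trace $[0,\rho]$'' cannot be done for fixed $R$, since for finite $R$ no needle is optimal, only almost optimal with error that must be shown not to degenerate as $R\to\infty$. This requires quantitative one-dimensional stability estimates (the residual $\Res^{D}_{h}$ analysis of Sections~\ref{S:dimensiononespace}--\ref{S:one-dim}) together with a compactness argument for the family of disintegrations (the measures $\tau_{R},\sigma_{R}$ of Section~\ref{S:limit}); asserting that ``forcing every intermediate step to be an equality'' yields the rigid profile skips the entire content of those sections. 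Note also that even in the limit, what one first obtains is that each conditional density is a convolution $\int(r-t)^{N-1}\indicator_{(t,t+\rho)}(r)\,\nu_{\alpha}(dt)$ against an unknown starting distribution $\nu_{\alpha}$, and a separate argument (using the weak concavity of $\hat h_{\alpha}^{1/(N-1)}$) is needed to conclude $\nu_{\alpha}=\delta_{0}$.
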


The essentially non-branching assumption in Theorem \ref{T:main1} is necessary 
to prevent pathological situation within the synthetic $\CD$ theory (see for instance the local-to-global property \cite{CMi}) and it is verified both by  the class of weighted manifolds and the $\RCD(0,N)$ 
spaces. 
  The uniqueness of the point $o$ has to be meant in the sense
  that ball $E$ has a unique center, i.e., if
  $B_\rho(o)=E=B_\rho(o')$, then $o=o'$.
  The minimizer can be non-unique: for instance, in the
  $\CD(0,2)$ space $(\R\times[0,\infty), |\,\cdot\,|,\L^2)$, all balls
  centered in $(x,0)$, $x\in\R$ are isoperimetric sets.  

It has also to be noticed that is usually rare 
to obtain rigidities in our generality where the failure of the linearity 
of Laplace operator prevents the use of several known rigidities results. 
Nonetheless the isoperimetric inequality  seems to make an exception (see also \cite{CavMagMon} where optimal sets where obtained in the same generality in the regime $K>0$ by a quantitative analysis). 

\smallskip

In the more regular setting of $\RCD(0,N)$ spaces one can invoke 
\cite[Theorem 1.1]{GuidoNicola}, the so-called
``volume cone implies metric cone'',  so to improve the measure rigidity of Theorem \ref{T:main1}
valid in the $\CD(0,N)$ setting to the stronger metric rigidity.

\begin{theorem}\label{T:main2}
Let $(X,\sfd,\mm)$ be a \mms\ verifying the $\RCD(0,N)$ condition for some $N > 1$, 
and having Euclidean volume growth. 
Then the equality \eqref{E:inequality} holds for some bounded set $E\subset X$
if and only if $X$ is isometric to a Euclidean metric measure cone
over an $\RCD(N- 2, N - 1)$ space and $E$ is isometric to the ball centered  at one of the tips of $X$.
\end{theorem}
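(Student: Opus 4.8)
The plan is to derive Theorem \ref{T:main2} from Theorem \ref{T:main1} together with the nonsmooth ``volume cone implies metric cone'' theorem \cite[Theorem 1.1]{GuidoNicola}. Since every $\RCD(0,N)$ space is an essentially non-branching $\CD(0,N)$ space, Theorem \ref{T:main1} applies without change, and it only remains to upgrade its measure rigidity to a metric one and to check the (easy) converse implication.

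\emph{Sufficiency.} Suppose $X$ is isometric, as a metric measure space, to a Euclidean metric measure cone $C(Y)$ over an $\RCD(N-2,N-1)$ space $(Y,\sfd_Y,\mm_Y)$, with cone measure $t^{N-1}\,dt\otimes\mm_Y$. By the cone characterisation of the curvature-dimension condition, $X$ is $\RCD(0,N)$; moreover $\mm(B_r(o)) = \tfrac1N\mm_Y(Y)\,r^N$ for a tip $o$, so $X$ has Euclidean volume growth with $\AVR_X=\mm_Y(Y)/(N\omega_N)$. For $E=B_\rho(o)$ a direct computation gives $\mm(E)=\AVR_X\omega_N\rho^N$ and, for the outer Minkowski content, $\mm^+(E)=\tfrac{d}{dr}\big|_{r=\rho}\mm(B_r(o))=N\AVR_X\omega_N\rho^{N-1}$, so \eqref{E:inequality} holds with equality.

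\emph{Necessity.} Conversely, let $E$ be bounded and saturate \eqref{E:inequality}. Theorem \ref{T:main1} provides $o\in X$ with $E=B_\rho(o)$ up to an $\mm$-null set, $\rho=(\mm(E)/(\AVR_X\omega_N))^{1/N}$, and the disintegration \eqref{E:disintegrationintro} of $\mm$ along the geodesic rays issuing from $o$, with each conditional measure $\mm_\alpha$ equal, in the arc-length parametrisation of the ray, to $N\omega_N\AVR_X\,t^{N-1}\mathcal{L}^1\llcorner_{[0,\infty)}$. Since on a minimising ray from $o$ the parameter $t$ coincides with $\sfd(\cdot,o)$, one gets $\mm_\alpha(B_r(o))=\int_0^r N\omega_N\AVR_X t^{N-1}\,dt=\AVR_X\omega_N r^N$ for every $r>0$; integrating against $\qq\in\mathcal P(\partial B_\rho(o))$ yields
\[
\mm(B_r(o)) = \AVR_X\,\omega_N\,r^N \qquad\text{for every } r>0.
\]
Hence $r\mapsto\mm(B_r(o))/(\omega_N r^N)$ is constant, which is exactly the hypothesis of \cite[Theorem 1.1]{GuidoNicola}; therefore $(X,\sfd,\mm)$ is isometric to a Euclidean metric measure cone with vertex $o$, and its cross-section is an $\RCD(N-2,N-1)$ space because the cone is $\RCD(0,N)$. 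Finally $E$ agrees with $B_\rho(o)$ up to a null set, so it is isometric (in the measure-theoretic sense) to the ball centred at a tip, which completes the equivalence.

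\emph{Where the difficulty lies.} The genuine analytic content --- that equality forces every needle of the decomposition of $\sfd(\cdot,o)$ to carry the rigid one-dimensional density $N\omega_N\AVR_X t^{N-1}$ --- is already supplied by Theorem \ref{T:main1}. What remains to watch is: (i) that the needle decomposition really exhausts $X$ by \emph{complete} rays emanating from the single point $o$, so that the identity $\mm(B_r(o))=\AVR_X\omega_N r^N$ holds for \emph{all} $r>0$ and not merely in the limit $r\to\infty$; and (ii) the correct bookkeeping of normalisations when invoking \cite[Theorem 1.1]{GuidoNicola} and the cone/cross-section curvature correspondence (with the usual degenerate cases when $N<2$). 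I expect point (i) to be the only place needing real care.
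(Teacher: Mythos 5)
Your proposal is correct and follows exactly the route the paper intends: the paper treats Theorem \ref{T:main2} as an immediate consequence of Theorem \ref{T:main1}, deducing $\mm(B_r(o))=\omega_N\AVR_X r^N$ for all $r>0$ from the disintegration \eqref{E:disintegrationintro} and then invoking ``volume cone implies metric cone'' together with Ketterer's cone/cross-section correspondence. Your point (i) is already settled by the statement of Theorem \ref{T:main1}, since the conditional measures are identified with $N\omega_N\AVR_X t^{N-1}\mathcal{L}^1\llcorner_{[0,\infty)}$ on the \emph{entire} half-line, so nothing further is needed.
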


Thus Theorem \ref{T:main2} recovers and extends both Theorem \ref{T:brendle} and Theorem \ref{T:semola} 
by allowing more general measures (other than the volume measure or the Hausdorff measure) 
and not necessarily spaces with an infinitesimally linear structure.
In the case $\mm=\H^{N}$, the hypothesis on the boundedness of $E$ can
be dropped.
Indeed, it was
proven~\cite[Theorem~1.3]{AntonelliPasqualettoPozzetta22} that
minimizers of the perimeter are bounded: apply the cited theorem to
our setting with $G=0$; the Bishop--Gromov inequality ensures
$\H^{N}(B_1(x))\geq\omega_N\AVR_X > 0$.

\begin{remark}
  After this paper was submitted for publication, it has been
  proved~\cite{AntonelliPasqualettoPozzettaViolo23} that the
  minimizers of the perimeter are bounded for $\RCD(0,N)$ spaces
  whose reference measure is possibly not the Hausdorff measure
  $\haus^N$.
  Therefore, the boundedness hypothesis in Theorem~\ref{T:main2} can
  be drop.
\end{remark}

Theorem \ref{T:main1} finds applications and covers new cases also in the Euclidean setting. 
We postpone this discussion to the final part of the Introduction and we now proceed presenting 
the proof strategy and the structure of the paper. 

\medskip
The classical approach to rigidity results goes by inspecting known proofs of inequalities to extract extra information whenever the equality happens. 
Our approach goes indeed in this direction by starting from the proof of the isoperimetric inequality 
for non-compact $\MCP$ spaces obtained in \cite{CavManini}. 
In \cite{CavManini} the argument uses the localisation given by the optimal transport problem between the given 
set $E$ and its complement inside a large ball of radius $R$ containing $E$. 
This produces a disjoint family of one-dimensional transport rays
and a corresponding disintegration of the reference measure $\mm$ restricted to the metric ball $B_{R}$.
Then one can apply the one-dimensional weighted Levy-Gromov isoperimetric inequality
to the traces of $E$ along the transport rays and conclude the proof of \eqref{E:inequality} 
by taking $R \to \infty$. 
For the reader's convenience we have included this proof also here, see Theorem \ref{T:isoperimetricAVR}.

In order to capture the equality case following this proof it is therefore necessary to deal with this limit procedure. 
The intuition suggests that whenever a region $E$ attains the equality in \eqref{E:inequality} 
then for large values of $R$ the one-dimensional traces have to be almost optimal. 
The almost optimality has to be intended in many respects: along each geodesic ray, 
the diameter has to be almost optimal, the one-dimensional conditional measures has to be almost $t^{N-1}$
and the set has to be almost an interval starting from the starting point of the ray.
The main difficulty here is to perform a quantitative analysis of the right order that will not vanish 
when $R \to \infty$.
This is done in Section \ref{S:one-dim} that culminates with Theorem \ref{T:rigidity-1d} where we summarise 
the crucial stability estimates for the one-dimensional densities and the geometry of the traces of $E$.

Then the natural prosecution is to take the limit as $R\to \infty$ and hopefully obtain a 
disintegration of $\mm$ on the whole space having conditional measures verifying the limit estimates.  
Disintegration formulas are however typically hard to threat under a limit procedure. 
For instance, the maximality of the transport rays is likely not preserved preventing any chances to 
get limit estimates. 
Nonetheless, the almost maximality of $E$, and all the almost optimal information 
deduced from it in Section \ref{S:one-dim} permits to bypass this intricate 
issue and obtain a well behaved limit disintegration. 
The limit is analysed in Section \ref{S:limit} and summarised by Corollary~\ref{cor:disintegration-classical}.

The final part of Section \ref{S:limit} is then dedicated to the proof
that the optimal set $E$ is a ball and the disintegration formula~\eqref{E:disintegrationintro} 
(see Theorems~\ref{T:Ball} and~\ref{th:disintegration-final}).

%
%
%
%
%
%

\subsection{Applications in the Euclidean setting}

Theorem \ref{T:main1} implies new results also in the Euclidean setting, namely 
the characterisation of optimal regions for the anisotropic isoperimetric inequality for weighted cones. 

The setting is the following one: 
let $\Sigma\subset \R^{n}$ be an open convex cone with vertex at the origin, 
and $H :\R^{n}\to [0,\infty)$ 
be a \emph{gauge}, that is a nonnegative, convex and positively homogeneous of degree one function.
Moreover $w$ is weight that is supposed to be continuous on $\bar \Sigma$ and positive and locally Lipschitz in 
$\Sigma$.

For a smooth set $E\subset \R^{n}$, 
the \emph{weighted anisotropic perimeter} relative to the cone $\Sigma$
is given by
$$
\PP_{w,H}(E;\Sigma) = \int_{\partial E} H(\nu(x))w (x) \, dS
,
$$
where $\nu(x)$ is the unit outward normal at $x\in \partial E$, and $dS$ the surface measure.
The main result of \cite{CabreRos-OtonSerra} is the sharp isoperimetric inequality 
for the weighted anisotropic perimeter: if in addition $w$ is
positively homogeneous of degree $\alpha >0$ and $w^{1/\alpha}$ is concave in $\Sigma$, 
then 
\begin{equation}\label{E:weightedanistropic}
\frac{\PP_{w,H}(E;\Sigma)}{w(E\cap \Sigma)^{\frac{N-1}{N}}} \geq \frac{\PP_{w,H}(W;\Sigma)}{w(W\cap \Sigma)^{\frac{N-1}{N}}}, 
\end{equation}
where $N = n + \alpha$ and $W$ is the Wulff shape associated to $H$, 
for the details see \cite[Theorem 1.3]{CabreRos-OtonSerra}.
The expression $w(A)$ with $A \subset \R^{n}$ is a short-hand notation for the integral of $w$ over $A$ in $dx$.

The inequality \eqref{E:weightedanistropic},
 taking $w = 1$, $\Sigma= \R^{n}$, and $H =\|\cdot \|_{2}$, recovers the classical sharp isoperimetric inequality. 
Taking $w = 1$ and $H =\|\cdot \|_{2}$,  \eqref{E:weightedanistropic} gives back the isoperimetric inequality in convex cones originally obtained by Lions and Pacella \cite{LionsPacella}. 
Finally, if $w = 1$ $\Sigma = \R^{n}$ and $H$ be some other gauge, \eqref{E:weightedanistropic} is the Wulff inequality.

As observed in  \cite{CabreRos-OtonSerra}, 
Wulff balls $W$ centered at the origin intersected with $\Sigma$ are always minimizers \eqref{E:weightedanistropic}.
However in \cite{CabreRos-OtonSerra} a 
characterization of the equality case (or a proof of uniqueness of those minimizers),
is not carried over (see also~\cite{Indrei21} for a different approach). Despite the many recent contributions (and an announcement in \cite{CabreRos-OtonSerra} of a the forthcoming work solving the problem),  
this characterisation, in its full generality, seems to be still not
present in the literature.

We now briefly recall the known results.
The characterization of the optimal sets has been obtained in the unweighted and isotropic case 
($w = 1$ and $H =\|\cdot \|_{2}$)
for smooth cones in \cite{LionsPacella} and for general cones in \cite{FigalliIndrei} via a quantitative analysis.   The same approach of \cite{FigalliIndrei} has been recently used in \cite{DipierroVald} 
to  characterize optimal sets in the unweighted and anisotropic case 
with the gauge $H$ assumed additionally to be a norm with strictly convex unitary ball.
Finally \cite{Ciraolo} extended \cite{DipierroVald} to the case of $H$ being a positive gauge (i.e. a not necessarily reversible norm) still uniformly elliptic.

The characterisation in weighted setting has been solved in
\cite{CintiGlaudoet}, in the isotropic case ($H = \|\cdot \|_{2}$);
in~\cite{MilmanRotem14}, the anisotropic case is treated, but the
minimizer is assumed to be convex.

Has been already observed in \cite{CabreRos-OtonSerra} that the assumption that 
$w^{1/\alpha}$  is concave has a natural interpretation as the $\CD(0,N)$ condition, where $N = n + \alpha$, 
and reported as well in \cite{Balogh_Kristaly_2022} that \eqref{E:weightedanistropic}
can be obtained as a particular case of
\eqref{E:inequality}  when $H$ is a  norm. 
To be precise, if $H$ is a norm then its dual function $H_{0}$ is a norm as well 
and one can associate to the triple $\Sigma$, $H$ and $w$ the metric measure space 
$(\Sigma, d_{H_{0}}, w \L^n )$ where $d_{H_{0}}(x,y) : = H_{0}(x - y)$. 
The perimeter associated to this metric measure spaces (see Section \ref{Ss:isoperimetric}) 
will indeed coincide with $\PP_{w,H}$.

Moreover it is well known that for any norm $\|\cdot \|$ the metric measure space $(\R^{n}, \|\cdot \|, \L^n)$ verifies the synthetic $\CD(0,n)$, see for instance \cite{Villani:Old}.
From \cite[Proposition 3.3]{EKS} one deduces that 
$(\R^{n}, \|\cdot \|, w \L^n)$ verifies $\CD(0,n + \alpha )$, provided $w^{1/\alpha}$ is concave.
Moreover, by the homogeneity properties of $H$ and $w$, 
one can check that 
$$
\AVR_{(\Sigma,d_{H_{0}},w \mathcal{L}^{n})}
=
\lim_{R\to\infty}
\frac{\int_{B_{d_{H_{0}}}(R) \cap \Sigma  }  w \,d\L^n}{\omega_{N}R^N}
=
\frac{\int_{B_{d_{H_{0}}}(1) \cap \Sigma  }  w \,d\L^n}{\omega_{N}} >
0
.
$$
Indeed, recall that the Wulff shape $W$ of $H$ is the unitary ball of
the dual norm $H_{0}$, hence the measure scales with power
$N=n+\alpha$.
Conversely, the perimeter of the rescaled Wulff shape is the
derivative w.r.t.\ the scaling factor of the measure, hence the
perimeter of the Wulff shape is $N$ times its measure,
thus~\eqref{E:weightedanistropic} can be seen as a particular case of
\eqref{E:inequality}.

We can therefore apply Theorem \ref{T:main1} to 
$(\Sigma,d_{H_{0}},w \mathcal{L}^{n})$: the uniform ellipticity of $H$ implies that the unitary ball of $H_{0}$
is strictly convex and therefore the distance $d_{H_{0}}$ is non-branching.

\begin{theorem}\label{T:Euclid-application}
Let $\Sigma\subset \R^{n}$ be an open convex cone with vertex at the origin, 
and $H :\R^{n}\to [0,\infty)$ be a norm with strictly convex unitary ball. 
Consider moreover the $\alpha$-homogeneous weight $w : \bar \Sigma \to [0,\infty)$ such that $w^{1/\alpha}$ is concave. 

Then the equality in \eqref{E:weightedanistropic} is attained if and
only if $E=W\cap \Sigma$, where $W$ is a rescaled Wulff shape.
\end{theorem}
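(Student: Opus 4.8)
The plan is to deduce Theorem~\ref{T:Euclid-application} from Theorem~\ref{T:main1} applied to the metric measure space $X:=(\bar\Sigma,d_{H_0},w\L^n)$, where $d_{H_0}(x,y):=H_0(x-y)$, $H_0$ is the norm dual to $H$, and $W:=\{H_0\le 1\}$ is the associated Wulff shape. As explained in the discussion preceding the statement, $X$ is a complete, essentially non-branching metric measure space satisfying $\CD(0,N)$ with $N=n+\alpha$ and having Euclidean volume growth, with $\AVR_X=\omega_N^{-1}\int_{W\cap\Sigma}w\,d\L^n>0$; essential non-branching follows from the strict convexity of $W$, which makes $d_{H_0}$-geodesics unique, and these are exactly the affine segments since $\Sigma$ is convex. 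Moreover $\mm^{+}(E)=\PP_{w,H}(E;\Sigma)$, $\mm(E)=w(E\cap\Sigma)$, and — using $\PP_{w,H}(W;\Sigma)=N\,w(W\cap\Sigma)$ together with the formula for $\AVR_X$ — inequality~\eqref{E:weightedanistropic} is exactly~\eqref{E:inequality} for $X$, with the rescaled Wulff shapes $\rho W\cap\bar\Sigma=B_\rho(0)$ among its extremisers. The ``if'' direction is then immediate: by the homogeneities $\PP_{w,H}(\rho W;\Sigma)=\rho^{N-1}\PP_{w,H}(W;\Sigma)$ and $w(\rho W\cap\Sigma)=\rho^{N}w(W\cap\Sigma)$ the quotient in~\eqref{E:weightedanistropic} is scale invariant and attains at $E=W$ its optimal value (equivalently, the balls $B_\rho(0)$ saturate~\eqref{E:inequality}), cf.~\cite{CabreRos-OtonSerra}.

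For the converse, suppose $E$ saturates~\eqref{E:weightedanistropic}. First I would dispose of trivial cases: if $w(E\cap\Sigma)\in\{0,\infty\}$ then~\eqref{E:weightedanistropic} is degenerate, so assume $0<w(E\cap\Sigma)<\infty$, and assume moreover that $E$ is bounded. Applying Theorem~\ref{T:main1} to $X$ yields a unique $o\in\bar\Sigma$ with $E=B_\rho(o)$ up to an $\mm$-null (hence $\L^n$-null) set, $\rho=(\mm(E)/(\AVR_X\omega_N))^{1/N}$, together with the disintegration~\eqref{E:disintegrationintro}. Since $B_\rho(o)=\{y\in\bar\Sigma:H_0(y-o)<\rho\}$ coincides, up to a null set, with $(o+\rho W)\cap\bar\Sigma$, the set $E$ is already, up to negligible sets, a rescaled Wulff shape centred at $o$. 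It thus remains only to prove that the centre $o$ is the vertex of $\Sigma$.

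This is where~\eqref{E:disintegrationintro} is used. A $d_{H_0}$-geodesic issuing from $o$ is an affine ray $t\mapsto o+tv$ with $H_0(v)=1$, and it is defined on the whole half-line $[0,\infty)$ precisely when $o+tv\in\bar\Sigma$ for all $t\ge 0$, i.e.\ when $v$ lies in the recession cone of $\bar\Sigma$ — which, $\bar\Sigma$ being a closed convex cone, equals $\bar\Sigma$. Now~\eqref{E:disintegrationintro} states that for $\qq$-a.e.\ $\alpha\in\partial B_\rho(o)$ the conditional measure $\mm_\alpha$ is the push-forward of $N\omega_N\AVR_X\,t^{N-1}\L^1\llcorner_{[0,\infty)}$ under the unit-speed parametrisation of the geodesic \emph{ray} from $o$ through $\alpha$; in particular this ray is defined on all of $[0,\infty)$, so its direction $(\alpha-o)/\rho$ belongs to $\bar\Sigma$. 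Consequently $\mm$ is concentrated on $\bigcup_\alpha\{o+t(\alpha-o)/\rho:t\ge 0\}\subseteq o+\bar\Sigma$. On the other hand $w>0$ on the dense set $\Sigma\subseteq\bar\Sigma$, so $\supp\mm=\bar\Sigma$; since $\mm$ is concentrated on the closed set $o+\bar\Sigma$, we get $o+\bar\Sigma=\bar\Sigma$, equivalently $-o\in\bar\Sigma$. Together with $o\in\bar\Sigma$ this means $o\in\bar\Sigma\cap(-\bar\Sigma)$, which is $\{0\}$ as soon as $\Sigma$ contains no line; hence $o=0$ and $E=\rho W\cap\Sigma$ up to an $\L^n$-null set, which is the assertion. (If $\Sigma$ has a nontrivial lineality space $L=\bar\Sigma\cap(-\bar\Sigma)$ the argument only forces $o\in L$, and the extremal set is then a rescaled Wulff shape translated along $L$ — in accordance with the translation invariance of~\eqref{E:weightedanistropic} along $L$.)

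I expect the only genuinely delicate point to be the step just used: passing from the measure-theoretic output of Theorem~\ref{T:main1} (conditional measures supported on geodesic rays parametrised by the whole of $[0,\infty)$) to the geometric constraint that their directions are recession directions of $\bar\Sigma$, and thence to the location of $o$. Everything else is bookkeeping, modulo the caveat about cones with nontrivial lineality space noted above and the reduction to bounded $E$, which one either includes as a hypothesis or obtains from a boundedness-of-minimisers statement.
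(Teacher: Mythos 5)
Your proposal is correct and follows the same route as the paper: one transfers the problem to the metric measure space $(\Sigma,d_{H_{0}},w\mathcal L^{n})$, checks the $\CD(0,N)$ and non-branching hypotheses and the identification of $\AVR$, perimeter and measure, and then invokes Theorem \ref{T:main1} to conclude that the optimiser is a metric ball, i.e.\ a rescaled Wulff shape. Your additional step locating the centre $o$ at the vertex --- reading off from the disintegration \eqref{E:disintegrationintro} that the rays through $\partial B_{\rho}(o)$ are defined on all of $[0,\infty)$, hence point into the recession cone, so that $\supp\mm=\bar\Sigma\subseteq o+\bar\Sigma$ and $o$ lies in the lineality space $\bar\Sigma\cap(-\bar\Sigma)$ --- is a detail the paper leaves implicit, and your caveats about nontrivial lineality spaces and about the boundedness of $E$ required by Theorem \ref{T:main1} are both fair and consistent with the paper's own treatment.
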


%

To conclude we stress that assumption on $w$ being
$\alpha$-homogeneous can actually be removed and obtained as a
consequence of the measure rigidity part of Theorem \ref{T:main1} if
we consider the modified version of \eqref{E:weightedanistropic} with
the asymptotic volume ratio, i.e., we assume the r.h.s.\
of~\eqref{E:weightedanistropic} to be equal to
$(\omega_{n+\alpha}\AVR_{(\Sigma,d_{H_{0}},w\mathcal{L}^{n})})^{1/(n+\alpha)}>0$.

In this case, Theorem~\ref{T:main1} applies (the strict convexity of
$H$ and the concavity of $w^{1/\alpha}$ imply the non-branching
hypothesis and the $\CD(0,N)$ condition, respectively).

The first part of Theorem~\ref{T:main1} says that the isoperimetric set is a
ball in the dual norm of $H$, i.e., it is a rescaled Wulff shape.

The second part of Theorem~\ref{T:main1}, regarding the disintegration of the
measures along the rays, provides an integration formula in polar
coordinates, where the Jacobian determinant grows with exponent
$N-1=n+\alpha-1$.
Since the density of Lebesgue measure in polar coordinates is
$(n-1)$-homogeneous, we deduce that $w$ is $\alpha$-homogeneous.

\medskip
\noindent
\textbf{Acknowledgement.} We would like to thank Daniele Semola for some comments on a preliminary version 
of this manuscript.


\section{Preliminaries}\label{S:preliminaries} 
In this section we recall the main constructions needed in the
paper. The reader familiar with curvature-dimension conditions and
metric-measure spaces will just need to check Sections
\ref{Ss:localization} and \ref{Ss:L1OT} for the decomposition of $X$
into transport rays (localization) which is going to be used
throughout the paper. In Section \ref{s:W2} we review geodesics in the
Wasserstein distance and the curvature-dimension
conditions; in Section~\ref{Ss:isoperimetric} the perimeter and $BV$
functions in the metric setting.

\subsection{Wasserstein distance and the Curvature-Dimension condition}
\label{s:W2}
A metric measure space (m.m.s.) $(X,\sfd, \mm)$ is a triple
with $(X,\sfd)$ a complete and separable
metric space and $\mm$ a Borel non negative measure over $X$.
By $\M^+(X)$, $\P(X)$, and $\P_2(X)$ we denote the space of non-negative Borel measures on $X$, the space of probability measures, and
the space of probability measures with finite second moment, respectively.
On the space $\mathcal{P}_{2}(X)$ we define the $L^{2}$-Wasserstein distance $W_{2}$, by setting, for $\mu_0,\mu_1 \in \mathcal{P}_{2}(X)$,
\begin{equation}\label{eq:Wdef}
  W_2(\mu_0,\mu_1)^2 = \inf_{ \pi} \int_{X\times X} \sfd^2(x,y) \, \pi(dxdy),
\end{equation}
making $(\mathcal{P}_{2}(X), W_{2})$ complete.
In $W_{2}$ the infimum is taken over all $\pi \in \mathcal{P}(X \times X)$ with $\mu_0$ and $\mu_1$ as the first and the second marginal, i.e., $(P_{1})_{\sharp} \pi= \mu_{0},  (P_{2})_{\sharp} \pi= \mu_{1}$. Of course $P_{i}, i=1,2$ is the projection on the first (resp. second) factor and $(P_{i})_{\sharp}$ denotes the corresponding push-forward map on measures. 

Denote the space of geodesics of $(X,\sfd)$ by
$$
\Geo(X) : = \big\{ \gamma \in C([0,1], X):  \sfd(\gamma_{s},\gamma_{t}) = |s-t| \sfd(\gamma_{0},\gamma_{1}), \text{ for every } s,t \in [0,1] \big\}.
$$
%
Any geodesic $(\mu_t)_{t \in [0,1]}$ in $(\mathcal{P}_2(X), W_2)$  can be lifted to a measure $\nu \in {\mathcal {P}}(\Geo(X))$,
so that $({e}_t)_\sharp \, \nu = \mu_t$ for all $t \in [0,1]$, where, for each $t\in [0,1]$,  
${e}_{t}$ is the evaluation map:
$$
  {e}_{t} : \Geo(X) \to X, \qquad {e}_{t}(\gamma) : = \gamma_{t}.
$$
Given $\mu_{0},\mu_{1} \in \mathcal{P}_{2}(X)$, we denote by
$\Opt(\mu_{0},\mu_{1})$ the space of all $\nu \in \mathcal{P}(\Geo(X))$ for which 
$({e}_{0}\otimes{e}_{1})_\sharp\, \nu$
realizes the minimum in \eqref{eq:Wdef}. If $(X,\sfd)$ is geodesic, then the set  $\Opt(\mu_{0},\mu_{1})$ is non-empty for any $\mu_0,\mu_1\in \mathcal{P}_2(X)$.

A set $F \subset \Geo(X)$ is a set of non-branching geodesics if and only if for any $\gamma^{1},\gamma^{2} \in F$, it holds:
$$
\exists \;  \bar t\in (0,1) \text{ such that } \ \forall t \in [0, \bar t\,] \quad  \gamma_{ t}^{1} = \gamma_{t}^{2}
\quad
\Longrightarrow
\quad
\gamma^{1}_{s} = \gamma^{2}_{s}, \quad \forall s \in [0,1].
$$
%
With this terminology, we recall from \cite{RaSt14} the following definition.

\begin{definition}\label{D:essnonbranch}
A metric measure space $(X,\sfd, \mm)$ is \emph{essentially non-branching} if and only if for any $\mu_{0},\mu_{1} \in \mathcal{P}_{2}(X)$,
with $\mu_{0},\mu_{1}$ absolutely continuous with respect to $\mm$, any element of $\Opt(\mu_{0},\mu_{1})$ is concentrated on a set of non-branching geodesics.
\end{definition}


The $\CD(K,N)$ for condition for \mms's has been introduced in the seminal works of Sturm \cite{sturm:I, sturm:II} and Lott--Villani \cite{lottvillani:metric}; here we briefly recall only the basics in the case $K = 0$, $1 <N<\infty$ (the setting of the present paper) and its form under the additional assumptions
on space to be essentially non-branching spaces. 
For the general definition of $\CD(K,N)$ see \cite{lottvillani:metric, sturm:I, sturm:II}. 
The equivalence between the two formulations follows from  \cite{CavallettiMondino17} (see also \cite[Proposition 4.2]{sturm:II}).

\begin{definition}[$\CD(0,N)$ for essentially non-branching spaces] \label{def:CDKN-ENB}
An essentially non-bran\-ching m.m.s.\ $(X,\sfd,\mm)$ satisfies $\CD(0,N)$ if and only if for all $\mu_0,\mu_1 \in \mathcal{P}_2(X,\sfd,\mm)$, there exists a unique $\nu \in \Opt(\mu_0,\mu_1)$, $\nu$ is induced by a map (i.e.\ $\nu = S_{\sharp}(\mu_0)$, for some map $S : X \rightarrow \Geo(X)$),  $\mu_t := ({e}_t)_{\#} \nu \ll \mm$ for all $t \in [0,1]$, and writing $\mu_t = \rho_t \mm$, we have for all $t \in [0,1]$:
\[
\rho_t^{-1/N}(\gamma_t) \geq  (1-t)\,\rho_0^{-1/N}(\gamma_0) + t\, \rho_1^{-1/N}(\gamma_1) \;\;\; \text{for $\nu$-a.e. $\gamma \in \Geo(X)$} .
\]
\end{definition}

%
%
%
%

If $(X,\sfd,\mm)$ verifies the $\CD(0,N)$ condition then the same is valid for $(\supp\mm,\sfd,\mm)$; hence we directly assume $X = \supp\mm$.

If $(M,g)$ is a Riemannian manifold of dimension $n$ and
$h \in C^{2}(M)$, with $h > 0$, then the m.m.s.\  $(M,\sfd_{g},h\, \vol_g)$  verifies $\CD(0,N)$ with $N\geq n$ if and only if  (see Theorem 1.7 of \cite{sturm:II})
$$
\Ric_{g,h,N} : =  \Ric_{g} - (N-n) \frac{\nabla_{g}^{2}
  h^{\frac{1}{N-n}}}{h^{\frac{1}{N-n}}}
\geq 0,
$$
 in other words if and only if the weighted Riemannian manifold $(M,g,
 h \, \vol_g)$ has non-negative generalized $N$-Ricci tensor.
%
%
%
If $N = n$ the  generalized  $N$-Ricci tensor $\Ric_{g,h,N}= \Ric_{g}$ requires $h$ to be constant.

Unless otherwise stated, we shall always assume that the m.m.s.\ $(X,\sfd,\mm)$ is essentially non-branching and satisfies $\CD(0,N)$, for some $N > 2$
with $\supp(\mm) = X$. This implies directly that $(X,\sfd)$ is a geodesic,
complete, and locally compact metric space.

\subsection{Perimeter and \texorpdfstring{$BV$}{BV} functions in
  metric measure spaces}\label{Ss:isoperimetric}

Given $u \in \Lip(X)$, the space of real-valued Lipschitz functions over $X$, its slope $|D u|(x)$ at $x\in X$ is defined by
\begin{equation}
|D u|(x):=\limsup_{y\to x} \frac{|u(x)-u(y)|}{\sfd(x,y)}.
\end{equation}
Following \cite{Ambrosio01,Am2,Mir} and the more recent \cite{ADM},
given a Borel subset $E \subset X$ and $\Omega$ open, the perimeter of $E$ relative to $\Omega$ is denoted by $\mathsf{P}(E;\Omega)$ and is defined as follows
\begin{eqnarray}
  \PP(E;\Omega)
  :=
       \inf\left\{
       \liminf_{n\to \infty}
       \int_{\Omega} |D u_n| \,d\mm
       \,:\,  u_n\in \Lip(\Omega)
       ,
       \,
       u_n\to \indicator_E
       \text{ in } L^1(\Omega,\mm)\right\}
       .
       \label{eq:defP}
\end{eqnarray}
We say that $E \subset X$ has finite perimeter in $X$ if $\PP(E;X) < \infty$.
We recall also few properties of the perimeter functions:
\begin{itemize}
\item[(a)] (locality) $\PP(E;\Omega) = \PP(F;\Omega)$, whenever $\mm((E\Delta F) \cap \Omega) = 0$;
\item[(b)] (l.s.c.) the map $E \mapsto \PP(E;\Omega)$ is lower-semicontinuous with respect to the $L^{1}_{loc}(\Omega)$ convergence;
\item[(c)] (complementation) $\PP(E;\Omega) = \PP(X\backslash E;\Omega)$.
\end{itemize}
Moreover, if $E$ is a set of finite perimeter, then the set function $\Omega \to \PP(E;A)$ is the restriction to open sets of a finite
Borel measure $\PP(E;\cdot)$ in $X$ (see Lemma~5.2 of~\cite{ADM}), defined by
$$
\PP(E;A) : =
\inf \{
\PP(E;\Omega) \colon \Omega \supset A, \ \Omega
\text{ open}\}.
$$
In order to simplify the notation, we will write $\PP(E)$ instead of
$\PP(E;X)$.
Finally, we recall that the perimeter can be seen~\cite{ADMG17} as the
l.s.c.\ envelope of the Minkowsky content
\begin{equation}
  \mm^+(E)
  :=
  \liminf_{\epsilon\to0}
  \frac{\mm(E^\epsilon)-\mm(E)}{\epsilon}
  ,
\end{equation}
where $E^\epsilon=\{x\in X:\dist(x,E)<\epsilon\}$.

\medskip

The \emph{isoperimetric profile function} of $(X,\sfd,\mm)$, denoted by  ${\cI}_{(X,\sfd,\mm)}$,
is defined as the point-wise maximal function so that $\PP(A)\geq \cI_{(X,\sfd,\mm)}(\mm(A))$ for every Borel set $A \subset X$, that is
\begin{equation}
  \label{isoperimetric profile mms}
  \cI_{(X,\sfd,\mm)}(v) : = \inf \big\{ \PP(A) \colon A \subset X \, \textrm{ Borel}, \, \mm(A) = v   \big\}.
\end{equation}
Milman~\cite{Mil} gave an explicit isoperimetric profile $\cI_{K,N,D}$ function
such that if a Riemannian manifold $(M,g)$ with smooth density $h$ has diameter at most
$D>0$, generalized $N$-Ricci tensor $\Ric_{g,h,N}\geq K\in \R$, then the isoperimetric
profile function of $(M,\sfd_g,h\,\vol_g)$ is bounded below by
$\cI_{K,N,D}$.
During the paper, we will make extensive use of of $\cI_{K,N,D}$ in
the case $K=0$.
In this case, the isoperimetric profile computed by Milman~\cite[Corollary~1.4,
Case~4]{Mil} is indeed given by the formula
\begin{equation}
  \I_{0,N,D}(v):=\frac{N}{D}\inf_{\xi\geq 0}
  \frac{(v \wedge (1-v) (\xi+1)^N + v\vee(1-v) \xi^N)^{\frac{N-1}{N}}}
       {(\xi+1)^N-\xi^N},
\end{equation}
and it is obtained by optimising among a family of one-dimensional space; we will 
expand this analysis in Section \ref{S:dimensiononespace}.
In order to keep the notation short, we will write $\cI_{N,D}$ in place of $\cI_{0, N,D}$.

\medskip

The classical theory of the perimeter in $\R^n$ makes extensive use of
$BV$ function.
The notion of $BV$ functions in the setting of m.m.s.\ has been first
introduced in~\cite{Mir} and then more deeply studied in~\cite{ADM}.
In particular, three different definitions of $BV$ functions has been
proven to be equivalent.

One of these three notions is given by relaxation
of the energy functional.
We say that a function $f\in L^1(X)$ is in $BV_*((X,\sfd,\mm))$, if
there exists a sequence $f_n\in\Lip(X)\cap L^1(X)$ converging to $f$
in $L^1$, such that $\sup_n \int_X |\nabla f_n|\,d\mm<\infty$.
In this case one can define the relaxed total variation
\begin{equation}
  |Df|_*(\Omega)
  :=
  \inf\left\{
    \liminf_{n\to\to\infty}
    \int_X |D f_n|\,d\mm
    :
    f_n\in\Lip_{loc}(\Omega),
    \,
    f_n\to f
    \text{ in }
    L^1(\Omega)
  \right\},
\end{equation}
where $\Omega\subset X$ is an open set.
It has been shown~\cite{Mir} that the total variation extends uniquely to a
finite Borel measure.

Another definition of $BV$ functions is given using test plans.
We say that a probability measure $\pi\in\P(C([0,1];X))$ is a
$\infty$-test plan if:
1) $\pi$ is concentrated on
Lipschitz-continuous curves;
2) there exists a constant $C=C(\pi)>0$ (named {\em compression} of the test
plan) such that $(e_t)_\#\pi\leq C\mm$.
A Borel subset $\Gamma\subset C([0,1];X)$ is said to be $1$-negligible
if $\pi(\Gamma)=0$, for every $\infty$-test plan $\pi$.
We say that a function $f\in L^1(X)$ is of weak-bounded variation
($f\in\text{w-}BV((X,\sfd,\mm))$), if the following two conditions holds
\begin{enumerate}
\item there exists a $1$-negligible subset $\Gamma$ such that
  $f\circ\gamma\in BV((0,1))$, $\forall\gamma\in
  C([0,1];X)\backslash\Gamma$ and
  \begin{equation}
    |f(\gamma_0)-f(\gamma_1)|\leq |D(f\circ \gamma)|((0,1));
  \end{equation}
\item
  there exists a measure $\mu\in\M^+(X)$ such that for every $\infty$-test plan
  $\pi$, for every Borel set $B\subset X$ we have that
  \begin{equation}
    \label{eq:weak-total-variation}
    \int
    \gamma_\#|D(f\circ\gamma)|(B)
    \,\pi(d\gamma)
    \leq
    C(\pi)
    \Big\|
    \sup_{t\in[0,1]}|\dot\gamma_t|\,
    \Big\|_{L^\infty(\pi)}
    \,
    \mu(B).
  \end{equation}
\end{enumerate}
Moreover, one can prove that there exists a least measure
satisfying~\eqref{eq:weak-total-variation}.
Such measure is named weak total variation and it is denoted by
$|Df|_w$.
\begin{theorem}[{\cite[Theorem~1.1]{ADM}}]
  Let $(X,\sfd,\mm)$ be a complete and separable metric measure space,
  with $\mm$ a locally finite Borel measure (i.e.\ for all $x\in X$
  there exists $r > 0$ such that $\mm(B_r (x)) < \infty$).
  Then the spaces $BV_*((X,\sfd\,\mm))$ and $\text{w-}BV((X,\sfd,\mm))$
  coincide and for every function
  $f\in BV_*((X,\sfd,\mm))=\text{w-}BV((X,\sfd,\mm))$ it holds
  \begin{equation}
    |Df|_*(B)=|Df|_w(B),
    \quad
    \text{ for every Borel set } B.
  \end{equation}
\end{theorem}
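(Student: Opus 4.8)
The plan is to prove the two inclusions separately, the second being the substantial one.

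\emph{First}, I would establish $BV_*((X,\sfd,\mm)) \subseteq \text{w-}BV((X,\sfd,\mm))$ together with $|Df|_w \le |Df|_*$. Given $f \in BV_*$, choose $f_n \in \Lip_{loc}(X) \cap L^1(X)$ with $f_n \to f$ in $L^1$ and $\int_X |Df_n|\,d\mm \to |Df|_*(X)$; by a routine localisation one may also arrange $\limsup_n \int_{\bar B} |Df_n|\,d\mm \le |Df|_*(\bar B)$ on every ball $B$. For a Lipschitz curve $\gamma$ the chain rule gives $\gamma_\#|D(f_n\circ\gamma)| \le (|Df_n|\circ\gamma_t)\,|\dot\gamma_t|$-weighted arclength, so for any $\infty$-test plan $\pi$ with compression $C(\pi)$ and any Borel $B$, using $(e_t)_\#\pi \le C(\pi)\mm$,
\begin{multline}
  \int \gamma_\#|D(f_n\circ\gamma)|(B)\,\pi(d\gamma)
  \le
  \Big\|\sup_{t\in[0,1]}|\dot\gamma_t|\Big\|_{L^\infty(\pi)}
  \int_0^1\!\!\int \indicator_B(\gamma_t)\,|Df_n|(\gamma_t)\,\pi(d\gamma)\,dt
  \\
  \le
  C(\pi)\,\Big\|\sup_{t\in[0,1]}|\dot\gamma_t|\Big\|_{L^\infty(\pi)}\int_B|Df_n|\,d\mm.
\end{multline}
Passing to the limit along a subsequence for which $f_n\circ\gamma \to f\circ\gamma$ in $L^1(0,1)$ outside a $1$-negligible set of curves, lower semicontinuity of the one-dimensional total variation and Fatou's lemma yield property (2) of w-$BV$ with $\mu := |Df|_*$; property (1) follows in the same way from $|f_n(\gamma_0)-f_n(\gamma_1)| \le |D(f_n\circ\gamma)|((0,1))$. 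Hence $f\in\text{w-}BV$ and $|Df|_w \le |Df|_*$.

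\emph{Second}, I would prove $\text{w-}BV \subseteq BV_*$ with $|Df|_* \le |Df|_w$. After truncation and the coarea inequality for both functionals one reduces to bounded $f$, and in fact to the identity of the relaxed and the weak perimeter for sets of finite perimeter. To bound $|Df|_*$ from above I would use its integration-by-parts description, $|Df|_*(X) = \sup\{\int_X f\,\mdiv\, b\,d\mm : \|b\|\le 1,\ \mdiv\, b \in L^\infty(\mm)\}$ over bounded vector fields (derivations) $b$. The heart of the argument is a superposition principle: any such $b$ is represented by an $\infty$-test plan $\pi_b$ whose curves flow along $b$ and whose compression is controlled by $\|\mdiv\, b\|_{L^\infty}$. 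Integrating the w-$BV$ inequality \eqref{eq:weak-total-variation} along the curves of $\pi_b$, the boundary terms coming from property (1) reassemble $\int_X f\,\mdiv\, b\,d\mm$ while the curve total variations are dominated by $|Df|_w$; taking the supremum over $b$ gives $|Df|_*(X) \le |Df|_w(X)$, and localising the same argument to open sets upgrades this to $|Df|_* \le |Df|_w$ as measures. Combined with the first part this gives $BV_* = \text{w-}BV$ and $|Df|_* = |Df|_w$ on all Borel sets.

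\emph{The main obstacle} is precisely the superposition step in the second part: converting the ``almost every curve'' information carried by the w-$BV$ condition into genuine control of the relaxed total variation requires building, out of an abstract bounded-divergence derivation, a bona fide test plan with quantitative compression bounds. In the absence of a heat flow or of any linear structure this forces a Lipschitz-in-time discretisation together with a compactness argument to recover the continuous flow, and this is the technical core of \cite{ADM}. Alternatively one may bypass the dual description and construct the Lipschitz approximants of $f$ directly, by a McShane/partition-of-unity regularisation adapted to small scales and estimating the resulting slopes against $|Df|_w$ by testing with explicit short-range plans; this is more hands-on, but the same compactness difficulty reappears.
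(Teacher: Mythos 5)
A preliminary remark: this statement is quoted verbatim from \cite[Theorem~1.1]{ADM} and the paper offers no proof of it, so there is no internal argument to compare yours against; I can only measure your sketch against the known proof.

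Your first direction ($BV_*\subseteq \text{w-}BV$ together with $|Df|_w\le|Df|_*$) is correct and is essentially the standard argument: the slope of a Lipschitz function is an upper gradient along absolutely continuous curves, the compression bound $(e_t)_\#\pi\le C(\pi)\mm$ converts the curvewise estimate into an integral against $\mm$, and lower semicontinuity of the one-dimensional total variation under the convergence $f_n\circ\gamma\to f\circ\gamma$ in $L^1(0,1)$ (which holds for $\pi$-a.e.\ $\gamma$ along a subsequence, again by compression) closes the estimate. Two routine points you should still supply: the inequality for arbitrary Borel $B$ is obtained first for open sets and then by outer regularity, and the $1$-negligible exceptional family of curves must be chosen uniformly over all test plans, not plan by plan.

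The genuine gap is in the converse inclusion, and it is exactly where you locate it. The step ``any bounded vector field $b$ with $\mdiv\,b\in L^\infty$ is represented by an $\infty$-test plan flowing along $b$ with compression controlled by $\|\mdiv\,b\|_{L^\infty}$'' is not an available black box: Lagrangian representation of flows of derivations on general metric measure spaces is the Ambrosio--Trevisan theory, which needs stronger structural hypotheses on $b$ than mere boundedness of $b$ and $\mdiv\,b$, and is in any case at least as deep as the theorem you are proving. Your preliminary reductions also quietly assume two nontrivial facts --- the coarea inequality for $|Df|_w$ and the duality formula $|Df|_*(X)=\sup\{\int_X f\,\mdiv\,b\,d\mm:\|b\|\le1\}$ --- each of which is itself a theorem belonging to the same circle of ideas. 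The actual proof in \cite{ADM} avoids derivations altogether and follows the Ambrosio--Gigli--Savar\'e scheme: one regularizes along the gradient flow of the Cheeger energy, proves a Kuwada-type estimate showing that the flow interpolation is an absolutely continuous curve in the Wasserstein space with metric speed controlled by the relevant energy, and invokes Lisini's superposition theorem to lift that curve to a genuine test plan against which the $\text{w-}BV$ inequality can be tested; letting the regularization parameter go to zero yields $|Df|_*(X)\le|Df|_w(X)$, and a localization gives the identity of measures. So your outline correctly flags the hard point but the route you propose through it is either circular or requires machinery unavailable at this level of generality.
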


It is clear that a set $E\subset X$ has finite perimeter whenever
$\indicator_E\in BV_*((X,\sfd,\mm))$ and in this case it holds
\begin{equation}
  \PP(E;A)
  =
  |D\indicator_E|_*(\Omega)
  =
  |D\indicator_E|_w(\Omega)
  ,
  \quad
  \forall\Omega\subset X
  \text{ open}.
\end{equation}


\subsection{Localization}
\label{Ss:localization}

The localization method reduces the task of establishing various analytic and geometric inequalities on a full dimensional space to the one-dimensional setting. 

In the Euclidean setting goes back to Payne and Weinberger \cite{PW},
it has been developed and popularised by Gromov and
V.~Milman~\cite{GrMi},  Lov\'asz--Simonovits~\cite{LoSi}, and Kannan--Lovasz--Simonovits~\cite{KaLoSi}. 
In 2015, Klartag \cite{klartag} reinterpreted the localization method as a measure disintegration adapted to 
$L^{1}$-Optimal-Transport, 
and extended it to weighted Riemannian manifolds satisfying $\CD(K,N)$. 
The first author and Mondino \cite{CM1} have succeeded to 
generalise this technique to  essentially non-branching \mms's verifying the $\CD(K,N)$, condition 
with $N \in (1,\infty)$.
Here we only report the case $K = 0$.

\begin{theorem}[{Localization on $\CD(0,N)$
    spaces~\cite[Teorem~3.28]{CM1}}]\label{T:locMCP}
Let $(X,\sfd,\mm)$ be an essentially non-branching m.m.s. with $\supp(\mm) = X$ and satisfying $\CD(0,N)$, for some $N\in (1,\infty)$.

Let $f : X \to R$ be $\mm$-integrable with $\int_{X} f\,\mm = 0$ and
$\int_{X} |f(x)|\sfd(x,x_{0})\,\mm(dx) < \infty$ for some (hence for
all) $x_{0} \in X$.
Then there exists an $\mm$-measurable subset $\mathcal{T} \subset X$
(named transport set)
and a family $\{X_{\alpha}\}_{\alpha \in Q}$ of subsets of $X$, such
that there exists a disintegration of $\mm\llcorner_{\mathcal{T}}$ on
$\{X_{\alpha}\}_{\alpha \in Q}$:
$$
\mm\llcorner_{\mathcal{T}}= \int_{Q} \mm_{\alpha}\,\qq(d\alpha),
$$
and for $\qq$-a.e. $\alpha \in Q$:
\begin{enumerate}
\item  $X_{\alpha}$ is a closed geodesic in $(X, \sfd)$.
\item 
$\mm_{\alpha}$ is a Radon measure supported on $X_{\alpha}$ with 
$\mm_{\alpha} \ll \H^{1}\llcorner_{X_{\alpha}}$. 
\item  
$(X_{\alpha} , \sfd, \mm_{\alpha} )$ verifies $\CD(0, N )$. 
\item $\int f\, d\mm_{\alpha} = 0$, 
and $f = 0$ $\mm$-a.e. on $X \setminus \mathcal{T}$.
\end{enumerate}
Moreover, the $X_{\alpha}$ are called transport rays and two distinct transport rays 
can only meet at their extremal points (having measure zero for $\mm_{\alpha}$).
\end{theorem}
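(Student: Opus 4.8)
The plan is to carry out the $L^{1}$-optimal transport localization of Klartag \cite{klartag} in the synthetic setting, exactly along the lines of \cite{CM1}. Write $f = f_{+} - f_{-}$ for the positive and negative parts; the hypothesis $\int_{X} f\,\mm = 0$ makes $\mu_{\pm} := f_{\pm}\mm$ measures of equal finite mass, and the moment bound $\int|f|\,\sfd(\cdot,x_{0})\,\mm < \infty$ guarantees finiteness of the distance cost. By Kantorovich duality for the cost $\sfd$ there is a $1$-Lipschitz potential $u : X \to \R$ such that every optimal plan concentrates on $\Gamma := \{(x,y) : u(x)-u(y) = \sfd(x,y)\}$. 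From $\Gamma$ one builds, as in \cite{CM1}, the transport relation, the family of \emph{transport rays} (maximal unit-speed geodesics along which $u$ decreases), the sets of forward/backward branching points and of ray endpoints, and finally the $\mm$-measurable \emph{transport set} $\mathcal{T}$ obtained by excising the branching and endpoint sets from the region where $u$ genuinely moves mass.

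\emph{Equivalence relation and ray map.} The key structural point, where essential non-branching is used, is that through $\mm$-a.e.\ point of $\mathcal{T}$ passes a unique maximal transport ray and two distinct maximal rays can meet only at a common endpoint: otherwise, concatenating sub-geodesics of two rays through a shared interior point would produce a branching geodesic in the support of an optimal plan between measures absolutely continuous with respect to $\mm$, contradicting Definition \ref{D:essnonbranch}. Hence, up to an $\mm$-null set, ``$x\sim y$ iff $x,y$ lie on a common transport ray'' is a Borel equivalence relation on $\mathcal{T}$; set $Q = \mathcal{T}/\!\!\sim$ and fix a measurable section, identifying $Q$ with a Borel subset of $X$. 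A measurable selection of the two endpoints of each ray then yields a Borel \emph{ray map} $g$ from a subset of $Q\times[0,\infty)$ onto $\mathcal{T}$ with $t\mapsto g(\alpha,t)$ an arc-length parametrisation of $X_{\alpha}$; this is the change of variables used below.

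\emph{Disintegration and regularity of the conditionals.} Applying the disintegration theorem to $\mm\llcorner_{\mathcal{T}}$ along $\{X_{\alpha}\}_{\alpha\in Q}$ gives $\mm\llcorner_{\mathcal{T}} = \int_{Q}\mm_{\alpha}\,\qq(d\alpha)$ with $\mm_{\alpha}$ concentrated on $X_{\alpha}$, and each $X_{\alpha}$ is a closed geodesic, proving (1). The heart of the proof is (2)--(3): transporting $\mm$ through $g^{-1}$ one must show $\mm_{\alpha}$ has a density $h_{\alpha}$ with respect to $\H^{1}\llcorner_{X_{\alpha}}$ and that $(X_{\alpha},\sfd,\mm_{\alpha})$ is $\CD(0,N)$, equivalently that $h_{\alpha}^{1/(N-1)}$ is concave on the parameter interval. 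This is the ``needle'' argument: for two short sub-intervals of a given ray, join the restrictions of $\mm$ to thin tubular neighbourhoods of them by a $W_{2}$-geodesic, apply the $\CD(0,N)$ inequality of Definition \ref{def:CDKN-ENB} (essential non-branching provides the unique map-induced optimal plan), and let the transverse radius go to zero; cyclical monotonicity of $u$ forces the transport to move essentially along the rays, and in the limit one recovers exactly the $(N-1)$-concavity of $h_{\alpha}$, hence its local boundedness and absolute continuity.

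\emph{Balance and the complement of $\mathcal{T}$.} An optimal $\pi$ for the $L^{1}$ problem lives on $\Gamma$ and disintegrates consistently with the ray partition; since the monotone rearrangement on each transport ray matches the mass of $\mu_{+}$ with that of $\mu_{-}$ ray by ray, $\int f\,d\mm_{\alpha} = 0$ for $\qq$-a.e.\ $\alpha$, and outside $\mathcal{T}$ no mass is moved so $f_{+}=f_{-}=0$ there, giving (4); that two distinct rays meet only at extremal points, which are $\mm_{\alpha}$-null, is contained in the second step once $\mm_{\alpha}\ll\H^{1}$ is known. The main obstacle is the third step: making the ray map bi-measurable with a genuine inverse, controlling the perturbation errors in the thin-tube comparison uniformly in the transverse scale, and extracting in the limit simultaneously the absolute continuity and the sharp one-dimensional curvature-dimension bound — everything else is comparatively soft descriptive set theory together with the non-branching dichotomy.
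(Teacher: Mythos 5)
Your proposal follows exactly the route the paper itself takes (and attributes to \cite{CM1}): the $L^{1}$-Kantorovich potential and the set $\Gamma$, the transport relation and the excision of the $\mm$-negligible branching sets via essential non-branching, the Disintegration Theorem over the quotient of the ray equivalence relation, the $W_{2}$-needle comparison yielding the one-dimensional $\CD(0,N)$ condition on the conditionals, and the ray-by-ray mass balance of $f^{+}\mm$ against $f^{-}\mm$ giving $\int f\,d\mm_{\alpha}=0$. This matches the construction in Subsections \ref{Ss:localization}--\ref{Ss:L1OT} and Theorem \ref{T:disintegration-CD} together with the localization of the zero-mean constraint from \cite[Theorem~5.1]{CM1}, so the approach is the same and correct.
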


Few comments are in order. 

\noindent
By $\H^{1}$ we denote the one-\-dimen\-sion\-al Hausdorff measure on the underlying
metric space.

Given $\{X_{\alpha}\}_{\alpha \in Q}$ a partition  of $X$,
a disintegration of
$\mm$ on $\{X_{\alpha}\}_{\alpha \in Q}$ is a measure space structure
$(Q,\mathcal{Q},\qq)$ and a map
$$
Q \ni \alpha \mapsto \mm_{\alpha} \in \mathcal{M} (X,\mathcal{X})
$$
such that 
\begin{enumerate}
\item For $\qq$-a.e. $\alpha \in Q$, $\mm_{\alpha}$ is concentrated on $X_{\alpha}$.
\item For all $B \in \mathcal{X}$ , the map $\alpha \mapsto \mm_{\alpha}(B)$ is 
$\qq$-measurable.
\item For all $B \in \mathcal{X}$, $\mm(B) = \int_{Q} \mm_{\alpha}(B)\,\qq(d\alpha)$; this is abbreviated by $\mm = \int_{Q} \mm_{\alpha}\,\qq(d\alpha)$.
\end{enumerate}
We point out that the disintegration is unique for fixed $\q$.
That means that, if there is a family $(\tilde \mm_\alpha)_\alpha$
satisfying the conditions above, then for $\q$-a.e.\ $\alpha$,
$\mm_\alpha=\tilde\mm_\alpha$.
If we change $\q$ with a different measure $\widehat\q$, such that
$\widehat\q=\rho\q$, then the map
$\alpha\mapsto \rho(\alpha)\mm_\alpha$ still satisfies the conditions
above, with $\widehat\q$ in place of $\q$.

\smallskip


Concerning the fact that $(X_{\alpha} , \sfd, \mm_{\alpha})$ verifies $\CD(0, N )$, 
since $(X_{\alpha} , \sfd)$ is a geodesic, it is isometric to a real interval   
and therefore the $\CD(0,N)$ condition is equivalent to have 
$\mm_{\alpha} = h_{\alpha} \mathcal{H}^{1}\llcorner_{X_{\alpha}}$ 
and $h_{\alpha}^{\frac{1}{N-1}}$ being concave (here we are identifying $X_{\alpha}$ with a real interval).

%
%
%

\medskip

%

%


\subsection{\texorpdfstring{$L^{1}$}{L1}-optimal transportation}\label{Ss:L1OT}

In this section we recall only some facts from the theory of $L^1$ optimal transportation which are of some interest for this paper;
we refer to \cite{ambro:lecturenote, AmbrosioPratelliL1, biacava:streconv, cava:MongeRCD, CMi, EvansGangbo,FeldmanMcCann-Manifold, klartag, Vil:topics}
and references therein for more details on the theory of $L^1$ optimal transportation.
%
%

Theorem~\ref{T:locMCP} has
been proven studying the optimal transportation problem
between $\mu_{0} : = f^{+} \mm$ and $\mu_{1} : = f^{-}\mm$, where
$f^{\pm}$ denote the positive and the negative part of $f$, with the distance as cost function.

%
By the summability properties of $f$ (see the hypothesis of Theorem
\ref{T:locMCP}) one deduces the existence of an $L^1$-Kantorovich potential $\varphi$, solution of the dual problem.
Using $\varphi$ we can construct the set
$$
\Gamma : = \{ (x,y) \in X \times X  \colon \f(x) - \f(y) = \sfd(x,y)\},
$$
inducing a partial order relation whose maximal chains produce a partition made of one dimensional sets of a certain subset of the space,
provided the ambient space $X$ verifies some mild regulartiy properties.

This procedure has been already presented and used in several contributions
(\cite{AmbrosioPratelliL1, biacava:streconv, FeldmanMcCann-Manifold, klartag, Vil:topics})
when the ambient space is the euclidean space, a manifold or a non-branching metric space (see \cite{biacava:streconv, cava:Wiener} for extended metric spaces).
The analysis in our framework started with \cite{cava:MongeRCD} and has been refined and extended in \cite{CMi};
we will follow the notation of \cite{CMi} to which we refer for more details.

The \emph{transport relation} $\relation^e$ and the \emph{transport set
  with end-points} $\mathcal{T}^e$ are defined  as:
\begin{equation}\label{E:R}
  \relation^e
  :=
  \Gamma \cup \Gamma^{-1}
  =
  \{ |\f(x) - \f(y)| = \sfd(x,y)\},
\quad
\mathcal{T}^e := P_{1}(\relation^e \setminus \{ x = y \}) ,
\end{equation}
where $\{ x = y\}$ denotes the diagonal $\{ (x,y) \in X^{2} : x=y \}$ and $\Gamma^{-1}= \{ (x,y) \in X \times X : (y,x) \in \Gamma\}$.
Since $\f$ is $1$-Lipschitz, $\Gamma, \Gamma^{-1}$ and $\relation^e$ are
closed sets and therefore, from the local compactness of $(X,\sfd)$, $\sigma$-compact;
consequently $\mathcal{T}^e$ is $\sigma$-compact.

We restrict $\T^e$ to a smaller set where $\relation^e$ is an equivalent relation.
To exclude possible branching we need to consider the following sets, introduced in \cite{cava:MongeRCD}:
\begin{align}
  A^{+}
  : =
  &~\{ x \in \mathcal{T}^e :
    \exists z,w \in \Gamma(x), (z,w) \notin
    \relation^e \},
  \\
  A^{-}
  : =
  &~
    \{ x \in \mathcal{T}^e :
    \exists z,w \in \Gamma^{-1}(x), (z,w) \notin \relation^e \};
\end{align}
where $\Gamma(x) = \{y \in X: (x,y) \in \Gamma\}$ denotes the section
of $\Gamma$ through $x$ in the first coordinate;
$\Gamma^{-1}(x)$ and  $\relation^e(x)$ are defined in the same way.
$A^{\pm}$ are called the sets of forward and backward branching points, respectively. Note that both $A^{\pm}$ are $\sigma$-compact sets.
Then the non-branched transport set has been defined as
$$
\T : = \T^e \setminus (A^{+} \cup A^{-}),
$$
and it is a Borel set; in the same way define the non-branched
relation as $\relation=\relation^e\cap(\T\times\T)$.
It was shown in \cite{cava:MongeRCD} (cf. \cite{biacava:streconv}) that $\relation$ is an equivalence relation
over $\T$ and that for any $x \in \T$, $\relation(x) \subset (X,\sfd)$ is isometric to a closed interval in $(\R, | \cdot |)$.

A priori the non-branched transport set $\T$ can be much smaller than $\T^e$. However, under fairly general assumptions one can prove that the sets
$A^{\pm}$ of forward and backward branching are both $\mm$-negligible.
In \cite[Proposition~4.5]{cava:MongeRCD} this was shown for a m.m.s. $(X,\sfd,\mm)$ verifying $\RCD^*(K,N)$ and $\supp(\mm) = X$.
The  same proof works
for an essentially non-branching m.m.s. $(X,\sfd,\mm)$ satisfying $\CD(0,N)$ and $\supp(\mm)= X$ (see \cite{CavallettiMondino17}).

One can chose $Q\subset\T$ a Borel section of the equivalence
relation $\relation$ (this choice is possible as it was shown
in~\cite[Proposition 4.4]{biacava:streconv}).
Define the quotient map $\QQ:\T\to Q$ as $\QQ(x)=\alpha$, where
$\alpha$ is the unique element of $\relation(x)\cap Q$.
Given a finite measure $\q\in\M^{+}(Q)$, such that
$\q\ll\QQ_{\#}(\mm\llcorner_{\T})$, the  Disintegration Theorem
applied to $(\T , \mathcal{B}(\T), \mm\llcorner_{\T})$, gives
an essentially unique disintegration of $\mm\llcorner_{\T}$
consistent with the partition of $\T$ given by the equivalence
classes $\{ \relation(\alpha)\}_{\alpha \in Q}$ of $\relation$:
$$
\mm\llcorner_{\T} = \int _{Q} \mm_{\alpha}\,\qq(d\alpha ).
$$
In the sequel, we will use also the notation $X_{\alpha}$ to denote
the equivalence class $\relation(\alpha)$.
Note that such measure $\q$ can always be build, by taking the
push-forward via $\QQ$ of a suitable finite measure absolutely
continuous w.r.t.\ $\mm_{\T}$.

The existence of a measurable section also permits to construct a measurable parameterization of the transport rays.
First define the (possibly infinite) length of a transport ray
$|X_\alpha|:=\sup_{x,y\in X_\alpha}\sfd(x,y)$.
Then, we can define
$$
g : \dom(g) \subset  Q \times [0,+\infty) \to \T
$$
that associates to $(\alpha,t)$ the unique $x \in \relation(\alpha)$ in such a
way $\varphi(g(\alpha,t))-\varphi(g(\alpha,s))=s-t$, provided $t,s\in(0,|X_\alpha|)$.
In other words, $g(\alpha,\,\cdot\,)$ is the unit-speed, maximal
parametrization of $X_{\alpha}$ such that
$\frac{d}{dt}\varphi(g(\alpha,t))=-1$.
We specify that this parametrization ensures that
$f(g(\alpha,0))\geq0$.
By continuity of $g$ w.r.t.\ the variable $t$, we extend $g$, in order
to map also the end-points of the rays $X_\alpha$; the restriction of
$g$ to the set $\{(\alpha,t): t\in(0,|X_\alpha|)\}$ is injective.

\medskip
Finally to prove that the disintegration is $\CD(0,N)$, i.e.
that for $\q$-a.e.\ $\alpha\in Q$ the space
$(X_{\alpha},\sfd,\mm_{\alpha})$ is $\CD(0,N)$, one uses 
the presence of the $L^{2}$-Wasserstein
geodesics inside the transport set $\T$ (see \cite[Lemma 4.6]{Cava-Gafa}). 
We refer to \cite[Theorem 4.2]{CM1} for all the details.

The measure $\mm_\alpha$ will be absolutely
continuous w.r.t.\ $\H^1\llcorner_{X_\alpha}$ as a consequence of the
$\CD(0,N)$ condition in one-dimensional spaces:
there exists a map
$h_\alpha:(0,|X_\alpha|)\to\R$ such that
\begin{equation}
  \mm_\alpha
  =
  (g(\alpha,\,\cdot\,))_\#(h_\alpha \L^1\llcorner_{(0,|X_\alpha|)})
  .
\end{equation}
The construction does not depend on the function $f$ but only on the $L^1$-Kantorovich potential
$\varphi$. 

\begin{theorem}
  \label{T:disintegration-CD}
Let $(X,\sfd,\mm)$ be an essentially non-branching m.m.s. with
$\supp(\mm) = X$ and satisfying $\CD(0,N)$, for some
$N\in (1,\infty)$.
Assume that $\varphi : X \to R$ is a $1$-Lipschitz function, and let
$\T$ and $(X_{\alpha})_{\alpha\in Q}$ be respectively the transport
set and the transport rays as they were defined in the previous
paragraphs.
Let $Q$ and $\QQ:\T\to Q$ be the quotient set and the quotient map,
respectively, and assume that there exists a measure
$\q\ll\QQ_{\#}(\mm_{\T})$.
Then there exists a disintegration of $\mm\llcorner_{\mathcal{T}}$ on
$\{X_{\alpha}\}_{\alpha \in Q}$
$$
\mm\llcorner_{\mathcal{T}}= \int_{Q} \mm_{\alpha}\,\qq(d\alpha),
$$
and for $\qq$-a.e. $\alpha \in Q$:
\begin{enumerate}
\item  $X_{\alpha}$ is a closed geodesic in $(X, \sfd)$.
\item 
$\mm_{\alpha}$ is a Radon measure supported on $X_{\alpha}$ with 
$\mm_{\alpha} \ll \H^{1}\llcorner_{X_{\alpha}}$. 
\item  The metric measure space 
$(X_{\alpha} , \sfd, \mm_{\alpha} )$ verifies $\CD(0, N )$. 
\end{enumerate}
\end{theorem}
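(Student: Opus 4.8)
The plan is to observe that Theorem~\ref{T:disintegration-CD} is the localization result of Theorem~\ref{T:locMCP} stripped of its generating function: every object occurring in the statement --- the transport set with end--points $\T^e$, the branching sets $A^{\pm}$, the non--branched transport set $\T$, the equivalence relation $\relation$, a Borel section $Q$, the quotient map $\QQ\colon\T\to Q$ and the unit--speed parametrization $g$ --- is manufactured out of the $1$--Lipschitz function $\varphi$ alone, as already stressed in the preliminaries; hence the proof reduces to re--running the arguments of \cite{CM1} and checking that none of them ever used that $\varphi$ is the Kantorovich potential of a particular $L^1$--transport problem. I would first recall that, over $\T$, $\relation$ is an equivalence relation and each class $X_\alpha=\relation(\alpha)$ is isometric to a closed real interval (from \cite{cava:MongeRCD}, cf.\ \cite{biacava:streconv}); this is precisely conclusion~(1).

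Next I would dispose of the branching. Since $(X,\sfd,\mm)$ is essentially non--branching, satisfies $\CD(0,N)$ and has $\supp\mm=X$, the proof of \cite[Proposition~4.5]{cava:MongeRCD}, in the form adapted in \cite{CavallettiMondino17}, yields $\mm(A^+\cup A^-)=0$, so that $\mm\llcorner_{\T^e}=\mm\llcorner_{\T}$ and one may work on $\T$. With the quotient measure $\q\ll\QQ_\#(\mm\llcorner_{\T})$ fixed by hypothesis, the Disintegration Theorem applied to $(\T,\mathcal B(\T),\mm\llcorner_{\T})$ against the partition $\{X_\alpha\}_{\alpha\in Q}$ then produces the essentially unique disintegration $\mm\llcorner_{\T}=\int_Q \mm_\alpha\,\qq(d\alpha)$ with $\mm_\alpha$ concentrated on $X_\alpha$ for $\q$--a.e.\ $\alpha$, which is the displayed formula.

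The substantial point, and the one I expect to be the main obstacle, is conclusions~(2) and~(3): that for $\q$--a.e.\ $\alpha$ the one--dimensional space $(X_\alpha,\sfd,\mm_\alpha)$ verifies $\CD(0,N)$, equivalently (identifying $X_\alpha$ with an interval) $\mm_\alpha=h_\alpha\,\H^1\llcorner_{X_\alpha}$ with $h_\alpha^{1/(N-1)}$ concave --- which in particular gives the absolute continuity claimed in~(2). Here I would follow \cite[Theorem~4.2]{CM1} and exploit the presence of $L^2$--Wasserstein geodesics inside the transport set $\T$ \cite[Lemma~4.6]{Cava-Gafa}: on a Borel family of rays one builds two $\mm$--absolutely continuous probability measures whose $W_2$--optimal dynamical plan --- unique by essential non--branching --- is forced by the cyclical monotonicity of $\Gamma$ to move mass only within individual rays, so that its densities $\rho_t$ restrict ray--by--ray to the conditional densities; feeding the $\CD(0,N)$ density inequality for this Wasserstein geodesic into the disintegration and using its essential uniqueness, a Fubini--type argument over $Q$ transfers the inequality to $h_\alpha$ for $\q$--a.e.\ $\alpha$. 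The delicate parts --- that the auxiliary transport genuinely stays inside the rays, the measurable selection of these transports, and the almost--everywhere passage from the global inequality to the ray--wise one --- are exactly what \cite{CM1} takes care of; the only thing genuinely new to verify is that the whole machinery is insensitive to replacing the specific Kantorovich potential used in Theorem~\ref{T:locMCP} by an arbitrary $1$--Lipschitz $\varphi$, which it is.
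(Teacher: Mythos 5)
Your proposal follows exactly the route the paper takes: the construction of $\T^e$, $A^{\pm}$, $\T$ and the equivalence relation depends only on the $1$-Lipschitz $\varphi$, the branching sets are $\mm$-negligible by \cite[Proposition~4.5]{cava:MongeRCD} as adapted in \cite{CavallettiMondino17}, the Disintegration Theorem gives the formula once $\q\ll\QQ_{\#}(\mm\llcorner_{\T})$ is fixed, and the $\CD(0,N)$ property of the conditionals is obtained via the $L^2$-Wasserstein geodesics inside $\T$ following \cite[Theorem~4.2]{CM1} and \cite[Lemma~4.6]{Cava-Gafa}. This is precisely the argument the paper sketches (and outsources to the same references), including the observation that nothing uses $\varphi$ being the potential of a specific transport problem.
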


Theorem~\ref{T:locMCP} follows from the previous theorem, provided
that we are able to localize constraint $\int_{X} f\,d\mm=0$.
The localization is a consequence of the properties of the
$L^1$-optimal transport problem (see~\cite[Theorem~5.1]{CM1}).
%


\section{Localization of the measure and the perimeter}
\label{sec:localization}

To prove Theorem \ref{T:main1} we will need to consider the isoperimetric problem 
inside a family of large subsets of $X$ with diameter approaching $\infty$. 
In order to apply the classical dimension reduction argument furnished
by localization theorem (see Subsections~\ref{Ss:localization} and~\ref{Ss:L1OT}), one needs in principle these subsets to also be convex. As the existence of an increasing family of convex subsets recovering at the limit the whole space $X$ is in general false,   
we will overcome this issue in the following way. 

Given any bounded set $E \subset X$ with $0< \mm(E) < \infty$, fix any point
 $x_{0} \in E$
 and  then consider $R > 0$ such that $E \subset B_{R}$
 (hereinafter we will adopt the following notation $B_R:=B_R(x_0)$).
Consider then the following family of zero mean
 functions: 
$$
f_{R} (x) = \left(\chi_{E} - \frac{\mm(E)}{\mm(B_{R})} \right)\chi_{B_{R}}.
$$ 
Clearly $f_{R}$ satisfies the hypothesis of Theorem \ref{T:locMCP} 
so we obtain an 
$\mm$-measurable subset $\mathcal{T}_{R} \subset X$ 
and a family $\{X_{\alpha,R}\}_{\alpha \in Q_{R}}$  of transport rays, such that there exists a disintegration of $\mm\llcorner_{\mathcal{T}_{R}}$ on 
$\{X_{\alpha,R}\}_{\alpha \in Q_{R}}$:
\begin{equation}\label{E:disintbasic}
\mm\llcorner_{\mathcal{T}_{R}}= \int_{Q_{R}} \mm_{\alpha,R}\,\qq_{R}(d\alpha),\qquad \qq_{R}(Q_{R})=\mm(\T_R),
\end{equation}
with the probability measures $\mm_{\alpha,R}$ having an $\CD(0,N)$ density with respect to $\H^{1}\llcorner_{X_{\alpha,R}}$. The localization of the zero mean implies that 
\begin{equation}\label{E:balancing}
\mm_{\alpha,R}(E) = \frac{\mm(E)}{\mm(B_{R})} \mm_{\alpha,R}(B_{R}), \qquad 
\qq_{R}\text{-a.e.} \ \alpha \in Q_{R}.
\end{equation}
We denote by $g_{R}(\alpha,\,\cdot\,):[0,|X_{\alpha,R}|]$ the unit
speed parametrisation of the geodesic $X_{\alpha,R}$.
For this reason, it holds
\begin{equation}
  \mm_{\alpha,R}
  =
  (g_R(\alpha,\,\cdot\,))_\#
  (h_{\alpha,R} \mathcal{L}^{1}\llcorner_{[0,|X_{\alpha,R}|]}),
\end{equation}
for some $\CD(0,N)$ density $h_{\alpha,R}$.

Also we specify that the direction of the parametrisation of $X_{\alpha,R}$
is chosen such that $g_R(\alpha,0) \in E$. Equivalently, if $\varphi_{R}$ denotes a Kantorovich potential 
associated to the localization of $g_{R}$, then the parametrisation is chosen in such a way 
that $\varphi_{R}$ is decreasing along $X_{\alpha,R}$ with slope $-1$.

We then define $T_{\alpha,R}$ to be the unique element of $[0,|X_{\alpha,R}|]$
such that 
\[
  \mm_{\alpha,R}(g_R(\alpha,[0,T_{\alpha,R}])) =
  \mm_{\alpha,R}(B_{R}):
\]
since $\mm_{\alpha,R}$ is absolutely continuous with respect to $\mathcal{H}^{1}\llcorner_{X_{\alpha,R}}$
the existence of a unique $T_{\alpha,R}$ follows. Moreover from the measurability in $\alpha$ of $\mm_{\alpha,R}$ we deduce the same measurability for $T_{\alpha,R}$.

Notice that  $\diam(B_{R} \cap X_{\alpha,R}) \leq R + \diam(E)$: 
 since $g_R(\alpha,\,\cdot\,)$ is a unit speed parameterization of $X_{\alpha,R}$, then
$\sfd(g_R(\alpha,0),g_R(\alpha,t)) \leq \sfd(g_R(\alpha,0),x_{0}) + \sfd(g_R(\alpha,t),x_{0}) 
\leq \diam(E) + R$,
provided $g_R(\alpha,t)\in B_{R}\cap X_{\alpha,R}$.  
Hence the same upper bound is valid for $T_{\alpha,R}$, i.e. $T_{\alpha,R}\leq R + \diam(E)$.

We restrict $\mm_{\alpha,R}$ to 
$\widehat{X}_{\alpha,R}:=g_R(\alpha,{[0,T_{\alpha,R}]})$ so to have the following disintegration: 
\begin{equation}\label{E:disintnormalized}
\mm\llcorner_{\widehat{\mathcal{T}}_{R}} = \int_{Q_{R}} \widehat{\mm}_{\alpha,R}\, \widehat{\qq}_{R}(d\alpha), 
\qquad \widehat{\mm}_{\alpha,R} : =  \frac{\mm_{\alpha,R}\llcorner_{\widehat{X}_{\alpha,R}}}{\mm_{\alpha,R}(B_{R})} 
\in \mathcal{P}(X), \qquad \widehat{\qq}_{R} = \mm_{\cdot,R}(B_{R}) \qq_{R};
\end{equation}
where $\widehat{\mathcal{T}}_{R} := \cup_{\alpha \in Q_{R}} \widehat{X}_{\alpha,R}$;
in particular
$\widehat{\qq}_{R}(Q_{R}) = \mm(B_{R})$, using \eqref{E:disintbasic} and the fact that 
$B_{R}\subset \mathcal{T}_{R}$.

The disintegration \eqref{E:disintnormalized} will be a localisation like \eqref{E:balancing} 
only if $(E \cap X_{\alpha,R}) \subset \widehat{X}_{\alpha,R}$, implying that 
$$
\widehat{\mm}_{\alpha,R}(E) = \frac{\mm(E)}{\mm(B_{R})}, 
\qquad 
\widehat{\qq}_{R}\text{-a.e.} \ \alpha \in Q_{R}.
$$
To prove this inclusion we will impose that $E \subset B_{R/4}$. 
Since $g_R(\alpha,\,\cdot\,) : [0,|X_{\alpha,R}|] \to X_{\alpha,R}$
has unit speed, we notice that  
$$
\sfd(g_R(\alpha,t),x_{0}) \leq \sfd(g_R(\alpha,0),x_{0}) + t \leq \diam(E) + t \leq   \frac{R}{2} + t,
$$
where in the second inequality we have used that each starting point of the transport ray has to be inside $E$, being precisely where $f_{R} > 0$.
Hence $g_R(\alpha,t)\in B_{R}$ for all $t < R/2$. 
This implies that $((g_R(\alpha,\,\cdot\,))^{-1}(B_{R}) \supset 
[0,\min\{R/2, |X_{\alpha,R}|\}]$, hence ``no holes'' inside $(g_R(\alpha,\,\cdot\,))^{-1}(B_{R})$
before $\min\{R/2, |X_{\alpha,R}|\}$, implying that
$|\widehat{X}_{\alpha,R}| \geq \min\{R/2, |X_{\alpha,R}|\}$.
Since $\diam(E) \leq R/2$, we deduce that $(g_R(\alpha,\,\cdot\,))^{-1}(E) 
\subset [0,\min\{R/2,|X_{\alpha,R}|\}]$ implying that 
$(E\cap X_{\alpha,R}) \subset \widehat{X}_{\alpha,R}$.

\smallskip
We can give an explicit description of the measure $\widehat\q_R$ in term
of a push-forward via the quotient map $\QQ_R$ of the measure
$\mm\llcorner_E$
\begin{align*}
  \widehat\q_R(A)
  &
    =
    \int_{Q_R}
    \indicator_A(\alpha)\frac{\mm(B_R)}{\mm(E)}
    \widehat\mm_{\alpha,R}(E)
    \,\widehat{\q}_R(d\alpha)
  \\
  &
    =
    \int_{Q_R}
    \frac{\mm(B_R)}{\mm(E)}
    \widehat\mm_{\alpha,R}(E\cap\QQ_R^{-1}(A))
    \,\widehat{\q}_R(d\alpha)
    =
    \frac{\mm(B_R)}{\mm(E)}
    \mm(E\cap\QQ_R^{-1}(A)),
\end{align*}
hence
$\widehat{\q}_R= \frac{\mm(B_R)}{\mm(E)}(\QQ_R)_{\#}(\mm\llcorner_E)$.

\smallskip

We need to study  the relation between the perimeter and the disintegration of the measure~\eqref{E:disintnormalized}.
Fix $\Omega\subset X$ an open set and consider the relative
perimeter $\PP(E;\Omega)$.
Let $u_n\in \Lip_{loc}(\Omega)$ be a sequence such that
$u_n\to \indicator_E$ in $L^1_{loc}(\Omega)$ and
$\lim_{n\to\infty}\int_\Omega |D u_n|\,d\mm=\PP(E;\Omega)$.
Using the Fatou Lemma, we can compute
\begin{align*}
  \PP(E;\Omega)
  &
    =
    \lim_{n\to\infty}
    \int_\Omega |D u_n|\,d\mm
  \\
  &
    \geq
    \liminf_{n\to\infty}
    \int_{\Omega\cap \widehat{\mathcal{T}}_R} |D u_n|\,d\mm
    =
    \liminf_{n\to\infty}
    \int_{Q_R}
    \int_{\Omega} |D u_n| \, \widehat\mm_{\alpha,R}(dx)
    \,\widehat\q_R(d\alpha)
  \\
  &
    \geq
    \int_{Q_R}
    \liminf_{n\to\infty}
    \int_{\Omega} |D u_n| \, \widehat\mm_{\alpha,R}(dx)
    \,\widehat\q_R(d\alpha)
  \\
  &
    \geq
    \int_{Q_R}
    \liminf_{n\to\infty}
    \int_{X_{\alpha,R}\cap\Omega} |u_n'| \, \widehat\mm_{\alpha,R}(dx)
    \,\widehat\q_R(d\alpha)
  \\
  &
    \geq
    \int_{Q_R}
    \PP_{\widehat X_{\alpha,R}}(E;\Omega)
    \,\widehat\q_R(d\alpha)
    ,
\end{align*}
where $u'_{n}$ denotes the derivative along the curve
$g_R(\alpha,\,\cdot\,)$ and $\PP_{\widehat X_{\alpha,R}}$ the perimeter 
\mms\ $(\widehat X_{\alpha,R},\sfd,\widehat\mm_{\alpha,R})$.

By arbitrariness of $\Omega$, we deduce the following disintegration inequality
\begin{equation}
  \PP(E;\,\cdot\,)
  \geq
  \int_{Q_R}
  \PP_{\widehat X_{\alpha,R}}(E;\,\cdot\,)
  \,\widehat\q_R(d\alpha)
  .
\end{equation}
Moreover, the fact that the geodesic
$g_R(\alpha,\,\cdot\,):[0,|\widehat{X}_{\alpha,R}|]\to \widehat X_{\alpha,R}$ has unit
speed, implies that
\begin{equation}
  \PP_{\widehat X_{\alpha,R}}(E;\,\cdot\,)
  =
  (g_R(\alpha,\,\cdot\,))_\#
  (\PP_{h_{\alpha,R}}((g_R(\alpha,\,\cdot\,))^{-1}(E);\,\cdot\,))
.
\end{equation}

We summarise this construction in the following 

\begin{proposition}\label{P:disintfinal}
Given any bounded $E \subset X$ with $0< \mm(E) < \infty$, fix any point $x_{0} \in E$
and  then fix $R > 0$ such that $E \subset B_{R/4}(x_{0})$.  

Then there exists a Borel set $\widehat{\mathcal{T}}_{R} \subset X$, with 
$E \subset \widehat{\mathcal{T}}_{R}$ and a disintegration formula 
\begin{align}
  \label{E:disintfinal}
  &
\mm\llcorner_{\widehat{\mathcal{T}}_{R}} 
= \int_{Q_{R}} \widehat{\mm}_{\alpha,R}\, \widehat{\qq}_{R}(d\alpha), \qquad 
\widehat{\mm}_{\alpha,R}(\widehat X_{\alpha,R}) = 1, \qquad \widehat{\qq}_{R}(Q_{R}) = \mm(B_{R}),
\end{align}
such that
\begin{align}
  &
  \label{E:disintfinal2}
    \widehat{\mm}_{\alpha,R}(E)
    =
    \frac{\mm(E)}{\mm(B_{R})},
    \quad
    \text{for $\widehat \qq_{R}$-a.e.\ }\alpha\in Q_{R}
    \quad\text{ and }\quad
    \widehat\q_R
    =
    \frac{\mm(B_R)}{\mm(E)}
    (\QQ_R)_\#(\mm\llcorner_E)
    ,
\end{align}
and
the one-dimensional \mms \ $(\widehat X_{\alpha,R}, \sfd,\widehat{\mm}_{\alpha,R})$ 
verifies the $\CD(0,N)$ condition and has diameter bounded by $R + \diam (E)$.
Furthermore, the following formula holds true
\begin{equation}
  \label{E:disintfinalper}
  \PP(E;\,\cdot\,)
  \geq
  \int_{Q_R}
  \PP_{\widehat X_{\alpha,R}}(E;\,\cdot\,)
  \,\widehat\q_R(d\alpha)
  .
\end{equation}
\end{proposition}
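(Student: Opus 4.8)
The statement collects into a single proposition the construction performed in the paragraphs just above, so the plan is to assemble those steps in order and record the resulting bookkeeping. First I would apply the localization result (Theorem~\ref{T:locMCP}, equivalently Theorem~\ref{T:disintegration-CD} together with the localization of the zero-mean constraint) to the function $f_R=\bigl(\chi_E-\tfrac{\mm(E)}{\mm(B_R)}\bigr)\chi_{B_R}$: it has zero $\mm$-mean and, being bounded with bounded support, satisfies the integrability hypothesis, so it yields the transport set $\mathcal{T}_R$, the transport rays $\{X_{\alpha,R}\}_{\alpha\in Q_R}$, the disintegration $\mm\llcorner_{\mathcal{T}_R}=\int_{Q_R}\mm_{\alpha,R}\,\qq_R(d\alpha)$ with each $(X_{\alpha,R},\sfd,\mm_{\alpha,R})$ a one-dimensional $\CD(0,N)$ space, and the localized balancing $\mm_{\alpha,R}(E)=\tfrac{\mm(E)}{\mm(B_R)}\mm_{\alpha,R}(B_R)$ for $\qq_R$-a.e.\ $\alpha$. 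Since $f_R\neq 0$ $\mm$-a.e.\ on $B_R$, the last assertion of the localization theorem gives $B_R\subset\mathcal{T}_R$ up to an $\mm$-null set. Orienting each ray so that $g_R(\alpha,0)\in E$ (i.e.\ so that $\varphi_R$ decreases with slope $-1$), I would then define $T_{\alpha,R}$ by $\mm_{\alpha,R}\bigl(g_R(\alpha,[0,T_{\alpha,R}])\bigr)=\mm_{\alpha,R}(B_R)$, which is well-posed and measurable in $\alpha$ by absolute continuity of $\mm_{\alpha,R}$ w.r.t.\ $\H^1\llcorner_{X_{\alpha,R}}$; restricting to $\widehat{X}_{\alpha,R}:=g_R(\alpha,[0,T_{\alpha,R}])$ and renormalizing, I set $\widehat{\mm}_{\alpha,R}:=\mm_{\alpha,R}\llcorner_{\widehat{X}_{\alpha,R}}/\mm_{\alpha,R}(B_R)$ and $\widehat{\qq}_R:=\mm_{\cdot,R}(B_R)\,\qq_R$, so that $\widehat{\qq}_R(Q_R)=\int_{Q_R}\mm_{\alpha,R}(B_R)\,\qq_R(d\alpha)=\mm(B_R)$.

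The crucial point is the inclusion $E\cap X_{\alpha,R}\subset\widehat{X}_{\alpha,R}$, which is exactly where the hypothesis $E\subset B_{R/4}(x_0)$ is used. Since $g_R(\alpha,\cdot)$ has unit speed and $g_R(\alpha,0)\in E$, one has $\sfd(g_R(\alpha,t),x_0)\leq\diam(E)+t\leq\tfrac R2+t$, hence $g_R(\alpha,t)\in B_R$ for all $t<\tfrac R2$; thus $(g_R(\alpha,\cdot))^{-1}(B_R)\supset[0,\min\{R/2,|X_{\alpha,R}|\}]$, i.e.\ there are no holes before $\min\{R/2,|X_{\alpha,R}|\}$, and in particular $|\widehat{X}_{\alpha,R}|\geq\min\{R/2,|X_{\alpha,R}|\}$. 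Because $\diam(E)\leq\tfrac R2$, we get $(g_R(\alpha,\cdot))^{-1}(E)\subset[0,\min\{R/2,|X_{\alpha,R}|\}]$, whence $E\cap X_{\alpha,R}\subset\widehat{X}_{\alpha,R}$ and therefore $E\subset\widehat{\mathcal{T}}_R$ and $\widehat{\mm}_{\alpha,R}(E)=\tfrac{\mm(E)}{\mm(B_R)}$. The identity $\widehat{\qq}_R=\tfrac{\mm(B_R)}{\mm(E)}(\QQ_R)_\#(\mm\llcorner_E)$ then follows by testing $\widehat{\qq}_R$ against $\indicator_A$ and using $\widehat{\mm}_{\alpha,R}(E\cap\QQ_R^{-1}(A))=\widehat{\mm}_{\alpha,R}(E)\cdot\indicator_A(\alpha)$ together with the disintegration of $\mm\llcorner_{\widehat{\mathcal{T}}_R}$. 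For the diameter bound I note that if $g_R(\alpha,t)\in B_R\cap X_{\alpha,R}$ then $t=\sfd(g_R(\alpha,0),g_R(\alpha,t))\leq\diam(E)+R$, so the mass of $B_R\cap X_{\alpha,R}$ is carried inside $g_R(\alpha,[0,R+\diam(E)])$ and hence $T_{\alpha,R}\leq R+\diam(E)$; the one-dimensional $\CD(0,N)$ property is inherited from the localization theorem and survives restriction to the subinterval $[0,T_{\alpha,R}]$.

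Finally, for the perimeter inequality \eqref{E:disintfinalper}, I would fix an open $\Omega\subset X$, take a recovery sequence $u_n\in\Lip_{loc}(\Omega)$ with $u_n\to\indicator_E$ in $L^1_{loc}(\Omega)$ and $\int_\Omega|Du_n|\,d\mm\to\PP(E;\Omega)$, restrict the integral to $\widehat{\mathcal{T}}_R$, disintegrate, and apply Fatou's lemma twice: once to pass the $\liminf_{n}$ inside the $\alpha$-integration, once in $n$ after using the pointwise bound $|u_n'|\leq|Du_n|$ along the curve $g_R(\alpha,\cdot)$ and recognizing the inner $\liminf$ as $\PP_{\widehat X_{\alpha,R}}(E;\Omega)$ (via the definition of relative perimeter in the one-dimensional conditional space, together with the fact that $u_n\to\indicator_E$ in $L^1$ of $\widehat{\mm}_{\alpha,R}$ for $\widehat{\qq}_R$-a.e.\ $\alpha$, a consequence of Fubini along the disintegration). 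Arbitrariness of $\Omega$ and the fact that both sides are the restrictions to open sets of finite Borel measures then upgrade the estimate to the claimed measure inequality; the push-forward identity $\PP_{\widehat X_{\alpha,R}}(E;\cdot)=(g_R(\alpha,\cdot))_\#\PP_{h_{\alpha,R}}((g_R(\alpha,\cdot))^{-1}(E);\cdot)$ records it in the parametrized form. I expect the only genuinely delicate points to be the legitimacy of the double Fatou interchange and the measurability of all the quantities in $\alpha$ (the function $T_{\alpha,R}$, the conditionals, the densities $h_{\alpha,R}$), both of which are standard consequences of the essential uniqueness of the disintegration and the measurable selection built into the localization machinery.
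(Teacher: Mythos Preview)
Your proof is correct and follows essentially the same approach as the paper: you apply the localization theorem to $f_R$, truncate each ray at $T_{\alpha,R}$ and renormalize, use the hypothesis $E\subset B_{R/4}$ exactly as the paper does to show $E\cap X_{\alpha,R}\subset\widehat X_{\alpha,R}$, and derive the perimeter inequality via a recovery sequence, disintegration, and Fatou. Your treatment is in fact slightly more explicit than the paper's on two points the paper glosses over (that the one-dimensional $\CD(0,N)$ condition survives restriction to a subinterval, and that $u_n\to\indicator_E$ in $L^1(\widehat\mm_{\alpha,R})$ for $\widehat\qq_R$-a.e.\ $\alpha$ is needed to invoke the one-dimensional perimeter definition).
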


The rescaling introduced in Proposition \ref{P:disintfinal} will be crucially used to 
obtain non-trivial limit estimates as $R \to \infty$.

\section{One dimensional analysis}\label{S:dimensiononespace}

Proposition \ref{P:disintfinal} is the first step to obtain from the optimality of a bounded set $E$ 
an almost optimality of $E \cap \widehat X_{\alpha,R}$. 
We now have to analyse in details the one-dimensional isoperimetric profile function.

\smallskip
\noindent
We fix few notation and conventions. 

We will be considering the \mms \, $(I, |\cdot|, h \mathcal{L}^{1})$, 
with $I\subset\R$ an interval and verifying the $\CD(0,N)$ condition; when the interval  has finite
diameter, we will always assume that $I=[0,D]$.
We will assume also that $\int_0^D h=1$, unless otherwise specified.
For  consistency with the conditional measures from Disintegration theorem, 
we will use the notation $\mm_{h} = h \,\mathcal{L}^{1}$.

We also introduce the functions
$v_h:[0,D]\to[0,1]$ and $r_h:[0,1]\to[0,D]$ as
\begin{equation}\label{E:volumesNa}
v_{h}(r) : = \int_{0}^{r} h(s)\,ds, \qquad r_{h}(v) : = (v_{h})^{-1}(v);
\end{equation}
notice that from the $\CD(0,N)$ condition, $h >0$ over $I$ making $v_{h}$ invertible 
and in turn the definition of $r_{h}$ well-posed.

We will denote by $\PP_h$ the perimeter in the space
$([0,D],|\,\cdot\,|,h\L^1_{[0,D]})$.
If $E\subset[0,D]$ is a set of finite perimeter, then it can be
decomposed (up to a negligible set) in a family of disjoint intervals
\begin{equation}
  E=\bigcup_i (a_i,b_i),
\end{equation}
and the union is at most countable.
In this case we have that the perimeter is given by the formula
\begin{equation}
  \PP_h(E)
  =
  \sum_{i:a_i\neq 0} h(a_i)
  +
  \sum_{i:b_i\neq D} h(b_i).
\end{equation}
We shall denote by $\I_h$ the isoperimetric profile
$\I_h(v):=\inf_{E:\mm_h(E)=v} \PP_h(E)$.

\subsection{Properties of the isoperimetric profile function}

For our purpose, we consider the model spaces
$([0,D],|\,\cdot\,|,h_{N,D}(\xi,\,\cdot\,)\L^1\llcorner_{[0,D]})$, for
$N>1$, $D>0$, and, $\xi\geq0$, where
\begin{equation}
  \label{eq:definition-model-density}
  h_{N,D}(\xi,x)
  :=
  \frac{N}{D^N}
  \,
  \frac{(x+\xi D)^{N-1}}{(\xi+1)^N-\xi^N}
  .
\end{equation}
For the model spaces, we can easily compute the functions
$v_{N,D}(\xi,\,\,\cdot\,):=v_{h_{N,D}(\xi,\cdot)}$ and
$r_{N,D}(\xi,\,\,\cdot\,):=r_{h_{N,D}(\xi,\cdot)}$
\begin{align}
  &
    v_{N,D}(\xi,r)
    =
    \frac{
    (r+\xi D)^N-(\xi D)^N
    }{
    D^N((1+\xi)^N-\xi^N)
    }
    ,
  \\
  \label{eq:model-ray}
  &
    r_{N,D}(\xi,v)
    =
    D
    \left(
    (
    v(1+\xi)^N
    +(1-v)\xi^N
    )^{\frac{1}{N}}
    -\xi
    \right).
\end{align}
We can easily deduce that if $E$ is an isoperimetric set of measure
$v\in(0,1)$ for a model space, then (up to a negligible set)
\begin{equation}
  E
  =
  \begin{cases}
    [0,r_{N,D}(\xi,v)]
    ,
    &
      \quad \text{ if }v\leq \frac{1}{2}
      ,
    \\
    [r_{N,D}(\xi,1-v),D]
    ,
    &
      \quad \text{ if }v\geq \frac{1}{2}
      ,
  \end{cases}
\end{equation}
with the convention that if $v=\frac{1}{2}$, both cases are possible.
Indeed, if $v\leq\frac{1}{2}$ we can ``push'' all the mass to left
obtaining a new set $E'=[0,r_{N,D}(\xi,v)]$; the monotonicity of
$h_{N,D}(\xi,\cdot)$ ensures that $E'$ has smaller perimeter than
$E'$.
If, on the contrary, $v\geq\frac{1}{2}$, then we have that the
complementary $[0,D]\backslash E$ is an isoperimetric set, then
$[0,D]\backslash E=[0,r_{N,D}(1-v)]$.
This allows us to explicitly compute the isoperimetric profile of the
model spaces
\begin{equation}
  \begin{aligned}
  \cI_{N,D}(\xi,v)
    &
      =
      h_{N,D}(\xi,r_{N,D}(\min\{v,1-v\}))
    \\
    &
  =
  \frac{N}{D}
  \frac{
    (\min\{v,1-v\} (\xi+1)^N + \max\{v,1-v\} \xi^N)^{\frac{N-1}{N}}
  }{
    (\xi+1)^N-\xi^N
  }
  .
  \end{aligned}
\end{equation}
We also define an auxiliary function $\g$ as
\begin{equation}
  \label{eq:definition-g}
  \g(\xi,v)
  :=
  \frac{
    \left(
      (\xi+1)^N + (\frac{1}{v}-1)\xi^N
    \right)^{\frac{N-1}{N}}
  }{
    (\xi+1)^N-\xi^N
  }.
\end{equation}
Notice that, if $v\leq\frac{1}{2}$, then
\begin{equation}
  \g(\xi,v)
  =\frac{D}{N}
  \,
  \frac{\cI_{N,D}(\xi,v)}{v^{1-\frac{1}{N}}}.
\end{equation}
One advantage of this function is that it is not depending on $D$.
This is indeed quite natural, as the isoperimetric profile scales with
$D$.


For the family of one-dimensional $\CD(0,N)$ of spaces with diameter
not larger than $D$, an explicit and sharp lower bound for the
isoperimetric profile function has been established in
\cite{Mil} (see for instance Corollary 1.4 and
\cite[Section~6.1]{CM1} for the non-smooth analog).
Defining
\begin{equation}\label{E:isoperi}
  \I_{N,D}(v):=\frac{N}{D}\inf_{\xi\geq 0}
  \frac{(\min\{v , 1-v\} (\xi+1)^N + \max\{v,1-v\} \xi^N)^{\frac{N-1}{N}}}
  {(\xi+1)^N-\xi^N}
  =
  \inf_{\xi\geq0}
  \cI_{N,D}(\xi,v)
  ,
\end{equation}
then one obtains that $\mathcal{I}_{h}(v) \geq \mathcal{I}_{N,D}(v)$,
for every $h:[0,D']\to\R$ satisfying the $\CD(0,N)$ condition, with
$D'\in(0,D]$.

\smallskip

We obtain the following lower bound.

\begin{lemma}\label{lem:milman-estimate}
Fix $N>1$.
Then, we have the following estimate for $\I_{N,D}$
\begin{equation}
  \I_{N,D}(w)
  \geq
  \frac{N}{D}w^{1-\frac{1}{N}}(1-O(w^{\frac{1}{N}}))
  =
  \frac{N}{D}(w^{1-\frac{1}{N}}-O(w)),
  \qquad
  \text{ as }w\to0.
\end{equation}
\end{lemma}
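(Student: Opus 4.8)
The plan is to estimate $\I_{N,D}(w)$ from below by examining the infimum defining it in \eqref{E:isoperi}. Since we only care about the behaviour as $w\to 0$, we may assume $w\le\frac12$, so that $\min\{w,1-w\}=w$ and $\max\{w,1-w\}=1-w$, and we must bound from below
\[
\I_{N,D}(w)=\frac{N}{D}\inf_{\xi\ge0}\frac{\bigl(w(\xi+1)^N+(1-w)\xi^N\bigr)^{\frac{N-1}{N}}}{(\xi+1)^N-\xi^N}.
\]
Equivalently, using the function $\g$ from \eqref{eq:definition-g}, we have $\I_{N,D}(w)=\frac{N}{D}w^{1-\frac1N}\inf_{\xi\ge0}\g(\xi,w)$, so the whole problem reduces to showing $\inf_{\xi\ge0}\g(\xi,w)\ge 1-O(w^{1/N})$ as $w\to0$. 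The two expressions claimed in the statement agree since $w^{1-1/N}O(w^{1/N})=O(w)$, so it suffices to prove either one.

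First I would split the analysis of $\g(\xi,w)$ according to the size of $\xi$. For $\xi$ in a bounded range, say $\xi\le M$ for a fixed constant $M$, the numerator $\bigl((\xi+1)^N+(\tfrac1w-1)\xi^N\bigr)^{\frac{N-1}{N}}$ is at least $\bigl((\xi+1)^N\bigr)^{\frac{N-1}{N}}=(\xi+1)^{N-1}$, while the denominator $(\xi+1)^N-\xi^N$ is, by the mean value theorem, at most $N(\xi+1)^{N-1}$; hence $\g(\xi,w)\ge \frac1N$ on this range — a crude bound, but note that at $\xi=0$ one has $\g(0,w)=1$ exactly, and near $\xi=0$ one can be more precise: expanding, $(\xi+1)^N-\xi^N = 1+N\xi+O(\xi^2)$ and the numerator is $1+O(\xi)+O(\xi^N/w)$, so the deviation of $\g$ from $1$ for small $\xi$ is controlled. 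The delicate regime is large $\xi$, where $\g(\xi,w)\to \bigl(\tfrac1w\bigr)^{\frac{N-1}{N}}\cdot\lim \frac{\xi^{N-1}}{(\xi+1)^N-\xi^N}$; since $(\xi+1)^N-\xi^N\sim N\xi^{N-1}$ as $\xi\to\infty$, this tends to $\frac1N w^{-(N-1)/N}\to\infty$, so large $\xi$ does not compete with the value near $\xi=0$.

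The main obstacle — and the heart of the estimate — is to locate where the infimum is actually attained and to show it is attained at $\xi=\xi(w)$ with $\xi(w)=O(w^{1/N})$ (or at least small enough), and then to Taylor-expand $\g$ around $\xi=0$ uniformly in $w$. Concretely, I would write $\g(\xi,w)=\dfrac{\bigl(1+N\xi+\tfrac{\xi^N}{w}+(\text{l.o.t.})\bigr)^{\frac{N-1}{N}}}{1+N\xi+(\text{l.o.t.})}$ for small $\xi$, observe that the term $\tfrac{\xi^N}{w}$ in the numerator is what can pull $\g$ below $1$, and balance: setting $\xi^N/w=t$, i.e. $\xi=(tw)^{1/N}$, one gets $\g\approx \dfrac{(1+t+N(tw)^{1/N})^{\frac{N-1}{N}}}{1+N(tw)^{1/N}}$, whose infimum over $t\ge0$ is easily seen to be $1-O(w^{1/N})$ (the numerator's gain from raising to the power $\frac{N-1}{N}<1$ is offset by $1/$(denominator) up to error $O(w^{1/N})$ coming from the $N(tw)^{1/N}$ cross terms, and $t$ at the minimizer stays bounded). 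Outside a neighbourhood of $\xi=0$ one falls back on the crude lower bounds above, which give $\g\ge c>1-O(w^{1/N})$ for small $w$. Assembling the cases yields $\inf_{\xi\ge0}\g(\xi,w)\ge 1-O(w^{1/N})$, hence the claimed bound on $\I_{N,D}$.
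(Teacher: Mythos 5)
Your reduction to showing $\inf_{\xi\ge0}\g(\xi,w)\ge 1-O(w^{1/N})$ is exactly the paper's starting point, and you correctly identify the critical scale $\xi\sim w^{1/N}$. The execution differs: the paper works with an actual minimizer $\xi_w$, uses the a priori bound $\g(\xi_w,w)\le\g(0,w)=1$ to bootstrap ($\xi_w$ bounded, then $\xi_w\to0$, then $\xi_w=o(w^{1/N})$) through three successive contradiction arguments, and only then expands $\g$ at $\xi_w$; you instead aim for a uniform-in-$\xi$ lower bound by splitting the $\xi$-axis and rescaling $\xi=(tw)^{1/N}$ near the origin. Your route is viable and avoids having to localize the argmin, at the price of tracking the Taylor remainders uniformly in $t$.

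Two steps need repair. First, your assembly sentence ``outside a neighbourhood of $\xi=0$ the crude lower bounds give $\g\ge c>1-O(w^{1/N})$'' is false as written: the only bound you establish on a bounded range is $\g\ge \frac{1}{N}$, and $\frac{1}{N}<1$ for $N>1$, so it does not beat $1-O(w^{1/N})$. What actually saves the intermediate regime $\xi\in[\epsilon,M]$ is the numerator term $(\frac{1}{w}-1)\xi^N\ge(\frac{1}{w}-1)\epsilon^N$, which forces $\g(\xi,w)\ge \bigl((\frac{1}{w}-1)\epsilon^N\bigr)^{\frac{N-1}{N}}(M+1)^{-N}\to\infty$ uniformly as $w\to0$ --- the same mechanism you invoke for $\xi\to\infty$. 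Second, the central claim that the rescaled infimum over $t$ equals $1-O(w^{1/N})$ is asserted rather than proved. It is true, and can be closed cleanly without the rescaling: bounding the denominator above by $(\xi+1)^N$ and the numerator below by $(\xi+1)^{N-1}$ gives $\g(\xi,w)\ge(1+\xi)^{-1}\ge 1-Kw^{1/N}$ whenever $\xi\le Kw^{1/N}$; while for $Kw^{1/N}\le\xi\le\epsilon$ one has $(\frac{1}{w}-1)\xi^N\ge K^N(1-w)$, so $\g(\xi,w)\ge (1+\tfrac{1}{2}K^N)^{\frac{N-1}{N}}(1+\epsilon)^{-N}\ge1$ once $K$ is chosen large (depending only on $N$ and $\epsilon$). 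With these two repairs your argument is complete and in fact somewhat shorter than the paper's.
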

\begin{proof}
  Recalling the definition of $\g$, what we have to prove becomes
  \begin{equation}
    \inf_{\xi\geq0}\g(\xi,w)\geq 1-O(w^{\frac{1}{N}}),
  \qquad
  \text{ as }w\to0.
  \end{equation}
  The minimum in the infimum in the expression above is attained, at
  least for all $w$ small enough.
  Indeed, we have that
  \begin{equation}
    \label{eq:limit-of-g}
    \g(\xi,v)=
    \frac{\left(\left(1+\xi^{-1}\right)^N
          + \left(\frac{1}{w}-1\right)\right)^{\frac{N-1}{N}}}
      {\xi\left(\left(1+\xi^{-1}\right)^N-1\right)}
      =
    \frac{\left(\left(1+\xi^{-1}\right)^N
          + \left(\frac{1}{w}-1\right)\right)^{\frac{N-1}{N}}}
      {\xi\left(1+N\xi^{-1}-o\left(\xi^{-1}\right)-1\right)}
      ,
  \end{equation}
  thus the limit
  $\lim_{\xi\to\infty}\g(\xi,w)=(\frac{1}{w}-1)^{(N-1)/N}/N\geq 1=\g(0,w)$ implies
  the coerciveness of $\xi\mapsto \g(\xi,w)$.
  Define $\xi_w\in\argmin_{\xi\in[0,\infty]} \g(\xi,w)$; we trivially
  have that $\G_N(\xi_{w},w)\leq 1$.

  First we prove that $\limsup_{w\to0}\xi_w<\infty$ (we soon will
  improve this estimate).
  Suppose the contrary, i.e.,\ there exists some sequence $w_n\to0$
  such that $\xi_{w_n}\to\infty$.
  Then we have
  \begin{equation}
    \label{eq:xi-is-bounded}
    \begin{aligned}
      1
      &
    \geq
    \limsup_{n\to\infty}
    \G_N(\xi_{w_n},w_n)
    \geq
    \limsup_{n\to\infty}
    \frac{(\frac{1}{w_n}-1)^{\frac{N-1}{N}}
      \xi_{w_n}^{N-1}}{(\xi_{w_n}+1)^{N}-\xi_{w_n}^{N}}
    \\&
    =
    \limsup_{n\to\infty}
    \frac{(\frac{1}{w_n}-1)^{\frac{N-1}{N}}}
    {\xi_{w_N}(1+N\xi_{w_n}^{-1}+o(\xi_{w_n}^{-1})-1)}
    =\infty,
  \end{aligned}
\end{equation}
  which is a contradiction.
  Since $\limsup_{w\to0}\xi_w<\infty$, then we have
  $(\xi_{w}+1)^{N-1}-\xi_{w}^{N-1}\leq C$, for all $w$ small enough,
  for some constant $C>0$.
  We improve the estimate above
  \begin{equation}
    \label{eq:xi-is-big-o}
    1
    \geq
    \limsup_{w\to0}
    \g(\xi_{w},w)
    \geq
    \limsup_{w\to0}
    \frac{((\frac{1}{w}-1)\xi_w^N)^{\frac{N-1}{N}}}
         {(\xi_{w}+1)^{N-1}-\xi_{w}^{N}}
    \geq
    \limsup_{w\to0}
    \frac{((\frac{1}{w}-1)\xi_w^N)^{\frac{N-1}{N}}}
         {C},
  \end{equation}
  which implies $\limsup_{w\to0}\xi_w\leq0$, i.e., $\xi_w\to 0$ as
  $w\to0$.
  We can improve the estimate again
  \begin{equation}
    \label{eq:xi-is-small-o}
    1
    \geq
    \limsup_{w\to0}
    \g(\xi_{w},w)
    =
    \limsup_{w\to0}
    \frac{\left((1+\xi_w)^N+\frac{\xi_w^N}{w}-\xi_w^N\right)^{\frac{N-1}{N}}}
         {(\xi_{w}+1)^{N-1}-\xi_{w}^{N}}
    =
    \left(
      1
      +
      \limsup_{w\to0}\frac{\xi_w^N}{w}
    \right)
    ^{\frac{N-1}{N}},
  \end{equation}
  yielding $\limsup_{w\to0}\xi_w/w^{\frac{1}{N}}\leq 0$, i.e.,
  $\xi_w=o(w^{\frac{1}{N}})$ as $w\to0$.
  Finally we can conclude noticing that
  \begin{align*}
    \inf_{\xi\geq 0} \g(\xi,w)
    &
    =
    \g(\xi_w,w)
    =
    \frac{\left((\xi_w+1)^N + \left(\frac{1}{w}-1\right) \xi_w^N\right)^{\frac{N-1}{N}}}
    {(\xi_w+1)^N-\xi_w^N}
    \\
    &
    =
    \frac{(1+o(1))^{\frac{N-1}{N}}}
    {1+O(\xi_w)}
    =
    1-O(w^{\frac{1}{N}}).
    \qedhere
  \end{align*}
\end{proof}

\begin{corollary}
\label{cor:milman-isoperimetric-estimate}
Fix $N>1$.
Then for all $D\geq D'>0$ and for all $h:[0,D']\to\R$ satisfying the $\CD(0,N)$
condition it holds that
\begin{equation}
  \begin{aligned}
  \PP_h(E)
    &
      \geq \mathcal{I}_{h}(\mm_{h}(E))
  \geq \frac{N}{D'}\m_h(E)^{1-\frac{1}{N}} (1-O(\mm_{h}(E)
      ^{\frac{1}{N}})
    \\\
    &
  \geq \frac{N}{D}\m_h(E)^{1-\frac{1}{N}} (1-O(\mm_{h}(E) ^{\frac{1}{N}}),    
  \end{aligned}
\end{equation}
for any Borel set $E\subset [0,D']$.
\end{corollary}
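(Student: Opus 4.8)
The corollary is essentially a repackaging of Lemma~\ref{lem:milman-estimate} together with the standard fact that the relative perimeter on a one-dimensional $\CD(0,N)$ space dominates the isoperimetric profile. The plan is to unwind the definitions in the right order and then invoke the lemma.

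\textbf{Step 1: reduce to the profile.} By definition of the isoperimetric profile $\I_h$ on $([0,D'],|\cdot|,h\L^1)$, for any Borel $E\subset[0,D']$ one has $\PP_h(E)\geq \I_h(\mm_h(E))$; this is immediate from $\I_h(v)=\inf\{\PP_h(E):\mm_h(E)=v\}$. So the first inequality is free and it remains to bound $\I_h(v)$ from below with $v=\mm_h(E)$.

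\textbf{Step 2: compare $\I_h$ with the model profile $\I_{N,D'}$.} Here one needs to be slightly careful because the convention $\int_0^{D'}h=1$ from Section~\ref{S:dimensiononespace} normalizes the total mass to one, whereas in the corollary $\mm_h(E)$ is an honest mass, not a fraction. The clean way is: rescale $h$ to $\tilde h:=h/\mm_h([0,D'])$, a probability density on $[0,D']$ still satisfying $\CD(0,N)$, apply Milman's sharp lower bound $\I_{\tilde h}(w)\geq \I_{N,D'}(w)$ (equation~\eqref{E:isoperi} and the sentence following it), and then scale back. Under $h=c\tilde h$ one has $\PP_h=c\,\PP_{\tilde h}$ and $\mm_h=c\,\mm_{\tilde h}$, so $\I_h(v)=c\,\I_{\tilde h}(v/c)\geq c\,\I_{N,D'}(v/c)$. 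Since $\I_{N,D'}(w)=\frac{N}{D'}\inf_{\xi\geq0}\g(\xi,w\wedge(1-w))\cdots$ and, by Lemma~\ref{lem:milman-estimate}, $\I_{N,D'}(w)\geq \frac{N}{D'}w^{1-1/N}(1-O(w^{1/N}))$ as $w\to0$, we get $\I_h(v)\geq c\cdot\frac{N}{D'}(v/c)^{1-1/N}(1-O((v/c)^{1/N}))=\frac{N}{D'}v^{1-1/N}(1-O(v^{1/N}))$, where the constants absorbed into the $O$ depend on $c=\mm_h([0,D'])$; in the application $c$ will be comparable to $\mm_h(E)$'s ambient scale and uniformly controlled, so this is harmless. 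Alternatively, and more simply, one applies Lemma~\ref{lem:milman-estimate} directly to the (un-normalized) $h$ after observing that all the quantities $\PP_h$, $\mm_h$, $\I_{N,D'}$ behave homogeneously under multiplying $h$ by a constant, so the estimate is scale-invariant in that constant. Either route gives the middle inequality.

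\textbf{Step 3: monotonicity in $D$.} The last inequality, replacing $\frac{N}{D'}$ by $\frac{N}{D}$, is just $D'\leq D$ together with the fact that $v^{1-1/N}(1-O(v^{1/N}))$ is eventually nonnegative for $v$ small, so enlarging the denominator only decreases the right-hand side. This is where the hypothesis $D\geq D'$ is used, and it is the reason the statement is phrased with a possibly larger $D$: in the sequel the bound on the one-dimensional diameters is $R+\diam(E)$, which one wants to be able to quote uniformly.

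\textbf{Main obstacle.} There is no real obstacle: the only point requiring care is the bookkeeping of the normalization $\int_0^{D'}h=1$ versus the unnormalized mass $\mm_h(E)$, i.e.\ making sure the constant hidden in $O(\mm_h(E)^{1/N})$ is tracked correctly (or that one explicitly argues scale-invariance). Everything else is a direct citation of Lemma~\ref{lem:milman-estimate} and Milman's sharp one-dimensional bound~\eqref{E:isoperi}.
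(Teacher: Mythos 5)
Your proof is correct and is essentially the argument the paper intends (the corollary is stated without proof precisely because it is the chain: definition of $\I_h$, Milman's comparison $\I_h\geq\I_{N,D'}$, Lemma~\ref{lem:milman-estimate}, and $1/D'\geq 1/D$). One caveat on your normalization digression: the rescaling identity you write is off, since $c\,(v/c)^{1-\frac{1}{N}}=c^{\frac{1}{N}}v^{1-\frac{1}{N}}\neq v^{1-\frac{1}{N}}$ for $c\neq 1$, so the factor $c^{\frac{1}{N}}$ cannot be absorbed into the $O(\cdot)$ term; however, the whole issue is moot because the paper's standing convention in Section~\ref{S:dimensiononespace} is $\int_0^{D'}h=1$, under which $\mm_h(E)\in[0,1]$ is already the volume fraction and the lemma applies directly.
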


\subsection{Sharp isoperimetric inequalities in
  \texorpdfstring{$\CD(0,N)$}{CD(0,N)} spaces with Euclidean volume
  growth}

We re-obtain Theorem \ref{E:inequality} via localization.

\begin{theorem}\label{T:isoperimetricAVR}
  Let $(X,\sfd,\mm)$ be an essentially non-branching $\CD(0,N)$ space
  having $\AVR_{X} > 0$.
Let $E \subset X$ be any bounded Borel set then
\begin{equation}\label{E:isopAVR}
\PP(E) \geq 
N \omega_{N}^{\frac{1}{N}} \AVR_{X}^{\frac{1}{N}} \mm(E)^{\frac{N-1}{N}}.
\end{equation}
\end{theorem}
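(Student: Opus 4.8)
The plan is to reduce the $N$-dimensional isoperimetric inequality to the one-dimensional estimate of Corollary~\ref{cor:milman-isoperimetric-estimate} via the localization of Proposition~\ref{P:disintfinal}, and then take $R\to\infty$. First I would fix a bounded Borel set $E$ with $0<\mm(E)<\infty$ (the case $\mm(E)=0$ being trivial), pick $x_{0}\in E$, and for $R>0$ large enough that $E\subset B_{R/4}(x_{0})$ apply Proposition~\ref{P:disintfinal}. This gives the disintegration $\mm\llcorner_{\widehat{\mathcal T}_{R}}=\int_{Q_{R}}\widehat{\mm}_{\alpha,R}\,\widehat{\qq}_{R}(d\alpha)$ with $E\subset\widehat{\mathcal T}_{R}$, each one-dimensional space $(\widehat X_{\alpha,R},\sfd,\widehat{\mm}_{\alpha,R})$ being $\CD(0,N)$ with diameter bounded by $R+\diam(E)$, the balancing condition $\widehat{\mm}_{\alpha,R}(E)=\mm(E)/\mm(B_{R})$ for $\widehat{\qq}_{R}$-a.e.\ $\alpha$, and the perimeter lower bound \eqref{E:disintfinalper}.

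Next I would apply Corollary~\ref{cor:milman-isoperimetric-estimate} in each fiber, with $D=R+\diam(E)$ and $v=\widehat{\mm}_{\alpha,R}(E)=\mm(E)/\mm(B_{R})=:v_{R}$ (note $v_{R}$ is independent of $\alpha$):
\begin{equation*}
\PP_{\widehat X_{\alpha,R}}(E)
\geq
\frac{N}{R+\diam(E)}\,v_{R}^{\,1-\frac{1}{N}}\bigl(1-O(v_{R}^{\frac{1}{N}})\bigr).
\end{equation*}
Integrating against $\widehat{\qq}_{R}$, whose total mass is $\mm(B_{R})$, and using \eqref{E:disintfinalper} gives
\begin{equation*}
\PP(E)\geq\int_{Q_{R}}\PP_{\widehat X_{\alpha,R}}(E)\,\widehat{\qq}_{R}(d\alpha)
\geq
\frac{N\,\mm(B_{R})}{R+\diam(E)}\,\Bigl(\frac{\mm(E)}{\mm(B_{R})}\Bigr)^{1-\frac{1}{N}}\bigl(1-O(v_{R}^{\frac{1}{N}})\bigr)
=
N\,\mm(E)^{1-\frac{1}{N}}\,\frac{\mm(B_{R})^{\frac{1}{N}}}{R+\diam(E)}\,\bigl(1-O(v_{R}^{\frac{1}{N}})\bigr).
\end{equation*}

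Finally I would let $R\to\infty$. Since $(X,\sfd,\mm)$ has Euclidean volume growth, $\mm(B_{R})=\mm(B_{R}(x_{0}))\sim\omega_{N}\AVR_{X}R^{N}$ as $R\to\infty$, hence $\mm(B_{R})^{1/N}/(R+\diam(E))\to(\omega_{N}\AVR_{X})^{1/N}$; moreover $v_{R}=\mm(E)/\mm(B_{R})\to 0$, so the error factor $1-O(v_{R}^{1/N})\to 1$. Passing to the limit yields
\begin{equation*}
\PP(E)\geq N\,\omega_{N}^{\frac{1}{N}}\,\AVR_{X}^{\frac{1}{N}}\,\mm(E)^{\frac{N-1}{N}},
\end{equation*}
which is \eqref{E:isopAVR}. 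The only delicate points are the bookkeeping of the error term — making sure the $O(v_{R}^{1/N})$ from the one-dimensional Milman estimate is uniform in $\alpha$ (it is, since the constant depends only on $N$ and the fiber diameters are uniformly bounded by $R+\diam(E)$, so $D=R+\diam(E)$ works simultaneously for all fibers) — and the identification of $\mm(B_{R})$'s asymptotics with $\AVR_{X}$, which is exactly the definition of Euclidean volume growth together with the Bishop--Gromov monotonicity; I expect no real obstacle, the argument being a clean packaging of the fiberwise estimate.
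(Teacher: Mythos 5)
Your argument is correct and follows essentially the same route as the paper's proof: localize via Proposition~\ref{P:disintfinal}, apply the one-dimensional Milman estimate of Corollary~\ref{cor:milman-isoperimetric-estimate} fiberwise with $D=R+\diam(E)$ (uniformly in $\alpha$ thanks to the balancing condition $\widehat{\mm}_{\alpha,R}(E)=\mm(E)/\mm(B_R)$), integrate against $\widehat{\qq}_{R}$, and let $R\to\infty$ using the definition of $\AVR_X$. No gaps.
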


\begin{proof}
Let $x_{0} \in E$ be any point. 
We then consider $R > 0$ such that $E \subset B_{R(x)}$.
For shortness we will write $B_R=B_R(x_0)$.
We use Propsition~\ref{P:disintfinal} and in
particular~\eqref{E:disintfinalper}, obtaining
\begin{equation}\label{E:inequalityPer}
  \PP(E)\geq
  \int_{Q_R}
  \PP_{\widehat X_{\alpha,R}}(E)
  \,
  \widehat\q_R(d\alpha).
\end{equation}
Using Corollary~\ref{cor:milman-isoperimetric-estimate}, and the fact
that each ray $\widehat X_{\alpha,R}$ has length at most $\diam E+R$, we deduce
\begin{align*}
  \PP(E)
  &
    \geq
    \int_{Q_R}
    \I_{N,\diam E+R}
    (
    \widehat\mm_{\alpha,R}(E)
    )
    \,
    \widehat\q_R(d\alpha)
    \geq
    \mm(B_R)
    \,
    \I_{N,\diam E+R}
    \left(
    \frac{\mm(E)}{\mm(B_R)}
    \right)
  \\
  &
    \geq
    \mm(B_R)
    \,
    \frac{N}{\diam E+R}
    \left(
    \frac{\mm(E)}{\mm(B_R)}
    \right)^{1-\frac{1}{N}}
    \left(
    1-O\left(
    \left(
    \frac{\mm(E)}{\mm(B_R)}
    \right)^{\frac{1}{N}}
    \right)
    \right)
  \\
  &
    =
    N\left(
    \frac{\mm(B_R)}{R^N}
    \right)^{\frac{1}{N}}
    \mm(E)^{1-\frac{1}{N}}
    -\frac{O(1)}{\diam E+R}.
\end{align*}
We conclude by taking the limit as $R\to\infty$ in the equation above.
\end{proof}

%
%

\medskip
\subsection{One dimensional reduction for the optimal region}


Assuming $E \subset X$ to turn inequality \eqref{E:isopAVR} into an identity   
and following the proof of Theorem~\ref{T:isoperimetricAVR}, 
a natural guess is that the r.h.s.\ of \eqref{E:inequalityPer} converges to the
l.h.s.\ as $R\to\infty$.
The measure $\widehat\q_R(Q_R)=\mm(B_R)$ is converging to infinity with
order $O(R^N)$, so the integrand should converge to $0$ with order
$O(R^{-N})$.
We now confirm this heuristic.

%

%

%
\begin{definition}
  Let $D\geq D'>0$ and let $h:[0,D']\to\R$ be a $\CD(0,N)$ density.
  If $E\subset [0,D']$ is Borel subset, we define the $D$-residual of
  $E$ as
  \begin{equation}
    \label{eq:residual-definition}
    \Res_h^D(E)
    :=
    \frac{D\PP_h(E)}{N(\mm_h(E))^{1-\frac{1}{N}}}-1
    .
  \end{equation}
 If $v\in(0,1/2)$, we define the $D$-residual of $v$ as
  \begin{equation}
    \Res_h^D(v)
    :=
    \Res_h^D([0,r_h(v)])
    =
    \frac{Dh(r_h(v))}{Nv^{1-\frac{1}{N}}}-1
    .
  \end{equation}
\end{definition}

Corollary~\ref{cor:milman-isoperimetric-estimate} can be restated as
  \begin{equation}
    \label{eq:isoperimatric-inequality-residual}
  \Res_h^D(E)\geq - O(\mm_h(E)^{\frac{1}{N}}).
\end{equation}
%
  %

We now apply the definition of residual to the disintegration rays.

In order to simplify the notation, we denote by $\PP_{\alpha,R}$ 
the perimeter measure of the one-dimensional \mms \, $(\widehat X_{\alpha,R}, \sfd,\widehat \mm_{\alpha,R})$.
The measure $\widehat \mm_{\alpha,R}$ will be identified with the ray map $g$ to $h_{\alpha,R} \mathcal{L}^{1}$.
Then
\begin{align*}
  &
\Res_{\alpha,R}:=\Res_{h_{\alpha,R}}^{R+\diam(E)}(g(\alpha,\cdot)^{-1}(E \cap \widehat X_{\alpha,R})), \qquad \textrm{for } \alpha\in
    Q_{R},
  \\
  &
\Res_{x,R}: =\Res_{\QQ_R(x),R}, \qquad \textrm{for } x\in E.
\end{align*}
The good rays are those rays having small residual.
We quantify their abundance.

\begin{proposition}\label{P:goodrays2}
Assume that $(X,\sfd,\mm)$ is an essentially non-branching $\CD(0,N)$ space
such that $\AVR_{X} > 0$.
If $E\subset X$ is a bounded set attaining the identity in the inequality 
\eqref{E:isopAVR},  then
\begin{equation}
  \label{eq:residual-is-O-R-N}
  \lim_{R\to\infty}
  \frac{
    \norm{\Res_{\alpha,R}}_{L^1(Q_R)}
  }{\mm(B_R)}
  = 0,
\end{equation}
where the reference measure for the Lebesgue space $L^1(Q_R)$ is $\q_{R}$.
\end{proposition}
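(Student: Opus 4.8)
The plan is to chase the inequality in the proof of Theorem~\ref{T:isoperimetricAVR} and quantify the loss incurred at each inequality when $E$ is an exact optimiser. Fix $E$ with $\PP(E) = N\omega_N^{1/N}\AVR_X^{1/N}\mm(E)^{1-1/N}$. For each admissible $R$ (with $E\subset B_{R/4}$) Proposition~\ref{P:disintfinal} gives the disintegration, and chaining \eqref{E:disintfinalper}, Jensen's inequality against $\widehat\q_R/\mm(B_R)$ (a probability measure) applied to the convex function $\I_{N,\diam E+R}$, and then the Milman lower bound of Corollary~\ref{cor:milman-isoperimetric-estimate} ray by ray, one gets
\begin{align*}
  \PP(E)
  &\geq
  \int_{Q_R}\PP_{\widehat X_{\alpha,R}}(E)\,\widehat\q_R(d\alpha)
  =
  \int_{Q_R}\frac{N}{R+\diam E}\,\widehat\mm_{\alpha,R}(E)^{1-\frac1N}\bigl(1+\Res_{\alpha,R}\bigr)\,\widehat\q_R(d\alpha)
  \\
  &=
  \frac{N}{R+\diam E}\Bigl(\frac{\mm(E)}{\mm(B_R)}\Bigr)^{1-\frac1N}
  \Bigl(\widehat\q_R(Q_R)+\int_{Q_R}\Res_{\alpha,R}\,\widehat\q_R(d\alpha)\Bigr),
\end{align*}
using that $\widehat\mm_{\alpha,R}(E)=\mm(E)/\mm(B_R)$ is constant in $\alpha$ by \eqref{E:disintfinal2}. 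Since $\widehat\q_R = \mm_{\cdot,R}(B_R)\q_R$ and $\widehat\q_R(Q_R)=\mm(B_R)$, rearranging gives
\begin{equation*}
  \int_{Q_R}\mm_{\alpha,R}(B_R)\,\Res_{\alpha,R}\,\q_R(d\alpha)
  \leq
  \frac{(R+\diam E)\,\PP(E)}{N\,(\mm(E)/\mm(B_R))^{1-\frac1N}}-\mm(B_R).
\end{equation*}

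Next I would plug in the value of $\PP(E)$ and the definition $\AVR_X=\lim_{R}\mm(B_R)/(\omega_N R^N)$: the right-hand side equals $\mm(B_R)\bigl(\tfrac{R+\diam E}{R}\cdot(\mm(B_R)/(\omega_N R^N\AVR_X))^{-1/N}-1\bigr)$, which by Bishop--Gromov monotonicity and the definition of $\AVR_X$ is $\mm(B_R)\cdot o(1)$ as $R\to\infty$; more precisely $\mm(B_R)/(\omega_N R^N\AVR_X)\to1^+$ and $(1+\diam E/R)\to1$, so the bracket tends to $0$. Hence $\int_{Q_R}\mm_{\alpha,R}(B_R)\,\Res_{\alpha,R}\,\q_R(d\alpha) = o(\mm(B_R))$. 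This controls a \emph{signed} integral weighted by $\mm_{\alpha,R}(B_R)$; to convert it into the $L^1(Q_R)$ (with respect to $\q_R$) bound on $\Res_{\alpha,R}$ claimed in \eqref{eq:residual-is-O-R-N}, I would split $\Res_{\alpha,R}=\Res_{\alpha,R}^+-\Res_{\alpha,R}^-$ and use the one-dimensional isoperimetric lower bound \eqref{eq:isoperimatric-inequality-residual}, namely $\Res_{\alpha,R}\geq -O(\widehat\mm_{\alpha,R}(E)^{1/N}) = -O((\mm(E)/\mm(B_R))^{1/N})$, so that the negative part is controlled: $\int_{Q_R}\mm_{\alpha,R}(B_R)\Res_{\alpha,R}^-\,\q_R(d\alpha)\leq O((\mm(E)/\mm(B_R))^{1/N})\,\mm(B_R)=O(\mm(B_R)^{1-1/N})=o(\mm(B_R))$. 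Adding the two estimates gives $\int_{Q_R}\mm_{\alpha,R}(B_R)\,|\Res_{\alpha,R}|\,\q_R(d\alpha)=o(\mm(B_R))$.

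Finally I must pass from the $\mm_{\alpha,R}(B_R)$-weighted integral to the plain $L^1(Q_R,\q_R)$ norm $\int_{Q_R}|\Res_{\alpha,R}|\,\q_R(d\alpha)$. The point is that $\mm_{\alpha,R}(B_R)$ is bounded below on the rays meeting $E$: because each ray starts inside $E\subset B_{R/4}$ and $\mm_{\alpha,R}$ has a $\CD(0,N)$ (hence monotone, after the chosen orientation) density with total mass normalised appropriately, $\mm_{\alpha,R}$ assigns to the initial piece of the ray inside $B_R$ a definite fraction of its total mass — in fact $\widehat\mm_{\alpha,R}(E)=\mm(E)/\mm(B_R)$ already pins down the normalisation, and $g_R(\alpha,0)\in E$ forces the density at $0$ to be comparable to its average, so $\mm_{\alpha,R}(B_R)\geq c\,\q_R$-a.e.\ for some $c=c(E,N)>0$ uniformly in large $R$; alternatively one observes directly that $\widehat\q_R=\frac{\mm(B_R)}{\mm(E)}(\QQ_R)_\#(\mm\llcorner_E)$ has total mass $\mm(B_R)$ while the relevant rays are exactly those hit by $E$. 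I expect this uniform lower bound on $\mm_{\alpha,R}(B_R)$ to be the main technical obstacle: it requires a careful use of the concavity of $h_{\alpha,R}^{1/(N-1)}$ together with the geometric fact that the ray emanates from within a fixed bounded set while $R\to\infty$. Granting it, $\int_{Q_R}|\Res_{\alpha,R}|\,\q_R(d\alpha)\leq c^{-1}\int_{Q_R}\mm_{\alpha,R}(B_R)|\Res_{\alpha,R}|\,\q_R(d\alpha)=o(\mm(B_R))$, which is exactly \eqref{eq:residual-is-O-R-N}.
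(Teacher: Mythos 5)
The core of your argument (through the estimate $\int_{Q_R}|\Res_{\alpha,R}|\,\widehat\q_R(d\alpha)=o(\mm(B_R))$) is correct and is essentially the paper's own proof: you use the definition of the residual to rewrite $\PP_{\widehat X_{\alpha,R}}(E)$, exploit that $\widehat\mm_{\alpha,R}(E)=\mm(E)/\mm(B_R)$ is constant in $\alpha$, compare with $\PP(E)$ via \eqref{E:disintfinalper}, plug in the saturated value of $\PP(E)$ and the definition of $\AVR_X$, and control the negative part by \eqref{eq:isoperimatric-inequality-residual}. (The appeal to Jensen's inequality is superfluous — since $\widehat\mm_{\alpha,R}(E)$ is constant you never actually convexify anything — but this is harmless.)

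The problem is your final step. The quantity the proposition actually needs — and the one used in Corollary~\ref{cor:goodrays2}, whose proof identifies $\norm{\Res_{\alpha,R}}_{L^1(Q_R)}$ with $\int_{Q_R}|\Res_{\alpha,R}|\,\widehat\q_R(d\alpha)$ — is the integral against $\widehat\q_R$, not against $\q_R$; the "$\q_R$" in the statement is a slip of notation (the same slip appears once inside the paper's own proof). So the conversion you attempt in the last paragraph is not required. It is also not justified as written: the claim that $\mm_{\alpha,R}(B_R)\geq c>0$ uniformly in $\alpha$ and in large $R$ does not follow from "$g_R(\alpha,0)\in E$ forces the density at $0$ to be comparable to its average," and a priori a transport ray may extend far beyond $B_R$ and carry most of its $\mm_{\alpha,R}$-mass outside $B_R$, so $\mm_{\alpha,R}(B_R)$ could be small. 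If the statement really were about $\q_R$, this would be a genuine gap; since it is about $\widehat\q_R$, you should simply delete that step and stop at the weighted estimate you already obtained.
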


\begin{proof}
We first check that the function $\alpha\to\Res_{\alpha,R}$ is
integrable.
To this extent, it is enough to check that $(\Res_{\alpha,R})^-$, is
integrable; indeed, this last fact derives from the isoperimetric inequality
$\Res_{\alpha,R}\geq-O((\frac{\mm(E)}{\mm(B_R)})^{\frac{1}{N}})$, as
stated in~\eqref{eq:isoperimatric-inequality-residual}.
We can now compute the integral
in~\eqref{eq:residual-is-O-R-N}
\begin{equation}
  \begin{aligned}
    \frac{1}{\mm(B_R)}
    \int_{Q_R}
  |\Res_{\alpha,R}|\,\widehat\q_R(d\alpha)
  &
  =
  \frac{1}{\mm(B_R)}
  \int_{Q_R}
  (2(\Res_{\alpha,R})^-
  +\Res_{\alpha,R})\,\widehat\q_R(d\alpha)
  \\
  &
  \leq
    O
  \left(
  \left(
  \frac{\mm(E)}{\mm(B_R)}
  \right)^{\frac{1}{N}}
  \right)
  +
\frac{1}{\mm(B_R)}
  \int_{Q_R}
  \Res_{\alpha,R}\,\widehat\q_R(d\alpha).
  \end{aligned}
\end{equation}
The first term is infinitesimal, so we focus on the second one
\begin{equation}
  \begin{aligned}
  \int_{Q_R}
  \Res_{\alpha,R}\,\widehat\q_R(d\alpha)
  &
  =
  \int_{Q_R}
  \left(
  \frac{(R+\diam(E))\PP_{\alpha,R}(E)}{N}
  \left(
  \frac{\mm(B_R)}{\mm(E)}
  \right)^{1-\frac{1}{N}}
  -1
  \right)
  \,\widehat\q_R(d\alpha)
  \\
  &
  =
  \frac{R+\diam(E)}{\mm(B_R)^{\frac{1}{N}-1}\,N\mm(E)^{1-\frac{1}{N}}}
  \,
  \int_{Q_R}
  \PP_{\alpha,R}(E)
  \,\widehat\q_R(d\alpha)-\mm(B_R)
  \\
  &
  \leq
  \frac{R+\diam(E)}{\mm(B_R)^{\frac{1}{N}-1}\,N\mm(E)^{1-\frac{1}{N}}}
  \,
  \PP(E)-\mm(B_R)
  \\
  &
  \leq
    \mm(B_R)
  \frac{R+\diam(E)}{\mm(B_R)^{\frac{1}{N}}}
  (\AVR_X\omega_N)^{\frac{1}{N}}-\mm(B_R),
  \end{aligned}
\end{equation}
yielding
\begin{equation}
  \begin{aligned}
\frac{1}{\mm(B_R)}
  \int_{Q_R}
  \Res_{\alpha,R}\,\q_R(d\alpha)
  \leq
  \frac{R+\diam(E)}{\mm(B_R)^{\frac{1}{N}}}
  (\AVR_X\omega_N)^{\frac{1}{N}}-1,
\end{aligned}
\end{equation}
and the r.h.s.\ goes to $0$, as $R\to\infty$.
\end{proof}

\begin{corollary}\label{cor:goodrays2}
Let $(X,\sfd,\mm)$ be an essentially non-branching $\CD(0,N)$ space
having $\AVR_{X} > 0$.
Let $E\subset X$ be a set saturating the isoperimetric inequality
\eqref{E:isopAVR}, 
then it holds true:
\begin{equation}
  \label{eq:residual-is-infinitesimal}
  \lim_{R\to\infty}
  \norm{\Res_{\QQ_R(x),R}}_{L^1(E)} = 0.
\end{equation}
\end{corollary}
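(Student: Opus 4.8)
The plan is to deduce Corollary~\ref{cor:goodrays2} from Proposition~\ref{P:goodrays2} by performing a change of variables between the two measures on the rays, namely between $\widehat\q_R$ and $\q_R$ on $Q_R$ on one side, and their pushforwards to $E$ via $\QQ_R$ on the other. The key observation is that $\Res_{\QQ_R(x),R}$ is, by definition, a function on $E$ obtained by composing $\alpha\mapsto\Res_{\alpha,R}$ with the quotient map $\QQ_R$, and that by \eqref{E:disintfinal2} we have the explicit identity $\widehat\q_R=\frac{\mm(B_R)}{\mm(E)}(\QQ_R)_\#(\mm\llcorner_E)$. So the plan is first to rewrite the $L^1(E)$-norm in \eqref{eq:residual-is-infinitesimal} as an integral against $(\QQ_R)_\#(\mm\llcorner_E)$ on $Q_R$, then identify this with a constant multiple of $\int_{Q_R}|\Res_{\alpha,R}|\,\widehat\q_R(d\alpha)$, and finally relate the latter to the quantity controlled by Proposition~\ref{P:goodrays2}.

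Concretely, I would write
\begin{equation*}
  \norm{\Res_{\QQ_R(x),R}}_{L^1(E)}
  =
  \int_E |\Res_{\QQ_R(x),R}|\,\mm(dx)
  =
  \int_{Q_R} |\Res_{\alpha,R}|\,(\QQ_R)_\#(\mm\llcorner_E)(d\alpha)
  =
  \frac{\mm(E)}{\mm(B_R)}
  \int_{Q_R} |\Res_{\alpha,R}|\,\widehat\q_R(d\alpha),
\end{equation*}
using the definition of pushforward in the middle step and \eqref{E:disintfinal2} in the last. Since $\mm(E)$ is a fixed finite positive constant, the right-hand side is exactly $\mm(E)$ times the ratio $\norm{\Res_{\alpha,R}}_{L^1(Q_R)}/\mm(B_R)$ that appears in \eqref{eq:residual-is-O-R-N}, where I must be slightly careful that the $L^1(Q_R)$ norm in Proposition~\ref{P:goodrays2} is taken with respect to $\q_R$; but the displayed computation in the proof of that proposition actually bounds $\frac{1}{\mm(B_R)}\int_{Q_R}|\Res_{\alpha,R}|\,\widehat\q_R(d\alpha)$ directly, so I would simply invoke that intermediate bound. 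Letting $R\to\infty$ and applying Proposition~\ref{P:goodrays2} then gives the claim.

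The only genuinely delicate point is the bookkeeping of \emph{which} reference measure ($\q_R$ versus $\widehat\q_R$) is attached to each $L^1$ space, since the statement of Proposition~\ref{P:goodrays2} specifies $\q_R$ while its proof manipulates $\widehat\q_R$; I would resolve this by quoting the explicit estimate $\frac{1}{\mm(B_R)}\int_{Q_R}|\Res_{\alpha,R}|\,\widehat\q_R(d\alpha)\to 0$ established inside that proof rather than its headline statement. A second minor point is measurability: $\Res_{\QQ_R(x),R}$ is measurable on $E$ because $\QQ_R$ is Borel and $\alpha\mapsto\Res_{\alpha,R}$ is measurable (as noted after Proposition~\ref{P:disintfinal} and in the disintegration setup), so the change-of-variables formula for pushforward measures applies without issue. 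Everything else is a one-line substitution, so there is no real obstacle beyond this notational care.
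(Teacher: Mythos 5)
Your proof is correct and essentially identical to the paper's: the paper computes the same quantity by inserting the disintegration $\mm\llcorner_{\widehat\T_R}=\int_{Q_R}\widehat\mm_{\alpha,R}\,\widehat\q_R(d\alpha)$ and using $\widehat\mm_{\alpha,R}(E)=\mm(E)/\mm(B_R)$, which is exactly the content of the pushforward identity $\widehat\q_R=\frac{\mm(B_R)}{\mm(E)}(\QQ_R)_\#(\mm\llcorner_E)$ that you invoke, and both arrive at $\frac{\mm(E)}{\mm(B_R)}\int_{Q_R}|\Res_{\alpha,R}|\,\widehat\q_R(d\alpha)\to0$. Your remark that the limit from Proposition~\ref{P:goodrays2} must be read with $\widehat\q_R$ as reference measure (as its proof actually establishes) is also the correct reading.
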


\begin{proof}
  A direct computation gives
  \begin{align*}
  \norm{\Res_{\QQ_R(x),R}}_{L^1(E)}
    &
      =
      \int_{Q_R}
      \int_E
      |\Res_{\QQ_R(x),R}|
      \,
      \widehat\mm_{\alpha,R}(dx)
      \,
      \widehat\q_R(d\alpha)
    \\
    &
      =
      \int_{Q_R}
      |\Res_{\alpha,R}|
      \,
      \widehat\mm_{\alpha,R}(E)
      \,
      \widehat\q_R(d\alpha)
      =
      \frac{\mm(E)}{\mm(B_R)}
      \norm{\Res_{\alpha,R}}_{L^1(Q_R)}
      \to0.
      \qedhere
  \end{align*}
\end{proof}


\section{Analysis along the good rays}\label{S:one-dim}

We now use the residual to control how distant is the density $h:[0,D']\to\R$ from the model density
  $x\in[0,D]\mapsto Nx^{N-1}/D$ as well as the one-dimensional traces of $E$ from the optimal ones. 
  %

The results in this section go in the direction of proving that,
given $D\geq D'>0$, $h:[0,D']\to\R$ a $\CD(0,N)$ density, a subset
$E\subset [0,D']$, if the measure $\mm_h(E)$ and the residual
$\Res_h^D(E)$ are small, then the set $E$ is closed to the interval
$[0,D \mm_h(E)^{\frac{1}{N}}]$ and the density $h$ is closed to the model
density $Nx^{N-1}/D$.

\begin{remark}
We will make an extensive use of the Landau's ``big-O'' and
``small-o'' notation.
If we are in a situation where several variables appears, but only a
few of them are converging, either the ``big-O'' or ``small-o'' could
depend on the non-converging variables.

In our setting, the converging variables will be $w\to0$ and $\delta\to0$.
The free variables will be: 1) $D$, a bound from above on the
diameter of the space; 2) $D'\in(0,D]$, the diameter of the space; 3)
$h:[0,D']$ a $\CD(0,N)$ density; 4) $E\subset[0,D']$ a set with
measure $\mm_h(E)=w$ and residual $\Res_h^D(E)\leq \delta$.

The following estimates are infinitesimal expansions as
$w\to 0$ and $\delta\to 0$
and whenever a ``big-O'' or
``small-O'' appears, it has to be understood that this expression can
be substituted with a function going to $0$ with the same order 
uniformly w.r.t.\ the other free  variables.
\end{remark}

\begin{remark}
Another point to remark is the fact that we focus only on the case
when $E$ is on the left.
We will sometimes assume that $E$ is of the form
$[0,r]\subset [0,D']$ and sometimes that $E\subset [0,L]$, with the
tacit understanding that $r\ll D'$ or $L\ll D'$.
This is possible because the rays come from the $L^1$-optimal transport problem from the measure
$\frac{\mm\llcorner_E}{\mm(E)}$ to the measure
$\frac{\mm\llcorner_{B_R}}{\mm(B_R)}$, where $E$ is our original set.
Hence the rays are lines starting from $E$ and going away,
thus the
intersection of $E$ with any ray lays at the beginning of the ray.
\end{remark}

%
%
%
%
%
%
%

\subsection{Almost rigidity of the diameter}
We start our analysis focusing on the diameter of the space:
the inequality $D\geq D'$ tends to be saturated if
$\mm_h(E)=w\to0$ and $\Res_h^D(E)\leq\delta\to0$.
It follows from the fact that the isoperimetric profile $\I_{N,D}$ scales according to $D$.

\begin{proposition}\label{P:almost-rigidity-diameter}
Fix $N>1$.
The following estimates hold for $w\to0$ and $\delta\to 0$
\begin{equation}
    \label{eq:almost-rigidity-diameter}
    D'
    \geq
    D
    (
    1-o(1)
    )
    ,
\end{equation}
where $D\geq D'>0$ and $h:[0,D']\to\R$ is a $\CD(0,N)$ density such
that $E\subset[0,D']$ is a subset satisfying $\mm_h(E)=w$ and
$\Res_h^D(E)\leq \delta$.
\end{proposition}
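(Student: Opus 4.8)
The plan is to exploit the scaling of the one-dimensional isoperimetric profile under dilation of the interval together with the sharp lower bound of Corollary~\ref{cor:milman-isoperimetric-estimate}. Recall that $\Res_h^D(E)\le\delta$ means precisely
$$
\PP_h(E)\le (1+\delta)\,\frac{N}{D}\,\mm_h(E)^{1-\frac1N}
=(1+\delta)\,\frac{N}{D}\,w^{1-\frac1N}.
$$
On the other hand, $h$ is a $\CD(0,N)$ density on $[0,D']$, so Corollary~\ref{cor:milman-isoperimetric-estimate} (applied with $D'$ itself playing the role of the diameter bound) gives the matching lower bound
$$
\PP_h(E)\ge \I_h(w)\ge \frac{N}{D'}\,w^{1-\frac1N}\bigl(1-O(w^{\frac1N})\bigr).
$$
Combining the two displays and cancelling the common factor $\frac{N}{N}w^{1-\frac1N}>0$ yields
$$
\frac{1}{D'}\bigl(1-O(w^{\frac1N})\bigr)\le \frac{1+\delta}{D},
$$
hence
$$
D'\ge D\,\frac{1-O(w^{\frac1N})}{1+\delta}=D\,(1-o(1))\qquad\text{as }w\to0,\ \delta\to0,
$$
which is exactly~\eqref{eq:almost-rigidity-diameter}. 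Note the $o(1)$ here is uniform in the remaining free variables $D,D',h,E$, because the $O(w^{\frac1N})$ term from Corollary~\ref{cor:milman-isoperimetric-estimate} is uniform, as remarked in the preceding Remark.

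The only subtlety — and the single point that needs a line of care rather than being a one-liner — is the edge case where the set $E$ has no finite perimeter or $\PP_h(E)$ is not controlled by $\I_h(w)$; but this cannot occur, since if $\Res_h^D(E)\le\delta<\infty$ then in particular $\PP_h(E)<\infty$, so $E$ is a set of finite perimeter in $([0,D'],|\cdot|,h\L^1)$ and the bound $\PP_h(E)\ge\I_h(w)$ is automatic from the definition of $\I_h$. One should also observe that the inequality is vacuous (and the statement trivially true) when no such configuration exists, i.e.\ when $w$ or $\delta$ is too large for a density with $\Res_h^D(E)\le\delta$ to exist; the estimate is an asymptotic statement and is only asserted in the regime $w\to0$, $\delta\to0$, where Corollary~\ref{cor:milman-isoperimetric-estimate} applies with a genuinely infinitesimal error.

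I do not expect any real obstacle here: the whole proposition is a repackaging of the scaling $\I_{N,D}(v)=\tfrac{D'}{D}\I_{N,D'}(v)$ built into the explicit formula~\eqref{E:isoperi}, combined with Milman's sharp bound. The main thing to get right is bookkeeping: making sure the $O(w^{1/N})$ error and the $\delta$ are kept separate and then absorbed together into a single $o(1)$, and making sure the uniformity claim (independence of the $o(1)$ from $D,D',h,E$) is inherited correctly from Corollary~\ref{cor:milman-isoperimetric-estimate}. This is the first and easiest of the almost-rigidity statements in Section~\ref{S:one-dim}; the genuinely delicate estimates — controlling the shape of $E$ and the pointwise behaviour of $h$ — come afterwards.
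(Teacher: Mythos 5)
Your proof is correct and is essentially the paper's own argument: the paper phrases it as the identity $\frac{D'}{D}=\frac{1+\Res_h^{D'}(E)}{1+\Res_h^{D}(E)}$ and then bounds the numerator from below by $1-O(w^{1/N})$ via Corollary~\ref{cor:milman-isoperimetric-estimate} and the denominator from above by $1+\delta$, which is exactly your combination of the upper bound on $\PP_h(E)$ from the residual hypothesis (with diameter $D$) and the lower bound from Milman's estimate (with diameter $D'$). Only a cosmetic slip: the cancelled factor is $N\,w^{1-\frac1N}$, not $\frac{N}{N}w^{1-\frac1N}$.
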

\begin{proof}
The definition of residual~\eqref{eq:residual-definition} gives
\begin{equation}
  \frac{N}{D'}w^{1-\frac{1}{N}}(1+\Res_h^{D'}(E))
  =
  \PP_h(E)
  =
  \frac{N}{D}w^{1-\frac{1}{N}}(1+\Res_h^{D}(E)).
\end{equation}
Since $\Res_h^D(E)\geq  O(w^{\frac{1}{N}})$
by~\eqref{eq:isoperimatric-inequality-residual}, if $w$ is small
enough, we can multiply by the factor
$D'w^{\frac{1}{N}-1}/(N(1+\Res_h^D(E))$, obtaining
\begin{equation*}
  \frac{D'}{D}
  =
  \frac{1+\Res_h^{D'}(E)}{1+\Res_h^{D}(E)}
  \geq
  \frac{1-O(w^{\frac{1}{N}})}{1+\Res_h^{D}(E)}
  \geq
  \frac{1-O(w^{\frac{1}{N}})}{1+\delta}
  =
  1-o(1)
  .
  \qedhere
\end{equation*}
\end{proof}

\subsection{Almost rigidity of the set \texorpdfstring{$E$}{E}: the convex case}

We now prove that the set $E$ has to be close to
$[0,D\mm_h(E)^{\frac{1}{N}}]$.
We start considering the
special case when the set $E$ of the form $E=[0,r]$.
%

\begin{proposition}\label{P:almost-rigidity-segment}
Fix $N>1$.
The following estimates hold for $w\to0$ and $\delta\to 0$
\begin{align}
  &
  \label{eq:almost-rigidity-r-above}
  r_h(w)
  \leq
  D(w^{\frac{1}{N}}(1+o(1)))
  ,
  \\&
  \label{eq:almost-rigidity-r-below}
  r_h(w)
  \geq
  D(w^{\frac{1}{N}}(1+o(1)))
  ,
\end{align}
where $D\geq D'>0$ and $h:[0,D']\to\R$ is a $\CD(0,N)$ density such
that $\Res_h^D(w)=\Res_h^D([0,r_h(w)])\leq \delta$.
\end{proposition}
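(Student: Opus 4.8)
Throughout write $\phi:=h^{1/(N-1)}$, which is concave and nonnegative on $[0,D']$ since $h$ is a $\CD(0,N)$ density, and abbreviate $r:=r_h(w)$, $b:=\phi(r)$, so that $w=\int_0^r\phi^{N-1}$ and $\PP_h([0,r])=h(r)=b^{N-1}$ (here $0<r<D'$ in the regime $w\to0$). I would prove the two bounds separately.

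\emph{The upper bound \eqref{eq:almost-rigidity-r-above}.} This does not even use the hypothesis $\Res_h^D(w)\le\delta$. Concavity of $\phi$ together with $\phi\ge0$ gives $\phi(t)\ge(t/r)\,\phi(r)$ on $[0,r]$, hence $w=\int_0^r\phi^{N-1}\ge\frac{r}{N}h(r)$. On the other hand, by Corollary~\ref{cor:milman-isoperimetric-estimate} applied to $[0,r]$, $h(r)=\PP_h([0,r])\ge\I_{N,D}(w)\ge\frac{N}{D}w^{1-1/N}\bigl(1-O(w^{1/N})\bigr)$. Combining, $w\ge\frac{r}{D}w^{1-1/N}(1-O(w^{1/N}))$, i.e.\ $r\le Dw^{1/N}(1+O(w^{1/N}))=Dw^{1/N}(1+o(1))$. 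Since $w$ played no special role, this also gives the a priori estimate $r_h(v)\le Dv^{1/N}(1+O(v^{1/N}))$ for every $v\in(0,1/2)$, which I reuse below.

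\emph{The lower bound \eqref{eq:almost-rigidity-r-below}.} I would start from the identity $r=r_h(w)=\int_0^w\frac{dv}{h(r_h(v))}$ and show that $h(r_h(v))\le\frac{N}{D}v^{1-1/N}(1+o(1))$, i.e.\ $\Res_h^D(v)=o(1)$, for $v\in[\sigma w,w]$ with $\sigma=\sigma(\delta,w)\to0$. The tail is harmless: by \eqref{eq:isoperimatric-inequality-residual} one has $h(r_h(v))\ge\frac{N}{D}v^{1-1/N}(1-O(v^{1/N}))$, so $\int_0^{\sigma w}\frac{dv}{h(r_h(v))}\le 2D(\sigma w)^{1/N}$, while $\int_{\sigma w}^w\frac{dv}{h(r_h(v))}\ge Dw^{1/N}(1-\sigma^{1/N})(1-o(1))$; letting $w,\delta\to0$ and then $\sigma\to0$ yields $r\ge Dw^{1/N}(1-o(1))$. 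It therefore remains to prove $\Res_h^D(v)=o(1)$ on $[\sigma w,w]$, and this is where the still-unused hypotheses enter. From $\Res_h^D(w)\le\delta$ we get $b^{N-1}=h(r)\le\frac{N}{D}w^{1-1/N}(1+\delta)$, which together with the lower bound above pins $h(r)=\frac{N}{D}w^{1-1/N}(1+o(1))$, in particular $b=O(w^{1/N})$. Now the normalization $\int_0^{D'}\phi^{N-1}=1$ forces $\phi$ to recover almost all its mass on $[r,D']$: since $\phi(r)=b$ is tiny and $D'-r\le D$, $\phi$ must be increasing on $[0,r]$ (its maximum lies beyond $r$), and bounding $\phi$ on $[r,D']$ by $t\mapsto b+\phi'(r^+)(t-r)$ with $\phi'(r^+)\le(b-\phi(0))/r$ forces $(b-\phi(0))/r\ge N^{1/(N-1)}D^{-N/(N-1)}(1-o(1))$. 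Combined with the near-maximality $D'\ge D(1-o(1))$ of Proposition~\ref{P:almost-rigidity-diameter}, one checks that this squeezes the rescaled profile $t\mapsto\phi(rt)/\phi(r)$ on $[0,1]$ uniformly close to the linear model $t\mapsto t$ (equivalently $\phi(0)=o(b)$ and $\phi$ nearly linear through the origin with the model slope), whence $w=\int_0^r\phi^{N-1}=\frac{r}{N}h(r)(1+o(1))$ and the same conclusion holds on each sub-interval $[0,r_h(v)]\subset[0,r]$, giving $\Res_h^D(v)=o(1)$ there.

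The heart of the matter — and the step I expect to be the main obstacle — is exactly this last rigidity assertion: extracting from the single scalar inequality $\Res_h^D(w)\le\delta$, together with the one-dimensional $\CD(0,N)$ constraint and the bounds $D(1-o(1))\le D'\le D$, a quantitative $L^1$-closeness of $h|_{[0,r]}$ to the model density $x\mapsto Nx^{N-1}/D$ with an error $o(1)$ that is \emph{uniform} in the free parameters $D,D',h,E$. The delicate point is to recover the \emph{sharp} constant (i.e.\ the relation $w=\frac{r}{N}h(r)(1+o(1))$, equivalently $r=Dw^{1/N}(1+o(1))$) rather than the version off by a factor $N$ that the crude concavity estimates $\frac{r}{N}h(r)\le w\le r\,h(r)$ produce by themselves; this is precisely why the full strength of the mass normalization and of Proposition~\ref{P:almost-rigidity-diameter} must be used and cannot be replaced by concavity alone.
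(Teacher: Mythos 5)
Your proof of the upper bound \eqref{eq:almost-rigidity-r-above} is correct and is genuinely different from (and simpler than) the paper's: you combine the chord bound $\phi(t)\ge \tfrac{t}{r}\phi(r)$ on $[0,r]$ (so $w\ge \tfrac{r}{N}h(r)$) with the isoperimetric lower bound on $h(r)$, and indeed no residual hypothesis is needed; the paper instead integrates the Bishop--Gromov comparison on $[r,D']$ and does use $\Res_h^D(w)\le\delta$. Both are fine.

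The lower bound, however, has a genuine gap exactly where you flag "one checks that this squeezes the rescaled profile close to the linear model". The facts you have actually established are: (i) $b:=\phi(r)=h(r)^{1/(N-1)}$ is pinned, $b= (N/D)^{1/(N-1)}w^{1/N}(1+o(1))$; and (ii) the mass constraint on $[r,D']$ forces $\phi'(r^+)\ge s_0(1-o(1))$ with $s_0:=N^{1/(N-1)}D^{-N/(N-1)}$, hence $(b-\phi(0))/r\ge s_0(1-o(1))$. But (ii) only yields $\phi(0)\le b-rs_0(1-o(1))$, which gives $\phi(0)=o(b)$ precisely when $r\ge Dw^{1/N}(1-o(1))$ --- i.e.\ the conclusion you are trying to prove. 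As written, the deduction "near-linearity of $\phi$ on $[0,r]$, hence $\Res_h^D(v)=o(1)$ on $[\sigma w,w]$, hence the lower bound on $r$" is circular. The missing ingredient is to use concavity \emph{on $[0,r]$ via the tangent line at $r$}: $\phi(t)\le b-\phi'(r^-)(r-t)$ with $\phi'(r^-)\ge\phi'(r^+)\ge s_0(1-o(1))$, so that
\begin{equation}
  w=\int_0^r\phi^{N-1}\le\int_0^r\bigl(b-s_0(1-o(1))(r-t)\bigr)^{N-1}\,dt
  =\frac{b^N-\bigl(b-s_0(1-o(1))r\bigr)^N}{Ns_0(1-o(1))}.
\end{equation}
Since $b^N\le Ns_0 w(1+o(1))$ by (i), this forces $\bigl(b-s_0(1-o(1))r\bigr)^N=o(w)$ and hence $r\ge b/s_0-o(w^{1/N})\ge Dw^{1/N}(1-o(1))$ directly --- no detour through $\Res_h^D(v)$ or the identity $r=\int_0^w dv/h(r_h(v))$ is needed. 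So your strategy is salvageable, but the decisive inequality is absent from the proposal. For comparison, the paper avoids this analysis altogether by constructing an explicit comparison density of the model form $h_{N,D'}(\xi,\cdot)$ with smaller residual and smaller $r$, and then proving $\xi_\delta(w)=o(w^{1/N})$ by an asymptotic study of the function $\g$.
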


\begin{proof}
  In order to simplify the notation, we write $r=r_h(w)$.
  
  \smallskip
  \noindent
  {\bf Part 1 {\rm Inequality~\eqref{eq:almost-rigidity-r-above}}.}\\%
By the $\CD(0,N)$ of the function $h$, we have that $h(x)\leq
\frac{h(r)}{r^{N-1}} x^{N-1}$, for $r \leq x \leq D'$.
If we integrate in $[r,D']$ we obtain
\begin{equation}
  1-w\leq\int_r^{D'} \frac{h(r)}{r^{N-1}} x^{N-1} dx
  =
  \frac{h(r)(D'^N-r^N)}{Nr^{N-1}}
  \leq
  \frac{h(r)D'^N}{Nr^{N-1}}
  \leq
  \frac{h(r)D^N}{Nr^{N-1}},
\end{equation}
yielding to
\begin{equation}
  r^{N-1}
  \leq
  \frac{D^N}{N(1-w)}h(r)
  =
  \frac{D^N}{N(1-w)}
  \frac{N}{D}w^{1-\frac{1}{N}}(1+\Res_h^{D}(w))
  \leq
  (Dw^{\frac{1}{N}})^{N-1}
  \frac{1+\delta}{1-w}
  .
\end{equation}

\smallskip\noindent
{\bf Part 2 {\rm Inequality~\eqref{eq:almost-rigidity-r-below}}.}\\%
This second part is a bit more difficult.
The first step is to show that we can lead back ourselves to the case
of model spaces, namely that we can assume $h=h_{N,D'}(\xi,\cdot)$ for
some $\xi\geq 0$ (cfr.~\eqref{eq:definition-model-density}).
That is, we want to show that given $h$, we find $\xi$, such that
$\Res_{h_{N,D'}(\xi,\cdot)}^D(w)\leq\Res_h^D(w)\leq\delta$ and
$r_{h_{N,D'}(\xi,\cdot)}(w)\leq r$.

To this extent, consider the function $s:[0,\infty)\to\R$ given by
\begin{equation}
  s(a):=\int_r^{D'}\left(h(r)^{\frac{1}{N-1}}+a(x-r)\right)^{N-1} dx.
\end{equation}
Clearly this function is strictly increasing and it holds
\begin{align}
  &
    s\left(\frac{h(r)^{\frac{1}{N-1}}}{r}\right)=
    \int_r^{D'}\frac{h(r)}{r^{N-1}}x^{N-1} dx
    \geq
    \int_r^{D'}h(x) dx
    =1-w,
  \\&
  s(0)=(D'-r)h(r)
  =
  (D'-r) \frac{N}{D} w^{1-\frac{1}{N}}(1+\Res_h^D(w))
  \leq 2N w_N^{1-\frac{1}{N}}
  <1-w_N\leq 1-w,
\end{align}
where in the second line we assumed that $\Res_h^D(w)\leq\delta\leq 1$
and $w\leq w_N$ (for some $w_N>0$ depending only on $N$), which is
possible since $w\to0$ and $\delta\to0$.
From the two inequalities above, it follows that there exist a unique
$a\in(0,h(r)^{\frac{1}{N-1}}/r]$, such that $s(a)=1-w$.
We can define the $\CD(0,N)$ density $\bar h(x):=(h(r)^{\frac{1}{N-1}}+a(x-r))^{N-1}$, which
satisfies 
\begin{align}
  &
    \label{eq:condizione-h-barrato}
    \int_r^{D'} \bar h(x) dx=\int_r^{D'} h(x) dx=1-w.
\end{align}
By mean-value theorem, there exists $y\in(r,D')$ such that $h(y)=\bar
h(y)$, thus, by convexity of $h^{\frac{1}{N-1}}$, $\bar h(x)\geq h(x)$
for all $x\in[0,r]$.
This implies that
\begin{equation}
\int_0^r \bar h(x)\,dx\geq\int_0^r h(x)\,dx=w.
\end{equation}

%
%
%

%
Define
\begin{align*}
V:=&
 \int_0^{D'} \bar h(x) dx=
 \frac{(h(r)^{\frac{1}{N-1}}+a(D'-r))^N-(h(r)^{\frac{1}{N-1}}-ar)^N}{Na}
  \\
  \nonumber
   =
 &
   \int_r^{D'} \bar h(x) dx
   +
   \int_0^{r} \bar h(x) dx
   \geq
   1-w
   +
   \int_0^{r} h(x) dx
   =1,
  \\
  \bar r
  :=
  &\,
    r_{\bar h}(wV)
 \leq
 r_{\bar h}(V-(1-w))
 =
 r_{\bar h}\left(\int_0^r \bar h(x) dx\right)=
 r,
\end{align*}
where $wV\leq V -(1-w)$ follows from $1-w\in[0,1]$ and $V\geq 1$.
Finally, we renormalize $\bar h$, defining
$$
  \widehat h(x):=\frac{\bar h(x)}{V}=
  Na \frac{(h(r)^{\frac{1}{N-1}}+a(x-r))^{N-1}}
                 {(h(r)^{\frac{1}{N-1}}+a(D'-r))^N-(h(r)^{\frac{1}{N-1}}-ar)^N}.
$$
If we set $\xi=\frac{h(r)^{\frac{1}{N-1}}-ar}{aD'}\geq0$, then it
turns out that (cfr.~\eqref{eq:definition-model-density})
$$
  \widehat h(x)=h_{N,D'}(\xi,x)=\frac{N(x+D'\xi)^{N-1}}{D'^N((1+\xi)^N-\xi^N)}.
$$
This function satisfies
\begin{equation}
  r_{N,D'}(\xi,w)=\bar r\leq  r_h(w)
  \quad\text{ and }\quad
  h_{N,D'}(\xi,\bar r)\leq \bar h(\bar r)
  \leq \bar h(r)
  = h(r)
\end{equation}
(the inequality $\bar h(\bar r)\leq \bar h(r)$ follows from the
fact that $a\geq 0$, hence $\bar h$ is non increasing).
This latter inequality can be restated as
\begin{equation}
  \label{E:reduction}
  \Res_{h_{N,D'}(\xi,\cdot)}^D(w)
  \leq
  \Res_h^D(w)\leq\delta.
\end{equation}
For this reason we can assume that $h$ is of the type
$h_{N,D'}(\cdot,\xi)$ for some $\xi\geq0$.

Recalling Equation~\eqref{eq:model-ray}, we notice that
\begin{equation}
  \label{eq:xi-should-be-small}
  r_{N,D'}(\xi,w)=D'\left((w(1+\xi)^N+ (1-w)\xi^N)^{\frac{1}{N}}-\xi\right)
  \geq D' (w^{\frac{1}{N}}-\xi).
\end{equation}
What we are going to prove is that $\xi$ is ``small'' in a sense that
we will soon specify.
%
%
Using the definition of residual, inequality~\eqref{E:reduction} can
be restated as (we already defined $\g$ in
Equation~\eqref{eq:definition-g})
\begin{equation}
  \label{eq:ipotesi-su-xi}
  \g(\xi,w)=
  \frac{((1+\xi)^N+(\frac{1}{w}-1)\xi^N)^{\frac{N-1}{N}}}
  {(1+\xi)^N-\xi^N}
  \leq
  \frac{D'}{D}(1+\delta)
  \leq
  1+\delta.
\end{equation}
%
Define the set
\begin{equation}
  L_\delta(w):=\{\xi:\g(\eta,w)> 1+\delta:\forall \eta>\xi\}.
\end{equation}
We have already proved in~\eqref{eq:limit-of-g} that
$\lim_{\xi\to\infty}\g(\xi,w)=N^{-1}w^{\frac{1-N}{N}}$, hence the set
$L_\delta(w)$ is non-empty.

At this point define the
function
$\xi_\delta(w):=\inf L_\delta(w)$.
By the definition of $\xi_\delta$ and the continuity of $\g$, it clearly
holds that
\begin{align}
  &
    \g(\xi,w)\leq1+\delta
    \quad \implies\quad
    \xi\leq\xi_\delta(w),
  \\&
  \g(\xi_\delta(w),w)=1+\delta.
\end{align}
Now we follow the line of the proof of
Proposition~\ref{lem:milman-estimate}.
First, like in~\eqref{eq:xi-is-bounded}, we can see that
$\xi_\delta(w)$ is bounded as $w\to0$ and $\delta\to0$.
Indeed, suppose the contrary, i.e.,\ that there exists two sequences
$w_n\to0$ and $\delta_n\to0$ such that $\xi_{\delta_n}(w_n)\to\infty$.
Then we have
\begin{align*}
  1
  &
    \geq
    \limsup_{n\to\infty}
    \g(\xi_{\delta_n}(w_n),w_n)
    \geq
    \limsup_{n\to\infty}
    \frac{(\frac{1}{w_n}-1)^{\frac{N-1}{N}}
      \xi_{\delta_n}(w_n)^{N-1}}{(\xi_{\delta_n}(w_n)+1)^{N}-\xi_{\delta_n}(w_n)^{N}}
    \\&
    \geq
    \limsup_{n\to\infty}
  \frac{
  (\frac{1}{w_n}-1)^{\frac{N-1}{N}}
  }{
  \xi_{\delta_n}(w_n) \left(\left(\frac{1}{\xi_{\delta_n}(w_n)}+1\right)^{N}-1\right)}
    =\infty,
\end{align*}
which is a contradiction.
Like in~\eqref{eq:xi-is-big-o} we can prove that $\xi_\delta(w)\to 0$,
as $w\to0$ and $\delta\to0$:
\begin{align*}
  1
  &
  \geq
  \limsup_{\substack{
    w\to0\\
      \delta\to0
    }}
  \g(\xi_\delta(w),w)
  \geq
  \limsup_{\substack{
    w\to0\\
      \delta\to0
    }}
  \frac{
    ((\frac{1}{w}-1)\xi_\delta(w)^N)^{\frac{N-1}{N}}
  }{
  (\xi_{\delta}(w)+1)^{N}-\xi_{\delta}(w)^{N}
  }
  \\
  &
  \geq
  \limsup_{\substack{
    w\to0\\
      \delta\to0
    }}
  \frac{
    ((\frac{1}{w}-1)\xi_\delta(w)^N)^{\frac{N-1}{N}}
  }{
    C
  }
  .
\end{align*}
Finally, like in~\eqref{eq:xi-is-small-o}, we have that
\begin{equation*}
  \begin{aligned}
1=  \lim_{\substack{
    w\to0\\
      \delta\to0
    }}
  \g(\xi_\delta,(w),w)
  &
  =
  \lim_{\substack{
    w\to0\\
      \delta\to0
    }}
  \frac{((1+\xi_\delta(w))^N+(\frac{1}{w}-1)\xi_\delta(w)^N)^{\frac{N-1}{N}}}
  {(1+\xi_\delta(w))^N-\xi_\delta(w)^N}
  \\&
  =
  \lim_{\substack{
    w\to0\\
      \delta\to0
    }}
  \left(1+\frac{\xi_\delta(w)^N}{w}\right)^{\frac{N-1}{N}},
\end{aligned}
\end{equation*}
yielding
\begin{equation}
  \lim_{\substack{
    w\to0\\
      \delta\to0
    }}
  \frac{\xi_\delta(w)^N}{w}
  =
  1.
\end{equation}
Using Landau's notation, the above becomes
$\xi_\delta(w)=o(w^{\frac{1}{N}})$, as $w\to0$ and $\delta\to 0$.

At this point we can recall~\eqref{eq:xi-should-be-small}, obtaining
\begin{equation}
  r_{N,D'}(\xi_\delta(w),w)
  \geq
  D'(w^{\frac{1}{N}}-\xi_\delta(w))
  \geq
  D'(w^{\frac{1}{N}}-o(w^{\frac{1}{N}})).
\end{equation}
If we use the
estimate~\eqref{eq:almost-rigidity-diameter}, we can continue the chain
on inequalities and conclude:
\begin{equation*}
  \frac{r_{N,D'}(\xi_\delta(w),w)}{D}
  \geq
  \frac{D'}{D}(w^{\frac{1}{N}}-o(w^{\frac{1}{N}}))
  \geq
  (1-o(1))(w^{\frac{1}{N}}-o(w^{\frac{1}{N}}))
  =w^{\frac{1}{N}}(1-o(1)).\qedhere
\end{equation*}
\end{proof}


\subsection{Almost rigidity of the set \texorpdfstring{$E$}{E}: the
  general case}
\label{Ss:rigidity-non-convex}
We now drop the assumption $E=[0,r]$.
Up to a negligible set, 
$E=\bigcup_{i \in \N}(a_i,b_i)$  where the intervals $(a_i,b_i)$ are far away
from each other (i.e.\ $b_i<a_j$ or $b_j<a_i$, for $i\neq j$).
By boundedness of the original set of our isoperimetric problem,
we can also assume that $E$ is included in the interval $[0,L]$, for some
$L>0$. Define $b(E):=\esssup E\leq L$. 

In the next proposition we exclude the existence 
of a sequence such that
$(a_{i_n},b_{i_n})$ goes to $b(E)$.

\begin{lemma}\label{lem:increasing}
Fix $N>1$ and $L>0$.
Then there exists two constants $\bar w>0$ and $\bar\delta>0$
(depending only on $N$ and $L$) such that the following happens.
For all $D\geq D'>0$ with
$D\geq 3L$, for all $h:[0,D']\to\R$ satisfying the $\CD(0,N)$
condition, and for all $E\subset [0,L]$, such that $\mm_h(E)\leq\bar w$
and $\Res_h^D(E)\leq\bar\delta$,  there exists
$a\in[0,b(E))$ and an at most countable family of intervals
$((a_i,b_i))_i$ such that, up to a negligible set,
\begin{align}
&
E
  =
  \bigcup_{i}
  (a_i,b_i)
  \cup(a,b(E)),
\end{align}
with $a_i,b_i< a$, $\forall i$.

Moreover,  $h$ is strictly increasing on $[0,b(E)]$.
\end{lemma}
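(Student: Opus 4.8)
The plan is to exploit the residual bound together with the one-dimensional $\CD(0,N)$ structure to show that $h$ cannot "flatten out" and then "climb again" at the right end of $E$. First I would recall that, by the $\CD(0,N)$ condition, $h^{1/(N-1)}$ is concave on $[0,D']$, hence $h$ is unimodal: there is a point $x_{\max}$ such that $h$ is nondecreasing on $[0,x_{\max}]$ and nonincreasing on $[x_{\max},D']$. The key claim is that $x_{\max}\geq b(E)$ when $\mm_h(E)=w$ and $\Res_h^D(E)\leq\delta$ are small enough. If this holds, then $h$ is nondecreasing on $[0,b(E)]$; upgrading "nondecreasing" to "strictly increasing" then follows because a $\CD(0,N)$ density with $h>0$ that is constant on a subinterval would force, by concavity of $h^{1/(N-1)}$, that $h$ is constant on all of $[0,x_{\max}]$, and in particular $h(0)>0$; but then by Part~1 of Proposition~\ref{P:almost-rigidity-segment}-type estimates (or directly from $\Res_h^D([0,r_h(w)])\le\delta$ and $\mm_h(E)\le w$) one gets a contradiction for $w,\delta$ small, since $h(0)>0$ prevents $r_h(w)$ from being of order $Dw^{1/N}$ — this is where the constants $\bar w,\bar\delta$ (depending on $N$ and $L$ through the relation $D\ge 3L$) enter.

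The heart of the argument is proving $x_{\max}\ge b(E)$. Suppose not: then $x_{\max}<b(E)\le L$, so $h$ achieves its maximum strictly before the right endpoint of $E$ and is strictly decreasing somewhere on $(x_{\max},b(E))$. I would compare $E$ with the "pushed-left" competitor $E'=[0,r_h(w)]$ and quantify the perimeter gap. Writing $E=\bigcup_i(a_i,b_i)$, the perimeter $\PP_h(E)=\sum_{i:a_i\ne 0}h(a_i)+\sum_{i:b_i\ne D'}h(b_i)$ contains the term $h(b(E))$ coming from the rightmost endpoint. If $x_{\max}<b(E)$, then $h(b(E))$ is bounded below by a comparison with the model: since $h^{1/(N-1)}$ is concave and $h$ has a non-degenerate decreasing part, $h(b(E))$ cannot be too small relative to $h(r_h(w))\sim \tfrac{N}{D}w^{(N-1)/N}$, while at the same time the leftmost contribution of $\PP_h(E)$ is already at least $\cI_{N,D}(\mm_h(E))\sim \tfrac{N}{D}w^{(N-1)/N}$ by Corollary~\ref{cor:milman-isoperimetric-estimate}. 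Having (essentially) two such terms forces $\Res_h^D(E)$ to be bounded below by a positive constant depending only on $N$ and $L$, contradicting $\Res_h^D(E)\le\bar\delta$ once $\bar\delta$ is chosen small. To make the lower bound on $h(b(E))$ effective and uniform, I would use that $b(E)\le L\le D'/3$, so $b(E)$ is well inside the interval and the concavity of $h^{1/(N-1)}$ on $[b(E),D']$ (with $D'\ge D/3\ge L$, using Proposition~\ref{P:almost-rigidity-diameter} to control $D'/D$) gives a two-sided comparison of $h(b(E))$ with $h(x_{\max})\ge h(r_h(w))$.

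Once $x_{\max}\ge b(E)$ is established, the decomposition claim is immediate: writing $E=\bigcup_i(a_i,b_i)$ and letting $a:=\esssup\{a_i\}$ over the interval whose right endpoint realizes $b(E)=\esssup E$ (equivalently, $(a,b(E))$ is the connected component of $E$ containing points arbitrarily close to $b(E)$), all other intervals have endpoints $\le a < b(E)$; they cannot accumulate at $b(E)$ because $h$ is strictly increasing near $b(E)$, so a tail of small intervals near $b(E)$ would each contribute two endpoint-perimeter terms of size $\sim h(b(E))$, again blowing up $\Res_h^D(E)$. I would phrase this last point as: the number of components of $E$ meeting $(a,b(E))$ must be exactly one, for otherwise $\PP_h(E)$ exceeds $(1+\bar\delta)\tfrac{N}{D}w^{(N-1)/N}$.

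\textbf{Main obstacle.} The delicate step is making the lower bound on $h(b(E))$ — and hence on the "wasted" perimeter — genuinely \emph{uniform} in the free variables $D,D',h,E$, using only $\mm_h(E)\le \bar w$, $\Res_h^D(E)\le\bar\delta$, and $D\ge 3L$. The role of the hypothesis $D\ge 3L$ is precisely to keep $b(E)$ bounded away from the right endpoint $D'$ (after controlling $D'/D$ via Proposition~\ref{P:almost-rigidity-diameter}), so that the concave function $h^{1/(N-1)}$ cannot decay to near-zero across $[b(E),D']$ without creating a large perimeter defect; quantifying this decay rate against the normalization $\int_0^{D'}h=1$ is the computation that produces the constants $\bar w$ and $\bar\delta$.
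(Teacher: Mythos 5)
Your overall architecture (unimodality of $h$ from concavity of $h^{1/(N-1)}$, reduction to showing $h$ does not turn around before $b(E)$, and the role of $b(E)\le L\le D'/2$ and of the normalization $\int_0^{D'}h=1$) points in the right direction, but the central contradiction mechanism you propose does not close. You argue that $x_{\max}<b(E)$ would produce \emph{two} perimeter contributions of size $\sim\frac{N}{D}w^{1-1/N}$ (one being $h(b(E))$, the other ``the leftmost contribution, already at least $\cI_{N,D}(w)$''), forcing $\Res_h^D(E)\gtrsim 1$. This fails on two counts. First, there need not be a second term: if $E=[0,b(E))$ is a single interval starting at $0$, the entire perimeter is the single term $h(b(E))$, and the isoperimetric inequality lower-bounds the \emph{total} perimeter, not a proper part of it, so you cannot stack $\cI_{N,D}(w)$ on top of $h(b(E))$. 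Second, the lower bound $h(b(E))\gtrsim \frac{N}{D}w^{1-1/N}$ is not justified by ``$h$ has a non-degenerate decreasing part'' — a decreasing $h$ past $x_{\max}$ gives no such bound. The argument that actually works (and is the paper's) is a pure mass argument: if the right derivative of $h^{1/(N-1)}$ at $b(E)$ is $\le 0$, concavity gives $h(x)\le h(b(E))\bigl(\frac{D'-x}{D'-b(E)}\bigr)^{N-1}$ on $[0,b(E)]$ and $h\le h(b(E))$ on $[b(E),D']$; integrating and using $b(E)/D'\le 1/2$ yields $1\le C_N\,D'\,h(b(E))\le C_N\,D\,\PP_h(E)\le C_N'\,w^{1-1/N}(1+\delta)$, a contradiction for $w$ small. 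No second perimeter term is needed; the point is that $h(b(E))$ is forced to be of order $1/D'$, which is far larger than the residual bound permits.

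Two further steps of your sketch have genuine problems. Your upgrade from ``nondecreasing'' to ``strictly increasing'' asserts that constancy of $h$ on a subinterval of $[0,x_{\max}]$ forces $h$ to be constant on all of $[0,x_{\max}]$; this is false — concavity of $h^{1/(N-1)}$ with a flat piece on $(c,d)$ only forces the slope to be $\ge 0$ on $[0,c]$ and $\le 0$ on $[d,D']$, so $h$ may be strictly increasing on $[0,c]$ with $h(0)=0$. Moreover the fallback contradiction you invoke (``$h(0)>0$ prevents $r_h(w)\sim Dw^{1/N}$ via Proposition~\ref{P:almost-rigidity-segment}'') is circular at this stage: that proposition requires a bound on $\Res_h^D([0,r_h(w)])$, i.e.\ on $h(r_h(w))$, which is only deduced from $\Res_h^D(E)\le\delta$ \emph{after} one knows $h$ is increasing on $[0,b(E)]$. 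The clean route is to test the sign of the one-sided derivative of $h^{1/(N-1)}$ at $b(E)$ directly: if it is positive, concavity makes $h^{1/(N-1)}$ strictly increasing on all of $[0,b(E)]$ in one stroke; if it is nonpositive, the mass argument above gives the contradiction. Finally, for the decomposition, note that the countable intervals may accumulate anywhere in $[0,b(E))$; what must be excluded is only an infinite family of components whose left endpoints increase to a limit $y\in(0,L]$, and there continuity and strict positivity of $h$ at $y$ already make $\PP_h(E)=+\infty$ — no quantitative residual estimate or prior monotonicity is required for this step.
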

\begin{proof}
By Proposition~\ref{P:almost-rigidity-diameter}, we have that, if
$\mm_h(E)$ and $\Res_h^D(E)$ are small enough, then $D'$ is closed
to $D\geq 3L$ and in particular $D'\geq 2L$.
We already know that the set $E$ is of the form
$E=\bigcup_{i} (a_i,b_i)$ (up to a negligible set); our aim is to prove that there
exists $j$ such that $a_i,b_i<a_j$, for all $i\neq j$.
In this case $a=a_j$.
Suppose the contrary, i.e., $\forall j,\, \exists i\neq j$ such that
$a_i> a_j$.
With this assumption, we can build a sequence $(j_n)_n$,
so that $(a_{j_n})_n$ is increasing, thus converging to
some $y\in(0,L]$.
By continuity of $h$, we have that $h(a_{j_n})\to h(y)>0$.
We can compute the perimeter
\begin{equation}
  \infty
  =
  \sum_{n\in\N}
  h(a_{j_n})
  \leq
  \PP_h(E)
  =
  \frac{N}{D}(\mm_h(E))^{1-\frac{1}{N}}(1+\Res_h^D(E))
  <\infty,
\end{equation}
which is a contradiction.

It remains to prove that $h$ is increasing on $[0,b(E)]$.
In order to simplify the notation, let $b:=b(E)$.
Denote by
$t:=\lim_{z\searrow0}(h(b+z)^{\frac{1}{N-1}}-h(b)^{\frac{1}{N-1}})/z$
the right-derivative of $h^{\frac{1}{N-1}}$ in $b$, which must exists
because $h^{\frac{1}{N-1}}$ is concave.
We want to prove that $t>0$; from this and the fact that
$h^{\frac{1}{N-1}}$ is concave it will follow that $h$ is strictly
increasing in $[0,b]$.
Suppose the contrary, i.e., $t\leq 0$.
Then, by concavity of $h^{\frac{1}{N-1}}$, we have that
\begin{equation}
  h(x)
  \leq
  h(b)\left(\frac{D'-x}{D'-b}\right)^{N-1},
  \quad
  \forall x\in[0,b],
  \quad\text{ and }\quad
  h(x)\leq h(b),
  \quad \forall x\in[b,D'].
\end{equation}
If we integrate we obtain
\begin{equation}
  \label{eq:big-estimate-increasing}
  \begin{aligned}
    1
    &
    \leq
    \int_0^bh(b)\left(\frac{D'-x}{D'-b}\right)^{N-1}\,dx
    +
    \int_b^{D'}
      h(b)\,dx
    \\
    &
    =
    \frac{h(b)}{N}
    \left(
      \frac{D'^N-(D'-b)^N}{(D'-b)^{N-1}}
      +N(D'-b)
    \right)
    \\&
    \leq
    \frac{\PP_h(E)}{N}
    \left(
      \frac{D'^N}{(D'-b)^{N-1}}
      +ND'
    \right)
    =
    \frac{\PP_h(E)D'}{N}
    \left(
      \left(1-\frac{b}{D'}\right)^{1-N}
      +N
    \right)
    \\&
    =
    \frac{\PP_h(E)D'}{N}
    \left(
      1+(N-1)\frac{b}{D'}
      +
      o\left(\frac{b}{D'}\right)
      +N
    \right)
.
  \end{aligned}
\end{equation}
Consider the two factors in the r.h.s.\ of the
estimate above.
The former is controlled just using the definition of residual
\begin{equation}
  \frac{\PP_h(E)D'}{N}
  \leq
  \frac{\PP_h(E)D}{N}
  =
  \mm_h(E)^{1-\frac{1}{N}}
  (1+\Res_h^D(E)),
\end{equation}
and, if $\mm_h(E)\to0$ and $\Res_h^D(E)$ is bounded, then the
term above goes to $0$.
Regarding the latter factor, we just need to prove that $\frac{b}{D'}$
is bounded:
\begin{equation}
  \frac{b}{D'}
  \leq
  \frac{L}{D'}
  \leq
  \frac{L}{2L}
  =\frac{1}{2}.
\end{equation}
If we put together this last two estimates, we obtain that the r.h.s.\
of~\eqref{eq:big-estimate-increasing} converges to $0$ as
$\mm_h(E)\to0$ and $\Res_h^D(E)\to0$, whereas the l.h.s.\ is equal to
$1$, obtaining a contradiction.
\end{proof}

What we have just proven is that there exists a right-extremal
connected component for the set $E$ and this component is precisely
the interval $(a,b(E))$.
We will denote by $a(E)$ the number $a$ given by the just-proven
proposition.
Since our estimates are infinitesimal expansions in the limit as
$\mm_h(E)\to0$ and $\Res_h^D(E)\to0$, we will always assume that $\mm_h(E)\leq\bar w$ and
$\Res_h^D(E)\leq\bar\delta$, so that the expression $a(E)$ makes sense.
We will make an extensive use of the fact that $h$ is increasing in
the interval $[0,b(E)]$: since, again, our estimates are in the limit
as $\mm_h(E)\to0$ and $\Res_h^D(E)\to0$, the fact that $h$ is
increasing in $[0,b(E)]$ will be taken into account, without explicitly
referring to the previous Lemma.

We now prove that this component
$(a(E),b(E))$ tends to fill the set $E$ and that $b(E)$ converges as expected to
$D\mm_h(E)^{\frac{1}{N}}$.

\begin{proposition}\label{P:almost-rigidity-general}
Fix $N>1$ and $L>0$.
The following estimates hold for $w\to0$ and $\delta\to 0$
\begin{align}
  &
  \label{eq:almost-rigidity-b-above}
  b(E)
  \leq
    Dw^{\frac{1}{N}}
    +
    Do(w^{\frac{1}{N}})
  \\&
  \label{eq:almost-rigidity-b-below}
  b(E)
  \geq
    Dw^{\frac{1}{N}}
    -
    Do(w^{\frac{1}{N}})
  \\&
  \label{eq:almost-rigidity-a}
  a(E)
  \leq
  Do(w^{\frac{1}{N}}),
\end{align}
where $D\geq 3L$, $D'\in(0,D]$, $h:[0,D']\to\R$ is a $\CD(0,N)$ density, and
the set $E\subset [0,L]$ satisfies $\mm_h(E)=w$ and $\Res_h^D(E)\leq \delta$.
\end{proposition}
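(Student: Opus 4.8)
The plan is to bootstrap from the convex case (Proposition~\ref{P:almost-rigidity-segment}) by comparing $E$ with the pushed-down interval $[0,r_h(w)]$, exploiting that $h$ is increasing on $[0,b(E)]$ (Lemma~\ref{lem:increasing}) and that the residual $\Res_h^D(E)$ controls how much perimeter $E$ wastes compared to the optimal configuration. First I would record the "sliding" comparison: since $h$ is increasing on $[0,b(E)]$, replacing $E$ by $E'=[0,r_h(w)]$ can only decrease the perimeter, so $\PP_h(E')\le\PP_h(E)$, i.e.\ $\Res_h^D(w)\le\Res_h^D(E)\le\delta$. Proposition~\ref{P:almost-rigidity-segment} then gives $r_h(w)=Dw^{1/N}(1+o(1))$, and since $b(E)\ge r_h(w)$ always (the mass $w$ of $E$ sits inside $[0,b(E)]$ while $[0,r_h(w)]$ carries exactly mass $w$, and $h$ increasing forces $r_h(w)\le b(E)$), this immediately yields the lower bound~\eqref{eq:almost-rigidity-b-below}.

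For the upper bound~\eqref{eq:almost-rigidity-b-above} and the estimate~\eqref{eq:almost-rigidity-a} on $a(E)$, the key observation is that the perimeter of $E$ is at least $h(a(E))+h(b(E))$ (the two endpoints of the rightmost component $(a(E),b(E))$, unless $a(E)=0$, in which case only $h(b(E))$ survives, which only helps). On the other hand $\PP_h(E)=\frac{N}{D}w^{1-1/N}(1+\Res_h^D(E))\le\frac{N}{D}w^{1-1/N}(1+\delta)$. So both $h(a(E))$ and $h(b(E))$ are bounded by $\frac{N}{D}w^{1-1/N}(1+\delta)$. Now I would use the $\CD(0,N)$ lower bound on the density together with the mass constraint: writing $b=b(E)$, concavity of $h^{1/(N-1)}$ on $[0,b]$ and $h$ increasing give $h(x)\ge h(b)(x/b)^{N-1}$ is the wrong direction, so instead I use that the mass in $[0,b]$ is at least $w$, while $h$ increasing and $h(b)\le\frac{N}{D}w^{1-1/N}(1+\delta)$ give $w=\mm_h(E)\le\mm_h([0,b])\le b\cdot h(b)\le b\cdot\frac{N}{D}w^{1-1/N}(1+\delta)$, hence $b(E)\ge\frac{D}{N}w^{1/N}(1-o(1))$ — but I actually want an \emph{upper} bound on $b(E)$. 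For that, concavity of $h^{1/(N-1)}$ combined with the value $h(b)$ and the total mass $1$ forces, by the same Milman-type comparison used in Proposition~\ref{P:almost-rigidity-diameter}, that $h(b)$ cannot be too small relative to $w^{1-1/N}/D$; quantitatively, integrating the $\CD(0,N)$ upper envelope $h(x)\le\frac{h(b)}{b^{N-1}}x^{N-1}$ on $[0,b]$ gives $w\ge\int_0^b h\ge\frac{h(b)b}{N}$ only if we knew the reverse, so instead I integrate $h(x)\le h(b)$ on $[0,b]$ trivially and $h(x)\le\frac{h(b)}{b^{N-1}}x^{N-1}$ fails; the correct route is: $h$ increasing means the mass below $b$ is at most $b\,h(b)$, giving $w\le b\,h(b)$, i.e.\ $b\ge w/h(b)\ge \frac{D w^{1/N}}{N(1+\delta)}$. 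Combined with~\eqref{eq:almost-rigidity-b-below} this pins $h(b(E))$ at $\frac{N}{D}w^{1-1/N}(1+o(1))$ from both sides, and then feeding this back into the chain $h(a(E))\le\PP_h(E)-h(b(E))=\frac{N}{D}w^{1-1/N}(\Res_h^D(E)-\Res_h^D(b(E))+o(1))=\frac{N}{D}w^{1-1/N}o(1)$, so $h(a(E))=\frac{N}{D}w^{1-1/N}o(1)$; since $h(x)\ge h(a(E))$ for $x\ge a(E)$ on $[0,b]$, the mass of $E$ beyond $a(E)$ relative to the total mass forces, via the volume function $v_h$, that $a(E)=v_h^{-1}$ of a quantity $o(w)$, hence $a(E)=Do(w^{1/N})$ by Proposition~\ref{P:almost-rigidity-segment} applied with the smaller mass $o(w)$. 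Finally, once $a(E)=Do(w^{1/N})$ and $b(E)\ge Dw^{1/N}(1-o(1))$ with the mass of $(a(E),b(E))$ equal to $w-o(w)$, the identity $v_h(b(E))-v_h(a(E))=w-o(w)$ together with the sharp value of $r_h(w-o(w))$ from Proposition~\ref{P:almost-rigidity-segment} gives $b(E)\le Dw^{1/N}(1+o(1))$, completing~\eqref{eq:almost-rigidity-b-above}.

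The main obstacle I anticipate is making the "the rightmost component carries almost all the mass" step quantitative and uniform in the free variables $D,D',h,L$: one must show that the mass $\mm_h(E\setminus(a(E),b(E)))=\mm_h(E\cap[0,a(E)])$ is $o(w)$, not merely small, and this requires carefully combining the bound $h(a(E))=\frac{N}{D}w^{1-1/N}o(1)$ with monotonicity of $h$ and the inversion formula $r_h$, keeping every constant uniform. A secondary subtlety is bootstrapping Proposition~\ref{P:almost-rigidity-segment}, which is stated for mass exactly $w$, to the slightly perturbed mass $w-o(w)$ or $o(w)$; this is harmless because the $o(\cdot)$ there is uniform, but it should be spelled out. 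Once those uniformity points are handled the rest is the routine integration of $\CD(0,N)$ density envelopes already used repeatedly above.
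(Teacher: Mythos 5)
Your lower bound \eqref{eq:almost-rigidity-b-below} is correct and is exactly the paper's Part~1 ($h$ increasing on $[0,b(E)]$ gives $r_h(w)\leq b(E)$ and $\Res_h^D(w)\leq \Res_h^D(E)$, then invoke Proposition~\ref{P:almost-rigidity-segment}), and your starting point for the other two estimates --- $\PP_h(E)\geq h(a(E))+h(b(E))$ together with $h(b(E))\geq h(r_h(w))\geq \frac{N}{D}w^{1-\frac1N}(1-O(w^{\frac1N}))$, which squeezes $h(a(E))\leq \frac{N}{D}w^{1-\frac1N}o(1)$ --- is also sound and is the same mechanism the paper uses. The genuine gap is in the conversion of ``$h(a(E))$ is small'' into ``$a(E)$ is small''. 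You assert that ``the mass of $E$ beyond $a(E)$ relative to the total mass forces, via $v_h$, that $a(E)=v_h^{-1}$ of a quantity $o(w)$'', but nothing you have established gives $v_h(a(E))=o(w)$ a priori: the only available upper bound on $\mm_h([0,a(E)])$ by monotonicity is $a(E)\,h(a(E))$, which involves the unknown $a(E)$ (using $a(E)\leq L$ only yields $v_h(a(E))=o(w^{1-\frac1N})$, which is far larger than $w$), so the claim is circular. Moreover, even granting $v_h(a(E))=o(w)$, applying Proposition~\ref{P:almost-rigidity-segment} ``with the smaller mass $o(w)$'' is not legitimate as stated: estimate \eqref{eq:almost-rigidity-r-above} requires the hypothesis $\Res_h^D(v)\leq\delta$ at that mass $v$, and for $v=v_h(a(E))\ll w$ this is not automatic and is never verified. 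The correct conversion is a Bishop--Gromov comparison anchored at a point whose density is already pinned: either $h(a(E))\geq h(r_h(w))\bigl(a(E)/r_h(w)\bigr)^{N-1}$ (after first checking $a(E)\leq r_h(w)$, which the paper does by a short contradiction argument), giving $a(E)\leq r_h(w)\,o(1)^{1/(N-1)}=Do(w^{\frac1N})$; or, equivalently, integrating the envelope $h(x)\leq h(a(E))\,x^{N-1}/a(E)^{N-1}$ over $[a(E),D']$ to get $a(E)^{N-1}\leq \frac{D^N h(a(E))}{N(1-v_h(a(E)))}$ and hence the same bound. Your route as written does not close.

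Once \eqref{eq:almost-rigidity-a} is secured, your derivation of \eqref{eq:almost-rigidity-b-above} via the mass balance $v_h(b(E))\leq w+v_h(a(E))=w+o(w)$ and inversion is workable and is a mild variant of the paper's argument (the paper instead bounds $b(E)-r_h(w)\leq a(E)$ directly by comparing $\int_{r_h(w)}^{b(E)}h$ with $\int_{[0,a(E)]\setminus E}h$). For your variant you should still justify re-applying Proposition~\ref{P:almost-rigidity-segment} at the mass $w'=v_h(b(E))$; here it does go through because $\Res_h^D(w')=\frac{Dh(b(E))}{N(w')^{1-\frac1N}}-1\leq \frac{D\,\PP_h(E)}{Nw^{1-\frac1N}}-1=\Res_h^D(E)\leq\delta$, but this is precisely the kind of verification your sketch defers and that must be written out, since --- as the $a(E)$ step shows --- it is not always available.
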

\begin{proof}
  \,

\smallskip\noindent
{\bf Part 1 {\rm Inequality~\eqref{eq:almost-rigidity-b-below}}.}\\%
Since the density $h$ is strictly increasing on $[0,b(E)]$ and
$E\subset[0,b(E)]$ (up to a null measure set), we have that
$r_h(w)\leq b(E)$ and

\begin{equation}
  \Res_h^D(w)
  =
  \frac{Dh(r_h(w))}{Nw^{1-\frac{1}{N}}}-1
  \leq
  \frac{Dh(b(E))}{Nw^{1-\frac{1}{N}}}-1
  \leq
  \frac{D\PP_h(E)}{Nw^{1-\frac{1}{N}}}-1
  =
  \Res_h^D(E)\leq \delta.
\end{equation}
We now exploit Proposition~\ref{P:almost-rigidity-segment} (in
particular the estimate~\eqref{eq:almost-rigidity-r-below}), yielding
\begin{equation}
  D(w^{\frac{1}{N}}-o(w^\frac{1}{N}))
  \leq
  r_h(w)
  \leq
  b(E),
\end{equation}
and we have concluded the proof
of~\eqref{eq:almost-rigidity-b-below}.

\smallskip
\noindent
{\bf Part 2 {\rm Inequality~\eqref{eq:almost-rigidity-a}}.}\\%
First we prove that $a(E)< r_h(w)$ for $w$ and $\delta$ small
enough.
Suppose the contrary, i.e., that $a(E)\geq r_h(w)$.
This implies that $h(a(E))\geq h(r_h(w))$, hence $\PP_h(E)\geq
2h(r_h(w))$.
We deduce that
\begin{align*}
  -O(w^{\frac{1}{N}})
  &
    \leq
    \Res_h^D(w)
    =
    \frac{D h(r_h(w))}{Nw^{1-\frac{1}{N}}} -1
    \leq
    \frac{D \PP_h(E)}{2Nw^{1-\frac{1}{N}}} -1
    =
    \frac{1}{2}(\Res_h^D(E)-1)
    \leq
    \frac{\delta-1}{2}.
\end{align*}
If we take the limit as $w\to0$ and $\delta\to0$ we obtain a contradiction.

We exploit the Bishop--Gromov inequality and the isoperimetric
inequality (respectively) to obtain
\begin{align}
  &
  h(a(E))
  \geq
  h(r_h(w))
  \left(
  \frac{a(E)}{r_h(w)}
  \right)^{N-1}
  \\&
  h(b(E))
    \geq
    h(r_h(w))
    \geq
    \frac{N}{D}w^{1-\frac{1}{N}}(1-O(w^{\frac{1}{N}})),
\end{align}
Putting together the inequalities above and using the definition of
residual we obtain
\begin{equation}
  \begin{aligned}
    \frac{N}{D}w^{1-\frac{1}{N}}
    (1+\Res_h^D(E))
    &
    =
    \PP_h(E)
    \geq
    h(b(E))+h(a(E))
    \geq
    h(r_h(w))+h(a(E))
    \\&
    \geq
    h(r_h(w))
    \left(
      1
      +
      \left(
        \frac{a(E)}{r_h(w)}
      \right)
      ^{N-1}
    \right)
    \\&
    \geq
    \frac{N}{D}w^{1-\frac{1}{N}}(1-O(w^{\frac{1}{N}}))
    \left(
      1
      +
      \left(
        \frac{a(E)}{r_h(w)}
      \right)
      ^{N-1}
    \right),
  \end{aligned}
\end{equation}
yielding
\begin{equation}
  \label{eq:a-is-small}
  \begin{aligned}
    a(E)
    &
    \leq
    r_h(w)
    \left(
      \frac{
        1+\Res_h^D(E)
      }{
        1+O(w^{\frac{1}{N}})
      }
      -1
    \right)
    ^{\frac{1}{N-1}}
    \leq
    r_h(w)
    \left(
      (1+\delta)(1-O(w^{\frac{1}{N}}))
      -1
    \right)
    ^{\frac{1}{N-1}}
    \\&
    \leq
    r_h(w)\, o(1)
    \leq
    Dw^{\frac{1}{N}}(1+o(1))
    o(1)
    =
    Do(w^{\frac{1}{N}}),
  \end{aligned}
\end{equation}
where the estimate~\eqref{eq:almost-rigidity-r-above} was taken
into account.
This concludes the proof of~\eqref{eq:almost-rigidity-a}.

\smallskip\noindent
{\bf Part 3 {\rm Inequality~\eqref{eq:almost-rigidity-b-above}}.}\\%
Since
\begin{equation}
  \int_Eh=\int_0^{r_h(w)} h,
\end{equation}
 we can deduce (together with the fact that $a(E)\leq r_h(w)\leq b(E)$)
\begin{equation}
  \int_{E\cap[0,r_h(w)]} h
  +
  \int_{r_h(w)}^{b(E)} h
  =
  \int_{E\cap[0,r_h(w)]} h
  +
  \int_{[0,r_h(w)]\backslash E} h
  =
  \int_{E\cap[0,r_h(w)]} h
  +
  \int_{[0,a(E)]\backslash E} h,
\end{equation}
hence
\begin{equation}
  (b(E)-r_h(w))
  \,
  h(r_h(w))
  \leq
  \int_{r_h(w)}^{b(E)} h
  =
  \int_{[0,a(E)]\backslash E} h
  \leq
  \int_0^{a(E)} h
  \leq
  a(E)
  \,
  h(a(E)),
\end{equation}
yielding
\begin{equation}
  b(E)-r_h(w)
  \leq
  a(E)
  \,
  \frac{h(a(E))}{h(r_h(w))}
  \leq
  a(E).
\end{equation}
We conclude by combining the inequality above with the already-proven
estimate~\eqref{eq:almost-rigidity-b-below}
and the estimate~\eqref{eq:almost-rigidity-r-above} from
Proposition~\ref{P:almost-rigidity-segment}.
\end{proof}

\subsection{Almost rigidity of the density \texorpdfstring{$h$}{h}}
We now prove that the density $h$ converges to the density
of the model space $Nx^{N-1}/D^N$.
Relying  on the Bishop--Gromov inequality,
we obtain an estimate of $h$ from below.

\begin{proposition}
  \label{P:rigidity-of-space-easy-part}
  Fix $N>1$ and $L>0$.
The following estimates hold for $w\to0$ and $\delta\to 0$
\begin{align}
  \label{eq:rigidity-of-h-below}
  &
    h(x)
    \geq
    \frac{N}{D^N}x^{N-1}(1-o(1)),
    \quad
    \text{ uniformly w.r.t.\ }
    x\in [0,b(E)],
\end{align}
where $D\geq 3L$, $D'\in(0,D]$, $h:[0,D']\to\R$ is a $\CD(0,N)$
density, and the set
$E\subset [0,L]$ satisfies $\mm_h(E)=w$ and $\Res_h^D(E)\leq \delta$.
\end{proposition}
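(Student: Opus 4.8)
The plan is to assemble three facts already available: the Bishop--Gromov monotonicity at the level of one‑dimensional $\CD(0,N)$ densities, the lower bound for $h$ at the ``isoperimetric radius'' $r_h(w)$ coming directly from the definition of the residual together with~\eqref{eq:isoperimatric-inequality-residual}, and the upper bound on $b(E)$ proved in Proposition~\ref{P:almost-rigidity-general}. None of these steps is an obstacle; the only care needed is to make sure every error term introduced is independent of $x$, so that the final estimate can be stated uniformly.

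First I would record the form of Bishop--Gromov that is needed. Since $h$ is a $\CD(0,N)$ density, $h^{\frac{1}{N-1}}$ is concave and nonnegative on $[0,D']$, hence for $0<x\le y\le D'$, writing $x=\frac{x}{y}\,y+(1-\frac{x}{y})\,0$ and using concavity of $h^{\frac{1}{N-1}}$, one gets $h(x)^{\frac{1}{N-1}}/x\ge h(y)^{\frac{1}{N-1}}/y$, i.e.\ $x\mapsto h(x)/x^{N-1}$ is non‑increasing on $(0,D']$. Applying this with $y=b(E)$ gives
\begin{equation*}
  h(x)\ge h(b(E))\Big(\tfrac{x}{b(E)}\Big)^{N-1},\qquad x\in(0,b(E)].
\end{equation*}
Next I would bound $h(b(E))$ from below: by Lemma~\ref{lem:increasing} the density $h$ is strictly increasing on $[0,b(E)]$ and $r_h(w)\le b(E)$, so $h(b(E))\ge h(r_h(w))$, while by definition of the residual and the one‑dimensional isoperimetric inequality in the form~\eqref{eq:isoperimatric-inequality-residual},
\begin{equation*}
  h(r_h(w))=\frac{N}{D}w^{1-\frac1N}\big(1+\Res_h^D(w)\big)\ge \frac{N}{D}w^{1-\frac1N}\big(1-O(w^{\frac1N})\big).
\end{equation*}
Finally, the estimate~\eqref{eq:almost-rigidity-b-above} gives $b(E)\le Dw^{\frac1N}(1+o(1))$, hence $b(E)^{N-1}\le D^{N-1}w^{\frac{N-1}{N}}(1+o(1))$.

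Combining the three displays, for every $x\in(0,b(E)]$,
\begin{equation*}
  h(x)\ge \frac{N}{D}w^{1-\frac1N}(1-o(1))\,\frac{x^{N-1}}{D^{N-1}w^{\frac{N-1}{N}}(1+o(1))}
  =\frac{N}{D^N}\,x^{N-1}\,(1-o(1)),
\end{equation*}
using $w^{1-\frac1N}=w^{\frac{N-1}{N}}$; the case $x=0$ is trivial since $N>1$. As the $o(1)$ terms depend only on $w,\delta$ (and the fixed $N,L$) and not on $x$, the bound holds uniformly in $x\in[0,b(E)]$, which is~\eqref{eq:rigidity-of-h-below}. The main point, such as it is, is simply to chain the already‑established almost‑rigidity of $b(E)$ with the Bishop--Gromov monotonicity in the ``correct'' direction (lower bound on $h(x)$ in terms of $h$ at the larger point $b(E)$).
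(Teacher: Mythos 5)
Your proof is correct and follows essentially the same route as the paper's: the chain $h(x)\ge h(b(E))\,(x/b(E))^{N-1}\ge h(r_h(w))\,(x/b(E))^{N-1}$ via Bishop--Gromov and the monotonicity of $h$ on $[0,b(E)]$, then the lower bound on $h(r_h(w))$ from the one-dimensional isoperimetric inequality and the upper bound on $b(E)$ from~\eqref{eq:almost-rigidity-b-above}. Your additional care in checking that the error terms are uniform in $x$ is welcome but matches what the paper implicitly does.
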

\begin{proof}
Fix $x\in[0,b(E)]$.
We can compute, using the Bishop--Gromov inequality
\begin{equation}
  h(x)
  \geq
  h(b(E))
  \,
  \frac{x^{N-1}}{b(E)^{N-1}}
  \geq
  h(r_h(w))
  \,
  \frac{x^{N-1}}{b(E)^{N-1}}.
\end{equation}
The first factor is controlled using the isoperimetric
inequality
\begin{equation}
  h(r_h(w))
  \geq
  \frac{N}{D}w^{1-\frac{1}{N}}(1-O(w^{\frac{1}{N}}))
  =
  \frac{N}{D}w^{1-\frac{1}{N}}(1-o(1)).
\end{equation}
For the term $b(E)$ we use the
estimate~\eqref{eq:almost-rigidity-b-above}
\begin{equation}
  b(E)
  \leq
  Dw^{\frac{1}{N}}(1+o(1)).
\end{equation}
The thesis follows from the combination of these last two inequalities.
\end{proof}

Before going on we prove the following, purely technical lemma.
\begin{lemma}
\label{lem:monotonia-f}
Fix $N>1$ and consider the function $f:[0,1)\times[0,\infty]\to\R$ given by
\begin{equation}
  f(t,\eta)=\frac{1+\eta-t^N}{1-t}.
\end{equation}
Define the function $g$ by
\begin{equation}
  \label{eq:definition-g-mononicity}
  g(\eta)=\sup\{t-s:f(t,0)\leq f(s,\eta)\}.
\end{equation}
Then $\lim_{\eta\to0} g(\eta)=0$.
\end{lemma}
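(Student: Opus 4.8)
The plan is to recast the inequality $f(t,0)\le f(s,\eta)$ as a comparison of the single-variable function $f(\cdot,0)$, which is strictly increasing, so that the statement reduces to a uniform-in-$s$ bound. First I would record two elementary facts. Since
\[
f(s,\eta)=\frac{1-s^N}{1-s}+\frac{\eta}{1-s}=f(s,0)+\frac{\eta}{1-s},
\]
the map $\eta\mapsto f(s,\eta)$ is increasing and equals $f(s,0)$ at $\eta=0$; and $t\mapsto f(t,0)=\frac{1-t^N}{1-t}$ is continuous and strictly increasing on $[0,1)$, with $f(0,0)=1$ and $\lim_{t\to1^-}f(t,0)=N$ --- strict monotonicity follows by differentiation, the numerator of the derivative being $1-Nt^{N-1}+(N-1)t^N$, which equals $1$ at $t=0$, vanishes at $t=1$, and is decreasing in between, hence positive on $[0,1)$. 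Taking $t=s$ shows the set defining $g(\eta)$ is nonempty, so $g(\eta)\in[0,1]$ is well defined and $g(\eta)\ge0$.

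Fix $\eps\in(0,1)$ (for $\eps\ge1$ there is nothing to prove, since $t-s<1$ always). The key quantity I would introduce is
\[
\psi_\eps(s):=(1-s)\bigl(f(s+\eps,0)-f(s,0)\bigr),\qquad s\in[0,1-\eps),
\]
and the heart of the argument is the claim $m(\eps):=\inf_{s\in[0,1-\eps)}\psi_\eps(s)>0$. Granting this, the conclusion follows at once: if $\eta<m(\eps)$ and $t,s\in[0,1)$ satisfy $t-s\ge\eps$, then $s<1-\eps$ and, by monotonicity of $f(\cdot,0)$ and the definition of $\psi_\eps$,
\[
f(t,0)\ge f(s+\eps,0)=f(s,0)+\frac{\psi_\eps(s)}{1-s}\ge f(s,0)+\frac{m(\eps)}{1-s}>f(s,0)+\frac{\eta}{1-s}=f(s,\eta),
\]
so no such pair $(t,s)$ enters the supremum defining $g(\eta)$; hence $g(\eta)\le\eps$ whenever $\eta<m(\eps)$, and since $\eps\in(0,1)$ was arbitrary, $\lim_{\eta\to0}g(\eta)=0$.

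It remains to prove $m(\eps)>0$, and this is the only genuinely computational step. On $[0,1-\eps)$ the function $\psi_\eps$ is continuous and strictly positive (because $1-s>0$ and $f(\cdot,0)$ is strictly increasing), so the only thing to rule out is that $\psi_\eps$ degenerates as $s\uparrow1-\eps$. Writing $(1-s)f(s,0)=1-s^N$ and, with $u:=s+\eps$, $(1-s)f(s+\eps,0)=(1-u+\eps)\frac{1-u^N}{1-u}$, and letting $u\to1$ gives
\[
\lim_{s\uparrow1-\eps}\psi_\eps(s)=\eps N-1+(1-\eps)^N=:\phi(\eps).
\]
Since $\phi(0)=0$ and $\phi'(\eps)=N\bigl(1-(1-\eps)^{N-1}\bigr)>0$ for $\eps\in(0,1)$, we get $\phi(\eps)>0$. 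Thus $\psi_\eps$ extends continuously and strictly positively to the compact interval $[0,1-\eps]$ and therefore attains a strictly positive minimum, which is $m(\eps)$. The main obstacle is precisely this boundary analysis: verifying that the vanishing factor $(1-s)$ exactly compensates the blow-up of $f(s+\eps,0)$, leaving a strictly positive residual uniform in $s$; everything else is bookkeeping around the monotonicity of $f(\cdot,0)$.
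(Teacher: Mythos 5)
Your proof is correct, and it takes a genuinely different route from the paper's. The paper argues by contradiction and compactness: it extracts convergent subsequences $t_n\to t$, $s_n\to s$ from a putative sequence of counterexamples with $t_n-s_n>\epsilon$, notes that $s_n\le t_n-\epsilon\le 1-\epsilon$ keeps $s_n$ away from the singularity at $1$ so that $f(\cdot,\eta_n)\to f(\cdot,0)$ uniformly there, and then derives $f(t,0)\le f(s,0)$, contradicting the strict monotonicity of $f(\cdot,0)$. You instead give a direct, quantitative argument: the identity $f(s,\eta)=f(s,0)+\eta/(1-s)$ reduces everything to showing that $\psi_\eps(s)=(1-s)\bigl(f(s+\eps,0)-f(s,0)\bigr)$ has a positive infimum $m(\eps)$, and the only nontrivial point is the boundary limit $\lim_{s\uparrow 1-\eps}\psi_\eps(s)=\eps N-1+(1-\eps)^N>0$, which you verify correctly (as is your monotonicity check via the numerator $1-Nt^{N-1}+(N-1)t^N$ of $\partial_t f(t,0)$). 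What your approach buys is an explicit modulus: $g(\eta)\le\eps$ whenever $\eta<m(\eps)$, which is strictly more information than the qualitative limit the paper extracts; the price is the explicit boundary computation, which the paper's soft argument sidesteps by never letting $s$ approach $1$. Both proofs are complete and the lemma is used in the paper only qualitatively, so either suffices.
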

\begin{proof}
The proof is by contradiction.
Suppose that there exists $\epsilon>0$ and three sequences in
$(\eta_n)_n$, $(t_n)_{n}$, and $(s_n)_{n}$, such that $\eta_n\to0$,
$f(t_n,0)\leq f(s_n,\eta_n)$, and $t_n-s_n>\epsilon$.
Up to a taking a sub-sequence, we can assume that $t_n\to t$ and
$s_n\to s$, hence $1\geq t\geq s+\epsilon$.
The functions $f(\cdot,\eta_n)$ converge to $f(\cdot,0)$,
uniformly in the interval $[0,1-\frac{\epsilon}{2}]$.
This implies $f(s_n,\eta_n)\to f(s,0)$, yielding $f(t,0)\leq
f(s,0)$.
Since $t\mapsto f(t,0)$ is strictly increasing, we obtain
$t\leq s\leq t-\epsilon$, which is a contradiction.
\end{proof}

We now obtain an estimate of $h$ from above in the
interval $[a(E),b(E)]$  going in
the opposite direction of the Bishop--Gromov inequality.

\begin{proposition}
  Fix $N>1$ and $L>0$.
The following estimates hold for $w\to0$ and $\delta\to 0$
\begin{equation}
  \label{eq:rigidity-of-h-above}
  h(x)
  \leq
  h(b(E))\left(\frac{x}{b(E)}+o(1)\right)^{N-1},
  \quad
  \text{ uniformly w.r.t. }
  x\in[a(E),b(E)],
\end{equation}
where $D\geq 3L$, $D'\in(0,D]$, $h:[0,D']\to\R$ is a $\CD(0,N)$
density, and the set $E\subset [0,L]$ satisfies $\mm_h(E)=w$ and
$\Res_h^D(E)\leq \delta$.

\end{proposition}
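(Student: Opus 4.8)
The plan is to establish first an $L^{1}$-type control of $h$ over $[0,b(E)]$ and then to upgrade it to the desired pointwise estimate using the concavity of $h^{\frac1{N-1}}$. Write $\psi:=h^{\frac1{N-1}}$, which is concave and nonnegative on $[0,D']$ since $h$ is a $\CD(0,N)$ density, and set $B:=b(E)$, $M:=\psi(B)$, $w:=\mm_h(E)$. The ingredients I would collect from the previous results are: by the one-dimensional Bishop--Gromov inequality (cf.\ the proof of Proposition~\ref{P:rigidity-of-space-easy-part}), $x\mapsto h(x)/x^{N-1}$ is non-increasing, so $\psi(x)\ge Mx/B$ (equivalently $h(x)\ge h(B)(x/B)^{N-1}$) for all $x\in[0,B]$; by Proposition~\ref{P:almost-rigidity-general}, $B=Dw^{\frac1N}(1+o(1))$ and $a(E)=D\,o(w^{\frac1N})$; from $h(B)\le\PP_h(E)=\frac ND w^{1-\frac1N}(1+\Res_h^D(E))\le\frac ND w^{1-\frac1N}(1+\delta)$ and $h(B)\ge h(r_h(w))\ge\frac ND w^{1-\frac1N}(1-O(w^{\frac1N}))$ (using monotonicity of $h$ on $[0,B]$ from Lemma~\ref{lem:increasing} and \eqref{eq:isoperimatric-inequality-residual}) we get $h(B)=\frac ND w^{1-\frac1N}(1+o(1))$, whence $M^{N-1}B=h(B)B=Nw(1+o(1))$ and $\tfrac1N h(B)B=w(1+o(1))$; and, since $E\subseteq[0,B]$ and $[0,B]\setminus E\subseteq[0,a(E)]$ up to a null set with $h$ increasing there (Lemma~\ref{lem:increasing}), $w\le v_h(B)=w+\mm_h([0,a(E)]\setminus E)\le w+a(E)\PP_h(E)=w(1+o(1))$.

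The core of the argument is the claim that for every $\epsilon>0$ there are $w_\epsilon,\delta_\epsilon>0$ such that $h(x)\le h(B)(x/B+\epsilon)^{N-1}$ for \emph{all} $x\in[0,B]$ whenever $w\le w_\epsilon$ and $\delta\le\delta_\epsilon$; granting this, $\sup_{x\in[0,B]}\big((h(x)/h(B))^{\frac1{N-1}}-x/B\big)=o(1)$, which is \eqref{eq:rigidity-of-h-above}, uniformly on $[a(E),b(E)]\subseteq[0,B]$. I would prove the claim by contradiction. If $\psi(x_0)>M(x_0/B+\epsilon)$ for some $x_0\in[0,B)$, then concavity of $\psi$ on $[0,x_0]$ with $\psi(0)\ge0$ gives $\psi(x)\ge\tfrac{\psi(x_0)}{x_0}x>Mx/B+M\epsilon\,x/x_0$ on $[0,x_0]$, while concavity on $[x_0,B]$ (the chord through $(x_0,\psi(x_0))$ and $(B,M)$, together with the affinity of $Mx/B$) gives $\psi(x)-Mx/B\ge M\epsilon\,(B-x)/B$ on $[x_0,B]$; combining these, there is an interval $J\subseteq[0,B]$ with $|J|\ge B/4$ on which $\psi(x)\ge Mx/B+\tfrac14M\epsilon$ and $Mx/B\le M$. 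Using that for $0\le a\le M$, $c\ge\tfrac14M\epsilon$ one has $(a+c)^{N-1}-a^{N-1}\ge c_N(\epsilon)M^{N-1}$ with $c_N(\epsilon):=\min\{(\epsilon/4)^{N-1},(1+\epsilon/4)^{N-1}-1\}>0$, integration over $J$ yields $\int_0^B\big(h(x)-h(B)(x/B)^{N-1}\big)\,dx\ge\tfrac14 c_N(\epsilon)M^{N-1}B=\tfrac14 c_N(\epsilon)Nw(1+o(1))$. But the integrand is nonnegative by Bishop--Gromov, so the left-hand side equals $v_h(B)-\tfrac1N h(B)B=w(1+o(1))-w(1+o(1))=w\,o(1)$, which contradicts the previous inequality once $w,\delta$ are small enough (depending only on $N,\epsilon$).

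The step I expect to be the main obstacle is exactly this $L^1$-to-$L^\infty$ passage: the concavity of $h^{\frac1{N-1}}$ is what forbids a localised spike and turns a pointwise overshoot $\psi(x_0)>Mx_0/B+M\epsilon$ into an overshoot on a definite proportion of $[0,B]$, hence into a surplus of mass incompatible with $v_h(B)=w(1+o(1))$. Everything else — in particular the bookkeeping of the various $o(\cdot)$'s — is routine and uniform with respect to the free parameters $D,D',h,E$, by Proposition~\ref{P:almost-rigidity-general} and the conventions on the Landau symbols fixed at the beginning of this section.
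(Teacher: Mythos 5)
Your proof is correct, and it takes a genuinely different route from the paper's. The paper argues pointwise: for each fixed $x\in[a(E),b(E)]$ it writes down the two chord lower bounds for $h^{\frac{1}{N-1}}$ on $[a(E),x]$ and on $[x,b(E)]$, integrates them against the mass constraint $\mm_h(E)=w$, and after inserting the asymptotics $\frac{Nw}{b(E)h(b(E))}\leq 1+o(1)$ and $a(E)^N/b(E)^N=o(1)$ arrives at an inequality of the form $f(k/l,0)\leq f(x/b,o(1))$ for $f(t,\eta)=\frac{1+\eta-t^N}{1-t}$; the conversion of this into the pointwise bound is outsourced to the compactness/monotonicity Lemma~\ref{lem:monotonia-f}. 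You instead first derive the global identity $\int_0^{b(E)}\bigl(h(x)-h(b(E))(x/b(E))^{N-1}\bigr)\,dx=o(w)$ from $v_h(b(E))=w(1+o(1))$ and $h(b(E))b(E)=Nw(1+o(1))$ (both of which follow correctly from Lemma~\ref{lem:increasing}, Proposition~\ref{P:almost-rigidity-general} and the perimeter/residual asymptotics), and then use concavity of $h^{\frac{1}{N-1}}$ to show that a single pointwise overshoot by $\epsilon$ propagates to an overshoot on an interval of length $\geq b(E)/4$, contradicting the $L^1$ smallness; your interval construction and the elementary bound $(a+c)^{N-1}-a^{N-1}\geq c_N(\epsilon)M^{N-1}$ are both verified. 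Your approach dispenses with Lemma~\ref{lem:monotonia-f} altogether and yields the estimate on the whole of $[0,b(E)]$ rather than only on $[a(E),b(E)]$, which would in fact streamline Part~4 of the subsequent proposition on $\tilde h_E$; the paper's argument is more local and does not require assembling the mass identity $v_h(b(E))=w(1+o(1))$, but otherwise consumes the same asymptotic inputs.
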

\begin{proof}
Fix $x\in[a(E),b(E)]$ and, in order to simplify the notation, define
\begin{equation}
a:=a(E),
\quad
b:=b(E),
\quad
k:=h(x)^{\frac{1}{N-1}},
\quad
l:=h(b(E))^{\frac{1}{N-1}}.
\end{equation}
By concavity of $h^{\frac{1}{N-1}}$, it holds true that
\begin{align}
  &
    h(y)\geq\left(\frac{y}{x}\right)^{N-1}k^{N-1},
    \quad
    \forall y\in[a,x],
  \\&
  h(y)\geq\left(l+(k-l)\frac{b-y}{b-x}\right)^{N-1},\quad\forall y\in[x,b].
\end{align}
We can integrate these two inequalities, obtaining
\begin{equation*}
  \begin{aligned}
    w
    &
    \geq
    \int_{a}^x\frac{y^{N-1}}{x^{N-1}}k^{N-1}
    dy
    +
    \int_x^{b}\Big(l+(k-l)\frac{b-y}{b-x}\Big)^{N-1}
    dy
    \\&
    =
    \frac{k^{N-1}\,(x^N-a^N)}{Nx^{N-1}}
    +
    \frac{b-x}{N}
    \frac{l^N-k^N}{l-k},
  \end{aligned}
\end{equation*}
yielding
\begin{equation*}
  \begin{aligned}
  \frac{
    1
    -
    \left(
      \frac{k}{l}
    \right)^N
  }{
    1-\frac{k}{l}}
  &
  \leq
  \frac{
    Nw
    -
    \frac{
      k^{N-1}(x^N-a^N)
    }{
      x^{N-1}
    }
  }{
    l^{N-1}(b-x)
  }
  =
  \frac{
    \frac{Nw}{bl^{N-1}}
    -
    \frac{k^{N-1}(x^N-a^N)}{b(lx)^{N-1}}
  }{
    1-\frac{x}{b}
  }
  \\&
  \leq
  \frac{
    \frac{Nw}{bl^{N-1}}
    -
    \frac{x^N-a^N}{b^N}
  }{
    1-\frac{x}{b}
  }
  =
  \frac{
    \frac{Nw}{bl^{N-1}}
    +
    \frac{a^N}{b^{N}}
    -
    \frac{x^N}{b^N}
  }{
    1-\frac{x}{b}
  }
  ,
  \end{aligned}
\end{equation*}
where in the last inequality we used the Bishop--Gromov inequality
written in the form
$\frac{k^{N-1}}{l^{N-1}}\geq\frac{x^{N-1}}{b^{N-1}}$.
At this point, we estimate the terms $\frac{Nw}{bl^{N-1}}$ and
$\frac{a^N}{b^{N}}$.
Regarding the former, taking into
account~\eqref{eq:almost-rigidity-b-below} and the isoperimetric
inequality, we notice
\begin{align*}
  \frac{Nw}{bl^{N-1}}
  &
  =
  \frac{Nw}{b(E)\,h(b(E))}
  \leq
    \frac{Nw}{b(E)\,h(r_h(w))}
  \\
  &
  \leq
  \frac{
    Nw
  }{
    Dw^{\frac{1}{N}}(1-o(1))
    \,\,
    \frac{N}{D}w^{1-\frac{1}{N}}(1-O(w^{\frac{1}{N}})
  }
  =
  1+o(1).
\end{align*}
The latter term is even more simple
(recall~\eqref{eq:almost-rigidity-b-above}
and~\eqref{eq:almost-rigidity-a})
\begin{equation}
\frac{a^N}{b^{N}}
=
\frac{a(E)^N}{b(E)^{N}}
\leq
\frac
{
  D^No(w)
}{
  D^Nw(1-o(1))^N
}
=
o(1).
\end{equation}
We can put all the pieces together obtaining
\begin{equation}
  f\left(\frac{k}{l},0\right)
  =
  \frac{1-\left(\frac{k}{l}\right)^N}{1-\frac{k}{l}}
  \leq
  \frac{
    \frac{Nw}{bl^{N-1}}
    +
    \frac{a^N}{b^{N}}
    -
    \frac{x^N}{b^N}
  }{
    1-\frac{x}{b}
  }
  \leq
  \frac{
    1+o(1)
    -
    \frac{x^N}{b^N}
  }{
    1-\frac{x}{b}
  }
  =f\left(\frac{x}{b},o(1)\right),
\end{equation}
where $f$ is the function of Lemma~\ref{lem:monotonia-f}.
We can apply said Lemma (and in
particular~\eqref{eq:definition-g-mononicity}) and we get
\begin{equation}
  \frac{k}{l}
  -
  \frac{x}{b}
  \leq
  g(o(1))
  =
  o(1).
\end{equation}
If we explicit the definitions of $k$, $l$, and $b$, it turns out that
the inequality above is precisely the thesis.
\end{proof}


\subsection{Rescaling the diameter and renormalizing the measure}
We now obtain a first limit estimate of the densities $h$. 
The presence of factor $\frac{1}{D^N}$ in the
estimate~\eqref{eq:rigidity-of-h-below} suggests
the need of a  rescaling to get a non-trivial limit estimate.  
We will rescale by $\frac{1}{b(E)}$ and  renormalise the measure by $\mm_h(E)$.

Fix $k>0$ and define the rescaling transformation $S_k(x)=x/k$.
If $h:[0,D']\to\R$ is a density and $E\subset [0,L]$, we
can define

\begin{equation}
  \nu_{h,E}
  =
  (S_{b(E)})_\#
  \left(
    \frac{
      \mm_h\llcorner_E
    }{
      \mm_h(E)
    }
  \right)
  \in\P([0,1]).
\end{equation}
The probability measure $\nu_{h,E}$ is absolutely continuous
w.r.t.\ $\L^1$.
Denote by $\tilde h_E:[0,1]\to\R$ the
Radon--Nikodym derivative $\frac{d\nu_{h,e}}{d\L^1}$.
The density $\tilde h_E$ can be computed explicitly
\begin{equation}
  \label{eq:definition-of-tilde-h-E}
  \tilde h_E(t)
  =
  \indicator_E(b(E)t)
  \frac{b(E)}{\mm_h(E)}
  \,
  h(b(E)t).
\end{equation}
Since $E$ could be disconnected, the indicator function
in~\eqref{eq:definition-of-tilde-h-E} prevents $\tilde
h_E^{\frac{1}{N-1}}$ from being concave and therefore 
$([0,1],|\cdot|,\nu_{h,E})$ from satisfing the $\CD(0,N)$ condition.

\begin{proposition}
  Fix $N>1$ and $L>0$.
The following estimates hold for $w\to0$ and $\delta\to 0$
\begin{equation}
  \norm{\tilde h_E- N t^{N-1}}_{L^\infty(0,1)}
  \leq
  o(1)
\end{equation}
where $D\geq 3L$, $D'\in(0,D]$, $h:[0,D']\to\R$ is a $\CD(0,N)$
density, and the set $E\subset [0,L]$ satisfies $\mm_h(E)=w$ and
$\Res_h^D(E)\leq \delta$.
\end{proposition}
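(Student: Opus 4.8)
\emph{Plan.} The idea is to plug $x=b(E)t$ into the pointwise bounds for $h$ already established in this section, treating separately the ``bulk'' $t\in(a(E)/b(E),1)$ and the ``tail'' $t\in(0,a(E)/b(E))$. By Lemma~\ref{lem:increasing}, writing $E=\bigcup_i(a_i,b_i)\cup(a(E),b(E))$ up to a negligible set with $a_i,b_i<a(E)$, the right-extremal component is the interval $(a(E),b(E))\subset E$. Hence for a.e.\ $t\in(a(E)/b(E),1)$ one has $b(E)t\in(a(E),b(E))\subset E$, so $\indicator_E(b(E)t)=1$ and \eqref{eq:definition-of-tilde-h-E} becomes $\tilde h_E(t)=\tfrac{b(E)}{w}h(b(E)t)$; whereas by \eqref{eq:almost-rigidity-b-below} and \eqref{eq:almost-rigidity-a} the tail $t\in(0,a(E)/b(E))$ is an interval of length $a(E)/b(E)=o(1)$ as $w\to0$, $\delta\to0$.

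\emph{Lower bound.} Whenever $b(E)t\in E$, I combine the Bishop--Gromov estimate \eqref{eq:rigidity-of-h-below}, namely $h(b(E)t)\geq \tfrac{N}{D^N}(b(E)t)^{N-1}(1-o(1))$, with $b(E)^N\geq D^Nw(1-o(1))$ coming from \eqref{eq:almost-rigidity-b-below}, obtaining
$\tilde h_E(t)=\tfrac{b(E)}{w}h(b(E)t)\geq \tfrac{Nb(E)^N}{wD^N}t^{N-1}(1-o(1))\geq Nt^{N-1}(1-o(1))\geq Nt^{N-1}-o(1)$,
uniformly in $t\in(0,1)$. On the tail, trivially $\tilde h_E(t)\geq 0\geq Nt^{N-1}-o(1)$ since there $Nt^{N-1}\leq N(a(E)/b(E))^{N-1}=o(1)$.

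\emph{Upper bound.} On the bulk I apply \eqref{eq:rigidity-of-h-above} with $x=b(E)t$, i.e.\ $h(b(E)t)\leq h(b(E))(t+o(1))^{N-1}$, together with the perimeter bound $h(b(E))\leq\PP_h(E)=\tfrac{N}{D}w^{1-\frac{1}{N}}(1+\Res_h^D(E))\leq \tfrac{N}{D}w^{1-\frac{1}{N}}(1+\delta)$ — valid because $b(E)\leq L<D'$ (by $D'\geq D(1-o(1))\geq 3L(1-o(1))$ from Proposition~\ref{P:almost-rigidity-diameter}), so $b(E)$ is a genuine right endpoint of $E$ distinct from $D'$ — and with $b(E)\leq Dw^{\frac{1}{N}}(1+o(1))$ from \eqref{eq:almost-rigidity-b-above}. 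These give $\tilde h_E(t)\leq N(1+o(1))(1+\delta)(t+o(1))^{N-1}=Nt^{N-1}+o(1)$, using the elementary fact that $(t+o(1))^{N-1}=t^{N-1}+o(1)$ uniformly on $[0,1]$ (Hölder/Lipschitz continuity of $s\mapsto s^{N-1}$ on bounded sets). On the tail, monotonicity of $h$ on $[0,b(E)]$ (Lemma~\ref{lem:increasing}) gives $h(b(E)t)\leq h(a(E))$, and \eqref{eq:rigidity-of-h-above} evaluated at $x=a(E)$, together with $a(E)/b(E)=o(1)$, yields $h(a(E))\leq h(b(E))\,o(1)$; hence $\tilde h_E(t)\leq \tfrac{b(E)}{w}h(b(E))\,o(1)=N(1+o(1))(1+\delta)\,o(1)=o(1)$, so that $|\tilde h_E(t)-Nt^{N-1}|\leq o(1)$ on the tail as well.

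\emph{Main obstacle.} The only delicate point is the tail, where $E$ may be disconnected and the upper estimate \eqref{eq:rigidity-of-h-above} is a priori only available on $[a(E),b(E)]$: the resolution is that the leftover pieces of $E$ are confined to the $o(w^{1/N})$-small interval $[0,a(E)]$, on which $h$ is controlled by $h(a(E))$, and $h(a(E))$ is negligible compared with $h(b(E))$ precisely because $a(E)/b(E)\to0$. Collecting the bulk and tail estimates for both inequalities gives $\|\tilde h_E-Nt^{N-1}\|_{L^\infty(0,1)}\leq o(1)$.
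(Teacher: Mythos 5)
Your proof is correct and follows essentially the same route as the paper: the same bulk/tail decomposition at $t=a(E)/b(E)$, with the lower bound from \eqref{eq:rigidity-of-h-below} and \eqref{eq:almost-rigidity-b-below}, and the upper bound from \eqref{eq:rigidity-of-h-above} combined with $h(b(E))\leq\PP_h(E)$ and the residual. The only (harmless) difference is in the tail upper bound, where you evaluate \eqref{eq:rigidity-of-h-above} directly at the endpoint $x=a(E)$ instead of the paper's $\epsilon$-approximation from the right followed by $\epsilon\to0$; both yield $\tilde h_E(t)\leq o(1)$ there.
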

\begin{proof}
Fix $t\in[0,1]$.
The proof is divided in four parts.

\smallskip\noindent
{\bf Part 1 {\rm Estimate from below and $t>\frac{a(E)}{b(E)}$}.}\\%
Since $t>\frac{a(E)}{b(E)}$, then $t\,b(E)\in E$ (for a.e.\ $t$).
By a direct computation, we have
\begin{equation}
  \begin{aligned}
    \tilde h_E(t)
    &
    =
    \frac{b(E)}{w}
    \,
    h(tb(E))
    \geq
    \frac{Nb(E)^N}{D^Nw}\,
    t^{N-1}(1-o(1))
    \\&
    \geq
    \frac{ND^Nw(1+o(1))^N}{D^Nw}\,
    t^{N-1}(1-o(1))
    =
    Nt^{N-1}
    -
    Nt^{N-1} o(1),
\end{aligned}
\end{equation}
where we have used the estimate~\eqref{eq:rigidity-of-h-below}, with $x=tb(E)$, in the
first inequality and~\eqref{eq:almost-rigidity-b-below} in the
first and second inequalities, respectively.
Since $t\in[0,1]$, then $-t^{N-1}o(1)\geq o(1)$ and we conclude this
first part.

\smallskip\noindent
{\bf Part 2 {\rm Estimate from below and $t\leq\frac{a(E)}{b(E)}$}.}\\%
In this case it may happen that $t\,b(E)\notin E$, so the best we can
say about $\tilde h_E$ is that it is non-negative in $t$.
The point here is to exploit the fact that the interval
$[0,\frac{a(E)}{b(E)}]$ is ``short'' and that $t\leq \frac{a(E)}{b(E)}$.
By a direct computation (we recall~\eqref{eq:almost-rigidity-b-below}
and~\eqref{eq:almost-rigidity-a}) we have
\begin{equation}
  \begin{aligned}
    \tilde h_E(t)
    &
    \geq
    0
    \geq
    Nt^{N-1}-Nt^{N-1}
    \geq
    Nt^{N-1}
    -N
    \frac{a(E)^{N-1}}{b(E)^{N-1}}
    \\&
    \geq
    Nt^{N-1}
    -N
    \frac{
      D^{N-1}o(w^{1-\frac{1}{N}})
    }{
      D^{N-1}w^{1-\frac{1}{N}}(1+o(1))^{N-1}
    }
    \geq
    Nt^{N-1}
    -o(1).
  \end{aligned}
\end{equation}
\smallskip\noindent
{\bf Part 3 {\rm Estimate from above and $t>\frac{a(E)}{b(E)}$}.}\\%
We take into account the estimate~\eqref{eq:rigidity-of-h-above}, with
$x=tb(E)$ and compute
\begin{equation}
  \begin{aligned}
    \tilde h_E(t)
    &
    =
    \frac{b(E)}{w}
    \,
    h(tb(E))
    \leq
    \frac{b(E)}{w}
    \,
    h(b(E))
    (t+o(1))^{N-1}
    \leq
    \frac{b(E)}{w}
    \,
    h(b(E))
    (t^{N-1}+o(1))
    \\&
    \leq
    \frac{
      D w^{\frac{1}{N}}(1+o(1))
    }{w}
    \,
    \PP_h(E)
    (t^{N-1}+o(1))
    \\&
    =
    \frac{
      D w^{\frac{1}{N}}(1+o(1))
    }{w}
    \,
    \frac{N}{D}
    w^{1-\frac{1}{N}}
    (1+\Res_h^D(E))
    (t^{N-1}+o(1))
    \\&
    \leq
    N
    (1+o(1))
    (1+\delta)
    (t^{N-1}+o(1))
    =
    Nt^{N-1}
    +o(1)
  \end{aligned}
\end{equation}
(in the second inequality we exploited the uniform continuity of
$t\in[0,1]\mapsto t^{N-1}$; in the third one,
estimate~\eqref{eq:almost-rigidity-b-above}).

\smallskip\noindent
{\bf Part 4 {\rm Estimate from above and $t\leq\frac{a(E)}{b(E)}$}.}\\%
Fix $\epsilon>0$ and compute
\begin{align*}
  \tilde h_E(t)
  &
    =
    b(E)
    \frac{
    \indicator_E(tb(E))
    }{
    \mm_h(E)}
    h(b(E)t)
    \leq
    \frac{
    b(E)
    }{
    \mm_h(E)}
    h(b(E)t)
    \\&
    \leq
    \frac{
    b(E)
    }{
    \mm_h(E)}
    h\left(b(E)\left(\frac{a(E)}{b(E)}+\epsilon\right)\right)
    =
    \tilde h_E\left(\frac{a(E)}{b(E)}+\epsilon\right),
\end{align*}
and the last equality holds true for a.e.\ $\epsilon$ small enough.
At this point we can take into account the previous part and continue
\begin{align*}
  \tilde h_E(t)
  &
    \leq
    \tilde h_E\left(\frac{a(E)}{b(E)}+\epsilon\right)
    \leq
    N\left(
    \frac{a(E)}{b(E)}
    +\epsilon
    \right)^{N-1}
    +o(1).
\end{align*}
If we take the limit as $\epsilon\to0$ we can conclude
\begin{align*}
    \tilde h_E(t)
    &
  \leq
  N\left(\frac{a(E)}{b(E)}\right)^{N-1}
  +o(1)
  \leq
  o(1)
  \leq
  Nt^{N-1}
  +o(1).
\qedhere
\end{align*}
\end{proof}

The following theorem summerizes the content of this section.
\begin{theorem}\label{T:rigidity-1d}
  Fix $N>1$ and $L>0$.
  Then there exists a function
  $\omega:\Dom(\omega)\subset(0,\infty)\times\R\to\R$, infinitesimal in
  $0$, such that the following holds.
  For all $D\geq 3L$, $D'\in(0,D)$, for all $h:[0,D']\to \R$ a
  $\CD(0,N)$ density, and for all $E\subset [0,L]$, it holds
  \begin{align}
    &
      \label{eq:almost-rigidity-b}
      \left|
      b(E)-D\mm_h(E)^{\frac{1}{N}}
      \right|
      \leq
      D\mm_h(E)^{\frac{1}{N}}
      \omega(\mm_h(E),\Res_h^D(E))
      ,
    \\[2mm]
    &
      \label{eq:almost-rigidity-h-tilde}
      \norm{\tilde{h}_E- N t^{N-1}}_{L^\infty}
      \leq
      \omega(\mm_h(E),\Res_h^D(E))
      ,
  \end{align}
  where $b(E)=\esssup E$ and
  $\tilde h_E$ is the density of
  $\mm_h(E)^{-1}(S_{b(E)})_\#\mm_h\llcorner_E$, with
  $S_{b(E)}(x)=x/b(E)$.
\end{theorem}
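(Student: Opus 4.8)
The plan is to read Theorem~\ref{T:rigidity-1d} off as a single uniform packaging of the estimates already proved in this section. First I would cut down the domain: by Lemma~\ref{lem:increasing} there are constants $\bar w,\bar\delta>0$, depending only on $N$ and $L$, such that whenever $\mm_h(E)\le\bar w$ and $\Res_h^D(E)\le\bar\delta$ the quantities $a(E),b(E)$ are well defined and $h$ is increasing on $[0,b(E)]$. Accordingly I would declare $\Dom(\omega)$ to be the set of pairs $(w,\delta)$ with $0<w\le\bar w$ and $\delta\le\bar\delta$ for which there is at least one \emph{admissible} configuration $(D,D',h,E)$ — meaning $D\ge 3L$, $D'\in(0,D]$, $h$ a $\CD(0,N)$ density on $[0,D']$, and $E\subset[0,L]$ — realising $\mm_h(E)=w$ and $\Res_h^D(E)\le\delta$. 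Everything below is understood on this domain, so that $a(E),b(E),\tilde h_E$ make sense in all the propositions invoked.

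On $\Dom(\omega)$ I would then simply extremise the two quantities appearing in \eqref{eq:almost-rigidity-b}--\eqref{eq:almost-rigidity-h-tilde}:
\[
  \omega(w,\delta)
  :=
  \sup_{(D,D',h,E)}\,
  \max\left\{
    \frac{\bigl|b(E)-D\mm_h(E)^{\frac{1}{N}}\bigr|}{D\mm_h(E)^{\frac{1}{N}}}
    ,\
    \bigl\|\tilde{h}_E-N t^{N-1}\bigr\|_{L^\infty(0,1)}
  \right\},
\]
the supremum ranging over all admissible $(D,D',h,E)$ with $\mm_h(E)=w$ and $\Res_h^D(E)\le\delta$. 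With this choice the two inequalities of the Theorem hold by the very definition of $\omega$, and (if desired) I would replace $\omega$ by $(w,\delta)\mapsto\sup_{0<w'\le w,\ \delta'\le\delta}\omega(w',\delta')$ to make it nondecreasing in each variable without spoiling the bounds. It then only remains to verify that $\omega$ is infinitesimal at $(0,0)$. For the first term this is exactly the combination of \eqref{eq:almost-rigidity-b-above} and \eqref{eq:almost-rigidity-b-below} in Proposition~\ref{P:almost-rigidity-general}, which assert $|b(E)-Dw^{\frac{1}{N}}|\le Dw^{\frac{1}{N}}\,o(1)$; for the second it is precisely the last Proposition of this section, the one bounding $\norm{\tilde h_E-N t^{N-1}}_{L^\infty(0,1)}$ by $o(1)$, whose proof rests on \eqref{eq:rigidity-of-h-below}, \eqref{eq:rigidity-of-h-above}, \eqref{eq:almost-rigidity-a} and the residual inequality \eqref{eq:isoperimatric-inequality-residual}.

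The only genuine point requiring care — and the step I would treat most explicitly — is the \emph{uniformity} of all the Landau symbols in the free parameters $D$, $D'$, $h$, $E$: this is what guarantees that the two suprema above are finite on $\Dom(\omega)$ and converge to $0$ as $(w,\delta)\to(0,0)$, rather than merely for each fixed configuration. This is a bookkeeping check rather than a new argument, since every estimate in Propositions~\ref{P:almost-rigidity-diameter}, \ref{P:almost-rigidity-segment}, \ref{P:almost-rigidity-general}, \ref{P:rigidity-of-space-easy-part} and in the subsequent density propositions terminates, at the end of its chain of inequalities, in the form ``(an explicit function of $w$ and $\delta$ only)~$\to0$''. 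I do not expect a real obstacle here beyond confirming that the reduction to model densities in Proposition~\ref{P:almost-rigidity-segment} and the appeal to Lemma~\ref{lem:monotonia-f} behind \eqref{eq:rigidity-of-h-above} introduce no hidden dependence on $D'$ or on $h$; inspection of those proofs shows they do not, which closes the argument.
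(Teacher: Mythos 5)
Your proposal is correct and matches the paper's treatment: the paper itself presents Theorem~\ref{T:rigidity-1d} with no separate proof, explicitly as a summary of Propositions~\ref{P:almost-rigidity-general} (for \eqref{eq:almost-rigidity-b}) and the final density proposition (for \eqref{eq:almost-rigidity-h-tilde}), with the uniformity of the Landau symbols in $D$, $D'$, $h$, $E$ guaranteed by the convention fixed in the Remark opening Section~\ref{S:one-dim}. Your explicit construction of $\omega$ as a supremum over admissible configurations is exactly the standard way to make that packaging precise, and the domain restriction via Lemma~\ref{lem:increasing} is the right way to ensure $b(E)$ and $\tilde h_E$ are well defined.
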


\section{Passage to the limit as \texorpdfstring{$R\to\infty$}{R→∞}}
\label{S:limit}

We now go back to the studying the identity case of the isoperimetric inequality:  
$E$ is a bounded Borel such that
\begin{equation}
  \PP(E)=N(\omega_N\AVR_X)^{\frac{1}{N}}\mm(E)^{1-\frac{1}{N}},
\end{equation}
where $(X,\sfd,\mm)$ is an essentially non-branching $\CD(0,N)$ space having $\AVR_X>0$ and

We make use of the notation of Section~\ref{sec:localization}; 
denote by $\varphi_R$ the Kantorovich potential associated to $f_{R}$ and \eqref{E:disintbasic}.
Since the construction does not change if we add a constant to
$\varphi_R$, we can assume that $\varphi_R$ are equibounded on every bounded set.
Using the Ascoli--Arzel\`a theorem and a diagonal argument we deduce
that, up to subsequences,  $\varphi_R$ converges to a certain
$1$-Lipschitz function $\varphi_\infty$, uniformly on every bounded set.
%

We recall the disintegration given by Proposition~\ref{P:disintfinal}, 
\begin{align}
  \label{eq:dis-one-line}
  &
\mm\llcorner_{\widehat{\mathcal{T}}_{R}} 
    = \int_{Q_{R}} \widehat{\mm}_{\alpha,R}\, \widehat{\qq}_{R}(d\alpha),
    \quad
    \text{ and }
    \quad
  \PP(E;\,\cdot\,)
  \geq
  \int_{Q_R}
  \PP_{\widehat X_{\alpha,R}}(E;\,\cdot\,)
  \,\widehat\q_R(d\alpha)
    .
\end{align}

We would like to take the limit in the disintegration
formula~\eqref{eq:dis-one-line}.
To the knowledge of the authors there is no easy way to take such
limit.
For this reason, the effort of this section goes in the direction to
understand how the properties of the disintegration behave at the
limit.

\subsection{Passage to the limit of the radius}
We start by defining the  \emph{radius} function $r_R:\bar E\to[0,\diam E]$.
Fix $x\in E\cap \widehat{\mathcal{T}}_R$ and let
$E_{x,R}:=(g_R(\QQ_R(x),\cdot))^{-1}(E)\subset[0,|\widehat{X}_{\QQ_R(x),R}|]$.
Define
\begin{align}
  \label{eq:definition-r}
  &
    r_R(x):=
    \begin{cases}
      \esssup E_{x,R}
      ,
      \quad
      &\text{ if } x\in E\cap\widehat\T_R,
      \\
      0
      ,
      \quad
      &\text{ otherwise.}
    \end{cases}
\end{align}
Notice that $r_{R}(x) = b(E_{x,E})$, where the notation $b(E)$ 
was introduced in \ref{Ss:rigidity-non-convex}.

The function $r_R$ is defined on $\bar E$ for two motivations:
we require a common domain not depending on $R$ and 
the domain must be a compact metric spaces.
\begin{remark} \label{rmrk:domain-radius}
The set $E\cap\widehat\T_R$ has full $\mm\llcorner_E$-measure in
$\bar E$.
This means that it does not really matter how $r_R$ is defined outside
$E\cap\widehat\T_R$.
This fact is particularly relevant, because we will only take limits
in the $\mm\llcorner_E$-a.e. sense or in senses which are weaker than
the pointwise convergence.
\end{remark}

The next proposition ensures that, in limit as $R\to\infty$, the
function $r_R$ converges to the constant
$(\frac{\mm_h(E)}{\omega_N\AVR_X})^{\frac{1}{N}}$, which is
precisely the radius that we expect.

\begin{proposition}
  \label{P:limit-of-function-r}
  Up to subsequences it holds true
  \begin{equation}
    \lim_{R\to\infty} r_R
    =
    \left(
      \frac{\mm(E)}{\omega_N \AVR_X}
    \right)^{\frac{1}{N}}, \qquad \mm\llcorner_E -\textrm{a.e.}.
  \end{equation}
\end{proposition}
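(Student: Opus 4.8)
The plan is to combine the one-dimensional rigidity estimates of Theorem~\ref{T:rigidity-1d} with the almost-optimality of the rays captured by Corollary~\ref{cor:goodrays2}, reading them through the disintegration formula~\eqref{E:disintfinal}. Fix the subsequence along which $\varphi_R\to\varphi_\infty$ uniformly on bounded sets, and pass to a further subsequence so that the (measurable, bounded) functions $r_R$ converge in the weak-$*$ sense of $L^\infty(\bar E,\mm\llcorner_E)$ — equivalently, so that $r_R\,\mm\llcorner_E$ converges weakly-$*$ as measures — to some limit; call its density $r_\infty$. The goal is to identify $r_\infty$ with the constant $\bar\rho:=(\mm(E)/(\omega_N\AVR_X))^{1/N}$, which by uniqueness of the weak-$*$ limit will give convergence in the $\mm\llcorner_E$-a.e.\ sense along a suitable (diagonal) sub-subsequence — or, since the target is a constant, directly $\mm\llcorner_E$-a.e.\ convergence of $r_R$ itself.

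First I would record that, by~\eqref{eq:definition-r} and the remark following it, $r_R(x)=b(E_{x,R})$ with $E_{x,R}=(g_R(\QQ_R(x),\cdot))^{-1}(E)$ a Borel subset of the one-dimensional $\CD(0,N)$ space $(\widehat X_{\QQ_R(x),R},\sfd,\widehat\mm_{\QQ_R(x),R})$ of diameter $D'\le R+\diam E$, to which we apply Theorem~\ref{T:rigidity-1d} with $D=R+\diam(E)$ and $L=\diam(E)$ (valid once $R$ is large, since then $D\ge 3L$). With $\mm_h(E_{x,R})=\widehat\mm_{\QQ_R(x),R}(E)=\mm(E)/\mm(B_R)$ by~\eqref{E:disintfinal2} and $\Res_h^D(E_{x,R})=\Res_{x,R}$, estimate~\eqref{eq:almost-rigidity-b} gives, for $x\in E\cap\widehat\T_R$,
\begin{equation}
  \label{eq:rR-pointwise}
  \left|
    r_R(x)-(R+\diam E)\Big(\tfrac{\mm(E)}{\mm(B_R)}\Big)^{\frac1N}
  \right|
  \le
  (R+\diam E)\Big(\tfrac{\mm(E)}{\mm(B_R)}\Big)^{\frac1N}
  \omega\Big(\tfrac{\mm(E)}{\mm(B_R)},\Res_{x,R}\Big).
\end{equation}
Now $(R+\diam E)(\mm(E)/\mm(B_R))^{1/N}\to\bar\rho$ as $R\to\infty$, because $\mm(B_R)/R^N\to\omega_N\AVR_X$ by definition of $\AVR_X$. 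Thus the only thing left is to show that the error term on the right of~\eqref{eq:rR-pointwise} tends to $0$ in an integrated sense. Since $\mm(E)/\mm(B_R)\to0$ and $\omega$ is infinitesimal in its first argument uniformly in the second (as $\omega$ is a single fixed modulus), the term $\omega(\mm(E)/\mm(B_R),0)$ already goes to $0$; the genuine content is to control the dependence on $\Res_{x,R}$, and for this I would use monotonicity/subadditivity of the modulus together with Corollary~\ref{cor:goodrays2}, which states $\|\Res_{\QQ_R(x),R}\|_{L^1(E)}\to0$, hence $\Res_{x,R}\to0$ in $\mm\llcorner_E$-measure; up to a further subsequence, $\Res_{x,R}\to0$ for $\mm\llcorner_E$-a.e.\ $x$. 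Feeding this into~\eqref{eq:rR-pointwise} and using that $r_R$ is uniformly bounded by $\diam E$ (so dominated convergence applies), we conclude $r_R\to\bar\rho$ in $L^1(\bar E,\mm\llcorner_E)$, hence $\mm\llcorner_E$-a.e.\ along a further subsequence — and since the limit is the deterministic constant $\bar\rho$ independent of the subsequence, the whole sequence converges $\mm\llcorner_E$-a.e.

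The main obstacle I anticipate is purely a bookkeeping subtlety rather than a conceptual one: the modulus $\omega$ in Theorem~\ref{T:rigidity-1d} depends on $L$, and here $L=\diam E$ is fixed, so that is fine; but one must be careful that $\omega(\cdot,\cdot)$ is a \emph{single} function monotone and infinitesimal at the origin in the joint variable, so that $\omega(\mm(E)/\mm(B_R),\Res_{x,R})\le\omega(\mm(E)/\mm(B_R)+\Res_{x,R},\;\mm(E)/\mm(B_R)+\Res_{x,R})$ and the right side is a single nonnegative quantity tending to $0$ a.e.\ and bounded. If instead $\omega$ is only known to be infinitesimal ``as each argument $\to0$'' without joint monotonicity, one replaces the a.e.\ argument by: for each $\eta>0$ split $E$ into $\{\Res_{x,R}\le\eta\}$, where the error is $\le C\,\omega(\mm(E)/\mm(B_R),\eta)$ uniformly, and its complement, whose $\mm$-measure is $\le\|\Res_{\cdot,R}\|_{L^1(E)}/\eta\to0$ by Chebyshev, so its contribution to the $L^1$-error is $\le\diam(E)\cdot o(1)$. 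Letting $R\to\infty$ and then $\eta\to0$ closes the estimate. Either way the essential input is exactly the pair (Theorem~\ref{T:rigidity-1d}, Corollary~\ref{cor:goodrays2}) plus $\mm(B_R)\sim\omega_N\AVR_X R^N$.
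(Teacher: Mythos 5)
Your argument is essentially the paper's proof: both rest on the pointwise estimate \eqref{eq:almost-rigidity-b} from Theorem~\ref{T:rigidity-1d} applied with $D=R+\diam E$, the convergence $(R+\diam E)(\mm(E)/\mm(B_R))^{1/N}\to(\mm(E)/(\omega_N\AVR_X))^{1/N}$ from the definition of $\AVR_X$, and Corollary~\ref{cor:goodrays2} to extract a subsequence along which $\Res_{x,R_n}\to0$ for $\mm\llcorner_E$-a.e.\ $x$, after which the modulus $\omega$ (infinitesimal at the origin jointly in both variables, as stated in Theorem~\ref{T:rigidity-1d}) kills the error term. The only caveat is your closing remark that the \emph{whole} sequence converges a.e.: subsequential a.e.\ convergence to a constant only yields convergence in measure, not a.e.\ convergence of the full sequence, but this is immaterial since the proposition is stated up to subsequences.
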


\begin{proof}
  By Corollary~\ref{cor:goodrays2} we have that
  $\norm{\Res_{R,\QQ_R(x)}}_{L^1(\bar E;\mm\llcorner_E)}\to0$, as
  $R\to\infty$, hence there exists a negligible subset $N\subset E$
  and a sequence $R_n\to\infty$, such that
  $\lim_{n\to\infty} \Res_{x,R_n}=0$, for all $x\in E\backslash N$.

  Define $G:=\bigcap_n\widehat\T_{R_n}\backslash N$ and notice that
  $\mm(E\backslash G)=0$.
  Now fix $n\in\N$ and $x\in G$ and let $\alpha:=\QQ_{R_n}(x)\in Q_{R_n}$.
  By triangular inequality, it holds
  \begin{align*}
&  \left|r_{R_n}(x)-
    \left(
      \tfrac{\mm(E)}{\omega_N\AVR_X}
    \right)^\frac{1}{N}
  \right|
%
    \leq
  \left|r_{R_n}(x)-
    ({R_n}+\diam E)
    \left(
      \tfrac{\mm(E)}{\mm(B_{R_n})}
    \right)^\frac{1}{N}
        \right|
      \\
    &
      \quad
      \quad
  +
  \left|
    ({R_n}+\diam E)
    \left(
      \tfrac{\mm(E)}{\mm(B_{R_n})}
    \right)^\frac{1}{N}
    -
    \left(
      \tfrac{\mm(E)}{\omega_N\AVR_X}
    \right)^\frac{1}{N}
      \right|
      ,
  \end{align*}
  and the second term goes to $0$ by definition of $\AVR$.

  Let's focus on the first term.
  Consider the ray $(\widehat X_{\alpha,{R_n}},\sfd,\widehat\mm_{\alpha,{R_n}})$.
  By definition, we have that
  \begin{equation}
    \Res_{h_{\alpha,{R_n}}}^{{R_n}+\diam E}(E_{x,{R_n}})
    =
    \Res_{\alpha,R_n}
  \end{equation}
  We are in position to use Theorem~\ref{T:rigidity-1d} and, in
  particular, estimate~\eqref{eq:almost-rigidity-b} implies
  \begin{equation}
    \begin{aligned}
      &
      \left|
        r_{R_n}(x)
        -
        ({R_n}+\diam E)
        \left(
          \tfrac{\mm(E)}{\mm(B_{R_n})}
        \right)^{\frac{1}{N}}
      \right|
      =
      \left|
        r_{R_n}(x)
        -
        ({R_n}+\diam E)
        (
        \mm_{h_{\alpha,R_n}}(E_{x,{R_n}})
        )^{\frac{1}{N}}
      \right|
      \\
      &
      \quad\quad
      \leq
      ({R_n}+\diam E)
      \mm_{h_{\alpha,R_n}}(E)^{\frac{1}{N}}
      \omega(
        \mm_{h_{\alpha,R_n}}(E)
        ,
        \Res_{h_{\alpha,{R_n}}}^{{R_n}+\diam E}(E_{x,{R_n}})
        )
      \\
      &
      \quad\quad
      =
      ({R_n}+\diam E)
      \left(
        \frac{\mm(E)}{\mm(B_{R_n})}
      \right)^{\frac{1}{N}}
      \omega\left(
        \frac{\mm(E)}{\mm(B_{R_n})}
        ,
        \Res_{\QQ_R(x),{R_n}}
      \right)
        .
    \end{aligned}
  \end{equation}
  Since the r.h.s.\ in the inequality above is infinitesimal, we can
  take the limit as $n\to\infty$ and conclude.
\end{proof}

Hence in the limit the length of the rays converge
to a well defined constant; this will turn out to be the radius of $E$.
From now on we will write $\rho:=(\frac{\mm(E)}{\omega_N\AVR_X})^{\frac{1}{N}}$.

\subsection{Passage to the limit of the rays}
Consider now a constant-speed parametrization of the rays inside $E$:
\begin{equation}
  \label{eq:definition-of-gamma}
  \gamma_s^{x,R}:=
  \begin{cases}
    g_R(\QQ_R(x),s\,r_R(x)),
    \quad
    &
    \text{ if } x\in E\cap\widehat\T_R,
    \\
    x,
    \quad
    \text{ otherwise,}
  \end{cases}
\end{equation}
where $x\in\bar E$ and $s\in[0,1]$.
Remark~\ref{rmrk:domain-radius} applies also to the map
$x\mapsto\gamma^{x,R}$.
A direct consequence of the definition of $\gamma^{x,R}$ is
\begin{align}
  &
    \label{eq:gamma-along-rays}
    \sfd(\gamma_t^{x,R},\gamma_s^{x,R})
    =
    \varphi_R(\gamma_t^{x,R})
    -
    \varphi_R(\gamma_s^{x,R})
    ,
    \quad
    \forall \,0\leq t\leq s\leq 1,
    \text{ for $\mm$-a.e.\ }x\in E,
  \\
  &
    \label{eq:speed-of-gamma}
    \sfd(\gamma_0^{x,R},\gamma_1^{x,R})
    =r_R(x),
    \quad
    \text{ for $\mm$-a.e.\ }x\in E
    ,
  \\
  &
    \label{eq:x-belongs-to-gamma}
    x\in \gamma^{x,R}
    ,
    \quad
    \text{ for $\mm$-a.e.\ }x\in E
    .
\end{align}
We stress out the order of the quantifiers in~\eqref{eq:gamma-along-rays}:
said equation has to be understood in the sense that $\exists N\subset E$ such that
$\mm(N)=0$ and $\forall t\leq s$, $\forall x\in E\backslash N$,
\eqref{eq:gamma-along-rays} holds true.
Regarding \eqref{eq:x-belongs-to-gamma}, we point out that
the expression $x\in\gamma^{x,R}$ means that $\exists t\in[0,1]$ such
that $x=\gamma^{x,R}_t$, or, equivalently,
$\min_{t\in[0,1]}\sfd(x,\gamma_t^{x,R})=0$.

%

In order to capture the limit behaviour of $\gamma^{x,R}$ as $R\to\infty$
we proceed as follows. 
First define
$K:=\{\gamma\in\Geo(X): \gamma_0,\gamma_1\in\bar E\}$.
Since a $\CD(K,N)$ space is locally compact and $E$ is bounded, $\bar
E$ is compact and so is $K$.
Then define the measure
\begin{equation}
  \tau_R:=
  (\mathrm{Id}\times \gamma^{\,\cdot\,,R})_\#\mm\llcorner_{E}
  \,\in \M(\bar E\times K)
  .
\end{equation}
The measures $\tau_R$ have mass $\mm(E)$ and enjoy the following
immediate properties
\begin{align}
  &
    (P_{1})_\#\tau_R=\mm\llcorner_E,
    \quad
    \text{ and }
    \quad
    \gamma=\gamma^{x,R},
    \quad\text{ for $\tau_R$-a.e.\ }(x,\gamma)\in \bar E\times K.
\end{align}
We can restate the
properties~\eqref{eq:gamma-along-rays}--\eqref{eq:x-belongs-to-gamma}
using a more measure-theoretic language
\begin{align}
  &
    \label{eq:gamma-along-rays2}
    \sfd(e_t(\gamma),e_s(\gamma))
    -
    \varphi_R(e_t(\gamma))
    +
    \varphi_R(e_s(\gamma))
    =0
    ,
    \quad
    \forall \,0\leq t\leq s\leq 1,
  \\
  &
    \label{eq:speed-of-gamma2}
    \sfd(e_0(\gamma),e_1(\gamma))
    -r_R(x)=0,
  \\
  &
    \label{eq:x-belongs-to-gamma2}
    x\in\gamma
    ,
\end{align}
for $\tau_R$-a.e.\ $(x,\gamma)\in \bar E\times K$.
Since the measures $\tau_R$ have the same mass and
$\bar E\times K$ is compact, the family of measures $(\tau_R)_{R>0}$
is tight, thus we can extract a sub-sequence (which we do not relabel)
such that $\tau_R\rightharpoonup\tau$ weakly, i.e., $\int_{\bar
  E\times K} \psi\,d\tau_R\to\int_{\bar
  E\times K} \psi\,d\tau$, for all $\psi\in C_b(\bar E\times K)$.
%
%

%

%
The next proposition affirms that the
properties~\eqref{eq:gamma-along-rays2}--\eqref{eq:x-belongs-to-gamma2}
pass to the limit as $R\to\infty$.
\begin{proposition}
  For $\tau$-a.e.\ $(x,\gamma)\in\bar E\times K$, it holds thas
  \begin{align}
    \label{eq:gamma-along-rays-weak}
    &
      \sfd(e_t(\gamma),e_s(\gamma))
      =
      \varphi_\infty(e_t(\gamma))
      -
      \varphi_\infty(e_s(\gamma))
      ,
      \quad
      \forall\, 0\leq t\leq s\leq1,
    \\
    &
      \label{eq:speed-of-gamma-weak}
      \sfd(e_0(\gamma),e_1(\gamma))
      =
      \rho
      ,
    \\
    &
      \label{eq:x-belongs-to-gamma-weak}
      x\in\gamma
      .
\end{align}
\end{proposition}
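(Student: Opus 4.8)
The plan is to realise each of the three pointwise conditions as the vanishing of a suitable bounded continuous functional on the compact space $\bar E\times K$, to verify that these functionals become negligible (in integral against $\tau_R$) as $R\to\infty$ along the chosen subsequence, and then to transfer the conclusion to $\tau$ via the weak convergence $\tau_R\rightharpoonup\tau$. The only delicate point is that the natural functionals encoding \eqref{eq:gamma-along-rays-weak} and \eqref{eq:speed-of-gamma-weak} depend on the $R$-varying data $\varphi_R$ and $r_R$, which converge only in weak senses; these two cases are handled separately.

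For \eqref{eq:gamma-along-rays-weak}, fix rationals $0\le t\le s\le 1$ and let $\Phi^{t,s}_\infty(x,\gamma):=|\sfd(e_t(\gamma),e_s(\gamma))-\varphi_\infty(e_t(\gamma))+\varphi_\infty(e_s(\gamma))|$, which is bounded and continuous on $\bar E\times K$: indeed every $\gamma\in K$ is a constant-speed geodesic with both endpoints in the bounded set $\bar E$, so all the points $e_u(\gamma)$ stay in a fixed bounded set $B$. Denoting by $\Phi^{t,s}_R$ the analogous functional built with $\varphi_R$, equation~\eqref{eq:gamma-along-rays2} gives $\Phi^{t,s}_R=0$ $\tau_R$-a.e.; since $\varphi_R\to\varphi_\infty$ uniformly on $B$, one has $\|\Phi^{t,s}_\infty-\Phi^{t,s}_R\|_\infty\to0$, hence
\[
\int\Phi^{t,s}_\infty\,d\tau_R\le\int\Phi^{t,s}_R\,d\tau_R+\|\Phi^{t,s}_\infty-\Phi^{t,s}_R\|_\infty\,\mm(E),
\]
and the right-hand side tends to $0$ as $R\to\infty$. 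By weak convergence, $\int\Phi^{t,s}_\infty\,d\tau=\lim_R\int\Phi^{t,s}_\infty\,d\tau_R=0$, so $\Phi^{t,s}_\infty=0$ $\tau$-a.e. Intersecting the countably many exceptional null sets and using the continuity of $u\mapsto e_u(\gamma)$, of $\sfd$, and of $\varphi_\infty$, the identity \eqref{eq:gamma-along-rays-weak} then holds for all real $0\le t\le s\le 1$, for $\tau$-a.e.\ $(x,\gamma)$.

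For \eqref{eq:speed-of-gamma-weak} the functional $\Psi_\infty(x,\gamma):=|\sfd(e_0(\gamma),e_1(\gamma))-\rho|$ is again bounded and continuous; here, however, $r_R$ converges to $\rho$ only $\mm\llcorner_E$-a.e.\ (Proposition~\ref{P:limit-of-function-r}), so I would first rewrite, using $(P_1)_\#\tau_R=\mm\llcorner_E$ together with $\sfd(\gamma^{x,R}_0,\gamma^{x,R}_1)=r_R(x)$ for $\mm$-a.e.\ $x$ (cf.~\eqref{eq:speed-of-gamma}),
\[
\int\Psi_\infty\,d\tau=\lim_{R\to\infty}\int\Psi_\infty\,d\tau_R=\lim_{R\to\infty}\int_{\bar E}|r_R(x)-\rho|\,\mm\llcorner_E(dx)=0,
\]
the last equality by dominated convergence ($0\le r_R\le\diam E$ and $\mm(E)<\infty$); hence $\Psi_\infty=0$ $\tau$-a.e. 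For \eqref{eq:x-belongs-to-gamma-weak}, set $\chi(x,\gamma):=\min_{t\in[0,1]}\sfd(x,e_t(\gamma))$, which is bounded and continuous on $\bar E\times K$ (minimum over the compact parameter interval of a jointly continuous map) and satisfies $\chi=0$ $\tau_R$-a.e.\ by~\eqref{eq:x-belongs-to-gamma2}; thus $\int\chi\,d\tau=\lim_R\int\chi\,d\tau_R=0$, so $x\in\gamma$ for $\tau$-a.e.\ $(x,\gamma)$. Taking the intersection of the three $\tau$-null sets concludes the proof.

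The main obstacle I anticipate lies precisely in the first two steps, where the constraint-defining functionals are not fixed but vary with $R$: for \eqref{eq:gamma-along-rays-weak} this is absorbed by the compactness of $K$, which confines everything to one fixed bounded region where $\varphi_R\to\varphi_\infty$ uniformly; for \eqref{eq:speed-of-gamma-weak} one must remember to substitute $r_R(x)$ for the endpoint distance \emph{before} passing to the limit, since $r_R$ converges only almost everywhere and not uniformly.
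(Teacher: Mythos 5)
Your proposal is correct. The treatment of \eqref{eq:gamma-along-rays-weak} and \eqref{eq:x-belongs-to-gamma-weak} coincides with the paper's: for the first, the paper integrates $L_{\varphi_R}^{t,s}(\gamma)=\sfd(e_t(\gamma),e_s(\gamma))-\varphi_R(e_t(\gamma))+\varphi_R(e_s(\gamma))$ against $\tau_R$, uses the uniform convergence $\varphi_R\to\varphi_\infty$ on compacta to pass to the limit, invokes the sign $L^{t,s}_{\varphi_\infty}\geq0$ (where you use an absolute value instead, to the same effect), and then upgrades from a countable dense set of parameters to all $t\leq s$ exactly as you do; for the third, the paper uses the same continuous functional $\inf_{t}\sfd(x,e_t(\gamma))$.

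Where you genuinely diverge is \eqref{eq:speed-of-gamma-weak}. The paper starts from $\int|\sfd(e_0(\gamma),e_1(\gamma))-r_R(x)|\,d\tau_R=0$ and must then transport this identity through the weak limit $\tau_R\rightharpoonup\tau$; since $r_R$ is only measurable and converges only $\mm\llcorner_E$-a.e., it resorts to Lusin and Egorov to extract a compact $L\subset E$ with $\mm(E\setminus L)\leq\epsilon$ on which the $r_R$ are continuous and converge uniformly, and concludes on $L\times K$ before letting $\epsilon\to0$. You instead test $\tau_R$ directly against the \emph{fixed} continuous function $\Psi_\infty=|\sfd(e_0(\gamma),e_1(\gamma))-\rho|$, compute $\int\Psi_\infty\,d\tau_R=\int_{\bar E}|r_R-\rho|\,d\mm\llcorner_E$ exactly via the marginal condition $(P_1)_\#\tau_R=\mm\llcorner_E$ and \eqref{eq:speed-of-gamma2}, and kill the right-hand side by dominated convergence. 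This is a cleaner route that avoids Lusin/Egorov entirely; its only cost is a bookkeeping point you should make explicit, namely that the subsequence along which $\tau_R\rightharpoonup\tau$ must be refined so that the $\mm\llcorner_E$-a.e.\ convergence $r_R\to\rho$ of Proposition~\ref{P:limit-of-function-r} holds along the same subsequence (both extractions are available, so this is harmless).
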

\begin{proof}
Fix $t\leq s$ and integrate~\eqref{eq:gamma-along-rays2} in
$\bar E\times K$, obtaining
\begin{align*}
  0
  &
    =
    \int_{\bar E\times K}
    (
    \sfd(e_t(\gamma),e_s(\gamma))
    -
    \varphi_R(e_t(\gamma))
    +
    \varphi_R(e_s(\gamma))
    )\, \tau_R(dx\, d\gamma)
  \\
  &
    =
    \int_{\bar E\times K}
    L_{\varphi_R}^{t,s}(\gamma)
    \, \tau_R(dx\, d\gamma)
    ,
\end{align*}
where we have set
$L_\psi^{t,s}(\gamma):= \sfd(e_t(\gamma),e_s(\gamma)) - \psi(e_t(\gamma)) +
\psi(e_s(\gamma))$.
The map $L_{\varphi_R}^{t,s}:K\to\R$ is clearly continuous and converges uniformly
(recall that $\varphi_R\to\varphi_\infty$ uniformly on every compact) to
$L_{\varphi_\infty}^{t,s}$.
For this reason we can take the limit in the equation above obtaining
\begin{align*}
  0
  &
    =
    \int_{\bar E\times K} L_{\varphi_\infty}^{t,s}(\gamma)
    \, \tau(dx\, d\gamma)
  \\
  &
    =
    \int_{\bar E\times K}
    (
    \sfd(e_t(\gamma),e_s(\gamma))
    -
    \varphi_\infty(e_t(\gamma))
    +
    \varphi_\infty(e_s(\gamma))
    )\, \tau(dx\, d\gamma).
\end{align*}
The $1$-lipschitzianity of $\varphi_\infty$, yields
$L_{\varphi_\infty}^{t,s}(\gamma)\geq0$, $\forall\gamma\in K$, hence
\begin{equation}
  \sfd(e_t(\gamma),e_s(\gamma))
  =
  \varphi_\infty(e_t(\gamma))
  -
  \varphi_\infty(e_s(\gamma))
  \quad
  \text{ for $\tau$-a.e.\ } (x,\gamma)\in \bar E\times K.
\end{equation}
In order to conclude, fix $P\subset[0,1]$ a countable dense subset,
and find a $\tau$-negligible set $N\subset\bar E\times K$ such that
\begin{equation}
  \sfd(e_t(\gamma),e_s(\gamma))
  =
  \varphi_\infty(e_t(\gamma))
  -
  \varphi_\infty(e_s(\gamma)),
  \quad
  \forall t,s\in P,\text{ with } t\leq s,
  \,\forall (x,\gamma)\in(\bar E\times K)
  \backslash N.
\end{equation}
If we have $0\leq t\leq s\leq 1$, we approximate $t$ and $s$ with two
sequences in $P$ and we can pass to the limit in the equation above
concluding the proof of~\eqref{eq:gamma-along-rays-weak}.

%

Now we prove~\eqref{eq:speed-of-gamma-weak}.
The idea is similar, but in this case we need to be more careful,
because the function $r_R$ fails to be continuous.
Like before, we can integrate Equation~\eqref{eq:speed-of-gamma2}
obtaining
\begin{align*}
  0=
  &
    \int_{\bar E\times X}
    |\sfd(e_0(\gamma),e_1(\gamma))-r_R(x)|
    \,\tau_R(dx\,d\gamma).
\end{align*}
If the functions $r_R$ were continuous and converged uniformly to
$\rho$, then we could pass to
the limit and conclude.
Unfortunately Proposition~\ref{P:limit-of-function-r}, provides a
limit only the a.e.\ sense.
We overcome this issue using Lusin's and Egorov's theorems.
Fix $\epsilon>0$ and find a compact set $L\subset E$, such that:
1) the restrictions $r_R|_{L}$ are continuous;
2) the restricted maps $r_R|_{L}$ converge uniformly to
$\rho$;
3) $\mm(E\backslash {L})\leq\epsilon$.
We can now compute the limit
\begin{align*}
  0
  &
    =
    \lim_{R\to\infty}
    \int_{\bar E\times K}
    |\sfd(e_0(\gamma),e_1(\gamma))-r_R(x)|
    \,\tau_R(dx\,d\gamma)
  \\
  &
    \geq
    \liminf_{R\to\infty}
    \int_{L\times K}
    |\sfd(e_0(\gamma),e_1(\gamma))-r_R(x)|
    \,\tau_R(dx\,d\gamma)
  \\
  &
    \geq
    \int_{L\times K}
    |
    \sfd(e_0(\gamma),e_1(\gamma))
    -
    \rho
    |
    \,\tau(dx\,d\gamma)
    \geq 0
,
\end{align*}
hence
\begin{equation}
    \sfd(e_0(\gamma),e_1(\gamma))
    =
    \rho
  ,
  \quad
  \text{ for $\tau$-a.e.\ }
  (x,\gamma)\in L\times K
  .
\end{equation}
This means that the equation above holds true except for a set of
measure at most $\epsilon$.
By arbitrariness of $\epsilon$, we conclude the proof
of~\eqref{eq:speed-of-gamma-weak}.
Finally we prove~\eqref{eq:x-belongs-to-gamma-weak}.
Consider the continuous, non-negative function
$L(x,\gamma):=\inf_{t\in[0,1]}\sfd(x,e_t(\gamma))$.
Equation~\eqref{eq:x-belongs-to-gamma2} implies
\begin{equation}
  0=\int_{\bar E\times K} L(x,\gamma)\,\tau_R(dx\,d\gamma).
\end{equation}
The equation above passes to the limit as $R\to\infty$, hence we
deduce
$L(x,\gamma)=0$ for $\tau$-a.e.\ $(x,\gamma)\in \bar E\times K$, which
is precisely~\eqref{eq:x-belongs-to-gamma2}.
\end{proof}

\subsection{Disintegration of the measure and the perimeter}

Recalling the disintegration formula~\eqref{eq:dis-one-line}, we define the map
$\bar E\ni x\mapsto\mu_{x,R}\in\P(\bar E)$ as
\begin{equation}
  \mu_{x,R}:=
  \begin{cases}
  \frac{\mm(B_R)}{\mm(E)}
  \,
  (\widehat{\mm}_{\QQ_R(x),R})\llcorner_E
  ,\quad
  &
  \text{ if }x\in E\cap\widehat\T_R,
  \\
  \delta_x
  ,\quad
  &
  \text{ otherwise.}
\end{cases}
\end{equation}
This new family of measures satisfies the disintegration formula
\begin{equation}
  \label{eq:disintegration-mu}
  \mm\llcorner_E
  =
  \int_{\bar E}\mu_{x,R}\,\mm\llcorner_E(dx)
  .
\end{equation}
Indeed, by a direct computation (recall~\eqref{E:disintfinal}--\eqref{E:disintfinal2})
\begin{align*}
  \mm(A\cap E)
  &
    =
    \int_{Q_R}
    \widehat\mm_{\alpha,R}(A\cap E)
    \,\widehat\q_R(d\alpha)
    =
    \frac{\mm(B_R)}{\mm(E)}
    \int_{Q_R}
    \widehat\mm_{\alpha,R}(A\cap E)
    \,(\QQ_R)_\#(\mm\llcorner_E)(d\alpha)
  \\
  &
    =
    \frac{\mm(B_R)}{\mm(E)}
    \int_X
    \widehat\mm_{\QQ_R(x),R}(A\cap E)
    \,\mm\llcorner_E(dx)
    =
    \int_X
    \mu_{x,R}(A)
    \,\mm\llcorner_E(dx).
\end{align*}
\begin{remark}
\label{rmrk:caveat-disintegration}
We give a few details regarding the measurablity of the integrand
function in Equation~\eqref{eq:disintegration-mu}.
Said equation should be interpreted in the following sense: the map
$x\mapsto\mu_{x,R}(A)$ is measurable and the
formula~\eqref{eq:disintegration-mu} holds.
Indeed, the map $x\mapsto\mu_{x,R}(A)$ is (up to excluding the
negligible set $\bar E\backslash(E\cap\widehat\T_R)$) the composition of
$Q_R\ni\alpha\mapsto\frac{\mm(B_R)}{\mm(E)}\widehat\mm_{\alpha,R}(A\cap
E)$ and the projection $\QQ_R$.
The former map is $\widehat\q_R$-measurable, while
the map $\QQ_R$ is $\mm$-measurable, with respect to the
$\sigma$-algebra of $Q_R$, thus the composition is measurable.
\end{remark}

Since
$\widehat\mm_{\alpha,R}=(g_R(\alpha,\cdot))_\#(h_{\alpha,R}\L^1\llcorner_{[0,|\widehat
  X_{\alpha,R}|]})$,
we can explicitly compute the measure $\mu_{x,R}$
(recall that by~\eqref{eq:definition-r} $r_R(x)=\esssup E_{x,R}$,
for $\mm\llcorner_E$-a.e.\ $x$)
\begin{equation}
  \begin{aligned}
  \mu_{x,R}
  &
  =
  \frac{\mm(B_R)}{\mm(E)}
  (g_R(\QQ_R(x),\cdot))_\#
  \left(
    (g_R(\QQ_R(x),\cdot))^{-1}(E)
    h_{\QQ_R(x),R}\L^1\llcorner_{[0,r_R(r)]}
  \right)
  \\
  &
  =
  (g_R(\QQ_R(x),\cdot))_\#
  \left(
    \indicator_{E_{x,R}}
    \frac{\mm(B_R)}{\mm(E)}
    h_{\QQ_R(x),R}\L^1\llcorner_{[0,r_R(x)]}
  \right)
  \\
  &
  =
  (\gamma^{x,R})_\#(\tilde h_E^{x,R}\L^1\llcorner_{[0,1]}),
  \quad
  \text{ for $\mm\llcorner_E$-a.e.\ } x\in \bar E
  \end{aligned}
\end{equation}
where
\begin{equation}
  \tilde h_E^{x,R}(t)
  =
  \indicator_{E_{x,R}}(r_R(x) t)
  \,
  r_R(x)
  \frac{\mm(B_R)}{\mm(E)}
  \,
  h_{\QQ_R(x),R}(r_R(x)t).
\end{equation}

Thanks to~\eqref{E:disintfinalper}, we can perform a similar operation
for the perimeter.
Having in mind that
$h_{R,\QQ_R(x)}(r_R(x))\delta_{r_R(x)}\leq\PP_{h_{R,\QQ_R(x)}}(E_{x,R};\,\cdot\,)$,
we define the map
\begin{equation*}
  \begin{aligned}
    p_{x,R}
    :&
    =
    \begin{cases}
      \min
      \left\{
        \frac{\mm(B_R)}{\mm(E)}
        h_{R,\QQ_R(x)}(r_R(x))
        ,
      \frac{N}{\rho}
      \right\}
      \delta_{g_R(\QQ_R(x),r_R(x))}
      ,
      \quad
      &
      \text{ if }x\in E\cap\widehat\T_R
      ,
      \\
      \frac{N}{\rho}
      \delta_x
      ,
      \quad
      &
      \text{ if } x\in\bar E\backslash(E\cap\T_R).
    \end{cases}
  \end{aligned}
\end{equation*}
Using the maps $\gamma^{x,R}$ and $\tilde h_{x,R}$, we can rewrite
$p_{x,R}$ as
\begin{equation}
  p_{x,R}
  =
    \begin{cases}
      \min
      \left\{
        \dfrac{
          \tilde h_{x,R}(1)
        }{
          \sfd(\gamma^{x,R}_0,\gamma^{x,R}_1)
        }
        ,
        \dfrac{N}{\rho}
      \right\}
      \delta_{\gamma^{x,R}_1}.
      ,
      \quad
      &
      \text{ if }x\in E\cap\widehat\T_R
      ,
      \\
      \dfrac{N}{\rho}
      \delta_x
      ,
      \quad
      &
      \text{ if } x\in\bar E\backslash(E\cap\T_R).
    \end{cases}
\end{equation}
The definition of $p_{x,R}$ immediately yields
\begin{equation}
  \begin{aligned}
    p_{x,R}
  &
  \leq
  \frac{\mm(B_R)}{\mm(E)}
  \PP_{X_{R,\QQ_R(x)}}(E;\,\cdot\,),
  \quad
  \text{ for $\mm\llcorner_E$-a.e.\ }x\in \bar E,
  \end{aligned}
\end{equation}
hence we deduce the following ``disintegration'' formula
(equations~\eqref{E:disintfinalper} and~\eqref{E:disintfinal2} are
taken into account)
\begin{equation}
  \label{eq:disintegration-perimeter-p}
  \begin{aligned}
    \PP(E;A)
    &
    \geq
    \int_{Q_R}  \PP_{X_{\alpha,R}}(E;A)\,\widehat\q_R(d\alpha)
    =
    \frac{\mm(B_R)}{\mm(E)}
    \int_{\bar E}  \PP_{X_{R,\QQ_R(x)}}(E;A)\,\mm\llcorner_E(dx)
    \\
    &
    \geq
    \int_{\bar E}
    \,p_{x,R}(A)
    \,\mm(dx)
    ,
    \quad
    \forall A\subset \bar E \text{ Borel}.
  \end{aligned}
\end{equation}

Let $F:=e_{(0,1)}(K)=\{\gamma_t:\gamma\in K,t\in[0,1]\}$ and let $S\subset\M^+(F)$ be the subset of
the non-negative measures on $F$ with mass at most
$N/\rho$.
We endow the sets $\P(F)$ and $S$ with the weak topology of
measures.
Since $K$ and $F$ are compact Hausdorff spaces, by Riesz--Markov
Representation Theorem, the weak topology on $\P(F)$ and $S$,
coincides with the weak* topology induced by the duality against
continuous functions $C(F)$.
It is well-known that the weak* convergence can be metrized on bounded
sets, if the primal space is separable.
For instance, a possible suitable metric is given by
\begin{equation}
  \label{eq:distance-weak-convergence}
  d(\mu,\nu)
  =
  \sum_{k=1}^{\infty}
  \frac{1}{2^k \norm{f_k}_{\infty}}
  \left|
    \int_{X}
    f_k
    \, d\mu
    -
    \int_{X}
    f_k
    \, d\nu
  \right|
  ,
\end{equation}
where $\{f_k\}_{k}$ is dense set in $C(X)$.
We endow the spaces $\P(F)$ and $S$ with the distance defined
in~\eqref{eq:distance-weak-convergence}.

Define the map $G_R:\bar E\times K\to\P(F)\times S$, as
\begin{equation}
  G_R(x,\gamma)
  :=
  \left(
    \gamma_\#(\tilde h_E^{x,R}\L^1\llcorner_{[0,1]})
    ,
    \min
    \left\{
      \frac{\tilde h^E_{x,R}(1)}{\sfd(e_0(\gamma),e_1(\gamma))}
      ,
      \frac{N}{\rho}
    \right\}
    \delta_{e_1(\gamma)}
  \right)
  .
\end{equation}
The function $G_R$ is measurable w.r.t.\ the variable $x$ and
continuous w.r.t.\ the variable $\gamma$.
At this point we can define the measure
\begin{equation}
  \sigma_R:=
  (\mathrm{Id}\times G_R)_\# \tau_R
  \in
  \M^+(\bar E\times K\times \P(F)\times S).
\end{equation}
Notice that the mass of $\sigma_R$ is $\mm(E)$ for all $R>0$.
In order to simplify the notation, set $Z=\bar E\times K\times
\P(F)\times S$.
\begin{proposition}
The measure $\sigma_R$ satisfies the following properties
\begin{align}
  &
    \label{eq:disintegration-measure-weak}
    \int_E \psi\, d\mm
    =
    \int_Z\int_E \psi(y)\,\mu(dy)\,\sigma_R(dx\,d\gamma\,d\mu\,dp),
    \quad
    \forall\psi\in C^0_b(\bar E)
    ,
  \\
  &
    \label{eq:disintegration-perimeter-weak}
    \int_{\bar E} \psi(y)\,\PP(E,dy)
    \geq
    \int_Z \int_{\bar E} \psi(y) \,p(dy)\,\sigma_R(dx\,d\gamma\,d\mu\,dp),
    \quad
    \forall\psi\in C^0_b(\bar E),\psi\geq0
    .
\end{align}
\end{proposition}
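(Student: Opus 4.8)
The plan is to read both identities as change-of-variables formulas for the push-forward $\sigma_R=(\mathrm{Id}\times G_R)_\#\tau_R$ and to feed into them the two disintegration facts already established: the identity $\mm\llcorner_E=\int_{\bar E}\mu_{x,R}\,\mm\llcorner_E(dx)$ of~\eqref{eq:disintegration-mu} and the disintegration inequality $\PP(E;A)\geq\int_{\bar E}p_{x,R}(A)\,\mm(dx)$ of~\eqref{eq:disintegration-perimeter-p}. \textbf{Step 1: the components of $G_R$.} I would first check that, for $\tau_R$-a.e.\ $(x,\gamma)\in\bar E\times K$, one has $x\in E\cap\widehat\T_R$, $\gamma=\gamma^{x,R}$, and $G_R(x,\gamma)=(\mu_{x,R},p_{x,R})$: the first two facts are the full-measure properties of $\tau_R$ recorded right after its definition, together with Remark~\ref{rmrk:domain-radius}, while the identity $G_R(x,\gamma^{x,R})=(\mu_{x,R},p_{x,R})$ is a direct comparison of defining formulas — the first component of $G_R(x,\gamma^{x,R})$ is literally $(\gamma^{x,R})_\#(\tilde h_E^{x,R}\L^1\llcorner_{[0,1]})=\mu_{x,R}$, and for the second one substitutes $\sfd(\gamma^{x,R}_0,\gamma^{x,R}_1)=r_R(x)=\esssup E_{x,R}$ and $\tilde h_E^{x,R}(1)=r_R(x)\frac{\mm(B_R)}{\mm(E)}h_{\QQ_R(x),R}(r_R(x))$ to recover the clipped coefficient and the atom $g_R(\QQ_R(x),r_R(x))$ appearing in the definition of $p_{x,R}$. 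I would also note that $g_R(\QQ_R(x),\esssup E_{x,R})\in\bar E$ by continuity of the ray map, so $\mu_{x,R}$ and $p_{x,R}$ are concentrated on $\bar E$.

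\textbf{Step 2: the measure identity.} Fix $\psi\in C^0_b(\bar E)$ and extend it to some $\tilde\psi\in C^0_b(F)$; then $\mu\mapsto\int_F\tilde\psi\,d\mu$ is weakly continuous on $\P(F)$ and coincides with $\mu\mapsto\int_E\psi\,d\mu$ on measures concentrated on $\bar E$, so the integrand on the right-hand side of~\eqref{eq:disintegration-measure-weak} is $\sigma_R$-measurable. Using the push-forward formula for $\sigma_R=(\mathrm{Id}\times G_R)_\#\tau_R$, then Step 1, then the first-marginal property $(P_1)_\#\tau_R=\mm\llcorner_E$, and finally~\eqref{eq:disintegration-mu},
\begin{equation*}
  \int_Z\Big(\int_E\psi(y)\,\mu(dy)\Big)\,\sigma_R(dx\,d\gamma\,d\mu\,dp)
  =
  \int_{\bar E\times K}\int_E\psi\,d\mu_{x,R}\;\tau_R(dx\,d\gamma)
  =
  \int_{\bar E}\int_E\psi\,d\mu_{x,R}\;\mm\llcorner_E(dx)
  =
  \int_E\psi\,d\mm,
\end{equation*}
which is~\eqref{eq:disintegration-measure-weak}; the $\mm\llcorner_E$-measurability of $x\mapsto\int_E\psi\,d\mu_{x,R}$ is exactly the point discussed in Remark~\ref{rmrk:caveat-disintegration}.

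\textbf{Step 3: the perimeter inequality.} Fix now $\psi\in C^0_b(\bar E)$ with $\psi\geq0$. I would first upgrade~\eqref{eq:disintegration-perimeter-p} from indicators of Borel sets to $\psi$: choosing nonnegative Borel simple functions $\psi_n\uparrow\psi$, applying~\eqref{eq:disintegration-perimeter-p} to each Borel set occurring in $\psi_n$, using linearity and then monotone convergence on both sides gives $\int_{\bar E}\psi(y)\,\PP(E,dy)\geq\int_{\bar E}\int_{\bar E}\psi(y)\,p_{x,R}(dy)\,\mm\llcorner_E(dx)$. Exactly as in Step 2 — now with $p\mapsto\int_F\tilde\psi\,dp$ weakly continuous on $S$, and again invoking Step 1 and $(P_1)_\#\tau_R=\mm\llcorner_E$ — the right-hand side of this inequality equals $\int_Z\int_{\bar E}\psi(y)\,p(dy)\,\sigma_R(dx\,d\gamma\,d\mu\,dp)$, and combining the two yields~\eqref{eq:disintegration-perimeter-weak}.

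\textbf{Main obstacle.} The argument is essentially bookkeeping and contains no genuinely hard step; the two places deserving care are the explicit identification in Step 1 — in particular that the clipping by $N/\rho$ and the normalisation of $\tilde h_E^{x,R}$ are arranged so that the coefficient really matches the one in $p_{x,R}$ — and, in Step 3, the necessity of approximating $\psi$ \emph{from below} by simple functions, which is what preserves the direction of the inequality in~\eqref{eq:disintegration-perimeter-p} and is precisely why the hypothesis $\psi\geq0$ is imposed. The measurability issues are handled uniformly by noting that, for $\sigma_R$-a.e.\ point, the measures $\mu$ and $p$ are concentrated on the compact set $\bar E$, so that pairing with $\psi$ there coincides with a weakly continuous functional obtained by extending $\psi$ to $F$.
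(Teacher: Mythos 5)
Your proposal is correct and follows essentially the same route as the paper: identify $(\mu,p)=(\mu_{x,R},p_{x,R})$ for $\sigma_R$-a.e.\ point, push the integrals back through $(\mathrm{Id}\times G_R)_\#\tau_R$ and the marginal $(P_1)_\#\tau_R=\mm\llcorner_E$, and invoke~\eqref{eq:disintegration-mu} and~\eqref{eq:disintegration-perimeter-p}. The only cosmetic difference is in the perimeter part, where you upgrade the Borel-set inequality to nonnegative test functions by monotone approximation with simple functions, while the paper works with open sets and concludes by outer regularity of $\PP(E;\cdot)$ — both are equivalent bookkeeping.
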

\begin{proof}
Fix a test function $\psi\in C_b^0(\bar E)$.
First we notice that for $\sigma_R$-a.e.\ $(x,\gamma,\mu,p)\in Z$, we
have that $\mu=\mu_{x,R}$.
Indeed, it holds that
\begin{equation}
  \mu
  =
  \gamma_\#(\tilde h_E^{x,R}\L^1\llcorner_{[0,1]})
  =
  (\gamma^{x,R})_\#(\tilde h_E^{x,R}\L^1\llcorner_{[0,1]})
  =
  \mu_{x,R},
  \quad
  \text{ for $\sigma_R$-a.e.\ }
  (x,\gamma,\mu,p)\in Z,
\end{equation}
and we used the fact that $\gamma=\gamma_{x,R}$ for $\tau_R$-a.e.\
$(x,\gamma)\in \bar E\times K$.
We conclude this first part by a direct computation
\begin{align*}
  \int_E \psi\,d\mm
  &
  =
    \int_E \int_E \psi(y)\,\mu_{x,R}\,\mm(dx)
  =
    \int_Z \int_E \psi(y)\,\mu_{x,R}(dy)\,\sigma_R(dx\,d\gamma\,d\mu\,dp)
  \\
  &
    =
    \int_Z \int_E \psi(y)\,\mu(dy)\,\sigma_R(dx\,d\gamma\,d\mu\,dp),
\end{align*}
we conclude the proof of inequality~\eqref{eq:disintegration-measure-weak}.

%
Now fix an open set $\Omega\subset X$ and compute using~\eqref{eq:disintegration-perimeter-p}
\begin{align*}
  \PP(E;\Omega)
  &
    \geq
    \int_{E}
    \min\left\{
    \frac{\tilde h^E_{x,R}(1)}{\sfd(\gamma_0^{x,R},\gamma_1^{x,R})}
    ,
    \frac{N}{\rho}
    \right\}
    \delta_{\gamma_1^{x,R}}(\Omega)
    \,d\mm(dx)
  \\
  &
    =
    \int_{Z}
    \min\left\{
    \frac{\tilde h^E_{x,R}(1)}{\sfd(e_0(\gamma^{x,R}),e_1(\gamma^{x,R}))}
    ,
    \frac{N}{\rho}
    \right\}
    \delta_{e_1(\gamma^{x,R})}(\Omega)
    \,d\sigma_R(dx\,d\gamma\,d\mu\,dp)
  \\
  &
    =
    \int_{Z}
    \min\left\{
    \frac{\tilde h^E_{x,R}(1)}{\sfd(e_0(\gamma),e_1(\gamma))}
    ,
    \frac{N}{\rho}
    \right\}
    \delta_{e_1(\gamma)}(\Omega)
    \,d\sigma_R(dx\,d\gamma\,d\mu\,dp).
\end{align*}
If we use the fact that
\begin{equation}
  p=
  \min\left\{
    \frac{\tilde h^E_{x,R}(1)}{\sfd(e_0(\gamma),e_1(\gamma))}
    ,
    \frac{N(\omega_N\AVR_X)^{\frac{1}{N}}}{\mm(E)^{\frac{1}{N}}}
  \right\}
  \delta_{e_1(\gamma)}(\Omega)
  ,
  \quad\text{ for $\sigma_R$-a.e.\ }
  (x,\gamma,\mu,p)\in Z,
\end{equation}
we continue the chain of inequalities obtaining
\begin{align*}
  \PP(E;\Omega)
  &
  \geq
  \int_{Z}
  \min\left\{
    \frac{\tilde h^E_{x,R}(1)}{\sfd(e_0(\gamma),e_1(\gamma))}
    ,
    \frac{N}{\rho}
  \right\}
  \delta_{e_1(\gamma)}(\Omega)
    \,d\sigma_R(dx\,d\gamma\,d\mu\,dp)
    \\&
  =
  \int_Z p(\Omega)  \,d\sigma_R(dx\,d\gamma\,d\mu\,dp).
\end{align*}
Since the perimeter is outer-regular, i.e.,
$\PP(E;A)=\inf\{\PP(E;\Omega):\Omega\supset A\text{ is open}\}$, we can
conclude.
\end{proof}

At this point we are in position to take the limit as $R\to\infty$, as
the properties we have proven pass to the limit, but before proceeding
we prove the following technical Lemma.

\begin{lemma}
Let $X$ be a complete and separable metric space, let $Y$, $Z$ be
two compact metric spaces, and let $\mm$ be a finite Radon
measure on $X$.
Consider a sequence of functions $f_n:X\times Y\to Z$ and $f:X\times
Y\to Z$, such that $f$ and $f_n$ are Borel-measurable in the first variable
and continuous in the second.
Assume that for $\mm$-a.e.\ $x\in X$ the sequence $f_n(x,\cdot)$
converges uniformly to $f(x,\cdot)$.
Consider a sequence of measures $\mu_n\in\M^{+}(X\times Y)$ such that
$\mu_n\rightharpoonup\mu$ weakly in $\M^{+}(X\times Y)$ and
$(\pi_X)_\#\mu_n=\mm$.

Then it holds
\begin{equation}
  (\mathrm{Id}\times f_n)_\#\mu_n
  \rightharpoonup
  (\mathrm{Id}\times f)_\#\mu,
  \quad\text{ weakly in }
  \M(X\times Y\times Z).
\end{equation}
\end{lemma}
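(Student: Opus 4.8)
The statement asserts weak continuity of a pushforward under a sequence of maps $f_n$ that are Borel in the first variable, continuous in the second, and converge locally uniformly in the first variable; the marginal on $X$ is fixed at $\mm$. The strategy is the standard "test against continuous bounded functions, then reduce to where the convergence is uniform'' argument, using Egorov's theorem to upgrade the a.e.\ uniform convergence to genuine uniform convergence off a small set, and using $(\pi_X)_\#\mu_n=\mm$ to control the error on that small set uniformly in $n$.

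\begin{proof}
Fix $\psi\in C_b(X\times Y\times Z)$; we must show
\begin{equation*}
  \int_{X\times Y} \psi(x,y,f_n(x,y))\,\mu_n(dx\,dy)
  \longrightarrow
  \int_{X\times Y} \psi(x,y,f(x,y))\,\mu(dx\,dy).
\end{equation*}
Fix $\eps>0$. Since $\psi$ is uniformly continuous in the $Z$-variable (uniformly in $(x,y)$, because $Y$ and $Z$ are compact and $\psi$ is bounded; more precisely $\psi$ restricted to $X\times Y\times Z$ is continuous, and one uses that $\psi$ has a modulus of continuity in $z$ on compact subsets — combined with tightness of $(\mu_n)$ below, this suffices), there is $\eta>0$ such that $\sfd_Z(z,z')<\eta$ implies $|\psi(x,y,z)-\psi(x,y,z')|<\eps$ for all $(x,y)$ in the relevant compact set. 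By Egorov's theorem applied to $\mm$, there is a Borel set $A\subset X$ with $\mm(X\setminus A)<\eps$ such that $f_n(x,\cdot)\to f(x,\cdot)$ uniformly on $Y$, uniformly for $x\in A$; by Lusin's theorem we may further shrink $A$ to a compact set on which $x\mapsto f(x,\cdot)\in C(Y,Z)$ is continuous, so that $(x,y)\mapsto f(x,y)$ is continuous on $A\times Y$. Hence there is $n_0$ with $\sup_{x\in A,\,y\in Y}\sfd_Z(f_n(x,y),f(x,y))<\eta$ for all $n\geq n_0$.

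Split each integral over $A\times Y$ and over $(X\setminus A)\times Y$. On the first piece, for $n\geq n_0$,
\begin{equation*}
  \Bigl|\int_{A\times Y}\psi(x,y,f_n(x,y))\,\mu_n(dx\,dy)
  -\int_{A\times Y}\psi(x,y,f(x,y))\,\mu_n(dx\,dy)\Bigr|
  \leq \eps\,\mu_n(X\times Y)
  = \eps\,\mm(X),
\end{equation*}
using $(\pi_X)_\#\mu_n=\mm$. On the second piece, $|\psi|\leq\|\psi\|_\infty$ and $\mu_n((X\setminus A)\times Y)=\mm(X\setminus A)<\eps$, so that piece is bounded by $\|\psi\|_\infty\eps$ for every $n$; the same bounds hold with $\mu_n$ replaced by $\mu$, since $(\pi_X)_\#\mu=\mm$ as well (this follows by passing to the limit in $(\pi_X)_\#\mu_n=\mm$ against $C_b(X)$). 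It therefore remains to show
\begin{equation*}
  \int_{A\times Y}\psi(x,y,f(x,y))\,\mu_n(dx\,dy)
  \longrightarrow
  \int_{A\times Y}\psi(x,y,f(x,y))\,\mu(dx\,dy),
\end{equation*}
but $(x,y)\mapsto \indicator_A(x)\,\psi(x,y,f(x,y))$ is a bounded function that is continuous on $A\times Y$ and vanishes off the compact $A\times Y$; approximating $\indicator_A$ from outside by continuous functions and using the outer regularity of the finite Radon measure $\mu$ together with $\mu_n\rightharpoonup\mu$, the right-hand side of this display is recovered up to an error controlled by $\mm(X\setminus A)+(\text{oscillation})$, which is $O(\eps)$. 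Letting first $n\to\infty$ and then $\eps\to0$ yields the claim; by density of such $\psi$ in $C_b(X\times Y\times Z)$ (it suffices to test against $C_b$, and products of bounded continuous functions separate points), this proves the weak convergence $(\mathrm{Id}\times f_n)_\#\mu_n\rightharpoonup(\mathrm{Id}\times f)_\#\mu$.
\end{proof}

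The main obstacle is bookkeeping the uniformity: the convergence $f_n(x,\cdot)\to f(x,\cdot)$ is only uniform \emph{for $\mm$-a.e.\ $x$}, so one genuinely needs Egorov (plus Lusin to make $f$ itself continuous on the good set) and must exploit that all $\mu_n$ share the marginal $\mm$ so the "bad set'' has small $\mu_n$-mass \emph{uniformly in $n$}. Once that is in place, everything else is routine weak-convergence manipulation and an outer-regularity approximation of $\indicator_A$.
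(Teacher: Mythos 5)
Your proof is correct and follows essentially the same route as the paper's: Egorov and Lusin applied to the $C(Y,Z)$-valued maps $x\mapsto f_n(x,\cdot)$ to produce a compact good set, the common marginal $(\pi_X)_\#\mu_n=\mm$ to bound the bad set's $\mu_n$-mass uniformly in $n$, uniform convergence plus a modulus of continuity of $\psi$ on the good compact set, and then the weak convergence $\mu_n\rightharpoonup\mu$. If anything, you are slightly more careful than the paper about the final restriction-to-$A\times Y$ step (where $\indicator_{A\times Y}$ times the integrand is not globally continuous); just note that one should also continuously extend $\psi(x,y,f(x,y))$ off $A\times Y$ (Tietze) before multiplying by the continuous approximants of $\indicator_A$.
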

\begin{proof}
In order to simplify the notation, set
$\nu_n=(\mathrm{Id}\times f_n)_\#\mu_n$ and
$\nu=(\mathrm{Id}\times f)_\#\mu$.
Fix $\epsilon>0$.

We would like to use an extension of the Egorov's and Lusin's Theorems
for functions taking values in separable metric spaces.
The reader can find a proof these theorems
in~\cite[Theorem~7.5.1]{Du02} (for the Egorov's Theorem)
and~\cite[Appendix~D]{Du99} (for the Lusin's Theorem).
In this setting, we deal with maps taking value in $C(Y,Z)$, the space
of continuous functions between the compact spaces $Y$ and $Z$, which
turns out to be separable.

Using said Theorems, we can find a
compact $K\subset X$ such that:
1) the maps $x\in K\mapsto f_n(x,\cdot)\in C(Y,Z)$ are continuous (and
the same holds for $f$ in place of $f_n$);
2) the restricted maps $x\in K\mapsto f_n(x,\cdot)$ converge to $x\in
K\mapsto f(x,\cdot)$, uniformly in the space $C(K, C(Y,Z))$;
3) $\mm(X\backslash K)\leq\epsilon$.
Regarding point 2), this immediately implies that the restriction
$f_n|_{K\times Y}\to f|_{K\times Y}$ converges uniformly in $K\times Y$.

We test the convergence of $\nu_n$ against
$\varphi\in C_b^0(X\times Y\times Z)$
\begin{align*}
  \left|
    \int_{X\times Y\times Z}
    \varphi
    \,d\nu_n
    -
    \int_{X\times Y\times Z}
    \varphi
    \,d\nu
  \right|
  &
  \leq
  \norm{\varphi}_{C^0}
  (\nu_n((X\backslash K)\times Y\times Z)
  +
    \nu((X\backslash K)\times Y\times Z))
  \\&
  \quad\quad
  +
  \left|
    \int_{K\times Y\times Z}
    \varphi
    \,d\nu_n
    -
    \int_{K\times Y\times Z}
    \varphi
    \,d\nu
  \right|
  \\[2mm]
  &
  =
  \norm{\varphi}_{C^0}
  (\mm(X\backslash K)
  +
    \mm(X\backslash K))
  \\
  &
    \qquad
  +
  \left|
    \int_{K\times Y\times Z}
    \varphi
    \,d\nu_n
    -
    \int_{K\times Y\times Z}
    \varphi
    \,d\nu
  \right|
  \\[2mm]
  &
  \leq
  2\epsilon\norm{\varphi}_{C^0}
  +
  \left|
    \int_{K\times Y\times Z}
    \varphi
    \,d\nu_n
    -
    \int_{K\times Y\times Z}
    \varphi
    \,d\nu
  \right|.
\end{align*}
We focus on the second term and we compute the integral
\begin{equation}
  \int_{K\times Y\times Z}
  \varphi
  \,d\nu_n
  =
  \int_{K\times Y}
  \varphi(x,y,f_n(x,y))
  \,\mu_n  (dx\,dy).
\end{equation}
The function $\varphi|_{K\times Y\times Z}$ is
uniformly continuous (because it is continuous and defined on a
compact space), hence $\varphi(x,y,f_n(x,y))$ converges to
$\varphi(x,y,f(x,y))$ uniformly in $K\times Y$.
For this reason, together with the fact that $\mu_n\rightharpoonup
\mu$ weakly we can take the limit in the equation above obtaining
\begin{equation}
  \begin{aligned}
    \lim_{n\to\infty}
    \int_{K\times Y}
    \varphi(x,y,f_n(x,y))
    \,\mu_n  (dx\,dy)
    &
    =
    \int_{K\times Y}
    \varphi(x,y,f(x,y))
    \,\mu  (dx\,dy)
    \\
    &
    =
    \int_{K\times Y\times Z}
    \varphi
    \,d\nu,
  \end{aligned}
\end{equation}
and this concludes the proof.
\end{proof}

\begin{corollary}
  Consider the function $G:\bar E\times K\to\P(F)\times S$
  defined as
\begin{equation}
  G(x,\gamma)=
  \left(
    \gamma_\#(Nt^{N-1}\L^1\llcorner_{[0,1]}),
    \max\left\{
      \frac{N}{\sfd(e_0(\gamma),e_1(\gamma))},
      \frac{N}{\rho}
      \right\}
    \delta_{e_1(\gamma)}
  \right),
\end{equation}
and let $\sigma:=(\id\times G)_\#\tau$.
Then we have that $\sigma_R\rightharpoonup\sigma$ in the weak topology
of measures.
\end{corollary}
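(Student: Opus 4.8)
The plan is to deduce the statement from the technical Lemma just proven, applied with $X=\bar E$, $Y=K$, $Z=\P(F)\times S$, $\mm=\mm\llcorner_E$, $\mu_n=\tau_{R_n}$, $\mu=\tau$ and $f_n=G_{R_n}$, along the subsequence $R_n\to\infty$ fixed above (for which $\tau_{R_n}\rightharpoonup\tau$ and, by Proposition~\ref{P:limit-of-function-r} and Corollary~\ref{cor:goodrays2}, $r_{R_n}(x)\to\rho$ and $\Res_{\QQ_{R_n}(x),R_n}\to0$ for $\mm\llcorner_E$-a.e.\ $x$). The measurability of $G_R$ in $x$, its continuity in $\gamma$, and the identity $(P_1)_\#\tau_R=\mm\llcorner_E$ have already been recorded, so the only hypothesis of the Lemma still to be verified is that, for $\mm\llcorner_E$-a.e.\ $x$, the maps $G_{R_n}(x,\cdot)$ converge uniformly on $K$.

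To check this I would fix $x$ in the full-measure set on which $r_{R_n}(x)\to\rho$ and $\Res_{\QQ_{R_n}(x),R_n}\to0$, set $\alpha=\QQ_{R_n}(x)$, and observe that $\tilde h_E^{x,R_n}$ is precisely the rescaled and renormalised density $\tilde h_{E_{x,R_n}}$ of the trace $E_{x,R_n}\subset[0,\diam E]$ in the sense of Theorem~\ref{T:rigidity-1d}, applied with $D=R_n+\diam E$ (which is $\geq 3\diam E$ for $n$ large), $L=\diam E$, $D'=|\widehat X_{\alpha,R_n}|\leq D$, and $\mm_{h_{\alpha,R_n}}(E_{x,R_n})=\mm(E)/\mm(B_{R_n})\to0$. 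That theorem then gives $\snorm{\tilde h_E^{x,R_n}-N t^{N-1}}_{L^\infty(0,1)}\to0$; since by Lemma~\ref{lem:increasing} the set $E_{x,R_n}$ contains a left neighbourhood of its essential supremum $r_{R_n}(x)$, the density $\tilde h_E^{x,R_n}$ is continuous near $t=1$, whence also its endpoint value satisfies $\tilde h_E^{x,R_n}(1)\to N$. From the $L^\infty$ bound one gets $\snorm{\tilde h_E^{x,R_n}\L^1\llcorner_{[0,1]}-N t^{N-1}\L^1\llcorner_{[0,1]}}_{TV}\to0$, and since push-forward by any curve is a contraction for the total variation and the metric \eqref{eq:distance-weak-convergence} is dominated by it, the first component of $G_{R_n}(x,\gamma)$ converges to $\gamma_\#(N t^{N-1}\L^1\llcorner_{[0,1]})$ uniformly in $\gamma\in K$. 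Using $\tilde h_E^{x,R_n}(1)\to N$, the bound $\sfd(e_0(\gamma),e_1(\gamma))\leq\diam E$, the $1$-Lipschitz continuity of $s\mapsto\min\{s,N/\rho\}$, and the fact that once $\sfd(e_0(\gamma),e_1(\gamma))$ is small enough both truncated quotients are already equal to $N/\rho$, the second component converges uniformly in $\gamma\in K$ to $\min\{N/\sfd(e_0(\gamma),e_1(\gamma)),N/\rho\}\,\delta_{e_1(\gamma)}$.

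Therefore, for $\mm\llcorner_E$-a.e.\ $x$, $G_{R_n}(x,\cdot)$ converges uniformly on $K$ to the map $\hat G(x,\gamma):=\bigl(\gamma_\#(N t^{N-1}\L^1\llcorner_{[0,1]}),\ \min\{N/\sfd(e_0(\gamma),e_1(\gamma)),N/\rho\}\,\delta_{e_1(\gamma)}\bigr)$, which indeed takes values in $\P(F)\times S$. The Lemma then yields $\sigma_{R_n}=(\id\times G_{R_n})_\#\tau_{R_n}\rightharpoonup(\id\times\hat G)_\#\tau$. Finally, by \eqref{eq:speed-of-gamma-weak} one has $\sfd(e_0(\gamma),e_1(\gamma))=\rho$ for $\tau$-a.e.\ $(x,\gamma)$, and on this set $\min\{N/\sfd(e_0(\gamma),e_1(\gamma)),N/\rho\}=N/\rho=\max\{N/\sfd(e_0(\gamma),e_1(\gamma)),N/\rho\}$, so $\hat G=G$ $\tau$-a.e.; hence $(\id\times\hat G)_\#\tau=(\id\times G)_\#\tau=\sigma$ and $\sigma_{R_n}\rightharpoonup\sigma$.

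The step I expect to be the main obstacle is exactly this uniform (over all $\gamma\in K$) convergence of $G_{R_n}(x,\cdot)$: it has to be reduced to the one-dimensional rigidity estimate of Theorem~\ref{T:rigidity-1d}, which requires recognising $\tilde h_E^{x,R_n}$ as the renormalised one-dimensional density, controlling its endpoint value $\tilde h_E^{x,R_n}(1)$, handling the truncation at $N/\rho$, and coping with the degeneracy of $\sfd(e_0(\gamma),e_1(\gamma))$ as $\gamma$ approaches a constant geodesic; this last point is also why the limit map only agrees with $G$ on $\{\sfd(e_0,e_1)=\rho\}$, and hence why \eqref{eq:speed-of-gamma-weak} must be invoked at the very end.
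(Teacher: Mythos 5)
Your proposal is correct and follows essentially the same route as the paper: reduce to the preceding technical lemma and obtain the uniform-in-$\gamma$ convergence of $G_{R}(x,\cdot)$ from the one-dimensional rigidity estimate~\eqref{eq:almost-rigidity-h-tilde} of Theorem~\ref{T:rigidity-1d}, via the metric~\eqref{eq:distance-weak-convergence}. You additionally spell out the second component (which the paper dismisses as ``analogous''), including the control of the endpoint value $\tilde h_E^{x,R}(1)$, the degeneracy as $\sfd(e_0(\gamma),e_1(\gamma))\to0$, and the reconciliation of the $\min$ in $G_R$ with the $\max$ in the statement via~\eqref{eq:speed-of-gamma-weak} — all of which is sound and welcome detail.
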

\begin{proof}
We just need to check the hypotheses of the previous Lemma.
The set $\bar E$ is compact, hence complete and separable.
The set $K$ is compact and so is $\P(F)\times S$ (w.r.t.\ the weak
topology).
As we have already pointed out, the maps $G_R$ are measurable in the
first variable and continuous in the second variable.
Finally, we need to see that for a.e.\ $x$, the limit
$G_R(x,\gamma)\to G(x,\gamma)$ holds uniformly w.r.t.\ $\gamma$.
Fix $x$ and $\gamma$ and pick $\psi\in C_b(F)$ a test function.
Compute
\begin{align*}
  &
  \left|
  \int_{F}
  \psi(y)
  \,
  \gamma_\#(\tilde h_E^{x,R} \L^1\llcorner_{[0,1]})(dy)
  -
  \int_{F}
  \psi(y)
  \,
  \gamma_\#(Nt^{N-1}\L^1\llcorner_{[0,1]})(dy)
  \right|
  \\
  &
    \qquad
  =
  \left|
  \int_0^1
  \psi(\gamma_t)
  (\tilde h_E^{x,R}-Nt^{N-1})
  \, dt
  \right|
  %
    \leq
    \norm{\psi}_{C(F)}
    \norm{\tilde h_E^{x,R}-Nt^{N-1}}_{L^\infty}.
\end{align*}
The r.h.s.\ of the inequality above does not depend on $\gamma$ (but
only on $x$ and $\psi$) and converges to $0$ by
Theorem~\ref{T:rigidity-1d}, in
particular~\eqref{eq:almost-rigidity-h-tilde}.
This means that the first component of $G_R(x,\gamma)$ converges (in
the weak topology of $\P(F)$), uniformly w.r.t.\ $\gamma$
(see~\eqref{eq:distance-weak-convergence}).
For the other component the proof is analogous, so we omit it.
\end{proof}

The next proposition reports all the relevant properties of the limit
measure $\sigma$.

\begin{proposition}\label{P:limit}
  The measure $\sigma$ satisfies the following disintegration
  formulae
  \begin{align}
    &
      \label{eq:disintegration-measure-with-sigma}
      \begin{aligned}
        \int_E\psi(y)\,\mm(dy)
        &
        =
        \int_Z
        \int_0^1
        \psi(e_t(\gamma))Nt^{N-1}
        \,dt
        \,\sigma(dx\,d\gamma\,d\mu\,dp),
        \quad \forall \psi\in L^1(E;\mm\llcorner_E),
      \end{aligned}
    \\
    &
      \label{eq:disintegration-perimeter-with-sigma}
      \begin{aligned}
        \int_{\bar E} \psi(y) \PP(E;dy)
        &
        =
        \frac{N}{\rho}
        \int_Z
        \psi(e_1(\gamma))
        \psi\,\sigma(dx\,d\gamma\,d\mu\,dp),
        \quad
        \forall \psi\in L^1(\bar{E};\PP(E;\,\cdot\,))
        .
      \end{aligned}
  \end{align}
  Furthermore, for $\sigma$-a.e.\ $(x,\gamma,\mu,p)\in Z$ it holds
  \begin{align}
    &
      \label{eq:gamma-along-rays-weak2}
      \sfd(e_t(\gamma),e_s(\gamma))
      =
      \varphi_\infty(e_t(\gamma))
      -
      \varphi_\infty(e_s(\gamma))
      ,
      \quad
      \forall 0\leq t\leq s\leq 1,
    \\
    &
      \label{eq:speed-of-gamma-weak2}
      \sfd(e_0(\gamma),e_1(\gamma))
      =
      \rho
      ,
  \\
  &
    \label{eq:x-belongs-to-gamma-weak2}
    x\in\gamma
    ,
    \\
    &
      \label{eq:characterization-mu}
      \mu=\gamma_\#(Nt^{N-1}\L^1\llcorner_{[0,1]}),
    \\
    &
      \label{eq:characterization-p}
      p
      =
      \frac{N}{\rho}
      \delta_{e_1(\gamma)}.
  \end{align}
\end{proposition}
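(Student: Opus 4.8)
The plan is to deduce every claim by letting $R\to\infty$ in the two ``$\sigma_R$-formulas'' \eqref{eq:disintegration-measure-weak}--\eqref{eq:disintegration-perimeter-weak}, exploiting the weak convergence $\sigma_R\rightharpoonup\sigma$ proved above together with the explicit structure $\sigma=(\id\times G)_\#\tau$. I would first dispose of the pointwise assertions. Since both components of $G$ depend only on the $\gamma$-variable, the $(\bar E\times K)$-marginal of $\sigma$ coincides with $\tau$, so that \eqref{eq:gamma-along-rays-weak2}, \eqref{eq:speed-of-gamma-weak2} and \eqref{eq:x-belongs-to-gamma-weak2} are precisely \eqref{eq:gamma-along-rays-weak}, \eqref{eq:speed-of-gamma-weak} and \eqref{eq:x-belongs-to-gamma-weak} transported to $Z$. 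By the very definition of $G$, for $\sigma$-a.e.\ $(x,\gamma,\mu,p)\in Z$ one has $\mu=\gamma_\#(Nt^{N-1}\mathcal{L}^1\llcorner_{[0,1]})$ and $p=\max\{N/\sfd(e_0(\gamma),e_1(\gamma)),\,N/\rho\}\,\delta_{e_1(\gamma)}$; feeding $\sfd(e_0(\gamma),e_1(\gamma))=\rho$ from \eqref{eq:speed-of-gamma-weak2} into the second expression gives $p=(N/\rho)\,\delta_{e_1(\gamma)}$, which together establish \eqref{eq:characterization-mu} and \eqref{eq:characterization-p}.

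For the measure identity \eqref{eq:disintegration-measure-with-sigma}, I would fix $\psi\in C_b(\bar E)$, pick a bounded continuous extension $\tilde\psi\in C_b(F)$ (Tietze, recalling that $\bar E\subset F$ is compact), and note that for $\sigma_R$-a.e.\ point $\mu=\mu_{x,R}$ is concentrated on $E$, so that the functional $(x,\gamma,\mu,p)\mapsto\int_F\tilde\psi\,d\mu$ — continuous on the compact space $Z$ by the weak$^*$-continuity of $\mu\mapsto\int_F\tilde\psi\,d\mu$ on $\P(F)$ — agrees $\sigma_R$-a.e.\ with $\int_E\psi\,d\mu$. Passing to the limit in \eqref{eq:disintegration-measure-weak} through $\sigma_R\rightharpoonup\sigma$ yields $\int_E\psi\,d\mm=\int_Z\big(\int_F\tilde\psi\,d\mu\big)\,d\sigma$. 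Letting $\psi$ range over $C_b(\bar E)$ this says that the barycentre $A\mapsto\int_Z\mu(A)\,d\sigma$ equals $\mm\llcorner_E$; hence $\mu$ is concentrated on $\bar E$ for $\sigma$-a.e.\ point, and with \eqref{eq:characterization-mu} (so that $\{t:e_t(\gamma)\notin\bar E\}$ is Lebesgue-negligible for $\sigma$-a.e.\ $\gamma$) the displayed identity becomes $\int_E\psi\,d\mm=\int_Z\big(\int_0^1\psi(e_t(\gamma))Nt^{N-1}\,dt\big)\,d\sigma$. A standard monotone-class argument then promotes this to every $\psi\in L^1(E;\mm\llcorner_E)$.

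For \eqref{eq:disintegration-perimeter-with-sigma} I would run the same scheme on \eqref{eq:disintegration-perimeter-weak}, now with the continuous functional $(x,\gamma,\mu,p)\mapsto\int_F\tilde\psi\,dp$ for $\psi\ge0$ (using that $p_{x,R}$, like $p$, is concentrated on $\bar E$) and with \eqref{eq:characterization-p}; the limit gives the one-sided estimate $\int_{\bar E}\psi\,\PP(E;\,\cdot\,)\ge(N/\rho)\int_Z\psi(e_1(\gamma))\,d\sigma$ for every non-negative $\psi\in C_b(\bar E)$. By locality of the perimeter $\PP(E;\,\cdot\,)$ is concentrated on $\bar E$, so this amounts to $\PP(E;\,\cdot\,)\ge\nu$ as Borel measures, $\nu$ being the push-forward of $(N/\rho)\,\sigma$ under $(x,\gamma,\mu,p)\mapsto e_1(\gamma)$. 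The decisive step is then a mass count: $\nu(\bar E)=(N/\rho)\,\sigma(Z)=(N/\rho)\,\mm(E)=N(\omega_N\AVR_X)^{1/N}\mm(E)^{1-1/N}=\PP(E)$, where the second equality uses $\sigma(Z)=\tau(\bar E\times K)=\mm(E)$, the third the definition $\rho=(\mm(E)/(\omega_N\AVR_X))^{1/N}$, and the fourth is exactly the hypothesis that $E$ saturates \eqref{eq:isopAVR}. A finite measure dominated by another one of the same total mass must coincide with it, so $\PP(E;\,\cdot\,)=\nu$, which upon the usual extension to $\psi\in L^1(\bar E;\PP(E;\,\cdot\,))$ is \eqref{eq:disintegration-perimeter-with-sigma}.

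The substantial analytic work — the one-dimensional almost-rigidity of Section~\ref{S:one-dim} and the compactness that produces $\tau_R\rightharpoonup\tau$ and $\sigma_R\rightharpoonup\sigma$ — is already in hand, so within this proof the only genuinely delicate point is the bookkeeping required to see the integrands of \eqref{eq:disintegration-measure-weak} and \eqref{eq:disintegration-perimeter-weak} as honest continuous functions on the compact space $Z$: one must extend the test functions off $\bar E$ and keep track of the fact that $\mu_{x,R}$ and $p_{x,R}$ both live on $\bar E$. The mass-balance identity that closes the perimeter formula is where the saturation of \eqref{eq:isopAVR} is actually consumed, and it is what makes the limiting disintegration exploitable in the subsequent proof that $E$ is a ball.
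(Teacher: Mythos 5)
Your proposal is correct and follows essentially the same route as the paper: transfer of the pointwise properties through the push-forward structure $\sigma=(\id\times G)_\#\tau$, passage to the limit in \eqref{eq:disintegration-measure-weak}--\eqref{eq:disintegration-perimeter-weak} using continuity of $\mu\mapsto\int\psi\,d\mu$ on $\P(F)$, and the upgrade of the perimeter inequality to an equality by the mass count $(N/\rho)\sigma(Z)=(N/\rho)\mm(E)=\PP(E)$, which is exactly where the paper also consumes the saturation of \eqref{E:isopAVR}. Your handling of the test-function extension to $F$ is in fact slightly more careful than the paper's.
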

\begin{proof}
Equations~\eqref{eq:gamma-along-rays-weak2}--\eqref{eq:x-belongs-to-gamma-weak2} are just a restatement
of~\eqref{eq:gamma-along-rays-weak}--\eqref{eq:x-belongs-to-gamma-weak}, respectively.
Equation~\eqref{eq:characterization-mu} is an immediate consequence of
the definition of $G$.
Similarly, taking into account~\eqref{eq:speed-of-gamma-weak2}, we can deduce~\eqref{eq:characterization-p}
\begin{align*}
  p
  &
  =
  \min
  \left\{
  \frac{N}{\sfd(e_0(\gamma),e_1(\gamma))}
  ,
  \frac{N}{\rho}
  \right\}
  \delta_{e_1(\gamma)}
    =
  \frac{N}{\rho}
    \delta_{e_1(\gamma)}
    .
\end{align*}

In order to prove~\eqref{eq:disintegration-measure-with-sigma},
fix $\psi\in C_b^0(F)=C_b^0(e_{(0,1)}(K))$ and define the function
$L_\psi:\P(F)\to\R$ as $L_\psi(\mu)=\int_{F}\psi\,d\mu$.
This function is bounded and continuous w.r.t.\ the weak topology of
$\P(F)$.
Hence, we take into account the definition of weak convergence of
measures and we compute the limit using~\eqref{eq:disintegration-measure-weak} and~\eqref{eq:characterization-mu}
\begin{align*}
  \int_E \psi \, d\mm
  &
    =
    \lim_{R\to\infty}
    \int_Z
    \int_{F}\psi(y)\,\mu(dy)
    \,\sigma_R(dx\,d\gamma\,d\mu\,dp)
  \\
  &
    =
    \lim_{R\to\infty}
    \int_Z
    L_\psi(\mu)
    \,\sigma_R(dx\,d\gamma\,d\mu\,dp)
  \\
  &
    =
    \int_Z
    \int_{F}\psi(y)\,\mu(dy)
    \,\sigma(dx\,d\gamma\,d\mu\,dp)
    =
  \\
  &
    \int_Z
    \int_0^1\psi(e_t(\gamma))Nt^{N-1}\,dt
    \,\sigma(dx\,d\gamma\,d\mu\,dp)
    .
\end{align*}
Using standard approximation arguments, we see that the equation above
holds true also for any $\psi\in L^1(E;\mm\llcorner_E)$.

Regarding~\eqref{eq:disintegration-perimeter-with-sigma}, using
the same argument we can deduce that
\begin{equation*}
  \begin{aligned}
    \int_{\bar{E}}\psi(y)\,\PP(E;dy)
    &
      \geq
      \frac{N}{\rho}
    \int_Z
      \psi(e_1(\gamma))
    \,\sigma(dx\,d\gamma\,d\mu\,dp),
    \quad
  \forall\psi\in L^1(\bar E;\PP(E;\,\cdot\,)),\, \psi\geq0.
  \end{aligned}
\end{equation*}
If we test the inequality above with $\psi=1$, the inequality is
saturated meaning that the two measures have the same mass, so the
inequality improves to an equality.
\end{proof}

\subsection{Back to the classical localization notation}
\label{Ss:localization-classical}
We are now in position to re-obtain a ``classical'' disintegration
formula for both the measure $\mm$ and the perimeter of $E$.

We recall the definition of some of the objects that were introduced in Subsection~\ref{Ss:L1OT}.
For instance, let
$\Gamma_\infty=\{(x,y):\varphi_\infty(x)-\varphi_\infty(y)=\sfd(x,y)\}$
and 
$\relation_\infty^e=\Gamma_\infty\cup\Gamma_\infty^{-1}$ be the
transport relation.
The transport set with endpoints is
$\T_\infty^e:=P_1(\relation^e_\infty\backslash \{x=y\})$; clearly
$E\subset\T_\infty^{e}$, up to a negligible set.
The sets of forward and backward branching points are defined as
\begin{align}
  A_\infty^{+}
  : =
  &~
    \{ x \in \T^{e}_{\infty} :
    \exists z,w \in \Gamma_\infty(x), (z,w)
    \notin \relation_\infty^e \}
    ,
  \\
  A_\infty^{-}
  : =
  &~
    \{ x \in \T^{e}_{\infty} :
    \exists z,w \in \Gamma_\infty^{-1}(x), (z,w)
    \notin \relation_\infty^e \}.
\end{align}
The transport set is defined as
$\T_\infty:=\T^e_\infty\backslash (A_\infty^+\cup A_\infty^-)$; since
the sets $A_\infty^+$ and $A_\infty^-$ are negligible, then $\T_\infty$ has
full measure in $\T^e_\infty$.
Let $Q_\infty$ be the quotient set and let
$\QQ_\infty:\T_\infty\to Q_\infty$ be the quotient map; denote by
$X_{\alpha,\infty}:=\QQ^{-1}(\alpha)$ the disintegration rays and let
$g_\infty:\Dom (g_\infty)\subset \R\times Q_\infty\to X$ be the
parametrization of the rays such that
$\frac{d}{dt} \varphi_\infty(g_\infty(t,\alpha))=-1$.
For every $\alpha\in Q_{\infty}$, let
$t_\alpha:\overline{X_{\alpha,\infty}}\to[0,\infty)$ be the function
$t_\alpha(x):=(g(\alpha,\,\cdot\,))^{-1}=\sfd(x,g_\infty(\QQ_\infty(x),0))$;
the function $t_\alpha$ measures how much a point is translates from
the starting point of the ray $X_{\alpha,\infty}$.

The following proposition guarantees that the geodesics on which the
measure $\sigma$ is supported lay on the transport set $\T_{\infty}$.

\begin{proposition}
  For $\sigma$-a.e.\ $(x,\gamma,\mu,p)\in Z$, it holds that
  $e_t(\gamma)\notin A^+_\infty\cup A^-_\infty$, for all $t\in(0,1)$.
\end{proposition}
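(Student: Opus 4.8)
The plan is to combine two facts. The first is that the branching sets $A^{+}_{\infty}\cup A^{-}_{\infty}$ associated with the $1$-Lipschitz function $\varphi_{\infty}$ are $\mm$-negligible: this holds because $(X,\sfd,\mm)$ is essentially non-branching and satisfies $\CD(0,N)$ with $\supp\mm=X$, exactly as recalled in Subsection~\ref{Ss:L1OT}. The second, and less standard, fact is a \emph{monotonicity} of the branching sets along any geodesic $\gamma$ carried by $\sigma$, which is what will let me upgrade an ``$\L^{1}$-a.e.\ $t$'' statement to an ``every $t\in(0,1)$'' one.

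First I would establish the monotonicity. Fix $(x,\gamma,\mu,p)$ in the $\sigma$-full set where \eqref{eq:gamma-along-rays-weak2} holds, so that $\sfd(e_{t}(\gamma),e_{s}(\gamma))=\varphi_{\infty}(e_{t}(\gamma))-\varphi_{\infty}(e_{s}(\gamma))$ for $0\le t\le s\le1$. For $0<t_{1}<t_{0}<1$ I would check that $\Gamma_{\infty}(e_{t_{0}}(\gamma))\subset\Gamma_{\infty}(e_{t_{1}}(\gamma))$: indeed, if $\varphi_{\infty}(e_{t_{0}}(\gamma))-\varphi_{\infty}(z)=\sfd(e_{t_{0}}(\gamma),z)$, then summing this with $\varphi_{\infty}(e_{t_{1}}(\gamma))-\varphi_{\infty}(e_{t_{0}}(\gamma))=\sfd(e_{t_{1}}(\gamma),e_{t_{0}}(\gamma))$ and squeezing between the triangle inequality and the $1$-Lipschitz bound for $\varphi_{\infty}$ forces $\varphi_{\infty}(e_{t_{1}}(\gamma))-\varphi_{\infty}(z)=\sfd(e_{t_{1}}(\gamma),z)$. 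Consequently a pair $(z,w)$ witnessing $e_{t_{0}}(\gamma)\in A^{+}_{\infty}$ also witnesses $e_{t_{1}}(\gamma)\in A^{+}_{\infty}$ (one of $z,w$ is distinct from $e_{t_{1}}(\gamma)$, which also places $e_{t_{1}}(\gamma)$ in $\T^{e}_{\infty}$), so $\{t\in(0,1):e_{t}(\gamma)\in A^{+}_{\infty}\}$ is downward closed in $(0,1)$. Symmetrically $\Gamma^{-1}_{\infty}(e_{t_{0}}(\gamma))\subset\Gamma^{-1}_{\infty}(e_{t_{1}}(\gamma))$ whenever $t_{0}<t_{1}$, so $\{t\in(0,1):e_{t}(\gamma)\in A^{-}_{\infty}\}$ is upward closed in $(0,1)$.

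Next I would use the disintegration \eqref{eq:disintegration-measure-with-sigma}, read as the identity of Borel measures $\mm\llcorner_{E}=\int_{Z}\gamma_{\#}(Nt^{N-1}\L^{1}\llcorner_{[0,1]})\,\sigma$ (which is what the proof of Proposition~\ref{P:limit} produces before extending to $L^{1}$ test functions; the extension from continuous to Borel test functions is harmless since $A^{\pm}_{\infty}$ are $\sigma$-compact). Testing it against the indicator of $A:=A^{+}_{\infty}\cup A^{-}_{\infty}$ and using $\mm(A)=0$ would give
\[
0=\mm(A\cap E)=\int_{Z}\int_{0}^{1}\indicator_{A}(e_{t}(\gamma))\,Nt^{N-1}\,dt\,\sigma(dx\,d\gamma\,d\mu\,dp),
\]
whence, the integrand being nonnegative and $Nt^{N-1}>0$ on $(0,1]$, for $\sigma$-a.e.\ $(x,\gamma,\mu,p)$ the set $\{t\in(0,1):e_{t}(\gamma)\in A\}$ is Lebesgue-null. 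Combining this with the previous step finishes the proof: for $\sigma$-a.e.\ $(x,\gamma,\mu,p)$ the set $\{t\in(0,1):e_{t}(\gamma)\in A^{+}_{\infty}\}$ is simultaneously downward closed in $(0,1)$ and Lebesgue-null, hence empty, and likewise for $A^{-}_{\infty}$; therefore $e_{t}(\gamma)\notin A^{+}_{\infty}\cup A^{-}_{\infty}$ for every $t\in(0,1)$. I expect the only genuine difficulty to be precisely this ``a.e.\ $t$ $\Rightarrow$ every $t$'' passage, i.e.\ isolating and proving the monotonicity of the branching sets along the $\varphi_{\infty}$-rays; everything else follows directly from Proposition~\ref{P:limit} and from the negligibility of the branching sets.
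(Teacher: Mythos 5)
Your proof is correct and follows essentially the same strategy as the paper's: combine the $\mm$-negligibility of $A^{\pm}_{\infty}$ with the disintegration formula weighted by $Nt^{N-1}$, and then use the monotonicity of membership in $A^{+}_{\infty}$ (resp.\ $A^{-}_{\infty}$) along the rays to upgrade the a.e.-$t$ conclusion to all $t$. The only difference is presentational: the paper asserts the monotonicity ``by definition of $A^{+}_{\infty}$'' and runs the $\epsilon$-parametrized estimate $\sigma(P)\le\epsilon^{-N}\mm(A^{+}_{\infty})=0$, whereas you prove the inclusion $\Gamma_{\infty}(e_{t_{0}}(\gamma))\subset\Gamma_{\infty}(e_{t_{1}}(\gamma))$ explicitly and conclude via ``downward-closed plus Lebesgue-null implies empty.''
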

\begin{proof}
  Fix $\epsilon>0$ and let
  \begin{equation}
    P:=\{(x,\gamma,\mu,p)\in Z: e_\epsilon(\gamma)\in A^+_\infty
    \text{ and
      conditions~\eqref{eq:disintegration-measure-with-sigma}--\eqref{eq:characterization-p}
      holds}
    \}
  \end{equation}
  Notice that by definition of $A^+_\infty$, if $(x,\gamma,\mu,p)\in
  P$, then $\gamma_t\in A^+_\infty$, for all $t\in [0,\epsilon]$,
  thus we can compute
  \begin{align*}
    0
    &
    =
      \mm(A^+_\infty)
      =
      \int_Z
      \int_0^1
      \indicator_{A^+_\infty}(e_t(\gamma))
      Nt^{N-1}
      \,dt
      \,\sigma(dx\,d\gamma\,d\mu\,dp)
    \\
    &
      \geq
      \int_P
      \int_0^\epsilon
      \indicator_{A^+_\infty}(e_t(\gamma))
      Nt^{N-1}
      \,dt
      \,\sigma(dx\,d\gamma\,d\mu\,dp)
      \geq
      \epsilon^N
      \sigma(P),
  \end{align*}
  so $P$ is negligible.
  Fix now $(x,\gamma,\mu,p)\notin P$.
  By definition of $A^+_\infty$ and $P$, we have that
  $\gamma_t\not\in A^+_\infty$, for all $t\in [\epsilon,1]$.
  By arbitrariness of $\epsilon$, we deduce that for
  $\sigma$-a.e\ $(x,\gamma,\mu,p)\in Z$, it holds that
  $e_t(\gamma)\notin A^+_\infty$, for all $t\in (0,1]$.
  The proof for the set $A^-_\infty$ is analogous.
\end{proof}
\begin{corollary}
  For $\sigma$-a.e.\ $(x,\gamma,\mu,p)\in Z$, it holds that
  $e_t(\gamma)\in\overline{X_{\QQ(x),\infty }}$ and
  \begin{equation}
    \label{eq:gamma-parametrization-g}
    e_t(\gamma)
    =g(\QQ(x),t_{\QQ(x)}(e_0(\gamma))+\rho t).
  \end{equation}
\end{corollary}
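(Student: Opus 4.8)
The plan is to deduce both assertions from the previous proposition, the structural properties of $\sigma$ collected in Proposition~\ref{P:limit}, and the definitions of the localization objects attached to $\varphi_\infty$ in Subsection~\ref{Ss:localization-classical}. First I would fix once and for all a set of tuples $(x,\gamma,\mu,p)\in Z$ of full $\sigma$-measure on which simultaneously: \eqref{eq:gamma-along-rays-weak2}, \eqref{eq:speed-of-gamma-weak2} and \eqref{eq:x-belongs-to-gamma-weak2} hold; $e_t(\gamma)\notin A^+_\infty\cup A^-_\infty$ for every $t\in(0,1)$ (the previous proposition); and $x\in\T_\infty$. The last point is admissible because $(P_1)_\#\sigma=(P_1)_\#\tau=\mm\llcorner_E$ is concentrated on $E\subset\T^e_\infty$ (up to an $\mm$-negligible set) while $A^\pm_\infty$ are $\mm$-negligible; everything afterwards is a pointwise argument on this set.

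The first step is to identify the ray containing $\gamma$. Since $\gamma\in\Geo(X)$, \eqref{eq:speed-of-gamma-weak2} gives $\sfd(e_t(\gamma),e_s(\gamma))=|t-s|\rho$ for all $t,s\in[0,1]$, and \eqref{eq:gamma-along-rays-weak2} says that $(e_t(\gamma),e_s(\gamma))\in\Gamma_\infty$ whenever $t\le s$; in particular any two points $e_t(\gamma),e_s(\gamma)$ are $\relation^e_\infty$-related. By \eqref{eq:x-belongs-to-gamma-weak2} we may write $x=e_{t_0}(\gamma)$ for some $t_0\in[0,1]$, so $x\in\T^e_\infty$, and as $x\notin A^+_\infty\cup A^-_\infty$ in fact $x\in\T_\infty$ and $\QQ(x)=\QQ_\infty(x)$ is well defined. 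For every $s\in(0,1)$ the point $e_s(\gamma)$ is distinct from $e_0(\gamma)$ (recall $\rho>0$) and $\relation^e_\infty$-related to it, hence $e_s(\gamma)\in\T^e_\infty$; combined with $e_s(\gamma)\notin A^+_\infty\cup A^-_\infty$ this yields $e_s(\gamma)\in\T_\infty$, and since $e_s(\gamma)$ is also $\relation^e_\infty$-related to $x$ and $\relation_\infty=\relation^e_\infty\cap(\T_\infty\times\T_\infty)$ is an equivalence relation on $\T_\infty$, we conclude $e_s(\gamma)\in X_{\QQ(x),\infty}$ for all $s\in(0,1)$. Letting $s\to 0^+$ and $s\to 1^-$ and using continuity of $s\mapsto e_s(\gamma)$ gives $e_0(\gamma),e_1(\gamma)\in\overline{X_{\QQ(x),\infty}}$, which proves the first assertion.

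The second step is to compute the parameter. Write $\alpha:=\QQ(x)$. By construction $g_\infty(\alpha,\,\cdot\,)$ is the unit-speed parametrization of $X_{\alpha,\infty}$ with $\frac{d}{dt}\varphi_\infty(g_\infty(\alpha,t))=-1$, hence $\varphi_\infty(g_\infty(\alpha,u))=\varphi_\infty(g_\infty(\alpha,0))-u$ for all admissible $u$, and a point of $\overline{X_{\alpha,\infty}}$ is uniquely determined by its $\varphi_\infty$-value. On the other hand, combining \eqref{eq:gamma-along-rays-weak2} with the constant speed $\rho$ of $\gamma$ gives $\varphi_\infty(e_t(\gamma))=\varphi_\infty(e_0(\gamma))-\rho t$ for all $t\in[0,1]$, while by definition of $t_\alpha$ we have $e_0(\gamma)=g_\infty(\alpha,t_\alpha(e_0(\gamma)))$ and therefore $\varphi_\infty(e_0(\gamma))=\varphi_\infty(g_\infty(\alpha,0))-t_\alpha(e_0(\gamma))$. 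Putting these together, $\varphi_\infty(e_t(\gamma))=\varphi_\infty(g_\infty(\alpha,0))-\big(t_\alpha(e_0(\gamma))+\rho t\big)$; since $e_t(\gamma)\in\overline{X_{\alpha,\infty}}$ and $\varphi_\infty$ is strictly decreasing along the ray, this forces $e_t(\gamma)=g_\infty(\alpha,t_\alpha(e_0(\gamma))+\rho t)$, which is exactly \eqref{eq:gamma-parametrization-g}.

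I expect the only genuinely delicate point to be the measure-theoretic bookkeeping in the first paragraph: one must take a countable intersection of the full-$\sigma$-measure sets coming from Proposition~\ref{P:limit}, from the previous proposition, and from the $\mm$-negligibility of $A^\pm_\infty$ pulled back through $(P_1)_\#\sigma=\mm\llcorner_E$, and only then run the pointwise argument above. Everything after that reduces to the elementary identity $\frac{d}{dt}\varphi_\infty\circ g_\infty(\alpha,\,\cdot\,)=-1$ together with continuity of $\gamma$ and of $\varphi_\infty$.
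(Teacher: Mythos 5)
Your proof is correct; the paper states this corollary without proof, treating it as an immediate consequence of the preceding proposition together with Proposition~\ref{P:limit}, and your argument is exactly the natural unpacking of that deduction (identify the ray via the non-branching of the interior points plus the equivalence relation $\relation_\infty$, then pin down the parameter by comparing $\varphi_\infty$-values, which are affine with slope $-1$ along $g_\infty(\alpha,\,\cdot\,)$). The measure-theoretic bookkeeping you flag is handled correctly, since $(P_1)_\#\sigma=\mm\llcorner_E$ and $A^{\pm}_\infty$ are $\mm$-negligible.
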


Define $\hat{\q}:=\frac{1}{\mm(E)}(\QQ_\infty)_\#(\mm\llcorner_E)\ll(\QQ_\infty)_\#\mm\llcorner_{\T_\infty}$ and let
$\tilde\q$ be a probability measure such that $(\QQ_\infty)_\#\mm\llcorner_{\T_\infty}\ll\tilde\q$.
The disintegration theorem gives the following two formulae
\begin{equation}
  \label{eq:disintegration-ugly}
  \mm\llcorner_E
  =
  \int_{Q_\infty}
  \hat \mm_{\alpha,\infty}
  \,
  \hat{\q}(d\alpha),
  \quad
  \text{ and }
  \quad
  \mm\llcorner_{\T_\infty}
  =
  \int_{Q_\infty}
  \tilde \mm_{\alpha,\infty}
  \,
  \tilde\q(d\alpha),
\end{equation}
where the measures $\hat{\mm}_{\alpha,\infty}$ and $\tilde\mm_{\alpha,\infty}$
are supported on $X_{\alpha,\infty}$.
By comparing the two expressions above, it turns out that
$\frac{d\hat{\q}}{d\tilde{\q}}(\alpha)\,\hat{\mm}_{\alpha,\infty}
= \indicator_E\tilde\mm_{\alpha,\infty}$.
Theorem~\ref{T:disintegration-CD}, ensures that the space
$(X_{\alpha,\infty}, \sfd, \tilde\mm_{\alpha,\infty})$ satisfies the
$\CD(0,N)$ condition.
Note that the disintegration $\alpha\mapsto\hat{\mm}_{\alpha,\infty}$
does not fall under the hypothesis of
Theorem~\ref{T:disintegration-CD}: indeed, in this case we are
disintegrating a measure concentrated on $E$ and not on the transport
set $\T_{\infty}$.
Define the functions $\hat h_\alpha$ and $\tilde h_\alpha$ as the
functions such that
\begin{equation}
  \hat{\mm}_{\alpha,\infty}
  =
  (g(\alpha,\,\cdot\,))_\#(\hat h_\alpha
  \L^1_{(0,|X_{\alpha,\infty}|)})
  ,
  \quad
  \text{ and }
  \quad
  \tilde\mm_{\alpha,\infty}
  =
  (g(\alpha,\,\cdot\,))_\#(\tilde h_\alpha
  \L^1_{(0,|X_{\alpha,\infty}|)})
  .
\end{equation}
Clearly, it holds that
$\frac{d\hat{\q}}{d\tilde\q}(\alpha)\hat h_\alpha(t) =
\indicator_E(g(\alpha,t))\tilde h_\alpha(t)$, thus we can derive a
somehow weaker concavity condition for the function
$\hat h_\alpha^{\frac{1}{N-1}}$: for all
$x_0,x_1\in(0,|X_{\alpha,\infty}|)$ and for all $t\in[0,1]$, it holds
that
\begin{equation}
  \begin{aligned}
    &
  \hat h_\alpha((1-t) x_0 + t x_1)^{\frac{1}{N-1}}
  \geq
  (1-t) \hat h_\alpha(x_0)^{\frac{1}{N-1}}
      + t   \hat h_\alpha(x_1)^{\frac{1}{N-1}},
    \\
    &
      \qquad
      \qquad
  \text{ if }  \hat h_\alpha((1-t) x_0 + tx_1)>0.
  \end{aligned}
\end{equation}
The inequality above implies that
\begin{equation}
  \label{eq:weak-mcp-condition}
  \text{the map
    $r\mapsto \frac{\hat h_\alpha(r)}{r^{N-1}}$ is decreasing on the set
    $\{r\in(0,|X_{\alpha,\infty}|):\hat h_\alpha(r)>0\}$.}
\end{equation}

Define the set $\hat Z\subset Z$ as
\begin{equation*}
  \begin{aligned}
    \hat Z
    :=
    \{
    &
    (x,\gamma,\mu,p)\in Z
    :
    x\in E\cap\T_{\infty}
    ,
      \text{ and the properties given by}
    \\
    &\qquad
    \text{ Equations~\eqref{eq:disintegration-measure-with-sigma}--\eqref{eq:disintegration-perimeter-with-sigma}
    and~\eqref{eq:gamma-parametrization-g} holds}
    \}
    .
  \end{aligned}
\end{equation*}
Clearly $\hat Z$ has full $\sigma$-measure in $Z$.
We give a partition for $\hat Z$
\begin{equation}
  \hat Z_\alpha
  :=
  \{
  (x,\gamma,\mu,p)\in\hat Z
  :\QQ_\infty(x)=\alpha
  \}
  ,
\end{equation}
and we disintegrate the measure $\sigma$ according to the partition $(\hat
Z_\alpha)_{\alpha\in Q_\infty}$
\begin{equation}
  \label{eq:disintegration-sigma}
  \sigma
  =
  \int_{Q_\infty}
  \sigma_\alpha
  \,\q(d\alpha),
\end{equation}
where the measures $\sigma_\alpha$ are supported on $\hat Z_\alpha$.
Moreover, let $\nu_\alpha\in\P([0,\infty))$ be the measure given by
\begin{equation}
  \nu_\alpha
  :=
  \frac{1}{\mm(E)}(t_\alpha \circ e_0 \circ \pi_K)_\#(\sigma_\alpha)
\end{equation}
(we recall that $t_\alpha=(g(\alpha,\,\cdot\,))^{-1}$ and
$\pi_K(x,\gamma,\mu,p)=\gamma$).

The following proposition states that the density $\hat{h}_{\alpha}$
is given by the convolution of the model density and the measure
$\nu_{\alpha}$.

\begin{proposition}
  For $\hat{\q}$-a.e.\ $\alpha\in {Q_{\infty}}$, it holds that
  \begin{equation}
    \hat h_\alpha(r)
    =
    N\omega_N\AVR_X
    \int_{[0,\infty)}
    (r-t)^{N-1}
    \indicator_{(t,t+\rho)}(r)
    \,
    \nu_\alpha(dt)
    ,
    \quad
    \forall r\in(0,|X_{\alpha,\infty}|)
    .
  \end{equation}
\end{proposition}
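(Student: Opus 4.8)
The plan is to push the limit disintegration of Proposition~\ref{P:limit} through the quotient map $\QQ_\infty$, identify $\hat{\mm}_{\alpha,\infty}$ with the corresponding conditional measure of $\sigma$, and then read off the density. The picture behind the formula is the following: by~\eqref{eq:characterization-mu} and~\eqref{eq:speed-of-gamma-weak2}, together with~\eqref{eq:gamma-parametrization-g}, for $\sigma$-a.e.\ $(x,\gamma,\mu,p)$ (set $\alpha=\QQ_\infty(x)$ and $t_0=t_\alpha(e_0(\gamma))$) the geodesic $\gamma$ lies inside the ray $X_{\alpha,\infty}$ and, in the arclength coordinate $r$ of $X_{\alpha,\infty}$ provided by $g(\alpha,\,\cdot\,)$, runs from $t_0$ to $t_0+\rho$ via $e_t(\gamma)=g(\alpha,t_0+\rho t)$; moreover $\mu=\gamma_\#(Nt^{N-1}\L^1\llcorner_{[0,1]})$. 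A one-line change of variables $r=t_0+\rho t$ then shows that $\mu$, read in the coordinate $r$, is absolutely continuous with density $\tfrac{N}{\rho^N}(r-t_0)^{N-1}\indicator_{(t_0,t_0+\rho)}(r)$, which equals $\tfrac{N\omega_N\AVR_X}{\mm(E)}(r-t_0)^{N-1}\indicator_{(t_0,t_0+\rho)}(r)$ because $\rho^N=\mm(E)/(\omega_N\AVR_X)$.

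Next I would disintegrate $\sigma$ along the partition $(\hat Z_\alpha)_\alpha$ as in~\eqref{eq:disintegration-sigma}, $\sigma=\int\sigma_\alpha\,\hat\q(d\alpha)$, using the same quotient measure $\hat\q$ that appears in~\eqref{eq:disintegration-ugly}. Inserting this into~\eqref{eq:disintegration-measure-with-sigma} and using~\eqref{eq:characterization-mu} to rewrite $\int_0^1\psi(e_t(\gamma))Nt^{N-1}\,dt=\int\psi\,d\mu$, one obtains $\int_E\psi\,d\mm=\int_{Q_\infty}(\int\psi\,d(\int_{\hat Z_\alpha}\mu\,\sigma_\alpha))\,\hat\q(d\alpha)$ for all bounded Borel $\psi$. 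Since the measures $\mu$ in play are concentrated on $\overline{X_{\alpha,\infty}}$ with $\H^1$-negligible endpoints, the inner measure $\int_{\hat Z_\alpha}\mu\,\sigma_\alpha$ is concentrated on $X_{\alpha,\infty}$, and measurability of its dependence on $\alpha$ is inherited from that of $\alpha\mapsto\sigma_\alpha$; comparing with~\eqref{eq:disintegration-ugly} and invoking uniqueness of the disintegration for the fixed quotient measure $\hat\q$ gives $\hat\mm_{\alpha,\infty}=\int_{\hat Z_\alpha}\mu\,\sigma_\alpha$ for $\hat\q$-a.e.\ $\alpha$. Finally, since $(\pi_{\bar E})_\#\sigma_R=\mm\llcorner_E$ for every $R$ (which survives the weak limit), the fibre $\sigma_\alpha$ has mass $\mm(E)$ for a.e.\ $\alpha$, hence $(t_\alpha\circ e_0\circ\pi_K)_\#\sigma_\alpha=\mm(E)\,\nu_\alpha$; integrating the density of $\mu$ found above against $\sigma_\alpha$, carrying out this last change of variables and cancelling the factor $\mm(E)$, one gets exactly $\hat h_\alpha(r)=N\omega_N\AVR_X\int_{[0,\infty)}(r-t)^{N-1}\indicator_{(t,t+\rho)}(r)\,\nu_\alpha(dt)$. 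The right-hand side is a genuine Borel function of $r$, so it serves as the explicit representative of the density and the identity then holds for every $r\in(0,|X_{\alpha,\infty}|)$.

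The analytically substantial steps — the change of variables for $\mu$ and the final integration against $\nu_\alpha$ — are routine. The delicate part is bookkeeping: one must make sure that the two disintegrations used, that of $\sigma$ over $(\hat Z_\alpha)_\alpha$ and that of $\mm\llcorner_E$ over the rays $(X_{\alpha,\infty})_\alpha$ in~\eqref{eq:disintegration-ugly}, are performed with respect to the same quotient measure $\hat\q$ and compatible quotient maps, so that the uniqueness statement applies; one must track the normalisations carefully (the masses $\mm(E)$ of $\sigma$, of each $\sigma_\alpha$, and of each $\hat\mm_{\alpha,\infty}$, the latter being $\mm(E)$ precisely because $\hat\q=\mm(E)^{-1}(\QQ_\infty)_\#(\mm\llcorner_E)$); and one must check the measurability of $\alpha\mapsto\int_{\hat Z_\alpha}\mu\,\sigma_\alpha$, for which a caveat analogous to Remark~\ref{rmrk:caveat-disintegration} is needed. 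Once these points are settled, the proposition follows.
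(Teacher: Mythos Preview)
Your proposal is correct and follows essentially the same route as the paper: disintegrate $\sigma$ over the fibres $\hat Z_\alpha$ with respect to $\hat\q$, use the parametrisation~\eqref{eq:gamma-parametrization-g} together with~\eqref{eq:characterization-mu} and~\eqref{eq:speed-of-gamma-weak2} to perform the change of variables $r=t_\alpha(e_0(\gamma))+\rho t$, and identify the resulting expression with $\hat h_\alpha$ via uniqueness of the disintegration~\eqref{eq:disintegration-ugly}. The paper carries this out as one direct chain of equalities rather than first isolating the density of $\mu$ in the $r$-coordinate, but the computation and the bookkeeping points you flag (same quotient measure, mass $\mm(E)$ of the fibres, measurability) are exactly the ones at play.
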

\begin{proof}
Fix $\psi\in L^1(\mm\llcorner_E)$ and compute its integral using
Equations~\eqref{eq:disintegration-measure-with-sigma}
and~\eqref{eq:disintegration-sigma}
\begin{equation}
  \begin{aligned}
    \int_E\psi(x)\,\mm(dx)
    &
      =
      \int_{\hat Z}
      \int_0^1\psi(e_t(\gamma)) N t^{N-1}
      \, dt
      \,
      \sigma(dx\, d\gamma \, d\mu \, dp)
    \\
    &
      =
      \int_{Q_\infty}
      \int_{\hat Z_\alpha}
      \int_0^1\psi(e_t(\gamma)) N t^{N-1}
      \, dt
      \,
      \sigma_\alpha(dx\, d\gamma \, d\mu \, dp)
      \,
      \q(d\alpha)
      .
  \end{aligned}
\end{equation}
Fix now $\alpha\in {Q_{\infty}}$ and compute
(recall~\eqref{eq:gamma-parametrization-g} and the definition of
$\hat Z$)
\begin{equation}
  \begin{aligned}
      &
      \int_{\hat Z_\alpha}
      \int_0^1\psi(e_t(\gamma)) N t^{N-1}
      \, dt
      \,
      \sigma_\alpha(dx\, d\gamma \, d\mu \, dp)
    \\[2mm]
      &
        \qquad
      =
      \int_{\hat Z_\alpha}
      \int_0^\rho\psi(e_{s/\rho}(\gamma)) N \frac{s^{N-1}}{\rho^N}
      \, ds
      \,
      \sigma_\alpha(dx\, d\gamma \, d\mu \, dp)
    \\[2mm]
    &
        \qquad
      =
      \int_{\hat Z_\alpha}
      \int_0^\rho
      \psi(g_\infty(\QQ(x),t(\alpha,\gamma_0)+s))
      N \frac{s^{N-1}}{\rho^N}
      \, ds
      \,
      \sigma_\alpha(dx\, d\gamma \, d\mu \, dp)
    \\[2mm]
    &
        \qquad
      =
      \int_{\hat Z_\alpha}
      \int_0^{|X_{\alpha,\infty}|}
      \psi(g_\infty(\alpha,r))
      N \frac{(r-t(\alpha,\gamma_0))^{N-1}}{\rho^N}
      \indicator_{(t(\alpha,\gamma_0),t(\alpha,\gamma_0)+\rho)}(r)
    \\
      &
        \qquad\qquad
      dr
      \,
      \sigma_\alpha(dx\, d\gamma \, d\mu \, dp)
    \\[2mm]
      &
        \qquad
      =
      \int_0^{|X_{\alpha,\infty}|}
      \psi(g_\infty(\alpha,r))
      \int_{\hat Z_\alpha}
      N \frac{(r-t(\alpha,\gamma_0))^{N-1}}{\rho^N}
      \indicator_{(t(\alpha,\gamma_0),t(\alpha,\gamma_0)+\rho)}(r)
    \\
      &
        \qquad\qquad
      \sigma_\alpha(dx\, d\gamma \, d\mu \, dp)
      \,
      dr
      ,
  \end{aligned}
\end{equation}
hence, by the uniqueness of the disintegration, we deduce that 
\begin{align*}
    \hat{h}_{\alpha}(r)
    &
  =
  \int_{\hat Z_\alpha}
  N \frac{(r-t(\alpha,\gamma_0))^{N-1}}{\rho^N}
  \indicator_{(t(\alpha,\gamma_0),t(\alpha,\gamma_0)+\rho)}(r)
  \,
  \sigma_\alpha(dx\, d\gamma \, d\mu \, dp)
    \\
    &
  =
  N\omega_N\AVR_X \int_{[0,\infty)}
  (r-t)^{N-1}
  \indicator_{(t,t+\rho)}(r)
  \,
  \nu_\alpha(dt)
  .
      \qedhere
  \end{align*}
\end{proof}
\begin{proposition}
For $\hat{\q}$-a.e.\ $\alpha\in {Q_{\infty}}$, it holds that $\nu_\alpha=\delta_0$.
\end{proposition}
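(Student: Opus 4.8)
The plan is to combine the two structural facts available for $\hat{\q}$-a.e.\ $\alpha$: the convolution identity just established,
\[
  \hat h_\alpha(r)
  =
  C\int_{[0,\infty)}(r-t)^{N-1}\indicator_{(t,t+\rho)}(r)\,\nu_\alpha(dt),
  \qquad r\in(0,|X_{\alpha,\infty}|),\quad C:=N\omega_N\AVR_X,
\]
(with $|X_{\alpha,\infty}|\geq\rho$, since the supporting geodesics $\gamma$ satisfy $\sfd(e_0(\gamma),e_1(\gamma))=\rho$), together with the weak monotonicity~\eqref{eq:weak-mcp-condition}, namely that $r\mapsto \hat h_\alpha(r)/r^{N-1}$ is non-increasing on $\{r:\hat h_\alpha(r)>0\}$. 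Setting $\beta:=\inf\supp\nu_\alpha$, I would establish in turn that (i)~$\beta=0$, (ii)~$\nu_\alpha((0,\rho))=0$, and (iii)~$\nu_\alpha([\rho,\infty))=0$; since $\nu_\alpha\in\P([0,\infty))$, (ii)--(iii) give $\nu_\alpha=\delta_0$.

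For (i): for $r\in(\beta,\beta+\rho)$ the indicator keeps exactly the $t\in(r-\rho,r)$, and as $\nu_\alpha$ carries no mass below $\beta$ this yields $\hat h_\alpha(r)=C\int_{[\beta,r)}(r-t)^{N-1}\nu_\alpha(dt)$, which is strictly positive (because $\beta\in\supp\nu_\alpha$) and at most $C(r-\beta)^{N-1}$. Hence, if $\beta>0$, then $\hat h_\alpha(r)/r^{N-1}\to 0$ as $r\to\beta^+$, while~\eqref{eq:weak-mcp-condition} forces this ratio to be non-increasing and strictly positive on $(\beta,\beta+\rho)$ --- a contradiction. So $\beta=0$, and in particular $0\in\supp\nu_\alpha$, whence $(0,\rho)\subset\{\hat h_\alpha>0\}$.

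For (ii): on $(0,\rho)$ one has $\hat h_\alpha(r)/r^{N-1}=C\int_{[0,r)}(1-t/r)^{N-1}\nu_\alpha(dt)$, and this ratio is non-decreasing in $r$ because both the integration domain and, for fixed $t$, the weight $(1-t/r)^{N-1}$ increase with $r$; combined with~\eqref{eq:weak-mcp-condition} it must be constant on $(0,\rho)$. If $\nu_\alpha((0,\rho))>0$, picking $0<c<d<\rho$ with $\nu_\alpha((c,d))>0$ gives $\hat h_\alpha(d)/d^{N-1}-\hat h_\alpha(c)/c^{N-1}\geq C\int_{(c,d)}(1-t/d)^{N-1}\nu_\alpha(dt)>0$, contradicting constancy; hence $\nu_\alpha((0,\rho))=0$ and $\supp\nu_\alpha\subset\{0\}\cup[\rho,\infty)$. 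For (iii): if $\nu_\alpha([\rho,\infty))>0$, set $t_0:=\inf(\supp\nu_\alpha\cap[\rho,\infty))\geq\rho>0$; then $\nu_\alpha$ has no mass in $(0,t_0)$, and for $r\in(t_0,t_0+\rho)$ the part of $\nu_\alpha$ lying in $[0,\rho)$ does not contribute (its indicator vanishes for $r\geq\rho$), so $\hat h_\alpha(r)=C\int_{[t_0,r)}(r-t)^{N-1}\nu_\alpha(dt)$, again strictly positive and $\leq C(r-t_0)^{N-1}$, so that $\hat h_\alpha(r)/r^{N-1}\to 0$ as $r\to t_0^+$, contradicting~\eqref{eq:weak-mcp-condition}. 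Thus $\nu_\alpha([\rho,\infty))=0$, and $\nu_\alpha=\delta_0$.

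The conceptual content is light, so I expect the actual difficulty to be bookkeeping. One must keep track of where $\nu_\alpha$ may place mass --- the positivity set $\{\hat h_\alpha>0\}$ is a union of translated windows $(t,t+\rho)$ and need not be an interval, so~\eqref{eq:weak-mcp-condition} has to be invoked on several sub-windows --- verify the monotonicity and the claim that the ratio strictly increases wherever $\nu_\alpha$ carries mass for a general (possibly non-atomic, possibly disconnectedly supported) measure $\nu_\alpha$, and bear in mind that the convolution formula, the inclusion $(0,\rho)\subset(0,|X_{\alpha,\infty}|)$, and~\eqref{eq:weak-mcp-condition} are at our disposal only for $\hat{\q}$-a.e.\ $\alpha$.
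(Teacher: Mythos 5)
Your proof is correct, and its skeleton coincides with the paper's: set $T:=\inf\supp\nu_\alpha$, show $T=0$ by the ``ratio tends to $0$ at the left edge of the support, yet must be positive and non-increasing on the window $(T,T+\rho)$'' contradiction, then exclude mass in $(0,\rho)$, then rule out mass in $[\rho,\infty)$ by repeating the first argument at $S:=\inf\supp(\nu_\alpha\llcorner_{[\rho,\infty)})$. Your steps (i) and (iii) are literally the paper's computations \eqref{eq:hat-h-is-positive}--\eqref{eq:hat-h-goes-to-zero}. The one genuine difference is the middle step: the paper differentiates the non-increasing map $r\mapsto\hat h_\alpha(r)/r^{N-1}$, justifies $\frac{d}{dr}\int_{[0,r)}(r-t)^{N-1}\nu_\alpha(dt)\geq(N-1)\int_{[0,r)}(r-t)^{N-2}\nu_\alpha(dt)$ via a difference quotient and Fatou, and lands on $0\geq(N-1)\int_{[0,r)}t(r-t)^{N-2}\nu_\alpha(dt)$, which kills the mass on $(0,r)$. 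You instead note that $\hat h_\alpha(r)/r^{N-1}=C\int_{[0,r)}(1-t/r)^{N-1}\nu_\alpha(dt)$ is manifestly non-\emph{de}creasing in $r$ on $(0,\rho)$ (the domain grows and, for $N>1$, so does the weight), hence constant by \eqref{eq:weak-mcp-condition}, and any mass in some $(c,d)\subset(0,\rho)$ would force a strict increase. This avoids differentiation under the integral sign altogether and is, if anything, cleaner; what it costs you is the need to verify the strict-increase inequality carefully for a general measure, which you do. The only points left implicit (in both your write-up and the paper's) are that $|X_{\alpha,\infty}|\geq T+\rho$, resp.\ $\geq S+\rho$, so that the convolution identity is available on the windows you use; this follows from \eqref{eq:gamma-parametrization-g} since points of $\supp\nu_\alpha$ arbitrarily close to $T$ (resp.\ $S$) come from geodesics extending a further length $\rho$ along the ray.
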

\begin{proof}
  Let $T:=\inf \supp\nu_\alpha$.
  If we set $r\in(T,T+\rho)$, we can compute
  \begin{equation}
    \label{eq:hat-h-is-positive}
  \begin{aligned}
    \frac{\hat h_{\alpha,\infty}(r)}{N\omega_N\AVR_X}
    &
  =
   \int_{[0,\infty)}
  (r-t)^{N-1}
  \indicator_{(t,t+\rho)}(r)
  \,
  \nu_\alpha(dt)
      =
  \int_{[T,r)}
  (r-t)^{N-1}
  \,
      \nu_\alpha(dt)
    \\
    &
      \geq
  \int_{[T,r)}
      \left(
      \frac{r-T}{2}
      \indicator_{[T,(r+T)/2]}(t)
      \right)^{N-1}
  \,
      \nu_\alpha(dt)
      =
      \frac{(r-T)^{N-1}}{2^{N-1}}
      \nu_\alpha([T,\tfrac{r+T}{2}]).
  \end{aligned}
\end{equation}
  By definition of $T$, we have that
  $\nu_\alpha([T,\frac{r+T}{2}])>0$, hence $\hat h_\alpha(r)>0$, for all
  $r\in(T,T+\rho)$.
  On the other hand
  \begin{equation}
    \label{eq:hat-h-goes-to-zero}
  \begin{aligned}
    \hat h_{\alpha,\infty}(r)
    &
  =
      N\omega_N\AVR_X
  \int_{[T,r)}
  (r-t)^{N-1}
  \,
      \nu_\alpha(dt)
    \\
    &
      \leq
      N\omega_N\AVR_X
      (r-T)^{N-1}
  \,
      \nu_\alpha([T,r))
      \to
      0
      .
      \quad
      \text{ as }
      r\to T^+
      .
  \end{aligned}
\end{equation}
  We claim that $T=0$.
  Indeed, if $T>0$, then $\lim_{r\to T^{+}}\hat h_\alpha(r)/r^{N-1}=0$
  contradicting~\eqref{eq:weak-mcp-condition}.

  We now derive the non-increasing function
  \begin{equation}
    (0,\rho)\ni r
    \mapsto
    \frac{\hat h_\alpha(r)}{r^{N-1}}
    =
    \frac{N\omega_N\AVR_X}{r^{N-1}}
    \int_{[0,r)}
    (r-t)^{N-1}
    \,
    \nu_\alpha(dt)
    ,
  \end{equation}
  obtaining
  \begin{align*}
    0
    &
      \geq
      N\omega_N\AVR_X
      \left(
      \frac{1-N}{r^{N}}
    \int_{[0,r)}
    (r-t)^{N-1}
    \,
      \nu_\alpha(dt)
      +
      \frac{1}{r^{N-1}}
      \frac{d}{dr}
          \int_{[0,r)}
    (r-t)^{N-1}
    \,
    \nu_\alpha(dt)
      \right)
      .
  \end{align*}
  The second term can be computed as
  \begin{align*}
    \frac{d}{dr}
    \int_{[0,r)}
    &
    (r-t)^{N-1}
    \,
    \nu_\alpha(dt)
    \\[2mm]
    &=
      \lim_{h\to 0}
          \int_{[r,r+h)}
    \frac{(r+h-t)^{N-1}}{h}
    \,
      \nu_\alpha(dt)
    \\
    &
      \qquad\qquad
      +
      \lim_{h\to 0}
      \int_{[0,r)}
      \frac{(r+h-t)^{N-1}-(r-t)^{N-1}}{h}
    \,
      \nu_\alpha(dt)
    \\[2mm]
    &
      \geq
      0 + 
      \int_{[0,r)}
      \lim_{h\to 0}
      \frac{(r+h-t)^{N-1}-(r-t)^{N-1}}{h}
    \,
      \nu_\alpha(dt)
    \\[2mm]
    &
      =
      (N-1)\int_{[0,r)}
      (r-t)^{N-2}
    \,
      \nu_\alpha(dt)
      ,
  \end{align*}
  yielding
  \begin{align*}
    0
    &
      \geq
      (1-N)
    \int_{[0,r)}
    (r-t)^{N-1}
    \,
      \nu_\alpha(dt)
      +
      r
      \frac{d}{dr}
          \int_{[0,r)}
    (r-t)^{N-1}
    \,
    \nu_\alpha(dt)
    \\
    &
      \geq
      (N-1)
    \int_{[0,r)}
    (r(r-t)^{N-2}-(r-t)^{N-1})
    \,
      \nu_\alpha(dt)
    \\
    &
      =(N-1)
      \int_{[0,r)}
      t(r-t)^{N-2}
    \,
      \nu_\alpha(dt).
  \end{align*}
The inequality above implies that $\nu_\alpha((0,r))=0$, for all
$r\in(0,\rho)$, hence $\nu_\alpha(0,\rho)=0$.
We deduce that
\begin{equation}
\hat  h_\alpha(r)
  =
  N\omega_N\AVR_X
  \int_{[0,r)}
  (r-t)^{N-1}
  \, \nu_\alpha(dt)
  =
  N\omega_N\AVR_X
  \,
  r^{N-1} \nu_\alpha(\{0\})
  ,
  \quad
  \forall r\in(0,\rho)
  .
\end{equation}
If $\nu_\alpha([\rho,\infty))=0$, then $\nu_\alpha=\delta_0$ (because
$\nu_\alpha$ has mass $1$) completing the proof.
Assume on the contrary that $\nu_\alpha([\rho,\infty))>0$, and
let $S:=\inf \supp (\nu_\alpha\llcorner_{[\rho,\infty)})\geq\rho$.
In this case we follow the
computations~\eqref{eq:hat-h-is-positive}
and~\eqref{eq:hat-h-goes-to-zero}, with $S$ in place of $T$, deducing $\lim_{r\to S^+} \hat
h_\alpha(r)=0$, contradicting~\eqref{eq:weak-mcp-condition}.
\end{proof}
\begin{corollary}
  \label{cor:gamma-parametrization-g-improved}
  For $\hat{\q}$-a.e.\ $\alpha\in {Q_{\infty}}$, for
  $\sigma_\alpha$-a.e. $(x,\gamma,\mu,p)\in Z_\alpha$, it holds that
  $e_t(\gamma)=g(\alpha,\rho t)$, $\forall t\in[0,1]$.
\end{corollary}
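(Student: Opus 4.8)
The plan is to read the statement off directly from the parametrisation formula \eqref{eq:gamma-parametrization-g} together with the identity $\nu_\alpha=\delta_0$ established in the preceding proposition. First I would recall that $\hat Z$ has full $\sigma$-measure in $Z$, so by the disintegration \eqref{eq:disintegration-sigma} the fibre $\hat Z_\alpha$ has full $\sigma_\alpha$-measure for $\hat{\q}$-a.e.\ $\alpha\in Q_\infty$. For such $\alpha$ one has $\QQ_\infty(x)=\alpha$ on $\hat Z_\alpha$, so \eqref{eq:gamma-parametrization-g} specialises to
\begin{equation}
  e_t(\gamma)=g\bigl(\alpha,\,t_\alpha(e_0(\gamma))+\rho t\bigr),
  \qquad \forall\, t\in[0,1],
  \quad \text{for $\sigma_\alpha$-a.e.\ }(x,\gamma,\mu,p)\in\hat Z_\alpha.
\end{equation}

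Next I would unwind the definition $\nu_\alpha=\frac{1}{\mm(E)}(t_\alpha\circ e_0\circ\pi_K)_\#\sigma_\alpha$. Since the preceding proposition gives $\nu_\alpha=\delta_0$ for $\hat{\q}$-a.e.\ $\alpha$, the pushforward $(t_\alpha\circ e_0\circ\pi_K)_\#\sigma_\alpha$ equals $\mm(E)\,\delta_0$, which forces $t_\alpha(e_0(\gamma))=0$ for $\sigma_\alpha$-a.e.\ $(x,\gamma,\mu,p)$; equivalently, $e_0(\gamma)=g(\alpha,0)$ is the initial point of the ray $X_{\alpha,\infty}$. Substituting $t_\alpha(e_0(\gamma))=0$ into the displayed identity gives $e_t(\gamma)=g(\alpha,\rho t)$ for all $t\in[0,1]$ and $\sigma_\alpha$-a.e.\ $(x,\gamma,\mu,p)$, which is precisely the claim.

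The only point to watch is the measure-theoretic bookkeeping across the disintegration: each input — membership in $\hat Z$ (hence validity of \eqref{eq:gamma-parametrization-g}) and the equality $\nu_\alpha=\delta_0$ — holds only up to a $\hat{\q}$-null set of indices $\alpha$, and within a good fibre only up to a $\sigma_\alpha$-null set, so one invokes the Fubini-type property of \eqref{eq:disintegration-sigma} to combine them and obtain the conclusion for $\hat{\q}$-a.e.\ $\alpha$ and $\sigma_\alpha$-a.e.\ point. I do not expect a genuine obstacle here: this corollary is just the clean repackaging of the two preceding propositions, and it records the fact, used in the sequel, that in the limit the geodesics carrying $\sigma$ are exactly the $\rho$-rescaled transport rays issuing from the common starting point.
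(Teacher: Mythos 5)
Your proposal is correct and follows the paper's own argument exactly: deduce $t_\alpha(e_0(\gamma))=0$ from $\nu_\alpha=\delta_0$ via the definition of $\nu_\alpha$ as a push-forward, then substitute into the parametrisation~\eqref{eq:gamma-parametrization-g} valid on $\hat Z_\alpha$. The measure-theoretic bookkeeping you flag is handled the same way in the paper, so there is nothing to add.
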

\begin{proof}
  The fact that $\nu_\alpha=\delta_0$, implies $t_\alpha(\gamma_0)=0$
  for $\sigma_\alpha$-a.e.\ $(x,\gamma\,\mu,p)\in\hat Z_\alpha$,
  hence, recalling~\eqref{eq:gamma-parametrization-g} and the
  definition of $\hat{Z}$, we have that
$    e_t(\gamma)
    =
    g(\alpha,t_\alpha(e_0)+\rho t)
    =
    g(\alpha,\rho t)$
    .
\end{proof}

The next corollary concludes the discussion of the limiting procedures
of the localization.

\begin{corollary}
  \label{cor:disintegration-classical}
  For $\hat{\q}$-a.e.\ $\alpha\in {Q_{\infty}}$, it holds that
  \begin{equation}
    \hat h_\alpha(r)
    =
    N\omega_N\AVR_X
    \indicator_{(0,\rho)}(r)
    r^{N-1}
    .
  \end{equation}
  Moreover, the following disintegration formulae hold
  \begin{align}
    \label{eq:disintegration-measure-classical}
    &
      \mm
    =
    N\omega_N\AVR_X
    \int_{{Q_{\infty}}}
      (g(\alpha,\,\cdot\,))_{\#}
      (
      r^{N-1}
      \,\L^1\llcorner_{(0,\rho)})
    \,\hat{\q}(d\alpha)
    ,
    \\
    \label{eq:disintegration-perimeter-classical}
    &
      \PP(E;\,\cdot\,)
      =
      \PP(E)
      \int_{Q_{\infty}}
      \delta_{g(\alpha,\rho)}
      \,
      \hat{\q}(d\alpha)
      .
  \end{align}
\end{corollary}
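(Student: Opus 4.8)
The plan is to harvest all three assertions from the structural facts already in hand, with no new estimate required. First I would read off the density formula: by the Proposition immediately preceding Corollary~\ref{cor:gamma-parametrization-g-improved} one has $\nu_\alpha=\delta_0$ for $\hat{\q}$-a.e.\ $\alpha\in Q_\infty$, and substituting this into the convolution representation $\hat h_\alpha(r)=N\omega_N\AVR_X\int_{[0,\infty)}(r-t)^{N-1}\indicator_{(t,t+\rho)}(r)\,\nu_\alpha(dt)$ proved just above collapses the integral to the single value $t=0$, so that $\hat h_\alpha(r)=N\omega_N\AVR_X\,\indicator_{(0,\rho)}(r)\,r^{N-1}$ on $(0,|X_{\alpha,\infty}|)$. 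Here $\rho\le|X_{\alpha,\infty}|$ for $\hat{\q}$-a.e.\ $\alpha$, since by Corollary~\ref{cor:gamma-parametrization-g-improved} the point $g(\alpha,\rho)=e_1(\gamma)$ is well defined.

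Next I would obtain~\eqref{eq:disintegration-measure-classical} by plugging this density into the first identity of~\eqref{eq:disintegration-ugly}, namely $\mm\llcorner_E=\int_{Q_\infty}\hat\mm_{\alpha,\infty}\,\hat{\q}(d\alpha)$ with $\hat\mm_{\alpha,\infty}=(g(\alpha,\,\cdot\,))_\#\big(\hat h_\alpha\,\L^1\llcorner_{(0,|X_{\alpha,\infty}|)}\big)=N\omega_N\AVR_X\,(g(\alpha,\,\cdot\,))_\#\big(r^{N-1}\L^1\llcorner_{(0,\rho)}\big)$; a one-line mass check, $N\omega_N\AVR_X\int_0^\rho r^{N-1}\,dr=\omega_N\AVR_X\rho^N=\mm(E)$, confirms this is compatible with $\hat{\q}$ being a probability measure. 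For~\eqref{eq:disintegration-perimeter-classical} I would start from the saturated identity~\eqref{eq:disintegration-perimeter-with-sigma}, $\int_{\bar E}\psi(y)\,\PP(E;dy)=\tfrac{N}{\rho}\int_Z\psi(e_1(\gamma))\,\sigma(dx\,d\gamma\,d\mu\,dp)$, replace $e_1(\gamma)$ by $g(\QQ_\infty(x),\rho)$ up to a $\sigma$-null set by Corollary~\ref{cor:gamma-parametrization-g-improved}, and use $(P_1)_\#\sigma=\mm\llcorner_E$ together with $\hat{\q}=\tfrac{1}{\mm(E)}(\QQ_\infty)_\#(\mm\llcorner_E)$ to rewrite the right-hand side as $\tfrac{N\mm(E)}{\rho}\int_{Q_\infty}\psi(g(\alpha,\rho))\,\hat{\q}(d\alpha)$. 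Since $\rho=(\mm(E)/(\omega_N\AVR_X))^{1/N}$, one has $\tfrac{N\mm(E)}{\rho}=N(\omega_N\AVR_X)^{1/N}\mm(E)^{1-1/N}=\PP(E)$, which is exactly the claimed formula.

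The only step that asks for genuine care — and the one I would flag as the main obstacle, modest though it is — is keeping the bookkeeping of the quotient and fibre measures consistent: the base measures $\hat{\q}$ and $\tilde\q$ of~\eqref{eq:disintegration-ugly}, the base of the disintegration~\eqref{eq:disintegration-sigma} of $\sigma$, and the total masses of the conditionals $\hat\mm_{\alpha,\infty}$ and of $\sigma$. Once one records that $\hat{\q}$ is a probability measure, that $\sigma$ has mass $\mm(E)$ with $(P_1)_\#\sigma=\mm\llcorner_E$, and that a disintegration against a fixed base measure is unique, all the identifications above are forced and the remaining manipulations are routine — the substantial work, the identity $\nu_\alpha=\delta_0$, having already been carried out.
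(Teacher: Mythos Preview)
Your proposal is correct and follows essentially the same route as the paper: the density formula drops out of $\nu_\alpha=\delta_0$ plugged into the convolution representation, the measure disintegration is just \eqref{eq:disintegration-ugly} with the explicit $\hat h_\alpha$, and the perimeter identity comes from \eqref{eq:disintegration-perimeter-with-sigma} combined with Corollary~\ref{cor:gamma-parametrization-g-improved}. The only cosmetic difference is that for the perimeter formula you push forward via $(P_1)_\#\sigma=\mm\llcorner_E$ and then via $\QQ_\infty$, whereas the paper goes through the fibrewise disintegration $\sigma=\int_{Q_\infty}\sigma_\alpha\,\hat{\q}(d\alpha)$; both routes land on $\tfrac{N\mm(E)}{\rho}=\PP(E)$, and your version makes this constant explicit.
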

\begin{proof}
  The only non-trivial part is
  Equation~\eqref{eq:disintegration-perimeter-classical}.
  Using~\eqref{eq:disintegration-perimeter-with-sigma} and
  Corollary~\ref{cor:gamma-parametrization-g-improved}, we can deduce
  that $      \forall\psi\in L^{1}(\bar{E};\PP(E;\,\cdot\,))$
  \begin{align*}
    \int_{\bar E}
    \psi(x)
    \,
    \PP(E;dx)
    &
      =
      \frac{N}{\rho}
      \int_{\hat Z}
      \psi(e_1(\gamma))
      \psi\,\sigma(dx\,d\gamma\,d\mu\,dp)
    \\
    &
      =
      \frac{N}{\rho}
      \int_{Q_{\infty}}
      \int_{\hat Z_\alpha}
      \psi(e_1(\gamma))
      \,\sigma_\alpha(dx\,d\gamma\,d\mu\,dp)
      \,\hat{\q}(d\alpha)
    \\
    &
      =
      \frac{N}{\rho}
      \int_{Q_{\infty}}
      \psi(g(\alpha,\rho))
      \int_{\hat Z_\alpha}
      \,\sigma_\alpha(dx\,d\gamma\,d\mu\,dp)
      \,\hat{\q}(d\alpha)
      .
      \qedhere
  \end{align*}
\end{proof}

\section{\texorpdfstring{$E$}{E} is a ball}

The aim of this section is to prove that $E$ coincides with a ball of
radius $\rho$.
Before starting the proof, we give a few technical lemmas.
The first Lemma states that a $BV$ function  with null differential on
an open connected set is constant.
This fact is already known for Sobolev functions and it follows from
either the Sobolev-to-Lipschitz property or the local Poincar\'{e}
inequality.

\begin{lemma}
\label{lem:locally-constant-sobolev-functions}
Let $(X,\sfd,\mm)$ be an essentially non-branching $\CD(K,N)$ space
with $X=\supp\mm$ and let $\Omega\subset X$ be an open connected
set.
If $v\in\text{\rm w-}\BV((\Omega,\sfd,\mm))$
and $|Du|=0$, then $u$ is constant in $\Omega$ (i.e., there exists $C\in\R$ such
that $v(x)=C$ for $\mm$-a.e.\ $x\in\Omega$).
\end{lemma}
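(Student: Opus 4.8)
The plan is to reduce the statement to a known fact about Sobolev functions via an approximation/truncation argument. First I would recall that a w-$\BV$ function with vanishing total variation is, in particular, a function whose relaxed total variation $|Du|_*$ is the zero measure; by the equivalence theorem of \cite{ADM} quoted above, the two notions of $\BV$ coincide and $|Du|_w = |Du|_* = 0$. The key point is that when the total variation measure vanishes, the function enjoys better integrability and regularity: one can show $u$ actually lies in the Sobolev class $W^{1,1}_{loc}(\Omega)$ (or even locally Lipschitz after the Sobolev-to-Lipschitz property), since $\BV$ with zero derivative measure is the borderline case with no singular part and no absolutely continuous part. Concretely, I would argue that for every $\infty$-test plan $\pi$ and every Borel $B$, estimate \eqref{eq:weak-total-variation} with $|D(f\circ\gamma)|=0$ forces $f\circ\gamma$ to be (a.e.) constant along $\pi$-a.e.\ curve $\gamma$, using the first defining property of w-$\BV$: $f\circ\gamma\in\BV((0,1))$ with $|f(\gamma_0)-f(\gamma_1)|\le |D(f\circ\gamma)|((0,1)) = 0$.

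The central step is then a connectedness argument. Having shown that $f$ is constant along $\pi$-a.e.\ curve for every $\infty$-test plan, I would fix a Lebesgue point $x_0$ of $f$ (which is $\mm$-a.e.\ $x$ since $f\in L^1$) and let $C := \tilde f(x_0)$ be the precise representative's value there. For an arbitrary other point $x_1$ in the same connected component $\Omega$, I would connect $x_0$ to $x_1$ by a chain of small balls inside $\Omega$ (possible by openness and connectedness, since $(X,\sfd)$ is a length space under $\CD(K,N)$), and in each such ball construct an $\infty$-test plan that ``sees'' a positive-measure family of curves linking points on which $f$ takes the value $C$ to points on which it takes the value $\tilde f(x_1)$. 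The existence of such test plans with bounded compression in a $\CD(K,N)$ space is standard: one can use the plan associated to a short $W_2$-geodesic between two absolutely continuous measures supported in the ball (these are $\infty$-test plans because the interpolating densities are bounded — this is exactly the kind of construction invoked in Section~\ref{Ss:L1OT} of the excerpt, e.g.\ \cite[Lemma~4.6]{Cava-Gafa}). Since $f$ is constant along $\pi$-a.e.\ curve, chaining these equalities yields $\tilde f(x_1) = C$ for $\mm$-a.e.\ $x_1\in\Omega$.

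I expect the main obstacle to be the careful bookkeeping in the chaining step: ensuring that the ``constant along curves'' property, which holds only up to a $1$-negligible exceptional set of curves depending on the test plan, can be glued consistently across infinitely many balls to produce a single constant $C$ on all of $\Omega$. The clean way to handle this is to avoid a literal chain of balls and instead invoke the already-known result for Sobolev functions: once one establishes $f\in W^{1,1}_{loc}(\Omega)$ with $|Df|=0$ (hence $f\in W^{1,2}_{loc}$ by local boundedness after a truncation $f_M := (f\wedge M)\vee(-M)$, which is again w-$\BV$ with zero total variation), the Sobolev-to-Lipschitz property of $\CD(K,N)$ spaces — or, alternatively, the local $(1,1)$-Poincar\'e inequality, which holds on $\CD(K,N)$ spaces — immediately gives that $f_M$ has a locally Lipschitz representative with vanishing slope, hence is locally constant, hence constant on the connected set $\Omega$; letting $M\to\infty$ recovers the claim for $f$ itself. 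So the proof structure I would write is: (i) reduce to bounded $f$ by truncation; (ii) upgrade w-$\BV$ with $|Df|=0$ to $W^{1,2}_{loc}$ with vanishing minimal weak upper gradient, using the equivalence of the total variations and the integrability gained from $|Df|=0$; (iii) apply the Sobolev-to-Lipschitz / Poincar\'e machinery to conclude local constancy; (iv) use connectedness of $\Omega$ to pass from local to global, and remove the truncation.
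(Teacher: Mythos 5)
Your first route is, in substance, the paper's proof. The paper argues by contradiction on a single ball: fix $x$ with $B_{3r}(x)\subset\Omega$ and suppose that for some $a<b$ the sets $A=\{v\le a\}\cap B_r(x)$ and $B=\{v\ge b\}\cap B_r(x)$ both have positive measure; the optimal geodesic plan $\pi$ between $\mm\llcorner_A/\mm(A)$ and $\mm\llcorner_B/\mm(B)$ has interpolating densities bounded by $\mm(A)^{-1}+\mm(B)^{-1}$ thanks to the $\CD(K,N)$ inequality of Definition~\ref{def:CDKN-ENB}, hence is an $\infty$-test plan whose curves have length at most $2r$; the first defining property of w-$\BV$ gives $|D(v\circ\gamma)|((0,1))\ge b-a$ for $\pi$-a.e.\ $\gamma$, while the second, applied with the zero measure as admissible $\mu$, gives $\int|D(v\circ\gamma)|((0,1))\,\pi(d\gamma)\le 2rC(\pi)\cdot 0=0$, a contradiction. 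This dissolves the chaining bookkeeping you worry about: one obtains $v=c_x$ a.e.\ on each such ball by contradiction rather than by propagating a specific Lebesgue value, and local constancy plus connectedness of $\Omega$ is then a purely topological step requiring no gluing of exceptional curve families.

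The fallback you actually commit to in steps (i)--(iv) has a genuine soft spot at step (ii). Vanishing of the $L^1$-relaxed total variation $|Dv|_*$ does not by itself place $v$ in $W^{1,2}_{loc}$ with vanishing minimal weak upper gradient: the Cheeger energy is a relaxation along sequences with $L^2$-controlled slopes, and a sequence $u_n\to v$ in $L^1$ with $\int|Du_n|\,d\mm\to0$ need not have $\int|Du_n|^2\,d\mm$ under control, truncation notwithstanding. Moreover, the Sobolev-to-Lipschitz property is an $\RCD$-type statement and is not available in the essentially non-branching $\CD(K,N)$ generality in which the lemma is stated (the whole point of the paper is to avoid infinitesimal linearity). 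The parenthetical alternative you mention --- the local $(1,1)$-Poincar\'e inequality, which does hold on $\CD(K,N)$ spaces and passes to the $\BV$ relaxation --- would close the argument directly at the $\BV$ level without any Sobolev upgrade, and is a legitimate alternative to the paper's test-plan argument; if you go that way, drop the $W^{1,2}$ and Sobolev-to-Lipschitz steps entirely.
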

\begin{proof}
The proof in given only for the case $K=0$.
We refer to Section~\ref{Ss:isoperimetric} for the notation.
Fix $x\in\Omega$ and let $r>0$ such that $B_{3r}(x)\subset\Omega$.
Assume by contradiction that there are two constants $a<b$ such that
the sets
\begin{align*}
  &
    A:=\{ y\in B_{r}(x) : v(y)\leq a\}
    \quad
    \text{ and }
    \quad
    B:=\{ y\in B_{r}(x) : v(y)\geq b\}
\end{align*}
have strictly positive measure.
Consider the probability measures
$\mu_0=\frac{\mm\llcorner_A}{\mm(A)}$ and
$\mu_1=\frac{\mm\llcorner_B}{\mm(B)}$.
Let $\pi\in\OptGeo(\mu_0,\mu_1)$ and $\mu_t=(e_t)_\#\pi$.
The $\CD(K,N)$ condition (as stated in Definition~\ref{def:CDKN-ENB})
reads
\begin{align*}
  \rho_t(\gamma_t)
  &
  \leq
  \left(
    (1-t)
    \rho_0^{-1/N}(\gamma_0) +
    t \rho_1^{-1/N}(\gamma_1)
  \right)^{-N}
    \leq
    (1-t)
    \rho_0(\gamma_0) +
    t \rho_1(\gamma_1)
  \\
  &
    =
    \mm(A)^{-1}
    +
    \mm(B)^{-1}
  ,
  \quad
    \text{ for $\pi$-a.e.\ }\gamma
    ,
\end{align*}
and this proves that there exists a constant $C>0$ such that
$(e_t)_\#\pi\leq C\mm$.
What we have proven and the fact that
$\Lip(\gamma)=\sfd(\gamma_0,\gamma_1)\leq 2r$, for $\pi$-a.e.\
$\gamma$, implies that $\pi$ is an $\infty$-test plan.
For $\pi$-a.e.\ $\gamma$, we have that $|D(v\circ \gamma)|([0,1])\geq
b-a$, because $\gamma$ is a curve from $A$ to $B$, thus
\begin{align*}
  b-a
  &
    \leq
    \int
    \gamma_\#|D(v\circ\gamma)|(X)\,\pi(d\gamma)
    \leq
    C
    \norm{\Lip(\gamma)}_{L^\infty(\pi)}
    \mu(X)
    \leq
    2r C
    \mu(X)
    ,
\end{align*}
where $\mu$ is any weak upper gradient for $v$.
Since we can chose the null measure as weak upper gradient we obtain a
contradiction.
Thus there exists a constant $c_x$ such that $v=c_x$
a.e.\ in $B_{r}(x)$.
Taking into account the connectedness of $\Omega$, we deduce that $v$
is globally constant.
\end{proof}

The following Lemma is topological.
It can be seen as a weak formulation of the following statement: let
$\Omega$ be an open connected subset of a topological space $X$ and
let $E\subset X$ be any set; if $\Omega\cap E\neq\emptyset$ and
$\Omega\backslash E\neq\emptyset$, then
$\partial E\cap\Omega\neq\emptyset$.

\begin{lemma}
  \label{lem:boundary-non-empty}
Let $(X,\sfd,\mm)$ be an essentially non-branching $\CD(K,N)$ space
with $X=\supp\mm$.
Let $E\subset X$ be a Borel set and let $\Omega\subset X$ be an open
connected set.
If $\mm(E\cap \Omega)>0$ and $\mm(\Omega\backslash E)>0$, then
$\PP(E;\Omega)>0$.
\end{lemma}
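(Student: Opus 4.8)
The plan is to argue by contraposition: I assume $\PP(E;\Omega)=0$ and show that then $\indicator_E$ must be $\mm$-a.e.\ constant on $\Omega$, which immediately forces $\mm(E\cap\Omega)=0$ or $\mm(\Omega\setminus E)=0$, contradicting the hypotheses.

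The first step is to translate the vanishing of the relative perimeter into the language of $BV$ functions. By the equivalence theorem of \cite{ADM} recalled above, $\PP(E;\Omega)=0$ means precisely that $\indicator_E$ (restricted to $\Omega$) lies in $\text{w-}\BV$ with weak total variation $|D\indicator_E|_w\equiv 0$ on $\Omega$. A preliminary remark is that $\mm$ is finite on every bounded set: local finiteness of $\mm$ together with the Bishop--Gromov inequality (valid under $\CD(K,N)$, cf.\ Section~\ref{s:W2}) propagates finiteness of $\mm$ from one ball to all concentric balls, so the $L^1$-integrability required in the definition of $\text{w-}\BV$ is available after localizing to balls with relatively compact closure contained in $\Omega$. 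With this in hand, Lemma~\ref{lem:locally-constant-sobolev-functions} applies to $v=\indicator_E$ on $\Omega$ and yields a constant $C\in\R$ with $\indicator_E=C$ $\mm$-a.e.\ on $\Omega$.

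To conclude, since $\indicator_E$ takes values in $\{0,1\}$ the constant $C$ is either $0$ or $1$: if $C=0$ then $\mm(E\cap\Omega)=0$ and if $C=1$ then $\mm(\Omega\setminus E)=0$, in both cases contradicting the assumptions, so $\PP(E;\Omega)>0$. If one prefers to avoid any global integrability issue, the same conclusion can be obtained by re-running the $\infty$-test-plan argument from the proof of Lemma~\ref{lem:locally-constant-sobolev-functions} directly on each ball $B_r(y)$ with $B_{3r}(y)\subset\Omega$: were both $E\cap B_r(y)$ and $B_r(y)\setminus E$ of positive $\mm$-measure, an element of $\OptGeo$ between the normalized restrictions would be an $\infty$-test plan (intermediate densities bounded thanks to $\CD(K,N)$, curves of length $\le 2r$), and testing~\eqref{eq:weak-total-variation} with the null weak upper gradient of $\indicator_E$ would give $1\le 0$; hence $\indicator_E$ is $\mm$-a.e.\ locally constant on $\Omega$, and since $\Omega$ is connected and $\supp\mm=X$ it is $\mm$-a.e.\ globally constant. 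I do not expect a genuine obstacle here: the statement is a soft corollary of Lemma~\ref{lem:locally-constant-sobolev-functions} and the $BV$ equivalence theorem, the only point deserving a line of care being the reduction to balls of finite measure so that the $L^1$ hypothesis in the definition of $\text{w-}\BV$ is met.
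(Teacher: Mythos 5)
Your proposal is correct and follows essentially the same route as the paper: both reduce the statement to Lemma~\ref{lem:locally-constant-sobolev-functions} by observing that $\PP(E;\Omega)=0$ forces $\indicator_E$ to be a $BV$ function on $\Omega$ with vanishing total variation (the paper phrases this via the relaxed object $BV_*$ and a recovery sequence, you via the $\text{w-}BV$ side of the \cite{ADM} equivalence), and then conclude that $\indicator_E$ is a.e.\ constant, contradicting the two positivity hypotheses. Your remark about localizing to balls to secure the $L^1$ hypothesis is a reasonable extra precaution but not a substantive deviation, since the lemma's proof is itself local plus connectedness.
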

\begin{proof}
Assume on the contrary that $\PP(E;\Omega)=0$.
In this case, there exists a sequence $u_n\in\Lip_{loc}(\Omega)$ such that
$u_n\to\indicator_E$ in $L^1_{loc}$ and $\int_\Omega |\lip\, u_n|\,d\mm\to0$.
This immediately implies that $u_n\to v$ in the space
$\BV_{*}((\Omega,\sfd,\mm))$ for some $v\in \BV_{*}((\Omega,\sfd,\mm))$  such
that $|Dv|=0$.
By uniqueness of the limit, $\indicator_E=v$ a.e.\ in $\Omega$,
whereas Lemma~\ref{lem:locally-constant-sobolev-functions} implies
that $v$ is constant, which is a contradiction.
\end{proof}

The next Lemma ensures that if two balls coincide, then they must
share their center.

\begin{lemma}
  \label{lem:uniqueness-center}
  Assume that $(X,\sfd,\mm)$ is an essentially non-branching,
  $\CD(K,N)$ space with $X=\supp\mm$ and let $x,y\in X$ and $r>0$.
  If $B_{r}(x)=B_{r}(y)$ and $\mm(X\backslash B_{r}(x))>0$, then $x=y$.
\end{lemma}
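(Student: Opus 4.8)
The plan is to argue by contradiction: suppose $B_r(x) = B_r(y)$ with $x \neq y$, set $\ell := \sfd(x,y) > 0$, and derive a contradiction with $\mm(X \setminus B_r(x)) > 0$. The key geometric observation is that since $(X,\sfd)$ is a geodesic space, one can travel from $x$ in the direction \emph{away} from $y$ (i.e., extend a geodesic $[y,x]$ slightly past $x$, or more robustly, pick points far from $y$) to find points that lie close to the boundary of $B_r(y)$ but are still comfortably inside $B_r(x)$, and symmetrically. More precisely, I would first note that $B_r(x) = B_r(y)$ forces $\ell < 2r$ (otherwise, e.g., $x \notin B_r(y)$ while $x \in B_r(x)$), and in fact by taking a point $z$ on a geodesic from $y$ to $x$ extended (using geodesic completeness / local compactness as guaranteed in Section~\ref{s:W2}) or simply a point $z$ with $\sfd(y,z)$ close to $r$ and $\sfd(x,z)$ close to $r - \ell$ when it lies "beyond" $x$, one produces a lens-shaped region $B_r(x) \triangle B_{r'}(x)$ discrepancy. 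The cleanest route: consider the annular shell $B_r(x) \setminus \overline{B_{r-\ell}(x)}$; any point $z$ in this shell with $\sfd(x,z) > r - \ell$ satisfies, by the triangle inequality, $\sfd(y,z) \leq \sfd(y,x) + \sfd(x,z)$, which can exceed $r$, pushing $z$ out of $B_r(y) = B_r(x)$ — a contradiction once we know such $z$ exists.

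The existence of such a point is where $X = \supp\mm$ and the hypothesis $\mm(X \setminus B_r(x)) > 0$ enter. Since $\mm(X \setminus B_r(x)) > 0$ and $X = \supp\mm$, there is a point $p$ with $\sfd(x,p) > r$; by the Bishop--Gromov inequality (equivalently, since $(X,\sfd)$ is geodesic and proper) any geodesic from $x$ to $p$ passes through every sphere $\partial B_s(x)$ for $s \in (0, \sfd(x,p))$, so in particular the sphere $\partial B_s(x)$ is nonempty for all $s < \sfd(x,p)$, hence for $s$ slightly less than $r$. Pick $s$ with $r - \ell < s < r$ and a point $z$ with $\sfd(x,z) = s$ lying on a geodesic from $x$ towards $p$; then $z \in B_r(x) = B_r(y)$ so $\sfd(y,z) < r$. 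On the other hand I want to \emph{choose the direction} of $z$ so that $z$ is on the far side of $x$ from $y$, which would give $\sfd(y,z) = \sfd(y,x) + \sfd(x,z) = \ell + s > r$, the contradiction. To guarantee a "far-side" direction exists, extend a geodesic from $y$ through $x$ a little beyond $x$ (possible by geodesic completeness of $X$, which is a complete locally compact geodesic space — one can always locally extend geodesics, or use that $B_r(y) = B_r(x)$ already contains points at distance close to $r$ from $y$ and run the geodesic from $y$ through such a point). Along that extended geodesic, a point $z$ at parameter corresponding to $\sfd(x,z) = s$ satisfies $\sfd(y,z) = \ell + s$ by the concatenation being a geodesic, and $\ell + s > \ell + (r - \ell) = r$, so $z \notin B_r(y)$; but $\sfd(x,z) = s < r$ so $z \in B_r(x)$, contradicting $B_r(x) = B_r(y)$.

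I would carry out the steps in this order: (1) record that $X$ is a complete, locally compact, geodesic space (from Section~\ref{s:W2}) and that $X = \supp\mm$; (2) show $\ell := \sfd(x,y) < 2r$, since otherwise $x$ or $y$ witnesses $B_r(x) \neq B_r(y)$ directly; (3) use $\mm(X \setminus B_r(x)) > 0$ together with $X = \supp\mm$ to locate a point outside $\overline{B_r(x)}$, hence by the geodesic property show spheres $\partial B_s(x)$ are nonempty up through radii $\geq r$; (4) produce a point $z$ with $r - \ell < \sfd(x,z) < r$ on the "$y$-to-$x$-extended" geodesic so that $\sfd(y,z) = \sfd(y,x) + \sfd(x,z)$; (5) conclude $z \in B_r(x) \setminus B_r(y)$, contradicting $B_r(x) = B_r(y)$.

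The main obstacle is step (4): making rigorous that there is a geodesic through $x$ "pointing away from $y$" on which distances from $y$ add up. Plain geodesic completeness does not immediately give geodesics extendable past an arbitrary point, but one does not need a global extension — it suffices to have \emph{any} point $z$ with $\sfd(y,z) = \sfd(y,x) + \sfd(x,z)$ and $\sfd(x,z)$ in the window $(r-\ell, r)$. Such a $z$ exists because, picking any $p \notin \overline{B_r(x)}$ from step (3), the midpoint-type construction along geodesics from $y$ need not work, so instead I would use the following cleaner argument avoiding extension altogether: since $\ell < 2r$, the point $y$ satisfies $\sfd(x,y) = \ell < 2r$, and one considers a geodesic $\gamma$ from $y$ to any $q \in \partial B_r(x)$ realizing $\sfd(y, \partial B_r(x))$ large — but the truly robust fix is: take $z$ on a geodesic from $y$ to a far point $p$ (with $\sfd(y,p) > 2r$, which exists since $\mm(X\setminus B_r(x))>0$ and hence $X$ is unbounded or at least has points far from $x$ — one may need to combine with $\sfd(x,y)=\ell$ to get points far from $y$), at the parameter where $\sfd(y,z) = r + \varepsilon$ for small $\varepsilon > 0$; then $\sfd(x,z) \geq \sfd(y,z) - \sfd(y,x) = r + \varepsilon - \ell$ and $\sfd(x,z) \leq \sfd(y,z) + \sfd(y,x)$... this lower bound alone does not place $z$ inside $B_r(x)$, so one genuinely needs the "away from $y$" direction, i.e. a point $z$ with $x$ between $y$ and $z$ on a geodesic. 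I expect the paper resolves this via the essentially-non-branching / geodesic extension properties already invoked, and I would appeal to the local extendability of geodesics in proper geodesic spaces (balls are compact) to extend $[y,x]$ to $[y,z]$ with $\sfd(x,z) \in (r-\ell, r)$, which is the one point requiring care.
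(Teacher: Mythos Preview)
Your proposal has a genuine gap at exactly the point you identify: step (4), the existence of a point $z$ with $x$ lying on a geodesic from $y$ to $z$ and $\sfd(x,z)\in(r-\ell,r)$. Your proposed remedy --- ``local extendability of geodesics in proper geodesic spaces (balls are compact)'' --- is simply false. The half-line $[0,\infty)$ with the Euclidean distance is a proper geodesic $\CD(0,1)$ space, yet the geodesic from $1$ to $0$ admits no extension past $0$. More generally, $\CD(K,N)$ spaces can have ``boundary'' points at which no geodesic can be prolonged, so nothing in the hypotheses guarantees the extension you need. Your other attempted workaround (taking $z$ on a geodesic from $y$ to a far point $p$) fails for the reason you yourself note: it gives no control placing $z$ inside $B_r(x)$. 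Notice also that your argument never uses the essentially non-branching hypothesis, which is a strong hint that something structural is missing.

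The paper's proof takes a completely different route. First, using that $X$ is geodesic one has $(B_r(x))^t=B_{r+t}(x)$, so $B_r(x)=B_r(y)$ propagates to $B_{r+t}(x)=B_{r+t}(y)$ for every $t>0$; this forces $\sfd(z,x)=\sfd(z,y)$ for every $z\in X\setminus B_r(x)$. Then, choosing disjoint $A,B\subset X\setminus B_r(x)$ with $\mm(A)=\mm(B)>0$, the two maps
\[
T(z)=\begin{cases}x,&z\in A\\ y,&z\in B\end{cases},
\qquad
S(z)=\begin{cases}y,&z\in A\\ x,&z\in B\end{cases}
\]
are two distinct optimal (Monge) maps for the quadratic cost transporting $\mm\llcorner_{A\cup B}$ to $\mm(A)(\delta_x+\delta_y)$, since all four relevant distances coincide. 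This contradicts the uniqueness of optimal maps in essentially non-branching $\CD(K,N)$ spaces \cite[Theorem~5.1]{CavallettiMondino17}. So the essentially non-branching assumption is precisely what closes the argument --- not geodesic extendability.
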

\begin{proof}
  Assume by contradiction $x\neq y$.
  Since $(X,\sfd)$ is a geodesic space,  then
  $(B_{r}(x))^{t}=B_{r+t}(x)=B_{r+t}(y)$, hence if $z\in X$ is such that
  $\sfd(z,x)=r+t$, then $z\in B_{r+t+\epsilon}(x)=B_{r+t+\epsilon}(y)$,
  for all $\epsilon >0$, thus $\sfd(z,y)\leq r+t=\sfd(z,x)$.
  We deduce $\sfd(z,y)=\sfd(z,x)$, for all $z\in X\backslash
  B_{r}(x)$.
  Consider now two disjoint sets $A,B\subset X\backslash
  B_{r}(x)$, such that $\mm(A)=\mm(B)$.
  Consider the maps
  \begin{equation}
    T(z)
    =
    \begin{cases}
      x
      ,
      &
          \quad
        \text{ if }z\in A
        ,
      \\
      y
      ,
      &
          \quad
        \text{ if }z\in B
        ,
    \end{cases}
    \qquad
        S(z)
    =
    \begin{cases}
      y
      ,
      &
          \quad
        \text{ if }z\in A
        ,
      \\
      x
      ,
      &
          \quad
        \text{ if }z\in B
        .
    \end{cases}
  \end{equation}
  Since $\sfd(S(z),x)=\sfd(S(z),y)=\sfd(T(z),x)=\sfd(T,x), \forall
  z\in A\cup B$, these maps are two different solutions of the Monge problem
  $\inf_R\int_{A\cup B} \sfd^{2}(z,R(z))\,\mm(dz)$, among all possible
  maps $R:X\to X$ such that $R_{\#}(\mm\llcorner_{A\cup
    B})=\mm(A)(\delta_x+\delta_y)$.
  Since said problem admits a unique
  solution~\cite[Theorem~5.1]{CavallettiMondino17}, we have found a
  contradiction.
\end{proof}

\begin{proposition}
  \label{P:min-max-phi}
  For $\hat{\q}$-a.e.\ $\alpha\in {Q_{\infty}}$, it holds
  that
  \begin{equation}
    \varphi_\infty(g_\infty(\alpha,0))\leq \esssup_E\varphi_\infty
    ,
    \quad\text{ and }\quad
    \varphi_\infty(g_\infty(\alpha,\rho))\geq \essinf_E\varphi_\infty
    .
  \end{equation}
\end{proposition}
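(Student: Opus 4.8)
The plan is to combine the explicit polar-coordinate disintegration of $\mm\llcorner_E$ obtained in Corollary~\ref{cor:disintegration-classical} (see \eqref{eq:disintegration-ugly} and \eqref{eq:disintegration-measure-classical}) with the elementary observation that, by the very definition of the parametrization $g_\infty$, the potential is affine along each ray:
\begin{equation*}
  \varphi_\infty(g_\infty(\alpha,t)) = \varphi_\infty(g_\infty(\alpha,0)) - t, \qquad t \in [0,\rho].
\end{equation*}
The two inequalities in the statement are symmetric, so I will only describe the first; the second is obtained by the same argument with $\esssup$ replaced by $\essinf$, the inequality reversed, and the ray read off from its endpoint $t=\rho$.

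First I would put $M := \esssup_E \varphi_\infty$ and consider $F := \{x \in E : \varphi_\infty(x) > M\}$, a Borel set with $\mm(F)=0$. Feeding $\indicator_F$ into the disintegration of $\mm\llcorner_E$ gives
\begin{equation*}
  0 = \mm(F) = N\omega_N\AVR_X \int_{Q_\infty} \Big( \int_0^\rho \indicator_F(g_\infty(\alpha,r))\, r^{N-1}\, dr \Big)\, \hat{\q}(d\alpha),
\end{equation*}
so that, for $\hat{\q}$-a.e.\ $\alpha$, one has $\varphi_\infty(g_\infty(\alpha,r)) \le M$ for Lebesgue-a.e.\ $r \in (0,\rho)$. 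Choosing a sequence $r_k \downarrow 0$ along which this bound holds and using the displayed affine identity together with the continuity of $\varphi_\infty$ (it is $1$-Lipschitz) and of $r \mapsto g_\infty(\alpha,r)$ on $[0,\rho]$ (extended to the endpoints by continuity in Subsection~\ref{Ss:L1OT}), I get
\begin{equation*}
  \varphi_\infty(g_\infty(\alpha,0)) = \lim_{k \to \infty}\big( \varphi_\infty(g_\infty(\alpha,r_k)) + r_k \big) \le M,
\end{equation*}
which is the first claim. Symmetrically, applying the same computation to the set $\{x \in E : \varphi_\infty(x) < \essinf_E \varphi_\infty\}$ yields $\varphi_\infty(g_\infty(\alpha,r)) \ge \essinf_E \varphi_\infty$ for a.e.\ $r \in (0,\rho)$, and letting $r \uparrow \rho$ in $\varphi_\infty(g_\infty(\alpha,r)) = \varphi_\infty(g_\infty(\alpha,\rho)) + (\rho - r)$ gives $\varphi_\infty(g_\infty(\alpha,\rho)) \ge \essinf_E \varphi_\infty$.

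There is no real obstacle here: the statement is essentially a bookkeeping consequence of the disintegration already in hand. The only point requiring a little care is to collect the various $\hat{\q}$-null sets of parameters $\alpha$ into a single one valid for all the needed conditions at once, which is harmless since only countably many conditions (indexed by the sequences $r_k \downarrow 0$, resp.\ $r_k \uparrow \rho$) intervene. No further curvature, compactness, or non-branching input is used beyond what went into Corollary~\ref{cor:disintegration-classical}.
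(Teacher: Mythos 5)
Your proof is correct and uses the same two ingredients as the paper's argument: the polar disintegration of $\mm\llcorner_E$ from Corollary~\ref{cor:disintegration-classical} and the affine identity $\varphi_\infty(g_\infty(\alpha,t))=\varphi_\infty(g_\infty(\alpha,0))-t$ coming from the normalization $\frac{d}{dt}\varphi_\infty(g_\infty(\alpha,t))=-1$. The only difference is organizational — the paper fixes $\epsilon>0$, integrates $\varphi_\infty-M$ over the initial $\epsilon$-segment of the ``bad'' rays $H=\{\alpha:\varphi_\infty(g_\infty(\alpha,0))\geq M+2\epsilon\}$ and concludes $\hat\q(H)=0$, whereas you derive the a.e.-in-$r$ bound on each good ray and let $r\to 0$ — so this is essentially the paper's proof.
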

\begin{proof}
We prove only the former inequality, for
the latter has the same proof.
In order to simplify the notation define $M:=\esssup_E\varphi_\infty$.
Let
$H:=\{\alpha\in {Q_{\infty}}: \varphi_\infty(g_\infty(\alpha,0))\geq
M+2\epsilon\}$.
Define the following measure on $E$
\begin{equation}
  \mathfrak{n}(T)
  =
  N\omega_N\AVR_X
  \int_H
  \int_0^\epsilon
    \indicator_T(g_\infty(\alpha,r)) r^{N-1}\,dr
    \,\hat{\q}(d\alpha)
    ,
    \quad
    \forall T\subset E \text{ Borel}
    .
\end{equation}
Clearly, $\mathfrak{n}\ll\mm$ (compare
with~\eqref{eq:disintegration-measure-classical}), so
$\varphi_\infty(x)\leq M$, for $\mathfrak{n}$-a.e.\ $x\in E$.
We can compute
the integral
\begin{align*}
  0&
     \geq
    \int_{E}
  \left(
  \varphi_\infty(x)-M
  \right)
  \,\mathfrak{n}(dx)
     =
     N\omega_N\AVR_X
    \int_H
    \int_0^{\epsilon}
    \left(
    \varphi_\infty(g_\infty(\alpha,t))
    -M
    \right)
    t^{N-1}\,dt
    \,\hat{\q}(d\alpha)
  \\
  &
    =
     N\omega_N\AVR_X
    \int_H
    \int_0^{\epsilon}
    \left(
    \varphi_\infty(g_\infty(\alpha,0))-t
    -M
    \right)
    t^{N-1}\,dt
    \,\hat{\q}(d\alpha)
  \\
  &
    \geq
     N\omega_N\AVR_X
    \int_H
    \int_0^{\epsilon}
    \epsilon
    t^{N-1}\,dt
    \,\hat{\q}(d\alpha)
    =\epsilon^N\hat{\q}(H).
\end{align*}
We deduce that $\hat{\q}(H)=0$ and, by arbitrariness of $\epsilon$,
we can conclude.
\end{proof}

\begin{theorem}\label{T:Ball}
  There exists a unique point $o\in X$, such that, up to a
  negligible set, $E=B_\rho(o)$, where
  $\rho=(\frac{\mm(E)}{\omega_N\AVR_X})^\frac{1}{N}$.
  Moreover, it holds that
  \begin{equation}
    \label{eq:phi-has-max-in-o}
    \varphi_\infty(o)
    =\esssup_{E}\varphi_\infty
    =
    \max_{B_\rho(o)}\varphi_\infty.
  \end{equation}
\end{theorem}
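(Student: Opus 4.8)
Write $M:=\esssup_E\varphi_\infty$ and $m:=\essinf_E\varphi_\infty$. By Corollary~\ref{cor:disintegration-classical} the measure $\mm\llcorner_E$ is carried by $\bigcup_{\alpha\in Q_\infty}g_\infty(\alpha,(0,\rho))$, and along $\hat{\q}$-a.e.\ ray one has $\varphi_\infty(g_\infty(\alpha,t))=\varphi_\infty(g_\infty(\alpha,0))-t$ by~\eqref{eq:gamma-along-rays-weak2} together with Corollary~\ref{cor:gamma-parametrization-g-improved}; hence $M=\esssup_{\hat{\q}}\varphi_\infty(g_\infty(\cdot\,,0))$ and $m=\essinf_{\hat{\q}}\varphi_\infty(g_\infty(\cdot\,,\rho))$. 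Proposition~\ref{P:min-max-phi} gives, for $\hat{\q}$-a.e.\ $\alpha$, that $\varphi_\infty(g_\infty(\alpha,0))\le M$ and $\varphi_\infty(g_\infty(\alpha,\rho))=\varphi_\infty(g_\infty(\alpha,0))-\rho\ge m$, so $\varphi_\infty(g_\infty(\alpha,0))\in[m+\rho,M]$. Using compactness of $\bar E$ and continuity of $\varphi_\infty$ I would fix a point $o\in\supp(\mm\llcorner_E)\subset\bar E$ with $\varphi_\infty(o)=M$. The whole statement then reduces to the claim that $g_\infty(\alpha,0)=o$ for $\hat{\q}$-a.e.\ $\alpha$.

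\textbf{The hard part will be this reduction.} First, since $o\in\supp(\mm\llcorner_E)$, the radial disintegration of Corollary~\ref{cor:disintegration-classical} forces, for every $\varepsilon>0$, a positive-$\hat{\q}$-measure set of rays to enter $B_\varepsilon(o)$; if $g_\infty(\alpha,t)\in B_\varepsilon(o)$ with $t\in(0,\rho)$ then $\varphi_\infty(g_\infty(\alpha,0))-t=\varphi_\infty(g_\infty(\alpha,t))>M-\varepsilon$, and $\varphi_\infty(g_\infty(\alpha,0))\le M$ forces $t<\varepsilon$, hence $\sfd(g_\infty(\alpha,0),o)<2\varepsilon$; so $o$ is in the support of $(\alpha\mapsto g_\infty(\alpha,0))_\#\hat{\q}$. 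The substantive step is to upgrade this to an a.e.\ identification, equivalently to show $\mathrm{osc}_E\varphi_\infty=\rho$ (so that $\varphi_\infty(g_\infty(\alpha,0))\equiv M$) and that the relevant part of the level set $\{\varphi_\infty=M\}$ is the single point $o$. Here I would play off: (i) the perimeter concentration~\eqref{eq:disintegration-perimeter-classical}, which says $\PP(E;\,\cdot\,)$ is carried by the outer endpoints $\{g_\infty(\alpha,\rho)\}$, where $\varphi_\infty\le M-\rho$; (ii) the fact that $\varphi_\infty$, being an $L^1$-Kantorovich potential whose transport rays exhaust $E$, has unit slope $\mm$-a.e.\ on $E$, so that the truncation $v:=(\varphi_\infty-(M-\rho))_+\,\indicator_E\in\BV$ has $|Dv|(X)=\mm\big(E\cap\{\varphi_\infty>M-\rho\}\big)$ (the boundary term in the Leibniz rule vanishing because $v\equiv 0$ on the outer endpoints); (iii) the sharp isoperimetric inequality~\eqref{E:isopAVR} applied to the superlevel sets $\{x\in E:\varphi_\infty(x)>c\}$, whose measures are computed from the radial disintegration as $\omega_N\AVR_X\int(\varphi_\infty(g_\infty(\alpha,0))-c)_+^N\,\hat{\q}(d\alpha)$; and (iv) the connectedness Lemmas~\ref{lem:locally-constant-sobolev-functions}--\ref{lem:boundary-non-empty} together with the one-dimensional rigidity of Section~\ref{S:one-dim}. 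Feeding the coarea formula for $v$ into these isoperimetric bounds is meant to collapse all the inequalities to equalities, pinning $\varphi_\infty(g_\infty(\alpha,0))$ to the constant $M$ and, via Lemma~\ref{lem:boundary-non-empty}, forbidding two distinct ray origins; hence $g_\infty(\alpha,0)=o$ $\hat{\q}$-a.e. I expect extracting this a.e.\ identification cleanly to be the main obstacle of the whole theorem.

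\textbf{The conclusion would then be routine.} Granting $g_\infty(\alpha,0)=o$ for $\hat{\q}$-a.e.\ $\alpha$: by~\eqref{eq:disintegration-measure-classical} the measure $\mm\llcorner_E$ is carried by $\bigcup_\alpha g_\infty(\alpha,(0,\rho))\subset B_\rho(o)$, so $\mm(E\setminus B_\rho(o))=0$. By~\eqref{eq:disintegration-perimeter-classical}, $\PP(E;\,\cdot\,)$ is carried by $\{g_\infty(\alpha,\rho)\}$, and $\sfd(g_\infty(\alpha,\rho),o)=\sfd(g_\infty(\alpha,\rho),g_\infty(\alpha,0))=\rho$ since $g_\infty(\alpha,\,\cdot\,)$ is a unit-speed minimizing geodesic on $[0,\rho]$; thus $\PP(E;B_\rho(o))=0$. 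As $\mm\llcorner_E$ has density $\sim r^{N-1}$ on $(0,\rho)$ along the rays we get $\mm(E\cap B_\rho(o))=\mm(E)>0$, so if $\mm(B_\rho(o)\setminus E)>0$ then, $B_\rho(o)$ being an open connected geodesic ball, Lemma~\ref{lem:boundary-non-empty} would give $\PP(E;B_\rho(o))>0$, a contradiction. Hence $\mm(E\,\triangle\,B_\rho(o))=0$, i.e.\ $E=B_\rho(o)$ up to a $\mm$-negligible set, with $\rho=(\mm(E)/(\omega_N\AVR_X))^{1/N}$ as defined (and Bishop--Gromov, $\mm(B_\rho(o))\ge\omega_N\AVR_X\rho^N=\mm(E)$, is consistent with this). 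For uniqueness, $\AVR_X>0$ forces $X$ to be non-compact, so $\mm(X\setminus B_\rho(o))>0$ and Lemma~\ref{lem:uniqueness-center} yields that $o$ is unique. Finally $\varphi_\infty(o)=M=\esssup_E\varphi_\infty$ by construction; and since $\supp(\mm\llcorner_{B_\rho(o)})=\bar B_\rho(o)$ while $\varphi_\infty(g_\infty(\alpha,t))=\varphi_\infty(o)-t\le\varphi_\infty(o)$ $\mm\llcorner_E$-a.e., continuity gives $\varphi_\infty\le\varphi_\infty(o)$ on $\bar B_\rho(o)$, so $\varphi_\infty(o)=\max_{B_\rho(o)}\varphi_\infty$, which is~\eqref{eq:phi-has-max-in-o}.
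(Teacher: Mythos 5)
There is a genuine gap at the center of your argument. You reduce the whole theorem to the claim that $g_\infty(\alpha,0)=o$ for $\hat{\q}$-a.e.\ $\alpha$, you correctly identify this as the main obstacle, and then you do not prove it: the paragraph ``Here I would play off: (i)--(iv)\dots is meant to collapse all the inequalities to equalities'' is a wish list, not an argument. In particular it is not explained how the coarea/isoperimetric computation on superlevel sets of $\varphi_\infty$ would force $\varphi_\infty(g_\infty(\cdot,0))$ to be constant (note the superlevel sets of $\varphi_\infty$ inside $E$ are not themselves competitors whose perimeter is controlled by the disintegration~\eqref{eq:disintegration-perimeter-classical}, which only concentrates $\PP(E;\cdot)$ on the outer endpoints), nor how Lemma~\ref{lem:boundary-non-empty} would ``forbid two distinct ray origins'' lying on the same level set of $\varphi_\infty$. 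So the proposal, as written, does not prove the theorem.

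The paper avoids your reduction entirely, and its route is worth internalizing because it is strictly easier. Choose $o\in\argmax_{\supp(\mm\llcorner_E)}\varphi_\infty$, so that $\varphi_\infty(o)=M:=\esssup_E\varphi_\infty$, and prove directly that $B_{\rho-\epsilon}(o)\subset E$ up to a negligible set for every $\epsilon>0$: if not, then $A:=B_{\rho-\epsilon}(o)$ is open and connected with $\mm(A\cap E)>0$ and $\mm(A\setminus E)>0$ — wait, more precisely $\mm(E\setminus A)>0$ is the contradiction hypothesis in the paper, but the operative point is $\PP(E;A)>0$ by Lemma~\ref{lem:boundary-non-empty}; by~\eqref{eq:disintegration-perimeter-classical} this forces a set $H$ of rays of positive $\hat{\q}$-measure with outer endpoint $g_\infty(\alpha,\rho)\in A$, where $1$-Lipschitzianity gives $\varphi_\infty(g_\infty(\alpha,\rho))\geq M-\rho+\epsilon$, hence $\varphi_\infty(g_\infty(\alpha,0))\geq M+\epsilon$ along those rays, contradicting Proposition~\ref{P:min-max-phi}. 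Then $E=B_\rho(o)$ follows from Bishop--Gromov ($\mm(E)\geq\mm(B_\rho(o))\geq\omega_N\AVR_X\rho^N=\mm(E)$), and only \emph{after} that does one prove $g_\infty(\alpha,0)=o$ (Equation~\eqref{eq:gamma-are-radial}), using $E=B_\rho(o)$ as an input together with a volume comparison on small balls $B_\epsilon(o)$. Your concluding paragraph (perimeter concentrated on the sphere of radius $\rho$, Lemma~\ref{lem:boundary-non-empty} inside the ball, uniqueness of $o$ via Lemma~\ref{lem:uniqueness-center} and non-compactness from $\AVR_X>0$) is fine once the key step is granted, but the key step itself is missing, and the order of logic you propose makes it harder than it needs to be.
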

\begin{proof}
Define
$\tilde E:=\supp\indicator_{E}$.
Recall that by definition of support, $\tilde{E}=\bigcup_C
C$, where the intersection is taken among all closed sets
$C$ such that $\mm(E\backslash
C)=0$; and in particular $\mm(E\backslash\tilde E)=0$.
Let $o\in\argmax_{\tilde E}\varphi_\infty$.
The uniqueness will follow from Lemma~\ref{lem:uniqueness-center}.

First we prove the first equality of~\eqref{eq:phi-has-max-in-o}.
Let
$N:=\{x\in E:\varphi_\infty(x)>\varphi_\infty(o)=\max_{\tilde
  E}\varphi_\infty\}$.
By definition of maximum, $N\cap\tilde E=\emptyset$, so
$N\subset E\backslash\tilde E$, hence $\mm(N)=0$, thus
\[
  M=\esssup_{E}\varphi_\infty\leq \varphi_{\infty}(o).
\]
On the other side, consider the open set $P:=\{x:\varphi(x)> M\}$.
By definition of essential supremum, we have that $\mm(E\cap P)=0$,
hence $\tilde E\subset X\backslash P$, thus $\varphi_\infty(o)\leq M$.
the other eqiality in~\eqref{eq:phi-has-max-in-o} will follow from the
fact $E=B_{\rho}(o)$ (up to a negligible set).

It is sufficient to prove only that $B_\rho(o)\subset E$, for
the other inclusion is a consequence.
Indeed, the Bishop--Gromov inequality, together with the definition of
a.v.r.\ yields
\begin{equation}
  \mm(E)
  \geq
  \mm(B_\rho(o))
  \geq
  \omega_N \AVR_X \rho^{N}
  =
  \mm(E),
\end{equation}
and the equality of measures improves to an equality of sets.

Fix now $\epsilon>0$ and define $A=B_{\rho-\epsilon}(o)$.
If $\mm(E\backslash A)=0$, then we deduce that
$B_{\rho-\epsilon}(o)\subset E$ and, by
arbitrariness of $\epsilon$, we can conclude.

Suppose the contrary, i.e., that $\mm(E\backslash A)>0$.
Clearly $A$ is connected and $\mm(A\cap E)>0$ (otherwise $o\notin\tilde E$), so we exploit Lemma~\ref{lem:boundary-non-empty} obtaining
$\PP(E;A)>0$.
Define $H=\{\alpha\in {Q_{\infty}}: g_\infty(\alpha,\rho)\in A\}$.
A simple computation shows that the set $H$ is non-negligible
(recall~\eqref{eq:disintegration-perimeter-classical})
\begin{align*}
  0<
  \frac{\PP(E;A)}{\PP(E)}
  &
    =
    \int_{Q_{\infty}} \indicator_A(g_\infty(\alpha,\rho))\,\hat{\q}(d\alpha)
    =
    \int_H \indicator_A(g_\infty(\alpha,\rho))\,\hat{\q}(d\alpha)
    =
    \hat{\q}(H)
    .
\end{align*}
The lipschitz-continuity of $\varphi_\infty$ yields
\begin{equation}
  \varphi_\infty(x)
  \geq
  \varphi_\infty(o)-\rho+\epsilon
  \geq
  M-\rho+\epsilon
  ,
  \quad
  \forall x\in A = B_{\rho-\epsilon}(o)
\end{equation}
hence
\begin{equation}
  \varphi_\infty(g_\infty(\alpha,\rho))
  \geq
  M-\rho+\epsilon,
  \quad
  \forall \alpha\in H
  .
\end{equation}
We continue the chain of inequalities, obtaining
\begin{equation}
  \varphi_\infty(g_\infty(\alpha,0))
  \geq
  \varphi_\infty(g_\infty(\alpha,\rho))
  +\rho
  \geq
  M+\epsilon,
  \quad
  \forall \alpha\in H
  .
\end{equation}
The line above, together with the fact that $\hat{\q}(H)>0$,
contradicts Proposition~\ref{P:min-max-phi}.
\end{proof}

\subsection{\texorpdfstring{$\varphi_\infty(x)$}{φ∞(x)} coincides with
  \texorpdfstring{$-\sfd(x,o)$}{-d(x,o)}}

The present section is devoted in proving that,
$\varphi_\infty(x)=-\sfd(x,o)+\varphi_\infty(o)$.

\begin{proposition}
  For $\hat{\q}$-a.e.\ $\alpha\in {Q_{\infty}}$, it holds that
  \begin{align}
    \label{eq:gamma-are-radial}
    \sfd(o,g(\alpha,t))= t,
    \quad
    \forall t\in[0,\rho]
    .
  \end{align}
\end{proposition}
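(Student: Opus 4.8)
The plan is to exploit the disintegration formula~\eqref{eq:disintegration-measure-classical} together with the already-established facts that $E=B_\rho(o)$ up to a negligible set (Theorem~\ref{T:Ball}) and that $\varphi_\infty$ attains its maximum over $E$ at $o$ (equation~\eqref{eq:phi-has-max-in-o}), plus the parametrization property $\frac{d}{dt}\varphi_\infty(g(\alpha,t))=-1$. First I would observe that, since $e_0(\gamma)$ lies in $E$ for $\sigma$-a.e.\ $(x,\gamma,\mu,p)$ (because $x\in\gamma$ by~\eqref{eq:x-belongs-to-gamma-weak2} and $x\in E$, combined with~\eqref{eq:gamma-along-rays-weak2} which forces $\gamma_0$ to be the point of $\gamma$ where $\varphi_\infty$ is largest, hence $\varphi_\infty(g(\alpha,0))\geq\varphi_\infty(x)$; then Proposition~\ref{P:min-max-phi} gives $\varphi_\infty(g(\alpha,0))\leq\max_E\varphi_\infty=\varphi_\infty(o)$). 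Combining with the parametrization, $\varphi_\infty(g(\alpha,t))=\varphi_\infty(g(\alpha,0))-t\leq\varphi_\infty(o)-t$ for all $t\in[0,\rho]$, and by $1$-Lipschitzianity of $\varphi_\infty$ together with $\varphi_\infty(o)=\max\varphi_\infty$ on $B_\rho(o)$ we get $\varphi_\infty(o)-\varphi_\infty(z)\leq\sfd(o,z)$ for $z\in B_\rho(o)$; applying this at $z=g(\alpha,t)$ yields $t\leq\varphi_\infty(o)-\varphi_\infty(g(\alpha,t))\leq\sfd(o,g(\alpha,t))$.

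The reverse inequality $\sfd(o,g(\alpha,t))\leq t$ is where the real content lies. The key step is to show that $g(\alpha,0)=o$ for $\hat\q$-a.e.\ $\alpha$; once this is known, $\sfd(o,g(\alpha,t))=\sfd(g(\alpha,0),g(\alpha,t))=t$ because $g(\alpha,\,\cdot\,)$ is a unit-speed geodesic parametrization, and we are done. To establish $g(\alpha,0)=o$ for a.e.\ $\alpha$, I would argue by contradiction using the measure decomposition~\eqref{eq:disintegration-measure-classical}: suppose $\hat\q(\{\alpha:\sfd(o,g(\alpha,0))>\eta\})>0$ for some $\eta>0$. On each such ray, $\varphi_\infty(g(\alpha,0))=\varphi_\infty(o)-\sfd(o,g(\alpha,0))+(\varphi_\infty(o)-\varphi_\infty(g(\alpha,0))-\sfd(o,g(\alpha,0)))$ — more cleanly, from the lower bound just proved at $t=0$, $\varphi_\infty(g(\alpha,0))\leq\varphi_\infty(o)-\sfd(o,g(\alpha,0))<\varphi_\infty(o)-\eta$... but actually $g(\alpha,0)\in E=B_\rho(o)$ and we need to contradict something. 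The cleaner route: from~\eqref{eq:disintegration-measure-classical}, $\mm\llcorner_E$ is concentrated on $\bigcup_\alpha g(\alpha,(0,\rho))$, and for $\mm$-a.e.\ point $x\in E$ we have $x=g(\QQ_\infty(x),t)$ for some $t\in(0,\rho)$ with $\sfd(o,x)\geq t=\sfd(g(\alpha,0),x)$; since also $\sfd(o,x)\leq\rho$ and the mass $\mm(B_\rho(o))=\omega_N\AVR_X\rho^N$ is exactly accounted for, the profile $r^{N-1}$ on $(0,\rho)$ matched against $\mm(B_r(o))$ (via Bishop--Gromov the function $r\mapsto\mm(B_r(o))/r^N$ is nonincreasing, equal at $r=\rho$ to $\omega_N\AVR_X$, hence constant on $(0,\rho]$, so $\mm(B_r(o))=\omega_N\AVR_X r^N$) forces $\mm(\{x\in E:\sfd(o,x)>t_{\QQ_\infty(x)}\})=0$, i.e., $\sfd(o,g(\alpha,t))=t$ for a.e.\ pair, which gives the claim after taking $t\to0$ and using continuity of $g(\alpha,\,\cdot\,)$.

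More precisely, I would integrate: for $\hat\q$-a.e.\ $\alpha$ and a.e.\ $r\in(0,\rho)$, $\sfd(o,g(\alpha,r))\geq r$ (shown above), while
\begin{equation*}
  \omega_N\AVR_X\rho^N
  =\mm(B_\rho(o))
  \geq \mm(E)
  =N\omega_N\AVR_X\int_{Q_\infty}\int_0^\rho r^{N-1}\,dr\,\hat\q(d\alpha)
  =\omega_N\AVR_X\rho^N,
\end{equation*}
so equality holds throughout; but $\mm(B_\rho(o))\geq\int_{Q_\infty}\int_0^\rho\indicator_{B_\rho(o)}(g(\alpha,r))Nr^{N-1}\,dr\,\hat\q(d\alpha)$, and combined with $\sfd(o,g(\alpha,r))\geq r$ and the radial distribution of $\mm\llcorner_{B_\rho(o)}$ (from $\mm(B_s(o))=\omega_N\AVR_X s^N$, $s\leq\rho$, which disintegrates the distance from $o$ as $Ns^{N-1}\,ds$ times a probability on spheres), a comparison of the two ways of writing $\int_{B_\rho(o)}\sfd(o,\cdot)^{N}\,d\mm$ or simply of the pushforward of $\mm\llcorner_E$ under $\sfd(o,\cdot)$ forces $\sfd(o,g(\alpha,r))=r$ for $\hat\q\otimes\L^1$-a.e.\ $(\alpha,r)$; continuity in $r$ upgrades this to all $r\in[0,\rho]$. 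I expect the main obstacle to be making the ``radial disintegration of $\mm\llcorner_{B_\rho(o)}$ from $\mm(B_s(o))=\omega_N\AVR_X s^N$'' step fully rigorous — i.e., deducing from the equality in Bishop--Gromov at every scale $s\le\rho$ that the distance-to-$o$ pushforward of $\mm\llcorner_{B_\rho(o)}$ is exactly $N\omega_N\AVR_X s^{N-1}\L^1\llcorner_{(0,\rho)}$ — and then matching it term-by-term against the pushforward computed from~\eqref{eq:disintegration-measure-classical}, since both are determined but one must check the inequality $\sfd(o,g(\alpha,r))\ge r$ cannot be strict on a positive-measure set without violating the total mass.
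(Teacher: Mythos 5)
Your overall strategy coincides with the paper's: first derive $\sfd(o,g(\alpha,t))\ge t$ from $\varphi_\infty(g(\alpha,0))\le\varphi_\infty(o)$ (Proposition~\ref{P:min-max-phi} together with~\eqref{eq:phi-has-max-in-o}) and the $1$-Lipschitzianity of $\varphi_\infty$, and then force the reverse inequality by comparing the mass of balls centred at $o$ computed via the disintegration~\eqref{eq:disintegration-measure-classical} against the Bishop--Gromov bound. The first half is correct and identical to the paper's argument. In the second half there is one incorrect intermediate claim: the monotonicity of $s\mapsto\mm(B_s(o))/s^N$ together with its value $\omega_N\AVR_X$ at $s=\rho$ does \emph{not} give constancy on $(0,\rho]$; it gives only the lower bound $\mm(B_s(o))\ge\omega_N\AVR_X\,s^N$ for $s\le\rho$. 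Fortunately that is exactly the inequality you need: the reverse bound $\mm(B_s(o))\le\omega_N\AVR_X\,s^N$ follows from~\eqref{eq:disintegration-measure-classical} combined with the already-proved $\sfd(o,g(\alpha,r))\ge r$ (which gives $\indicator_{B_s(o)}(g(\alpha,r))\le\indicator_{\{r<s\}}$), and the resulting equality at every scale $s\le\rho$, tested on a countable dense set of $s$, forces $\sfd(o,g(\alpha,r))=r$ for $\hat{\q}\otimes\L^1$-a.e.\ $(\alpha,r)$, hence for all $r\in[0,\rho]$ by continuity. This is precisely the paper's mechanism: there the comparison is run only at scales $\epsilon\to0$, using the set $C=\{\alpha:\sfd(o,g_\infty(\alpha,0))>2\epsilon\}$, to conclude first that $g_\infty(\alpha,0)=o$ and then finish with the triangle inequality. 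So the ``main obstacle'' you flag is resolved by the two-sided mass comparison you sketch at the end, not by a radial disintegration extracted from Bishop--Gromov alone, and with that correction your proof goes through.
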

\begin{proof}
  The
  $1$-lipschitzianity of $\varphi_\infty$, and the fact that
  $E=B_\rho(o)$ (up to a negligible set) implies that,
  $\varphi_\infty(x)\geq\varphi_\infty(o)-\rho$, for $\mm$-a.e.\ $x\in
  E$.
  Thus we deduce, using Proposition~\ref{P:min-max-phi} and
  Equation~\eqref{eq:phi-has-max-in-o}, that
  \begin{equation}
    \varphi_\infty(g_\infty(\alpha,0))\leq\varphi_\infty(o),
    \quad\text{ and }\quad
    \varphi_\infty(g_\infty(\alpha,\rho))\geq\varphi_\infty(o)-\rho
    .
  \end{equation}
  Since $\frac{d}{dt}\varphi_\infty( g_\infty(\alpha,t))=-1$, $t\in (o,\rho)$, the
  inequalities above are saturated and
  \begin{equation}
    \varphi_\infty(g_\infty(\alpha,t))=\varphi_\infty(o)-t
    ,
    \quad
    \forall t\in[0,\rho]
    ,
    \text{ for $\hat{\q}$-a.e.\ }
    \alpha\in Q_{\infty}
    .
  \end{equation}


  The $1$-lipschitzianity of $\varphi_\infty$, together with the
  Equation above, yields
  \begin{equation}
    \label{eq:gamma-is-far-away}
    \begin{aligned}
    \sfd(o,g_\infty(\alpha,t))
      &
        \geq
        \varphi_\infty(o)-\varphi_\infty(g_\infty(\alpha,t))
        =
        t
    ,
    \;\;
    \forall t\in[0,\rho]
    ,
    \text{ for $\hat{\q}$-a.e.\ }
    \alpha\in Q_{\infty}
        .
    \end{aligned}
  \end{equation}
  Fix $\epsilon>0$ and let
  $C=\{\alpha\in {Q_{\infty}}: \sfd(o,g_\infty(\alpha,0))>2\epsilon\}$.
  The function
  \[f(t):=\inf\{\sfd(o,g_\infty(\alpha,t)): \alpha\in
    C\}
  \]
  is $1$-Lipschitz and satisfies
  $f(0)\geq 2\epsilon$, hence $f(t)\geq2\epsilon- t$, yielding
  (cfr.~\eqref{eq:gamma-is-far-away})
  \begin{equation}
    f(t)\geq
    \max\{(2\epsilon- t),t\}
    \geq
    \epsilon.
  \end{equation}
  The inequality above implies that $g_\infty(\alpha,t)\notin B_\epsilon(o)$
  for all $t\in [0,1]$, for all $\alpha\in C$.
  We compute the measure of $B_\epsilon(o)$ using the disintegration
  formula~\eqref{eq:disintegration-measure-classical}
  \begin{align*}
    \frac{\mm(B_\epsilon(o))}{N\omega_N\AVR_X}
    &
      =
      \int_{Q_{\infty}}
      \int_0^\rho
      \indicator_{B_\epsilon(o)}(g_\infty(\alpha,t))
      \,
      t^{N-1}
      \,
      dt
      \,
      \hat{\q}(d\alpha)
    \\
    &
      =
      \int_{{Q_{\infty}}\backslash C}
      \int_0^\rho
      \indicator_{B_\epsilon(o)}(g_\infty(\alpha,t))
      \,
      t^{N-1}
      \,
      dt
      \,
      \hat{\q}(d\alpha)
.
  \end{align*}
  If $\indicator_{B_\epsilon(o)}(g_\infty(\alpha,t))=1$, then
  inequality~\eqref{eq:gamma-is-far-away} yields
  $t\leq\epsilon$, so we continue the computation
  \begin{align*}
    \frac{\mm(B_\epsilon(o))}{N\omega_N\AVR_X}
    &
      =
      \int_{{Q_{\infty}}\backslash C}
      \int_0^\rho
      \indicator_{B_\epsilon(o)}(g_\infty(\alpha,t))
      \,
      t^{N-1}
      \,
      dt
      \,
      \hat{\q}(d\alpha)
    \\
    &
      =
      \int_{{Q_{\infty}}\backslash C}
      \int_0^\epsilon
      \indicator_{B_\epsilon(o)}(g_\infty(\alpha,t))
      \,
      t^{N-1}
      \,
      dt
      \,
      \hat{\q}(d\alpha)
    \\
    &
      \leq
      \int_{{Q_{\infty}}\backslash C}
      \int_0^\epsilon
      \,
      t^{N-1}
      \,
      dt
      \,
      \hat{\q}(d\alpha)
      =
      (\hat{\q}({Q_{\infty}})-\hat{\q}(C))\frac{\epsilon^N}{N}
      .
  \end{align*}
  On the other hand, the Bishop--Gromov inequality states that
  \begin{equation}
    \mm(B_\epsilon(o))
    \geq
    \frac{\epsilon^N}{\rho^N}
    \mm(B_\rho(o))
    =
    \frac{\epsilon^N}{\rho^N}
    \mm(E)
    =\epsilon^N
    \omega_N\AVR_X
    ,
  \end{equation}
  thus, comparing with the previous inequality, we obtain
  $\hat{\q}(C)=0$.
  By arbitrariness of $\epsilon$, we deduce that $g_\infty(\alpha,0)=o$ for
  $\hat{\q}$-a.e.\ $\alpha\in {Q_{\infty}}$.

  Finally, using again~\eqref{eq:gamma-is-far-away}, we can conclude
  \begin{align*}
    &
    t
    \leq
    \sfd(o,g_\infty(\alpha,t))
    \leq
    \sfd(o,g_\infty(\alpha,0))
    +
    \sfd(g_\infty(\alpha,0),g_\infty(\alpha,t))
    =
    t
      ,
    \\
    &
    \qquad
    \forall t\in[0,\rho]
    ,
    \text{ for $\hat{\q}$-a.e\ }
    \alpha\in {Q_{\infty}}.
    \qedhere
  \end{align*}
\end{proof}

\begin{corollary}
  It holds that for all $x\in B_\rho(o)$,
  $\varphi_\infty(x)=\varphi_\infty(o)=-\sfd(x,o)$.
\end{corollary}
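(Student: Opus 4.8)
The plan is to combine the $1$-Lipschitz bound for $\varphi_\infty$ with the disintegration formula~\eqref{eq:disintegration-measure-classical} and the radial description of the rays obtained in the previous Proposition, and then to pass from an $\mm$-a.e.\ identity to a pointwise one using $X=\supp\mm$. (The intended identity is $\varphi_\infty(x)=\varphi_\infty(o)-\sfd(x,o)$.)

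First I would record the easy inequality: since $\varphi_\infty$ is $1$-Lipschitz, $\varphi_\infty(o)-\varphi_\infty(x)\le\sfd(x,o)$ for every $x\in X$, that is,
\[
\varphi_\infty(x)\ \ge\ \varphi_\infty(o)-\sfd(x,o),\qquad \forall x\in X.
\]

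For the matching upper bound I would argue first $\mm$-almost everywhere. Inside the proof of the previous Proposition it was shown that, for $\hat{\q}$-a.e.\ $\alpha\in Q_\infty$, one has $g_\infty(\alpha,0)=o$, $\sfd(o,g_\infty(\alpha,t))=t$ and $\varphi_\infty(g_\infty(\alpha,t))=\varphi_\infty(o)-t$ for every $t\in[0,\rho]$; combining these gives
\[
\varphi_\infty\bigl(g_\infty(\alpha,t)\bigr)=\varphi_\infty(o)-\sfd\bigl(o,g_\infty(\alpha,t)\bigr),\qquad\forall t\in[0,\rho],
\]
for $\hat{\q}$-a.e.\ $\alpha$. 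By the disintegration formula~\eqref{eq:disintegration-measure-classical} the measure $\mm$ is concentrated on $\bigcup_{\alpha} g_\infty(\alpha,(0,\rho))$, so the identity $\varphi_\infty(x)=\varphi_\infty(o)-\sfd(x,o)$ holds for $\mm$-a.e.\ $x\in X$, in particular for $\mm$-a.e.\ $x\in B_\rho(o)$.

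Finally I would upgrade this to every point. The set $S:=\{x\in X:\varphi_\infty(x)=\varphi_\infty(o)-\sfd(x,o)\}$ is closed, since both $\varphi_\infty$ and $\sfd(\cdot,o)$ are continuous, and $\mm(X\setminus S)=0$ by the previous step; as $X=\supp\mm$, a closed set of full measure must coincide with $X$, so $S=X$, which gives the claim for every $x\in B_\rho(o)$ (indeed for every $x\in X$). The substantive work has already been done in the preceding propositions, so there is no serious obstacle here; the only point to track carefully is the order of quantifiers in the previous Proposition, namely that the saturated equality $\varphi_\infty(g_\infty(\alpha,t))=\varphi_\infty(o)-t$ holds for all $t\in[0,\rho]$ and $\hat{\q}$-a.e.\ $\alpha$, which is exactly what allows one to integrate it against the disintegration and deduce the $\mm$-a.e.\ statement on $X$.
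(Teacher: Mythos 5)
Your argument is essentially the paper's: establish the identity on a full-measure subset of $B_\rho(o)$ using the previous proposition (the rays emanate from $o$ with unit speed and $\varphi_\infty$ decreases with slope $-1$ along them), then upgrade to every point of the ball by continuity and $\supp\mm=X$. The core is correct.

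One claim in your write-up is wrong, though it does not damage the conclusion: formula \eqref{eq:disintegration-measure-classical} describes only $\mm\llcorner_E=\mm\llcorner_{B_\rho(o)}$ (its right-hand side has total mass $\omega_N\AVR_X\rho^N=\mm(E)<\infty$, while $\mm(X)=\infty$), so the measure concentrated on $\bigcup_\alpha g_\infty(\alpha,(0,\rho))$ is $\mm\llcorner_E$, not $\mm$. Consequently you only get $\mm\bigl(B_\rho(o)\setminus S\bigr)=0$, not $\mm(X\setminus S)=0$, and the assertion $S=X$ is unjustified (indeed the paper explicitly notes that nothing is known about $\varphi_\infty$ outside $B_\rho(o)$). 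The fix is immediate: $S$ is closed and $S\cap B_\rho(o)$ has full measure in the open set $B_\rho(o)$; since $X=\supp\mm$, every nonempty open subset of $B_\rho(o)$ has positive measure, so $S\cap B_\rho(o)$ is dense in $B_\rho(o)$ and therefore $S\supset B_\rho(o)$, which is exactly what the corollary asserts.
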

\begin{proof}
  If $x\in E\cap\T_\infty$, then $x=g(\alpha,t)$ for some $t$, with
  $\alpha=\QQ_\infty(x)$.
  By the previous proposition we may assume that $g_\infty(\alpha,0)=o$,
  hence we have that
  \begin{equation*}
    \varphi_\infty(x)
    -
    \varphi_\infty(o)
    =
    \varphi_\infty(g_\infty(\alpha,t))
    -
    \varphi_\infty(g_\infty(\alpha,0))
    =
    -
    \sfd(g_\infty(\alpha,t),g_\infty(\alpha,0))
    =
    -
    \sfd(x,o)
    .
  \end{equation*}
  Since $\T_\infty\cap E$ has full measure in $B_\rho(o)$ and
  $\supp\mm=X$, we conclude.
\end{proof}

\subsection{Localization of the whole space}

At this point, we are in position to extend the localization given in
Section~\ref{Ss:localization-classical} to the whole space $X$.
Since we do not know the behaviour of $\varphi_{\infty}$ outside
$B_{\rho}(o)$, we take as reference function $-\sfd(o,\,\cdot\,)$,
which coincides with $\varphi_{\infty}$ on $B_{\rho}(o)$.

In this section we will use some of the concept introduced in
Subsection~\ref{Ss:L1OT}.
In particular we will refer to transport relation $\relation^e$; the
transport set $\T$ turns out to have full $\mm$-measure.
We will denote by ${Q}$ the quotient set and $\QQ:\T\to{Q}$ be the
quotient map; let $X_\alpha:=\QQ^{-1}(\alpha)$ be the disintegration
rays and let $g:\Dom (g)\subset \R\times {Q}\to X$ be the
standard parametrization.
Define $\q:=\frac{1}{\mm(E)}\QQ_\#(\mm\llcorner_{E})$ (note that for the
moment we still do not know if $\QQ_\#(\mm\llcorner_{E})\ll\q$).

\begin{proposition}
  \label{P:rays-from-the-vertex}
  For $\q$-a.e.\ $\alpha\in {Q}$, it holds that
  $\sfd(o,g(\alpha,t))=t$, for all $t\in[0,|X_{\alpha}|]$.
\end{proposition}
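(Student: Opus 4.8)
The plan is to show that the standard ray parametrization $g$ for the potential $-\sfd(o,\cdot)$ agrees, on $\q$-almost every ray, with the radial geodesic emanating from $o$ and, in particular, that $g(\alpha,0)=o$ for $\q$-a.e.\ $\alpha$. The key point is that $-\sfd(o,\cdot)$ coincides with $\varphi_\infty$ on $B_\rho(o)=E$ (up to a negligible set), so the transport rays $X_\alpha$ and $X_{\alpha,\infty}$ agree over $E$, and the already-established disintegration formula~\eqref{eq:disintegration-measure-classical} together with the previous proposition (which gives $\sfd(o,g_\infty(\alpha,t))=t$ and $g_\infty(\alpha,0)=o$ for $\hat\q$-a.e.\ $\alpha$) can be transferred.

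First I would recall that $\varphi_\infty$ restricted to $E$ equals $-\sfd(o,\cdot)+\varphi_\infty(o)$ by the Corollary just proven, hence the set $\Gamma$ associated to $-\sfd(o,\cdot)$ and the set $\Gamma_\infty$ associated to $\varphi_\infty$ coincide when both endpoints lie in $E$; in particular, for $x\in E\cap\T$ with $\alpha=\QQ(x)$, the ray $X_\alpha$ contains the point $o$ (since $o\in E$ and $(o,x)\in\Gamma$ because $\varphi_\infty(o)-\varphi_\infty(x)=\sfd(x,o)$) and the portion of $X_\alpha$ lying in $E$ is exactly the arc $g(\alpha,[0,\rho])$ with $g(\alpha,0)=o$. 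The only subtlety is branching: a point $x\in E$ could in principle be a forward/backward branching point for the global potential $-\sfd(o,\cdot)$ while not being one for $\varphi_\infty$; but since $o$ is reached along $X_\alpha$ and all these rays pass through the single point $o$, the non-branching assumption on $(X,\sfd,\mm)$ forces consistency — two transport rays through $o$ meeting near $o$ must coincide. I would argue this directly: for $\mm$-a.e.\ $x\in E$, the geodesic from $o$ to $x$ is contained in $\T$ and parametrized by $g(\QQ(x),\cdot)$, with $g(\QQ(x),0)=o$, precisely because the corresponding statement holds for $g_\infty$ by the previous proposition and the two parametrizations agree on $E$.

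Then, to upgrade ``$\sfd(o,g(\alpha,t))=t$ for $t\in[0,\rho]$'' to ``for all $t\in[0,|X_\alpha|]$'', I would use that $g(\alpha,\cdot)$ is a unit-speed curve with $\frac{d}{dt}(-\sfd(o,g(\alpha,t)))=-1$ along the ray; combined with $g(\alpha,0)=o$, this gives $\sfd(o,g(\alpha,t))\ge t$ everywhere on $(0,|X_\alpha|)$ by $1$-Lipschitzianity of $-\sfd(o,\cdot)$, while $\sfd(o,g(\alpha,t))\le\sfd(o,g(\alpha,0))+t = t$ by the triangle inequality and unit speed, yielding equality. This argument is formally the same as the chain of inequalities at the end of the previous proposition's proof; the difference is only that here $t$ ranges over the full maximal interval rather than $[0,\rho]$, which is harmless once $g(\alpha,0)=o$ is known.

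The main obstacle I expect is the transfer of the ``$g(\alpha,0)=o$'' statement from the $\varphi_\infty$-localization (where it is known only $\hat\q$-a.e., $\hat\q$ being the pushforward of $\mm\llcorner_E$ under $\QQ_\infty$) to the $-\sfd(o,\cdot)$-localization (where we have a priori a different quotient map $\QQ$ and measure $\q$). One must check that, modulo $\mm\llcorner_E$-null sets, $\QQ$ and $\QQ_\infty$ induce the same partition of $E$ into rays, which requires comparing the branching sets $A^\pm$ and $A^\pm_\infty$ restricted to $E$; this should follow because on $E$ the two potentials differ by a constant, so $\relation^e$ and $\relation^e_\infty$ coincide on $E\times E$, and because all rays through $E$ actually pass through the common vertex $o$, the non-branching condition (Lemma~\ref{lem:uniqueness-center}'s uniqueness-of-Monge-solution mechanism) rules out genuine branching at $o$. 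Once this identification is in place, the proposition is an immediate consequence of the previous one.
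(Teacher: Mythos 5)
Your route is genuinely different from the paper's: you transfer the conclusion from the $\varphi_\infty$-localization on $E$ (via the identity $\varphi_\infty=\varphi_\infty(o)-\sfd(o,\cdot)$ on $B_\rho(o)$ and the previous proposition), whereas the paper argues directly for the potential $-\sfd(o,\cdot)$. The paper's proof is two lines: since $\varphi(o)-\varphi(y)=\sfd(o,y)$ for \emph{every} $y$, the point $o$ belongs to $\relation^e(\alpha)$ for every $\alpha$, and the maximality of transport rays (\cite[Theorem~7.10]{CMi}) gives $\mathring{\relation}^e(\alpha)\subset X_\alpha$ for a.e.\ $\alpha$, hence $g(\alpha,0)=o$ and $\sfd(o,g(\alpha,t))=-\varphi(g(\alpha,t))=t$ on the whole ray. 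Note that this argument uses nothing from the preceding section beyond the existence of $o$.

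There is a genuine gap in your first step. You claim that $X_\alpha$ \emph{contains} $o$ because $(o,x)\in\Gamma$. This only gives $o\in\relation^e(\alpha)$; the ray is $X_\alpha=\relation^e(\alpha)\cap\T$, and $o$ is precisely the kind of point that gets removed when passing to $\T$: it is a forward-branching point ($o\in A^+$) whenever more than one radial geodesic leaves it, which is the generic situation here. Your proposed fix — that essential non-branching ``forces two transport rays through $o$ meeting near $o$ to coincide'' — is not the right mechanism: the rays genuinely all meet at $o$ and do not coincide; the issue is not uniqueness of the ray through $o$ but whether a given ray $X_\alpha$ extends all the way down to $o$ rather than stopping short. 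That is exactly the content of the ray-maximality theorem the paper cites. Along your transfer route one can still close the gap without that theorem: for $\hat\q$-a.e.\ $\alpha$ one checks (by disintegrating $\mm(X\setminus\T)=0$ and using that $\hat h_\alpha>0$ on $(0,\rho)$) that $g_\infty(\alpha,s)\in\T\cap\T_\infty$ for $\L^1$-a.e.\ $s\in(0,\rho)$; since $\relation^e$ and $\relation^e_\infty$ coincide on $B_\rho(o)\times B_\rho(o)$, these points lie in a single $\relation$-class, which therefore accumulates at $o$, giving $g(\QQ(g_\infty(\alpha,s)),0)=o$ after the continuous endpoint extension of $g$. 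Once $g(\alpha,0)=o$ is secured, your extension to all $t\in[0,|X_\alpha|]$ is fine (indeed it is immediate from $\sfd(o,g(\alpha,t))=\varphi(g(\alpha,0))-\varphi(g(\alpha,t))=t$).
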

\begin{proof}
  Let $\tilde{\q}\in\P(Q)$ be a measure such that
  $\tilde{\q}\ll\QQ_{\#}(\mm)\ll\tilde{\q}$.
  The maximality of the rays (see~\cite[Theorem~7.10]{CMi}) guarantees
  that $\mathring{\relation}^{e}(\alpha)\subset X_{\alpha}$, for
  $\tilde{\q}$-a.e.\ $\alpha\in Q$, where
  $\mathring{\relation}^{e}(\alpha)$ denotes the relative interior of
  $\relation^{e}(\alpha)$.
  By definition of distance $o\in\relation^{b}(\alpha)$, for all
  $\alpha\in Q$,
  thus $g(\alpha,0)=o$ for $\tilde{\q}$-a.e.\ $\alpha\in Q$.
  Since $\q\ll\QQ_{\#}\mm\ll\tilde{\q}$, the thesis follows.
\end{proof}

\begin{proposition}
  It holds true that $\QQ_\#\mm\ll\q$.
\end{proposition}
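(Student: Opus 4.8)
The plan is to disintegrate $\mm$ along the transport rays of the localization induced by the $1$-Lipschitz function $-\sfd(o,\,\cdot\,)$ and to show that each conditional measure charges $E$; absolute continuity then follows by a routine argument. Concretely, I would fix a probability measure $\tilde\q\in\P(Q)$ with $\tilde\q\ll\QQ_\#\mm\ll\tilde\q$ and apply Theorem~\ref{T:disintegration-CD}, obtaining
\begin{equation}
  \mm=\mm\llcorner_{\T}=\int_{Q}\mm_\alpha\,\tilde\q(d\alpha),
\end{equation}
where, for $\tilde\q$-a.e.\ $\alpha$, the measure $\mm_\alpha$ is concentrated on $X_\alpha$, is absolutely continuous with respect to $\H^1\llcorner_{X_\alpha}$, and $(X_\alpha,\sfd,\mm_\alpha)$ is a $\CD(0,N)$ space; writing $\mm_\alpha=(g(\alpha,\,\cdot\,))_\#(h_\alpha\L^1\llcorner_{(0,|X_\alpha|)})$, the one-dimensional $\CD(0,N)$ condition forces $h_\alpha>0$ on the open interval $(0,|X_\alpha|)$.

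The key step is to show that $\mm_\alpha(E)>0$ for $\tilde\q$-a.e.\ $\alpha$. By Proposition~\ref{P:rays-from-the-vertex} I may assume $g(\alpha,0)=o$, and then $\sfd(o,g(\alpha,t))=t$ for all $t\in[0,|X_\alpha|]$, so that $g\bigl(\alpha,(0,\min\{\rho,|X_\alpha|\})\bigr)\subset B_\rho(o)$. Since the ray $X_\alpha$ is non-degenerate, this subinterval has positive length, and the strict positivity of $h_\alpha$ gives $\mm_\alpha(B_\rho(o))>0$. By Theorem~\ref{T:Ball} the set $E\Delta B_\rho(o)$ is $\mm$-negligible, hence $\mm_\alpha$-negligible for $\tilde\q$-a.e.\ $\alpha$, and therefore $\mm_\alpha(E)=\mm_\alpha(B_\rho(o))>0$.

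Finally, I would carry out the formal deduction. Let $A\subset Q$ be a Borel set with $\q(A)=0$. Since $\q=\frac{1}{\mm(E)}\QQ_\#(\mm\llcorner_E)$, this means $\mm(E\cap\QQ^{-1}(A))=0$; as each $\mm_\alpha$ is concentrated on the fibre $X_\alpha=\QQ^{-1}(\alpha)$,
\begin{equation}
  0=\mm\bigl(E\cap\QQ^{-1}(A)\bigr)=\int_{A}\mm_\alpha(E)\,\tilde\q(d\alpha),
\end{equation}
and the previous step forces $\tilde\q(A)=0$. Since $\QQ_\#\mm\ll\tilde\q$ by the choice of $\tilde\q$, we obtain $\QQ_\#\mm(A)=0$, which is precisely $\QQ_\#\mm\ll\q$.

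The part I expect to be most delicate is not the computation but ensuring that the conditional measures are non-trivial and retain the one-dimensional $\CD(0,N)$ structure for $\tilde\q$-a.e.\ ray: this is exactly why $\tilde\q$ must be taken equivalent to $\QQ_\#\mm$ so that Theorem~\ref{T:disintegration-CD} applies, and why Proposition~\ref{P:rays-from-the-vertex} (through the maximality of transport rays) is needed to place the origin of each ray at $o$; one also has to keep track of measurability exactly as in Remark~\ref{rmrk:caveat-disintegration}.
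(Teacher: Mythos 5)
Your proof is correct and follows essentially the same route as the paper: disintegrate $\mm$ with respect to an auxiliary measure $\tilde\q$ equivalent to $\QQ_\#\mm$, observe that each ray emanates from $o$ so its initial segment of length $\min\{\rho,|X_\alpha|\}$ lies in $B_\rho(o)=E$ (up to $\mm$-negligible sets), and use the positivity of the one-dimensional $\CD(0,N)$ densities to conclude that a $\q$-null set of rays carries no mass. The only caveat is that Proposition~\ref{P:rays-from-the-vertex} is \emph{stated} only for $\q$-a.e.\ $\alpha$, so invoking it literally to get $g(\alpha,0)=o$ for $\tilde\q$-a.e.\ $\alpha$ would be circular (it would presuppose $\tilde\q\ll\q$); one must instead appeal to its proof, which establishes the conclusion for $\tilde\q$-a.e.\ $\alpha$ via the maximality of transport rays — exactly the step the paper signals with its parenthetical ``compare with the previous proof''.
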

\begin{proof}
  Let $\tilde{\q}\in \P(Q)$ be a measure such that
  $\QQ_{\#}(\mm)\ll\tilde{\q}$.
  Using the Localization Theorem, we get that
  $\mm=\int_{Q}\tilde{\mm}_\alpha\,\tilde{\q}(d\alpha)$, where the measures
  $\tilde{\mm}_\alpha$ are supported on $X_\alpha$ and satisfy the
  $\CD(0,N)$ condition.
  Let $A\subset Q$ be a set such that $\qq(A)=0$, that is
  $0=\mm(B_{\rho}(0)\cap\QQ^{-1}(A))=\int_{A}\mm(B_{\rho}(o))\,\tilde{\q}(d\alpha)$,
  thus $\tilde{\mm}_\alpha(B_{\rho})=0$, for $\tilde{\q}$-a.e.\
  $\alpha\in A$
  Since $\sfd(o,g(\alpha,t))=t$, for $\tilde\q$-a.e.\ $\alpha\in A$
  (compare with the previous proof), the $\CD(0,N)$ condition applied
  to every $\tilde{\mm}_{\alpha}$ yields $\tilde\mm_{\alpha}=0$ for
  $\tilde{\q}$-a.e.\ $\alpha\in Q$.
  It follows that $\mm(\QQ^{-1}(A))=0$.
\end{proof}

The previous proposition allows us to use the
Theorem~\ref{T:disintegration-CD}, hence there exists a unique
disintegration for the measure $\mm$
\begin{equation}
  \label{eq:disintegration-final}
  \mm
  =
  \int_{{Q}}\mm_\alpha \,\q(d\alpha),
\end{equation}
such that:
1) the measures $\mm_\alpha$ are supported on $X_\alpha$;
2) the space $(X_\alpha,\sfd,\mm_\alpha)$ satisfy the $\CD(0,N)$
condition.
We denote by $h_\alpha:(0,|X_\alpha|)\to\R$ the density function such that
$\mm_\alpha=(g(\alpha,\cdot))_\#(h_\alpha \L^\llcorner_{(0,|X_\alpha|)})$.

The next two propositions bound together the localization obtained in
section~\ref{Ss:localization-classical} (in particular
Corollary~\eqref{cor:disintegration-classical}) with the localization
using $-\sfd(o,\,\cdot\,)$ as $1$-Lipschitz reference function.

\begin{proposition}
There exists a unique measurable map
$L:\Dom(L)\subset Q_\infty\to{Q}$ such that the domain of $L$ has full
$\hat{\q}$ in $Q_{\infty}$ and it holds
\begin{equation}
  L(\QQ_\infty(x))
  =
  \QQ(x)
  ,
  \quad
  \forall
  x\in B_{\rho}(o)\cap\T_\infty\cap  \T
  ,
  \qquad
  \text{ and }
  \qquad
  \q=L_\# \hat{\q}
  .
\end{equation}
\end{proposition}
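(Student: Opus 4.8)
The plan is to show that inside $B_\rho(o)$ the two localizations at hand --- the one driven by the limit potential $\varphi_\infty$ and the one driven by $-\sfd(o,\,\cdot\,)$ --- produce the \emph{same} rays, each $\varphi_\infty$-ray being an arc of a unique $\sfd(o,\,\cdot\,)$-ray, so that $L$ is simply the map recording this correspondence. The key input is the Corollary already proved, $\varphi_\infty(x)=\varphi_\infty(o)-\sfd(x,o)$ for all $x\in B_\rho(o)$, which yields
\[
\Gamma_\infty\cap\big(B_\rho(o)\times B_\rho(o)\big)
=\{(x,y)\in B_\rho(o)^2:\ \sfd(o,y)-\sfd(o,x)=\sfd(x,y)\}
=\relation^e\cap\big(B_\rho(o)\times B_\rho(o)\big),
\]
together with the radiality statements \eqref{eq:gamma-are-radial} and Proposition~\ref{P:rays-from-the-vertex}.

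First I would construct $L$ fibrewise. Let $S:=B_\rho(o)\cap\T_\infty\cap\T$ (a Borel set) and let $\Dom(L)\subset Q_\infty$ be the set of $\alpha\in\QQ_\infty(S)$ for which \eqref{eq:gamma-are-radial} holds; since $E=B_\rho(o)$ up to an $\mm$-negligible set, both $\T$ and $\T_\infty$ have full $\mm$-measure, and $\hat\q=\tfrac1{\mm(E)}(\QQ_\infty)_\#(\mm\llcorner_E)$, one gets $\mm(E\setminus S)=0$ and hence $\Dom(L)$ is $\hat\q$-conegligible in $Q_\infty$. Fix $\alpha\in\Dom(L)$ and $x,y\in\QQ_\infty^{-1}(\{\alpha\})\cap S$: by \eqref{eq:gamma-are-radial} the curve $t\mapsto g_\infty(\alpha,t)$, $t\in[0,\rho]$, is the unit-speed geodesic issuing from $o=g_\infty(\alpha,0)$ and passing through both $x$ and $y$, so using $\sfd(o,g_\infty(\alpha,t))=t$ one checks that the closer of $x,y$ to $o$ stands in the relation $\Gamma$ (for the potential $-\sfd(o,\,\cdot\,)$) with the farther one; thus $(x,y)\in\relation^e$ and, since $x,y\in\T$, $(x,y)\in\relation=\relation^e\cap(\T\times\T)$. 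As $\relation$ is an equivalence relation on $\T$, this forces $\QQ(x)=\QQ(y)$, so $x\mapsto\QQ(x)$ is constant on $\QQ_\infty^{-1}(\{\alpha\})\cap S$; I set $L(\alpha)$ to be this common value, whence $L(\QQ_\infty(x))=\QQ(x)$ for every $x\in S$ with $\QQ_\infty(x)\in\Dom(L)$. Measurability of $L$ then follows from a standard measurable-selection argument: $\QQ$ and $\QQ_\infty$ are measurable maps onto the Borel quotients (Subsection~\ref{Ss:L1OT}), and $L$ is the factorisation of $\QQ$ through $\QQ_\infty$, whose graph is the image of the Borel set $S$ under the measurable pair map $(\QQ_\infty,\QQ)$ and projects injectively onto $\Dom(L)$.

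For the push-forward identity, on the full-measure set $S$ one has $\QQ=L\circ\QQ_\infty$, so
\[
\q=\tfrac1{\mm(E)}\QQ_\#(\mm\llcorner_E)=\tfrac1{\mm(E)}(L\circ\QQ_\infty)_\#(\mm\llcorner_E)=L_\#\hat\q ,
\]
and uniqueness is immediate, since any competitor $L'$ with the stated properties must agree with $L$ on $\QQ_\infty(S)$, a set of full $\hat\q$-measure. I expect the genuine obstacle to be the fibrewise identification: proving that $x\mapsto\QQ(x)$ is \emph{genuinely} constant (not merely $\mm$-a.e.\ constant) along each good $\varphi_\infty$-ray, which requires controlling the a priori different branching sets $A^{\pm}_\infty$ and $A^{\pm}$ and exploiting carefully that $\relation$ restricted to $\T$ is an equivalence relation and that, by \eqref{eq:gamma-are-radial} and Proposition~\ref{P:rays-from-the-vertex}, both families of rays consist of unit-speed geodesics emanating from $o$. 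The measurability and push-forward parts are, by contrast, bookkeeping once the fibrewise statement is in place.
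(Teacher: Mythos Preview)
Your proposal is correct and follows essentially the same route as the paper: both arguments exploit that $\varphi_\infty=\varphi_\infty(o)-\sfd(o,\,\cdot\,)$ on $B_\rho(o)$ to see that the two transport relations $\relation_\infty$ and $\relation$ agree on $B_\rho(o)\cap\T_\infty\cap\T$, whence $\QQ$ factors through $\QQ_\infty$ there and the push-forward identity is immediate from the definitions of $\hat\q$ and $\q$. The paper phrases this slightly more symmetrically (``$(x,y)\in\relation_\infty\Leftrightarrow(x,y)\in\relation$'') and reads the map $L$ directly off the graph $G=\pi_{Q_\infty\times Q}\{(x,\QQ_\infty(x),\QQ(x))\}$, while you argue one implication via \eqref{eq:gamma-are-radial}; both are the same mechanism, and the ``genuine obstacle'' you flag at the end is already resolved by your own restriction to $S=B_\rho(o)\cap\T_\infty\cap\T$ together with $\relation$ being an equivalence relation on $\T$.
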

\begin{proof}
  Since $\varphi_{\infty}=\varphi_{\infty}(o)-\sfd(o,\,\cdot\,)$ on
  $B_{\rho}(o)$, the partitions
  $(X_{\alpha,\infty})_{\alpha\in Q_{\infty}}$ and
  $(X_{\alpha})_{\alpha\in Q}$ agree on the set
  $B_{\rho}(o)\cap\T_{\infty}\cap\T$, that is, given $x,y\in
  B_{\rho}(o)\cap\T_{\infty}\cap\T$, we have that $(x,y)\in
  \relation_{\infty}$ if and only if $(x,y)\in\relation$.
  Consider the set
  \begin{equation}
    H
    :=
    \{
    (x,\alpha,\beta)
    \in (B_{\rho}(o)\cap\T_{\infty}\cap\T)\times Q_{\infty} \times Q
    : \QQ_{\infty}(x)=\alpha
    \text{ and }
    \QQ(x)=\beta
    \}
    ,
  \end{equation}
  and let $G:=\pi_{Q_{\infty} \times Q}(H)$ be the projection of $H$
  on the second and third variable.
  For what we have said $G$ is the graph of a map
  $L:\Dom(L)\subset Q_{\infty}\to Q$.
  The other properties easily follow.
\end{proof}

\begin{proposition}
  For $q$-a.e.\ $\alpha\in {Q}$, it holds that
  $|X_\alpha|\geq\rho$ and
  \begin{equation}
    h_\alpha(r)=N\omega_N\AVR_X r^{N-1}
    ,
    \quad
    \forall r\in[0,\rho]
    .
  \end{equation}
\end{proposition}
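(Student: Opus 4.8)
The plan is to identify the conditional measures $\mm_\alpha$ of the global disintegration \eqref{eq:disintegration-final}, restricted to $B_\rho(o)$, with the conditionals $\hat\mm_{\beta,\infty}$ of $\mm\llcorner_E$ produced by the $\varphi_\infty$-localization, whose densities were explicitly computed in Corollary~\ref{cor:disintegration-classical}. First I would restrict \eqref{eq:disintegration-final} to the ball $B_\rho(o)$. For $\q$-a.e.\ $\alpha\in Q$, Proposition~\ref{P:rays-from-the-vertex} gives $g(\alpha,0)=o$ and $\sfd(o,g(\alpha,t))=t$ for all $t\in[0,|X_\alpha|]$, so $(g(\alpha,\,\cdot\,))^{-1}(B_\rho(o))=(0,\rho)\cap(0,|X_\alpha|)$ and hence $\mm_\alpha\llcorner_{B_\rho(o)}=(g(\alpha,\,\cdot\,))_\#(h_\alpha\L^1\llcorner_{(0,\rho)\cap(0,|X_\alpha|)})$. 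Since $\mm\llcorner_{B_\rho(o)}=\mm\llcorner_E$ (Theorem~\ref{T:Ball}), this shows that $(\mm_\alpha\llcorner_E)_{\alpha\in Q}$ is a disintegration of $\mm\llcorner_E$ subordinate to the partition $(X_\alpha\cap E)_{\alpha\in Q}$ with quotient measure $\q$.

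Next I would compare this with the $\varphi_\infty$-side. By Corollary~\ref{cor:disintegration-classical} the conditionals of $\mm\llcorner_E$ along $(X_{\beta,\infty})_{\beta\in Q_\infty}$ are $\hat\mm_{\beta,\infty}=(g_\infty(\beta,\,\cdot\,))_\#(N\omega_N\AVR_X r^{N-1}\L^1\llcorner_{(0,\rho)})$ with quotient measure $\hat\q$. Since $\varphi_\infty=\varphi_\infty(o)-\sfd(\,\cdot\,,o)$ on $B_\rho(o)$, the two partitions agree on $E\cap\T_\infty\cap\T$ up to an $\mm$-null set, and the map $L$ built in the proposition preceding the statement satisfies $\q=L_\#\hat\q$ and $L\circ\QQ_\infty=\QQ$ there. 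The key point is that $L$ is injective on $\QQ_\infty(E\cap\T_\infty\cap\T)$: if $x,y\in E\cap\T_\infty\cap\T$ and $\QQ(x)=\QQ(y)$, then $(x,y)\in\relation$, hence $(x,y)\in\relation_\infty$ because the two relations coincide on $E$, hence $\QQ_\infty(x)=\QQ_\infty(y)$. Therefore in the disintegration $\hat\q=\int_Q\hat\q_\alpha\,\q(d\alpha)$ along $L$ one has $\hat\q_\alpha=\delta_{L^{-1}(\alpha)}$ for $\q$-a.e.\ $\alpha$, and uniqueness of the disintegration of $\mm\llcorner_E$ with fixed quotient measure $\q$ forces $\mm_\alpha\llcorner_E=\hat\mm_{L^{-1}(\alpha),\infty}$ for $\q$-a.e.\ $\alpha$.

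Finally I would extract the conclusion. The maps $g_\infty(L^{-1}(\alpha),\,\cdot\,)$ and $g(\alpha,\,\cdot\,)$ are both unit-speed parametrizations of $X_\alpha\cap B_\rho(o)$ starting at $o$, hence coincide on $[0,\rho]$; pulling back the equality $\mm_\alpha\llcorner_{B_\rho(o)}=\hat\mm_{L^{-1}(\alpha),\infty}$ through this common map yields $h_\alpha(r)=N\omega_N\AVR_X r^{N-1}$ for $\L^1$-a.e.\ $r\in(0,\rho)$. In particular $h_\alpha>0$ on $(0,\rho)$, so $g(\alpha,(0,\rho))\subset X_\alpha$ and $|X_\alpha|\geq\rho$; moreover $\hat\mm_{L^{-1}(\alpha),\infty}$ charges $g(\alpha,(0,\rho))$ entirely, which again forces $|X_\alpha|\ge\rho$. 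Since $(X_\alpha,\sfd,\mm_\alpha)$ is $\CD(0,N)$, the density $h_\alpha^{1/(N-1)}$ is concave, hence $h_\alpha$ is continuous on $(0,|X_\alpha|)$, which upgrades the a.e.\ identity to $h_\alpha(r)=N\omega_N\AVR_X r^{N-1}$ for every $r\in(0,\rho)$ and, by continuity, on $[0,\rho]$ (with the convention $h_\alpha(0)=0$, consistent with $N>1$). I expect the main obstacle to be the measure-theoretic bookkeeping in the middle step: establishing the injectivity of $L$ on the traced quotient and invoking uniqueness of the disintegration with fixed quotient measure to transfer the explicit density from $\hat\mm_{\beta,\infty}$ to $\mm_\alpha\llcorner_E$; once that transfer is in place, the remaining assertions are immediate consequences of results already established.
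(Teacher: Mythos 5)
Your proposal is correct and follows essentially the same route as the paper: restrict the global disintegration to $E=B_\rho(o)$, match it against the $\varphi_\infty$-disintegration of Corollary~\ref{cor:disintegration-classical} via the map $L$, and transfer the explicit density $N\omega_N\AVR_X r^{N-1}$. The only (immaterial) difference is that you invert $L$ using its injectivity while the paper changes variables by pushing forward along $L$ and invokes uniqueness of the disintegration with quotient $\hat\q$; your added remarks on continuity of $h_\alpha$ and on why $|X_\alpha|\ge\rho$ follows are correct refinements of points the paper leaves implicit.
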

\begin{proof}
  Comparing Equation~\eqref{eq:gamma-are-radial} with
  Proposition~\ref{P:rays-from-the-vertex} we deduce that
  for $\hat{\q}$-a.e.\ $\alpha\in {Q_{\infty}}$, it holds that 
  \begin{equation}
    g_\infty(\alpha,t)
    =
    g_\infty(L(\alpha),t)
    ,
    \quad
    \forall t\in(0,\min\{\rho,|X_\alpha|\})
    .
  \end{equation}
  Comparing the disintegration formulas~\eqref{eq:disintegration-ugly}
  and~\eqref{eq:disintegration-final}, we deduce
  \begin{equation}
    \mm\llcorner_E
    =
    \int_Q
    \hat{\mm}_{\alpha,\infty}
    \,\hat{\q}(d\alpha)
    =
    \int_{{Q}}
    \mm_\alpha\llcorner_E
    \,\q(d\alpha)
    =
    \int_{Q_{\infty}}
    (\mm_{L(\alpha)})\llcorner_E
    \,\hat{\q}(d\alpha)
    ,
  \end{equation}
  hence $\hat{\mm}_{\alpha,\infty}=(\mm_{L(\alpha)})\llcorner_E$, thus,
  recalling~\eqref{eq:disintegration-measure-classical},
  we deduce that
  \begin{equation}
    h_\alpha(r)
    =
    N\omega_N\AVR_X r^{N-1}
    ,
    \quad
    \forall r\in(0,\min\{\rho,|X_\alpha|\})
    .
  \end{equation}
  The fact that $|X_\alpha|\geq\rho$ follows from the
  expression above.
\end{proof}

\begin{theorem}
  \label{th:disintegration-final}
  For $\q$-a.e.\ $\alpha\in {Q}$, it holds that $|X_\alpha|=\infty$ and
  \begin{equation}
    h_\alpha(r)
    =
    N\omega_N\AVR_X
    r^{N-1}
    ,
    \quad
    \forall r>0
    .
  \end{equation}
\end{theorem}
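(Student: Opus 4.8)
The plan is to upgrade the partial information already in hand — that for $\q$-a.e.\ $\alpha$ the ray $X_\alpha$ has length at least $\rho$, is parametrised by the distance from $o$ (Proposition~\ref{P:rays-from-the-vertex}), and carries the density $h_\alpha(r)=N\omega_N\AVR_X r^{N-1}$ on $[0,\rho]$ — by playing the $\CD(0,N)$ condition along the rays against the global Bishop--Gromov inequality on $X$.

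\textbf{Step 1.} First I would note that, since $(X_\alpha,\sfd,\mm_\alpha)$ is $\CD(0,N)$, the function $h_\alpha^{1/(N-1)}$ is concave on $(0,|X_\alpha|)$; it agrees with the linear function $r\mapsto(N\omega_N\AVR_X)^{1/(N-1)}r$ on $(0,\rho)$ and tends to $0$ as $r\to0^+$. Comparing, for a fixed $r\in(\rho,|X_\alpha|)$, the values of this concave function at a point $s\to0^+$, at $\rho$, and at $r$, concavity forces
\begin{equation}
  h_\alpha(r)\leq N\omega_N\AVR_X r^{N-1},\qquad\forall r\in(0,|X_\alpha|),
\end{equation}
for $\q$-a.e.\ $\alpha$.

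\textbf{Step 2.} Next I would compute the volume of balls centred at $o$. By Proposition~\ref{P:rays-from-the-vertex}, for $\q$-a.e.\ $\alpha$ one has $\mm_\alpha(B_R(o))=\int_0^{\min\{R,|X_\alpha|\}}h_\alpha\,dr$, so Step~1 gives $\mm_\alpha(B_R(o))\leq\omega_N\AVR_X R^N$. Since $\T$ has full $\mm$-measure and $\q$ is a probability measure, integrating the disintegration~\eqref{eq:disintegration-final} yields $\mm(B_R(o))\leq\omega_N\AVR_X R^N$. The reverse inequality $\mm(B_R(o))\geq\omega_N\AVR_X R^N$ for every $R>0$ follows from Bishop--Gromov and the definition of $\AVR_X$, so altogether $\mm(B_R(o))=\omega_N\AVR_X R^N$ for all $R>0$.

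\textbf{Step 3.} Finally, for each fixed $R$ the identity $\int_Q\mm_\alpha(B_R(o))\,\q(d\alpha)=\omega_N\AVR_X R^N$ together with the pointwise bound of Step~2 forces $\mm_\alpha(B_R(o))=\omega_N\AVR_X R^N$ for $\q$-a.e.\ $\alpha$. Running this over a countable dense set of radii and invoking the monotonicity of $R\mapsto\mm_\alpha(B_R(o))$, I would obtain a single $\q$-full set of $\alpha$ on which $\int_0^{\min\{R,|X_\alpha|\}}h_\alpha=\omega_N\AVR_X R^N$ for every $R>0$; taking $R>|X_\alpha|$ would then contradict this unless $|X_\alpha|=\infty$, and differentiating in $R$ gives $h_\alpha(r)=N\omega_N\AVR_X r^{N-1}$ for a.e.\ $r>0$, hence for all $r>0$ by continuity of $h_\alpha$ on $(0,\infty)$ (a consequence of the concavity of $h_\alpha^{1/(N-1)}$). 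I expect the only mildly delicate point to be the measurability bookkeeping of Step~3, i.e.\ the passage from ``for each $R$, $\q$-a.e.\ $\alpha$'' to ``$\q$-a.e.\ $\alpha$, for all $R$''; this is handled routinely by the countable-dense-radii argument, and everything else reduces to the two one-sided volume estimates.
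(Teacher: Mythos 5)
Your proof is correct, and it rests on the same two pillars as the paper's: the one\-dimensional information on $[0,\rho]$ combined with concavity of $h_\alpha^{1/(N-1)}$ gives the pointwise upper bound $h_\alpha(r)\leq N\omega_N\AVR_X r^{N-1}$, and global Bishop--Gromov plus the definition of $\AVR_X$ gives the matching lower bound on ball volumes, so the disintegration forces $\q$-a.e.\ saturation. The execution differs slightly: the paper works directly at $R\to\infty$, writing $\omega_N\AVR_X=\int_Q\lim_{R\to\infty}R^{-N}\int_0^R h_\alpha\,\q(d\alpha)$ and ruling out, for each $\epsilon>0$, the set of rays whose asymptotic ratio falls below $\omega_N\AVR_X(1-\epsilon)$; you instead establish the exact volume-cone identity $\mm(B_R(o))=\omega_N\AVR_X R^N$ at every finite radius and then saturate the disintegration radius by radius. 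Your variant avoids the limit--integral interchange that the paper leaves implicit, and the finite-$R$ identity is a pleasant intermediate statement (it is precisely the ``volume cone'' hypothesis later fed into the metric-cone rigidity in the $\RCD$ case); the price is the routine countable-dense-radii bookkeeping you already flag. One small remark on Step 1: the limit $s\to0^+$ is not needed --- writing $\rho$ as a convex combination of any $s\in(0,\rho)$ and $r>\rho$ and using that $h_\alpha^{1/(N-1)}$ is linear through the origin on $(0,\rho)$ already yields $h_\alpha^{1/(N-1)}(r)\leq(N\omega_N\AVR_X)^{1/(N-1)}r$ exactly.
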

\begin{proof}
Fix $\epsilon>0$ and let
\newcommand{\leftright}[3]{\left#1{#2}\right#3}
\begin{equation}
  C
  :=
  \leftright{\{}{
    \alpha\in {Q}:
    \lim_{R\to\infty}\int_0^R h_\alpha/R^N
    < \omega_N \AVR_X(1-\epsilon)
    }{\}}
  ,
\end{equation}
with the convention that the limit above is $0$ if $|X_\alpha|<\infty$
(notice that the limit always exists and it is not larger than
$\omega_N\AVR_X$ by the Bishop--Gromov inequality applied to each
density $h_\alpha$).
We compute the a.v.r.\ using the disintegration
\begin{align*}
  \AVR_X\omega_N
  &
    =
    \lim_{R\to\infty}
    \frac{\mm(B_R)}{R^N}
    =
    \lim_{R\to\infty}
    \int_{{Q}}\int_0^R\frac{ h_\alpha(t)}{R^N}\,dt \,\q(d\alpha)
  \\
  &
    =
    \int_{{Q}}
    \lim_{R\to\infty}
    \int_0^R\frac{ h_\alpha(t)}{R^N}
    \,dt \,\q(d\alpha)
  \\
  &
    =
    \int_C
    \lim_{R\to\infty}
    \int_0^R\frac{ h_\alpha(t)}{R^N}
    \,dt \,\q(d\alpha)
    +
    \int_{{Q}\backslash C}
    \lim_{R\to\infty}
    \int_0^R\frac{ h_\alpha(t)}{R^N}
    \,dt \,\q(d\alpha)
  \\
  &
    \leq
    \int_C
    \omega_N\AVR_X(1-\epsilon)
     \,\q(d\alpha)
    +
    \int_{{Q}\backslash C}
    \omega_N\AVR_X
    \,\q(d\alpha)
  \\
  &
    =
    \omega_N\AVR_X(1-\epsilon\q(C)),
\end{align*}
thus $\q(C)=0$.
By arbitrariness of $\epsilon$ we deduce that
$\lim_{R\to\infty}\int_0^R h_\alpha/R^N= \omega_N \AVR_X$, hence
$h_\alpha(t)=N\omega_N\AVR_X t^{N-1}$, for $\q$-a.e.\
$\alpha\in\tilde{Q}$.
\end{proof}

The proof of Theorem \ref{T:main1} is therefore concluded.
As described in the introduction, Theorem \ref{T:main2} and Theorem \ref{T:Euclid-application} are immediate consequences.




\bibliographystyle{acm}
\bibliography{literature.bib}

\end{document}